\newenvironment{enumeratei}{\begin{enumerate}[\upshape (i)]\setlength{\itemsep}{0pt}}{\end{enumerate}}
\newenvironment{enumeratea}{\begin{enumerate}[\upshape (a)]\setlength{\itemsep}{0pt}}{\end{enumerate}}
\theoremstyle{plain}
\newtheorem{theorem}{Theorem}[section]
\newtheorem{proposition}[theorem]{Proposition}
\newtheorem{lemma}[theorem]{Lemma}
\newtheorem{corollary}[theorem]{Corollary}
\newtheorem{remark}[theorem]{Remark}
\newtheorem{assumption}{Assumption}
\theoremstyle{definition}
\newtheorem{definition}[theorem]{Definition}
\newcommand{\cB}{\mathcal{B}}\newcommand{\cC}{\mathcal{C}}
\newcommand{\cD}{\mathcal{D}}\newcommand{\cF}{\mathcal{F}}
\newcommand{\cG}{\mathcal{G}}\newcommand{\cH}{\mathcal{H}}
\newcommand{\cK}{\mathcal{K}}
\newcommand{\cN}{\mathcal{N}}\newcommand{\cO}{\mathcal{O}}
\newcommand{\cP}{\mathcal{P}}
\newcommand{\cS}{\mathcal{S}}
\newcommand{\cX}{\mathcal{X}}
\newcommand{\cY}{\mathcal{Y}}  
\newcommand{\vS}{\mathbf{S}}
\newcommand{\vX}{\mathbf{X}}
\newcommand{\vf}{\mathbf{f}}
\newcommand{\vg}{\mathbf{g}}
\newcommand{\mvT}{\boldsymbol{T}}
\newcommand{\mvX}{\boldsymbol{X}}
\newcommand{\mvf}{\boldsymbol{f}}
\newcommand{\mvh}{\boldsymbol{h}}
\newcommand{\mvl}{\boldsymbol{l}}
\newcommand{\mvp}{\boldsymbol{p}}
\newcommand{\mvr}{\boldsymbol{r}}
\newcommand{\mvx}{\boldsymbol{x}}\newcommand{\mvy}{\boldsymbol{y}}
\newcommand{\mvGamma}{\boldsymbol{\Gamma}}
\newcommand{\mvphi}{\boldsymbol{\phi}}
\newcommand{\mvnu}{\boldsymbol{\nu}}
\newcommand{\f}[1]{\mathfrak{#1}}
\newcommand{\fC}{\mathfrak{C}}
\newcommand{\fD}{\mathfrak{D}}\newcommand{\fF}{\mathfrak{F}}
\newcommand{\fL}{\mathfrak{L}}
\newcommand{\bA}{\mathbb{A}}\newcommand{\bC}{\mathbb{C}}
\newcommand{\bD}{\mathbb{D}}\newcommand{\bE}{\mathbb{E}}\newcommand{\bF}{\mathbb{F}}
\newcommand{\bH}{\mathbb{H}}
\newcommand{\bN}{\mathbb{N}}
\newcommand{\bP}{\mathbb{P}}\newcommand{\bQ}{\mathbb{Q}}\newcommand{\bR}{\mathbb{R}}
\newcommand{\rX}{\mathrm{X}}
\DeclareMathOperator{\E}{\mathds{E}}
\DeclareMathOperator{\argmin}{argmin}
\newcommand{\xc}[1]{{{\textcolor{purple}{#1}}}}
\newcommand{\pl}[1]{{{\textcolor{orange}{#1}}}}
\begin{document}

\begin{frontmatter}
\title{Sample complexity and weak limits of nonsmooth multimarginal Schr\"{o}dinger system with application to optimal transport barycenter}
\runtitle{nonsmooth multimarginal Schr\"{o}dinger system}

\begin{aug}
\author[A]{\fnms{Pengtao}~\snm{Li}\ead[label=e1]{pengtaol@usc.edu}}
\and
\author[A]{\fnms{Xiaohui}~\snm{Chen}\ead[label=e2]{xiaohuic@usc.edu}\orcid{0009-0009-0328-6159}}
\runauthor{Li and Chen}
\address[A]{University of Southern California\\
\printead{e1,e2}}
\end{aug}

\begin{abstract}
Multimarginal optimal transport (MOT) has emerged as a useful framework for many applied problems. However, compared to the well-studied classical two-marginal optimal transport theory, analysis of MOT is far more challenging and remains much less developed. In this paper, we study the statistical estimation and inference problems for the entropic MOT (EMOT), whose optimal solution is characterized by the multimarginal Schr\"{o}dinger system. Assuming only boundedness of the cost function, we derive sharp sample complexity for estimating several key quantities pertaining to EMOT (cost functional and Schr\"{o}dinger coupling) from point clouds that are randomly sampled from the input marginal distributions. Moreover, with substantially weaker smoothness assumption on the cost function than the existing literature, we derive distributional limits and bootstrap validity of various key EMOT objects. As an application, we propose the multimarginal Schr\"{o}dinger barycenter as a new and natural way to regularize the exact Wasserstein barycenter and demonstrate its statistical optimality.

\end{abstract}

\begin{keyword}[class=MSC]
\kwd[Primary ]{62E17}
\kwd{62E20}
\kwd[; secondary ]{62F40}
\end{keyword}

\begin{keyword}
\kwd{multimarginal optimal transport}
\kwd{entropy regularization}
\kwd{Schr\"{o}dinger system}
\kwd{Wasserstein barycenter}
\kwd{rate of convergence}
\kwd{statistical optimality}
\kwd{central limit theorem}
\kwd{bootstrap consistency}
\end{keyword}

\end{frontmatter}

\section{Introduction}
Multimarginal optimal transport (MOT) problems arise naturally in a broad range of scientific domains, including economics \cite{CarlierIvarEcon10, Pierre_Robert_Lars10}, finance \cite{Mathias_Labordere_Penkner13, Hamza_Quentin_Luca_Brendan24}, 
machine learning \cite{multimarginal_Wasserstein_Gan19, ZoeMichalJamesCuturi24, Michael_Nicholas_Michael_Eric23}, tomography \cite{Application_Tomographic17}, quantum physics \cite{GiuseppeButtazzoLuigiDePascalePaolaGori-Giorgi12PRA, MOT_application_physics}, and the study of Wasserstein barycenters \cite{AguehCarlier2011}. Given $m$ Borel probability measures $\nu_j$ on 
bounded subsets $\cX_j\subset \bR^d$ for $j\in[m] := \{1, \dots, m\}$ and a real-valued cost function $c : \cX_1 \times \cdots \times \cX_m \to \bR$, the MOT seeks to solve the following optimization problem
\begin{align}
    \label{eqn:MOT_formulation}
    \inf_{\pi\in\Pi(\nu_1,\dots,\nu_m)} \int_{\cX_1 \times \cdots \times \cX_m} c(x_1,\dots,x_m) d\pi(x_1,\dots,x_m),
\end{align}
where $\Pi(\nu_1,\dots,\nu_m)$ denotes the set of all possible couplings with marginal distributions $\nu_1, \dots, \nu_m$. The special case of $m = 2$ coincides with the classical theory of optimal transport (OT)~\cite{Vil09,villani2021topics,sabtanbrogio2015_OT}. Recent years have witnessed substantial advances in the computational and statistical aspects of OT~\cite{peyre2019computational,CheNilRig25OT}, owing in part to the Monge structure and regularity properties of the optimal coupling in the two-marginal setting~\cite{Brenier_87,Caffarelli_1991_regularity,Gangbo_McCann_Acta_96}.

In contrast, the general scenario of MOT with $m \geq 3$ marginals is far less-understood than the two-marginal OT setting beyond the existence of the optimal coupling as a solution of~\eqref{eqn:MOT_formulation}, and many theoretical questions currently remain largely open~\cite{Brendan_Pass_MOT15}. One fundamental obstacle is that, unlike the unified theory for the classical OT problem, existing results of MOT rely heavily on the special forms of the cost function $c$. The first result on the optimal coupling structure dates back to \cite{GangboSwiech_MOT} for the quadratic cost function $c(x_1,\dots,x_m) = \sum_{i \neq j}|x_i - x_j|^2$, with a series of follow-up work under cost functions of the same type \cite{HenriMOT02,Brendan_Pass11SIAM}, and other types of cost functions such as repulsive cost \cite{Colombo_Pascale_Marino_repulsive15} and cyclic cost
\cite{BredanPassAdolfoVargasJimenez22}. More generally, as demonstrated in \cite{Brendan_Pass_MOT15}, multimarginal twist conditions on the cost function that imply structural results on the resulting optimal coupling can be proposed but are much more restrictive than their two-marginal counterparts. All the subtlety hints the fundamental divergence from the classical case $m=2$ to the multimarginal setting $m \geq 3$; cf. \cite{Brendan_Pass_MOT15, KimPassMOTRiemannian15, BredanPassAdolfoVargasJimenez22, BredanPassAdolfoVargasJimenez24} for more details and state-of-art results. On the computational side, the MOT problem is computationally intractable to solve or even approximate with a general cost function~\cite{Jason_Enric_NP_MOT21}, highlighting the root difference from the two-marginal OT problems.


In this work, we study the statistical estimation and inference for a closely related MOT problem from data. Motivated by the prevalent entropy regularization of the OT problem~\cite{Cuturi_Sinkhorn2013,nutz2022entropic,rigollet2022samplecomplexityentropicoptimal}, we consider the following \emph{entropic multimarginal optimal transport} (EMOT) problem with a regularization parameter $\varepsilon>0$,
\begin{align}
    \label{eqn:EMOT_formulation}
    S_{\varepsilon}(\nu_1,\dots,\nu_m) := \inf_{\pi\in\Pi(\nu_1,\dots,\nu_m)} \int_\cX c(x_1,\dots,x_m) d\pi + \varepsilon \text{KL}( \pi\| \otimes_{k=1}^m \nu_k),
\end{align}
where $\otimes_{k=1}^m \nu_k$ denotes the product measure of $\nu_1, \dots, \nu_m$. Since the unique solution (i.e., optimal coupling) of EMOT is characterized via a Schr\"odinger system of equations~\cite{CarlierLaborde_SS,MarinoGerolin_SchrodingerBridge}, we also refer problem~\eqref{eqn:EMOT_formulation} as the \emph{multimarginal Schr\"odinger system} and its optimal coupling as the \emph{multimarginal Schr\"odinger coupling}. In practice, we do not directly have access to the marginal distributions $\nu_1, \dots, \nu_m$. Instead, one often only has a cloud of data points $X^{(j)}_1,\dots, X^{(j)}_N$ drawn from $\nu_j$ for $j \in [m]$. To study statistical properties of EMOT, we work with the standard sampling model throughout this paper; that is, for each $j\in[m]$, the cloud of data points $X^{(j)}_1,\dots, X^{(j)}_N\sim \nu_j$ are i.i.d., and moreover, the $m$ samples drawn from $\nu_1,\dots,\nu_m$ are independent. 

Statistical estimation and inference of EMOT from data are scarce and somewhat fractured in literature, with papers addressing cost functions with particular structures such as smoothness; see Section~\ref{subsec:EMOT_related_work} for the related work. \emph{One of the aims of this paper is to establish a new pipeline of statistical tools from sharp sample complexity to principled uncertainty quantification for general nonsmooth EMOT problems.} The difficulty for understanding and analyzing EMOT problem under nonsmooth cost function $c$ is two-fold. On the one hand, as mentioned above, the multimarginal setting introduces substantially more delicate and intricate nonlinear interactions and coupling among the marginal distributions than in the classical two-marginal case. On the other hand, progress in the two-marginal OT regime and especially the $m\geq3$ regime has traditionally relied on strong structural assumptions on the cost function, with smoothness playing an almost indispensable role. Abandoning smoothness eliminates many standard techniques and makes the analysis significantly more challenging. Consequently, moving from the two-marginal case to the multimarginal setting ($m \ge 3$), or from regular smooth costs to general nonsmooth costs, each on its own introduces significant challenges that are not merely technical but are intrinsically tied to the fundamental structural complexity of the problem.

As a leading application of our general EMOT results, we consider statistical analysis of the Wasserstein barycenter, a notion of averaging for measure-valued data under the optimal transport metric~\cite{AguehCarlier2011}. Barycenters have found numerous applications and advantages in data science and machine learning~\citep{srivastava2018scalable,ZhuangChenYang2022}, computer graphics and image processing ~\citep{rabin2012wasserstein,Solomon_2015} and scalable Bayesian inference~\citep{srivastava2018scalable}. Despite widespread interests, computing the Wasserstein barycenter remains extremely challenging in high (or even moderate) dimensions, partially due to the curse-of-dimensionality barrier~\citep{AltschulerBoix-Adsera2021_barycenter-NPhard}. This exponential dependence in dimension $d$  
explains the popularity of various regularized modifications for solving problems of large scale~\citep{NEURIPS2020_cdf1035c,CarlierEichingerKroshnin_entropic-barycenter,BigotCazellesPapadakis_barycenter-penalization,janati2020debiased,chizat2023doublyregularizedentropicwasserstein}. While many existing works focus on the approximating and algorithmic issues, considerably less is known on the statistical performance for estimation and performing inference on Wasserstein barycenters when we only have random sample access to the input marginal measures.

In Section~\ref{Multimarginal_Schrodinger_barycenter_section}, we propose a new approach, termed as the \emph{multimarginal Schr\"odinger barycenter} (MSB) to regularizing the barycenter functional based on the EMOT formulation. In contrast to existing regularized barycenter methods, the MSB applies regularization directly to the MOT problem. This yields a more controlled form of regularization that preserves the fidelity of the unregularized barycenter while offering both statistical and computational benefits.

\subsection{Contributions}

Below we summarize our main contributions. 

\begin{itemize}
    \item Under the standard statistical sampling model where only random point clouds are accessible to the input marginal distributions, we establish \emph{dimension-free} and \emph{parametric} $\sqrt{N}$-rates of convergence for estimating the cost functional and multimarginal Schr\"{o}dinger coupling on any bounded test function.

    \item We further derive the nonparametric rate of convergence of empirical multimarginal Schr\"{o}dinger coupling on the H\"older smooth function class. When specializing this general result to the $p$-Wasserstein distance for $p = 1,2$, we obtain a sharp convergence rate that substantially improves the state-of-the-art rates~\citep{BayraktarEcksteinZhang_stability}. To the best of our knowledge, the current work is the first to obtain the optimal convergence rates under $W_p \,(p=1,2)$ distance of entropy regularized couplings in the Wasserstein space given the known lower bounds on empirical Wasserstein distances~\citep{FournierGuillin,WeedBach}. 

    \item We prove the central limit theorem (CLT) for the quantities of interest, including the cost functional and optimal Schr\"{o}dinger coupling on any bounded test function, and demonstrate the associated bootstrap consistency, where the latter enables downstream data-driven statistical inferences.

    \item For the MSB, we also derive parallel rates of convergence, weak limits and bootstrap consistency,  as an application of general multimarginal Schr\"{o}dinger system theory established for EMOT.
        
    \item From a technical standpoint, we design a new decoupling mechanism to handle multimarginal Schr\"{o}dinger system based on induction and the iterative use of its marginal feasibility. This new probability tool, inspired by the idea of higher-order degenerate $U$-statistics \cite{ChenKato2019}, can be of independent interest to other MOT and barycentric inference problems. Building on this, along with other new technical tools, we show that under the assumption that the cost function $c\in\cC^2$, optimal population potential $\mvf^*$ and optimal empirical potential $\hat{\mvf}^*$ satisfy that
\begin{align*}
\| \mvf^* - \hat{\mvf}^* \|_{\cC^1}
 = \cO_\bP \left( \sqrt{\frac{\log N}{N}} \right),
\end{align*}
achieving a genuine improvement over the previously known results~\cite{EustasioGozalezLoubesNiles23,CalierChizatLaborde_DisplacementSmoothnessofEOT}.
\end{itemize}

\subsection{Related work}\label{subsec:EMOT_related_work}

Because of the aforementioned challenges, EMOT under nonsmooth cost has not been well-investigated in the exisinting literature. The well-posedness (existence, uniqueness and smooth
dependence with respect to the data) for the multimarginal Schr\"{o}dinger
system was first established in \cite{CarlierLaborde_SS} for bounded cost function $c$. For smooth cost function, \cite{CalierChizatLaborde_DisplacementSmoothnessofEOT} obtained the Lipschitz continuity of the map from marginal distributions to the optimal Schr\"{o}dinger coupling. \cite{Multimarginal_Unbalanced_OT_CLT_Xu21} investigated the weak limits of unbalanced multimarginal optimal transport under smooth cost function but could not specify the limiting variance. The Hadamard differentiability for EMOT was established in \cite{ZivGoldfeldKengoKatoGabrielRiouxRitwikSadhu} for smooth cost function. All the results above considered the empirical plug-in estimation of EMOT system while \cite{DorZivKristjanHaim2024Neural} employed neural networks and obtained parametric estimation rate for the cost functional with special graph structure.

The Wasserstein barycenter for probability measures on the Euclidean domain was first introduced in~\citep{AguehCarlier2011} and later extended to Riemannian manifolds in~\citep{KIM2017640}, where existence and uniqueness were proved when at least one of the input measures vanishes on small sets. The extension to the setting
of nonsmooth extended metric measure spaces was recently done by \cite{BangxianHan24}. Statistical consistency for empirical Wasserstein barycenters on a general geodesic space was derived in~\citep{LeGouicJeanMichel_barycenter-existunique}, and rates of convergence were established under additional curvature assumptions~\citep{AhidarLeGouicParis_barycenter-rate,LeGouic2019FastCO}. Practical algorithms for computing unregularized Wasserstein barycenter can be found in~\citep{pmlr-v32-cuturi14,KimYaoZhuChen2025_barycenter-nonconvex-concave,kim2025sobolevgradientascentoptimal,AltschulerBoix-Adsera2021_fixed-dim-barycenter}. There are several existing regularization models for the barycenter functional~\citep{pmlr-v32-cuturi14,NEURIPS2020_cdf1035c,CarlierEichingerKroshnin_entropic-barycenter,BigotCazellesPapadakis_barycenter-penalization,janati2020debiased,chizat2023doublyregularizedentropicwasserstein}. However, those works lack either of thorough statistical analysis for estimation or inference of the barycenter, or of efficient algorithm to compute its barycenter. Different from our MBS proposal based on the EMOT formulation, most existing regularized barycenters begin by introducing regularization into the pairwise formulation of the barycenter problem (\ref{eqn:unregbary}).


\subsection{Notations}
For a nonempty compact set $\Omega\subset\bR^d$ that agrees with the closure of its interior, $\cC(\Omega)$ denotes all the continuous functions on $\Omega$. We occasionally omit the dependence on the space $\Omega$ when it could be inferred from the context. Given $(f_1,\dots,f_m)\in\prod_{j=1}^m \cC(\cX_j)$, $\bigoplus (f_1,\dots,f_m) := \sum_{j=1}^m f_j$. For every multi-index $k=(k_1,\dots,k_d)\in \bN_0^d$ with $|k|= \sum_{j=1}^d k_j$ (where $\bN_0 = \bN\cup\{0\}$),
define the differential operator $D^k$ by
\[
  D^k f \;=\; \frac{\partial^{|k|} f}{\partial x_1^{k_1}\cdots \partial x_d^{k_d}},
  \qquad\text{with } D^0 f = f .
\]
For every $s\in\mathbb N_0$, denote by $\cC^s(\Omega)$ the set of functions $f$ on $\Omega$ such that $f$ has
continuous derivatives of all orders $\le s$ on $\mathrm{int}(\Omega)$ and these derivatives have continuous
extensions to $\Omega$ (so $\cC^0(\Omega) = \cC(\Omega)$). Define the norm
\[
  \lVert f\rVert_{\cC^s(\Omega)}
  \;=\;
  \sum_{j=0}^s \; \max_{|k|=j}\;
  \lVert D^k f \rVert_{\infty,\,\mathrm{int}(\Omega)} .
\]
Then $\big(\cC^s(\Omega), \lVert\cdot\rVert_{\cC^s(\Omega)}\big)$ is a separable Banach space. For $s\in\mathbb N_0$, let $\cC^{s}(\Omega;\mathbb R^{m})$
denote the space of vector-valued functions
$\vf= (f_1,\dots,f_m):\Omega\to\mathbb R^{m}$
whose coordinate functions belong to $\cC^{s}(\Omega)$,
equipped with the norm
\[
  \|f\|_{\cC^{s}(\Omega;\mathbb R^{m})}
  = \max_{1\leq j\leq m} \|f_j\|_{\cC^{s}(\Omega)} .
\]

\noindent The Kullback-Leibler (KL) divergence or relative entropy between probability $P$ and $Q$ is defined as 
\[
\text{KL}(P \parallel Q) := 
\begin{cases}
	\int \log \left( \frac{dP}{dQ}(x) \right) dP(x), & \text{if } P \ll Q, \\
	+\infty, & \text{otherwise,}
\end{cases}
\]
where $ P \ll Q$ means that $P$ is absolutely continuous with respect to $Q$ and $\frac{dP}{dQ}(x)$ denotes the Radon-Nikodym derivative. We write $\xi_n \xrightarrow{\mathbb{P}} \xi$ (resp. $\xi_n \xrightarrow{w} \xi$) for convergence in probability (resp. weak convergence). For a probability measure $\mu$ and a measurable function $f$, we often write $\mu(f)$ to represent $\int fd\mu$. We write $\|f\|_{L^{\infty}(\nu)}$ as the essential supremum for $\|f(X)\|$ with $X \sim \nu$, and $L^\infty(\nu)$ denotes the Banach space of functions such that $\|f\|_{L^{\infty}(\nu)} < \infty$. Given probability measures $\nu_1,\dots,\nu_m$,
$\Pi(\nu_1, \dots, \nu_m)$ denotes the set of all probability measures $\pi$ with $\nu_i$ as the $i$-th marginal, i.e., $\ (e_i)_{\sharp} \pi = \nu_i \ \text{where} \ e_i : (x_1, \dots, x_m) \mapsto x_i$ is the $i$-th projection map for each $i \in [m]$. 

We are given $m\geq2$ probability space $(\cX_i,\cF_i,\nu_i)$, $i=1,\dots,m$ and set 
\begin{align*}
    \cX := \prod_{i=1}^m \cX_i; \quad \cF :=\bigotimes_{i=1}^m \cF_i; \quad \nu:=\bigotimes_{i=1}^m\nu_i.
\end{align*}
Given $i\in\ [m]$, we will denote by 
$\cX_{-i} : = \prod_{j \neq i} \cX_j$,
$\nu_{-i}: = \bigotimes_{j\neq i}\nu_j$,
and will always identify $\cX$ with $\cX_i\times\cX_{-i}$, namely, will denote $\mvx = (x_1,\dots,x_m)\in\cX$ as $x = (x_i,x_{-i})$. The empirical distribution given $N$ independent and identically distributed (i.i.d.) sample of $\nu_k$ is denoted as $\hat{\nu}_k^N$ for $k\in[m]$. Similarly as before, we write for $i\in[m]$, $\hat{\nu}^N_{-i} := \bigotimes_{j\neq i}\hat{\nu}^N_j$.

Throughout the rest of the paper, we work with a compact domain $\cX_i \subset$ $\mathbb{R}^d$ with a non-empty interior. 
Without loss of generality, by rescaling we may further assume that $\cX_i \subset [-1, 1]^d$, $i\in[m]$. For clarity and brevity, more definitions of the commonly used notations are provided in Appendix \ref{appendix_section_notations}.

\subsection{Organization}
The rest of the paper is organized as follows. In Section \ref{Background_and_Mathematical_Preliminaries}, we collect background material for the EMOT problems. Optimization geometry of the dual functional of EMOT is introduced in Section \ref{optimization_geometry_section}. The main results on sample complexity, weak limits and bootstrap validity for the empirical EMOT problem are collected in Section \ref{main_results}. 
Statistical guarantees of MSB are established Section \ref{Multimarginal_Schrodinger_barycenter_section}. The Appendix contains proofs that are omitted from the main text, technical tools used in the proofs, and other auxiliary results.

\section{Background and mathematical preliminaries} 
\label{Background_and_Mathematical_Preliminaries}

\subsection{Background on OT}
In this section, we present some background on the optimal transport theory.
Given two probability measures $\mu,\,\nu\in\mathcal{P}_2(\Omega)$, the 2-Wasserstein distance between $\mu$ and $\nu$ is defined as the value of the Kantorovich problem:
\begin{equation}\label{W2}
W_2(\mu, \nu) := \inf_{\pi \in \Pi(\mu,\nu)} \left\{ \int_{\Omega \times \Omega} \|x - y\|^2 d\pi(x,y) \right\}^{1/2}.
\end{equation}
Solving the linear program in (\ref{W2}) on discretized points in $\Omega$ imposes tremendous computational challenges. Entropy regularized optimal transport computed via Sinkhorn's algorithm has been widely used with an efficient approximation at guaranteed low-computational cost even for high-dimensional probability measures~\citep{Cuturi_Sinkhorn2013}. The entropic OT (EOT) problem is defined as 
\begin{equation}\label{EW2}
S_{\varepsilon}(\mu,\nu):=\inf_{\pi \in \Pi(\mu,\nu)} \left\{ \int_{\Omega \times \Omega} \|x - y\|^2 d\pi(x,y) + \varepsilon \text{KL}(\pi || \mu \otimes \nu) \right\}, 
\end{equation}
where $\varepsilon > 0$ is the regularization parameter. It is known that problem (\ref{EW2}) has a unique solution $\pi^* \in  \Pi(\mu,\nu)$ and admits a strong duality form~\citep{leonard_SB}, i.e., zero duality gap (up to an additive constant depending only on $\varepsilon$), given by
\[
\sup_{\substack{f \in L^\infty(\mu)\\g\in L^\infty(\nu)}}  \left\{\int_{\Omega} f d\mu +  \int_{\Omega} g d\nu -  \varepsilon  \int_{\Omega \times \Omega}  
\exp{\left(  
	\frac {f(x)+g(y)-||x-y||^2}{\varepsilon}
	\right)} d(\mu \otimes \nu)(x,y) 
\right\}.  
\]
The above supremum is achieved at a unique pair of the optimal dual potentials 
\((f^*, g^*) \in L^\infty(\mu) \times L^\infty(\nu)\) up to translation 
\((f^* + c, g^* - c)\) for \(c \in \mathbb{R}\). In addition, the optimal coupling 
\(\pi^*\) can be recovered from the optimal dual potential pair \((f^*, g^*)\) via
\[
\frac{\mathrm{d} \pi^*}{\mathrm{d} (\mu \otimes \nu)}(x, y) = \exp \left( \frac{f^*(x) + g^*(y) - \|x - y\|^2}{\varepsilon} \right).
\]


\subsection{EMOT and strong duality}
\label{subsec:duality}

In the literature, the EMOT problem (\ref{eqn:EMOT_formulation}) is also referred as the multimarginal Schr\"{o}dinger problem because it is equivalent to minimize the relative entropy with respect to a Gibbs kernel associated with the transport cost~\citep{MarinoGerolin_SchrodingerBridge,CarlierLaborde_SS}, i.e., $\min_{\pi\in\Pi(\nu_1,\dots,\nu_m)} \ \text{KL}(\pi||\kappa),$
where $\kappa$ is the Gibbs kernel 
\[
d\kappa(x_1,\dots,x_m)\propto\exp\left(-c(x_1,\dots,x_m) / \varepsilon\right) \; d(\otimes_{k=1}^m\nu_k).
\]

\noindent Since we work in a bounded domain $\cX \subset ([-1, 1]^{d})^m$, problem (\ref{eqn:EMOT_formulation}) admits a strong duality in the $L^\infty$ setting~\citep{MarinoGerolin_SchrodingerBridge,CarlierLaborde_SS},
\begin{equation}\label{dual1}
	\sup_{f_j\in L^{\infty}(\nu_j)} \Big\{ \Phi_{\varepsilon}(\vf):=\sum_{j=1}^m\int f_jd \nu_j-\varepsilon\int\exp\left(\frac{\sum_{j=1}^mf_j-c}{\varepsilon}\right)d(\otimes_{k=1}^m \nu_k) \Big\}.  
\end{equation}
Supremum in~\eqref{dual1} is achieved at a unique pair of the optimal (Schr\"{o}dinger) dual potentials 
$\vf^* := (f_1^*,\dots, f_m^*) \in \prod_{i=1}^m L^{\infty}(\nu_i)$ up to translation 
\((f_1^*+c_1,\dots, f_m^*+c_m)\) for \((c_1,\dots,c_m) \in \mathbb{R}^m\) satisfying $\sum_{j=1}^mc_j=0$. We denote by $\mvf^*$ the unique optimal potential vector satisfying $\nu_k(f^*_k)=0$ for $k\in[m-1]$. In addition, the optimal (Schr\"{o}dinger) coupling 
\(\pi_\varepsilon^*\) can be recovered from the optimal dual potentials $\vf^*$ and marginal distribution $\nu_1,\dots,\nu_m$ via
\begin{equation}\label{eqn:optimal_marginal_coupling}
    \frac{\mathrm{d} \pi_\varepsilon^*}{\mathrm{d} (\otimes_{j=1}^m\nu_j)}(x_1,\dots, x_m) = \exp \left( \frac{\sum_{j=1}^m f_j^*(x_j)-c(x_1,\dots, x_m)}{\varepsilon} \right) =: p^*_{\varepsilon}(x_1,\dots,x_m).
\end{equation}
Note that~\eqref{eqn:optimal_marginal_coupling} is equivalent to the multimarginal Schr\"{o}dinger system: we have $\nu_j$-almost surely for each $j \in [m]$,
\begin{equation}\label{multiconst_equiv}
f_j^*(x_j) = - \varepsilon \log \left\{ \int \exp\left(\frac{\sum_{i \neq j} f_i^*(x_i)-c(x_1, \dots, x_m)}{\varepsilon}\right)d\nu_{-j}(x_{-j}) \right\}.
\end{equation}
This allows the optimal potential $\vf^*$ to be extended as continuous functions when the cost function is continuous, and provides a functional analytic perspective that could be beneficial for our statistical analysis. Particularly, we could equivalently consider the extension map
\begin{align}\label{Schrodinger_map_C}
\mvT : \prod_{j=1}^m \cC(\mathcal{X}_j) &\to \prod_{j=1}^m \cC(\mathcal{X}_j)\\
\vf&\mapsto\mvT(\vf)=\left(T_1(\vf),\dots,T_m(\vf)\right)\nonumber
\end{align}
defined for $j \in [m]$ and $x_j \in \mathcal{X}_j$ as
\begin{equation}
\label{Schrodinger_map_extension}
T_j(\vf)(x_j) := \varepsilon\log \left( \int_{\cX_{-j}} e^{ \frac{\sum_{i=1}^m f_i(x_i) - c(x_j, x_{-j})}{\varepsilon}} \, d\nu_{-j}(x_{-j}) \right).
\end{equation}
A function $\vf = (f_1, \dots, f_m)$ is called optimal (Schrödinger) potential associated to $\mvnu := (\nu_1,\dots,\nu_m)$ if it solves the Schrödinger system
\begin{equation}\label{Schrodinger_system_equation_0}
\mvT(\vf) = 0.
\end{equation}
As noted above, if $\vf = (f_1,\dots,f_m)$ solves (\ref{Schrodinger_system_equation_0}) for some fixed $\mvnu$, then so does every family of potentials of the form $(f_1 + c_1,\dots, f_m + c_m)$ where the $c_i\in\bR$ satisfying $\sum_{i=1}^m c_i =0$. This defines an equivalence relation $\sim$ and we define, for every $k\in \bN_0$, the quotient space
\[
\widetilde{\cC}^k:= \left( \prod_{j=1}^m \cC^k(\mathcal{X}_j) \right) / \sim .
\]
Note that $\widetilde{\cC}^k$ is a Banach space when endowed with the quotient norm $\|\cdot\|_{\widetilde{\cC}^k}$ (the infimum of the norm over all representatives in the equivalence class). We use $[\mvf^*]$ to denote the equivalent class of the optimal potential $\mvf^*$.

In the following, we present some basic properties of EMOT and its optimal dual potentials and show that the the optimizer $\pi^*_\varepsilon$ of EMOT in (\ref{eqn:EMOT_formulation}) converges to the optimal coupling $\pi^*_0$ of the unregularized MOT in (\ref{eqn:MOT_formulation}), whenever unique, as $\varepsilon \to 0^+$.

\begin{proposition}[Approximation]
\label{approgamma_coupling}
Any cluster point of Schr\"{o}dinger couplings $(\pi^*_{\varepsilon})_{\varepsilon>0}$ is an optimal coupling $\pi^*_0$ of MOT problem (\ref{eqn:EMOT_formulation}). If futher $\pi^*_0$ is unique, 
then we have $\lim_{\varepsilon\to0^+}W_2(\pi^*_{\varepsilon}, \pi^*_{0})=0.$
\end{proposition}

\begin{proposition}[Bounded dual potentials]\label{BDP}
The optimal dual potential $\mvf^*$ satisfies that 
\[\max_{j \in [m]} \|f^*_j\|_{\infty} \leq \|c\|_{\infty} \quad \text{and} \quad  \|\sum_{j=1}^mf^*_j\|_{\infty} \leq 2\|c\|_{\infty},
\]
where $\|f^*_j\|_{\infty} := \|f^*_j\|_{L^{\infty}(\nu_j)}$ and $\|c\|_{\infty} := \|c\|_{L^{\infty}(\otimes_{i=1}^m\nu_i)}$.
\end{proposition}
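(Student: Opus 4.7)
The plan is to extract pointwise $L^\infty$ bounds from the multimarginal Schr\"odinger system \eqref{mconst_equiv}, pairing an oscillation estimate that comes from $0 \leq c_\alpha \leq M := \|c_\alpha\|_\infty$ with the normalization $\nu_k(f_k^*)=0$ for $k \in [m-1]$. I would first fix $j \in [m]$ and substitute the two-sided inequality $e^{-M/\varepsilon} \leq e^{-c_\alpha/\varepsilon} \leq 1$ inside \eqref{mconst_equiv}. Since the remaining factor $K_j := \int \exp(\sum_{i \neq j} f_i^*/\varepsilon)\,d(\otimes_{i \neq j}\nu_i)$ is a constant in $x_j$, this yields the sandwich
\begin{equation*}
-\varepsilon\log K_j \;\leq\; f_j^*(x_j) \;\leq\; M - \varepsilon\log K_j,
\end{equation*}
from which the oscillation estimate $\mathrm{esssup}\,f_j^* - \mathrm{essinf}\,f_j^* \leq M$ follows for every $j \in [m]$. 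This osc bound serves as the backbone of the argument.

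For $j \in [m-1]$, the normalization $\nu_j(f_j^*)=0$ forces $\mathrm{essinf}_{\nu_j}\,f_j^* \leq 0 \leq \mathrm{esssup}_{\nu_j}\,f_j^*$, so combining with the oscillation bound yields $\|f_j^*\|_{L^\infty(\nu_j)} \leq M$. For the unnormalized coordinate $j = m$ I would apply Jensen's inequality ($\log \int e^h\,d\mu \geq \int h\,d\mu$) directly to \eqref{mconst_equiv} to get
\begin{equation*}
f_m^*(x_m) \;\leq\; -\sum_{i\neq m}\nu_i(f_i^*) + \int c_\alpha\,d(\otimes_{i\neq m}\nu_i) \;\leq\; 0 + M \;=\; M,
\end{equation*}
since $\nu_i(f_i^*)=0$ for $i \in [m-1]$. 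For the matching lower bound, integrate the coupling-density identity $\sum_j f_j^*(x_j) = c_\alpha(x)+\varepsilon\log p_{\alpha,\varepsilon}^*(x)$ from \eqref{eqn:optimal_marginal_coupling} against $\pi_\varepsilon^*$ to obtain $\sum_j \nu_j(f_j^*) = \int c_\alpha\,d\pi_\varepsilon^* + \varepsilon\,\mathrm{KL}(\pi_\varepsilon^*\|\otimes_j\nu_j) = S_{\alpha,\varepsilon} \geq 0$, which implies $\nu_m(f_m^*) \geq 0$, hence $\mathrm{esssup}\,f_m^* \geq 0$; invoking the oscillation bound then gives $\mathrm{essinf}\,f_m^* \geq -M$, completing the first inequality.

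For the bound $\|\sum_j f_j^*\|_\infty \leq 2M$, I would again exploit the coupling-density identity so that $|\sum_j f_j^*(x)| \leq |c_\alpha(x)| + |\varepsilon \log p_{\alpha,\varepsilon}^*(x)| \leq M + |\varepsilon \log p_{\alpha,\varepsilon}^*(x)|$, reducing the task to showing $|\varepsilon \log p_{\alpha,\varepsilon}^*| \leq M$ almost surely with respect to $\otimes_j \nu_j$. This is where I would combine the Step 1 sandwich at each $j$ with the fiber-wise marginal constraint $\int p_{\alpha,\varepsilon}^*\,d(\otimes_{i\neq j}\nu_i)=1$ to pin down $p_{\alpha,\varepsilon}^*$ from above and below. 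The hard part is that a naive triangle inequality applied to the first claim only yields $\|\sum_j f_j^*\|_\infty \leq mM$; extracting the dimension-free constant $2$ requires the coupling identity and the fiber-wise marginal normalization to be used together across all $j$ simultaneously rather than marginal by marginal, and this simultaneous coupling is the main technical obstacle.
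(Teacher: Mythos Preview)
Your argument for the first claim is correct and, via the oscillation-plus-normalization device, arguably cleaner than the paper's five-step case analysis. The paper proceeds by separately establishing upper and lower bounds for each $f_j^*$ (and for the partial sum $\sum_{j<m}f_j^*$) from the marginal constraint, using $c_\alpha\ge 0$ for one direction and $c_\alpha\le M$ together with Jensen and the mean-zero normalizations for the other; your sandwich compresses the two directions into a single oscillation estimate.

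There is, however, a genuine gap in the second claim. Your proposed reduction to $|\varepsilon\log p^*_{\alpha,\varepsilon}|\le M$ is strictly \emph{stronger} than what is asserted (it would give $-M\le\sum_j f_j^*\le 2M$ once you unpack the identity $\varepsilon\log p^*_{\alpha,\varepsilon}=\sum_j f_j^*-c_\alpha$), and the vague plan of ``combining the sandwich with the fiber-wise marginal constraint across all $j$'' does not produce a pointwise two-sided bound on $p^*_{\alpha,\varepsilon}$: the constraints $\int p^*_{\alpha,\varepsilon}\,d(\otimes_{i\neq j}\nu_i)=1$ only give average information, not pointwise control. So this route is stuck.

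The missing idea, which is what the paper does (Steps~2--3 and~5), is that your own oscillation argument already applies to the \emph{partial} sum $g(x_1,\dots,x_{m-1}):=\sum_{j=1}^{m-1}f_j^*(x_j)$. Use the marginal constraint obtained by integrating out only $x_m$: for $\otimes_{j<m}\nu_j$-a.e.\ $(x_1,\dots,x_{m-1})$,
\[
1=e^{g/\varepsilon}\int \exp\Big(\frac{f_m^*(x_m)-c_\alpha}{\varepsilon}\Big)\,d\nu_m(x_m).
\]
Bounding $e^{-M/\varepsilon}\le e^{-c_\alpha/\varepsilon}\le 1$ gives $-\varepsilon\log K_m'\le g\le M-\varepsilon\log K_m'$ with $K_m'=\int e^{f_m^*/\varepsilon}\,d\nu_m$ (now using $\nu_m(f_m^*)\ge 0$ and Jensen for the centering), so $\mathrm{osc}(g)\le M$. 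Since $(\otimes_{j<m}\nu_j)(g)=\sum_{j<m}\nu_j(f_j^*)=0$, the same mean-zero argument yields $\|g\|_\infty\le M$. Combining with $\|f_m^*\|_\infty\le M$ via the triangle inequality gives $\|\sum_{j=1}^m f_j^*\|_\infty\le 2M$, and you never need to control $\log p^*_{\alpha,\varepsilon}$ pointwise.
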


\section{Optimization Geometry}\label{optimization_geometry_section}

In this section, we recognize an appropriate optimization geometry for the EMOT problem, which will serve as a foundation for subsequent developments. Specifically, a key structure for proving sharp statistical rates of convergence and later weak limits for EMOT quantities (such as cost functional, coupling, potentials and resulting barycenter) in Sections~\ref{main_results} and \ref{Multimarginal_Schrodinger_barycenter_section} is to recognize an optimization geometry of $\Phi_{\varepsilon}(\vf)$ that respects the optimal multimarginal coupling structure in~\eqref{eqn:optimal_marginal_coupling}. For this purpose, we shall consider the gradient of $\Phi_{\varepsilon}(\vf)$ as elements of the dual space of $\mathscr{L}_m:=\left(L^2(\nu_1),\dots, L^2(\nu_m)\right)$. This space could be identified with $(\cC(\cX_1),\dots,\cC(\cX_m))$ or $(\cC^1(\cX_1),\dots,\cC^1(\cX_m))$ through the extension in (\ref{Schrodinger_map_extension}) depending on the regularity of the cost function $c$; however, since the distinction is immaterial for our purposes, we shall not differentiate between them.




Motivated by the geometry for two-marginal entropic OT problem~\cite{rigollet2022samplecomplexityentropicoptimal}, we first generalize the gradient to multi-marginal setting.

\begin{definition}[Gradient of the dual objective]
\label{normdef}
The gradient $\nabla\Phi_{\varepsilon}:\mathscr{L}_m\to\mathscr{L}'_m$ is defined as
\begin{align*}
\left\langle \nabla \Phi_{\varepsilon} (\vf), \vg \right\rangle_{\mathscr{L}_{m}} 
 =\int\Big[\Big(\sum_{j=1}^m g_j\Big)\Big(1-\exp\Big({\frac{\sum_{i=1}^mf_i-c}{\varepsilon}}\Big)\Big)\Big]d\left(\otimes_{k=1}^m\nu_k\right),
\end{align*}
where $\mathscr{L}'_m$ denotes the dual space of $\mathscr{L}_m$.
\end{definition}

\noindent The optimization geometry w.r.t. $\mathscr{L}_m$ induces a norm of the gradient. For $\vg=(g_1,\dots, g_m)\in\mathscr{L}_m$, we define $\|\vg\|_{\mathscr{L}_m}:=( \sum_{j=1}^m\int g_j^2d\nu_j )^{1/2}.$

\begin{lemma}[Norm of dual objective gradient]\label{norm}
We have
	$$\left\|\nabla\Phi_{\varepsilon}(\vf)\right\|_{\mathscr{L}_m}^2=\sum_{j=1}^m\int\Big[\int1-\exp\Big({\frac{\sum_{i=1}^mf_i-c}{\varepsilon}}\Big)d\nu_{-j}(x_{-j})\Big]^2d\nu_j.$$
\end{lemma}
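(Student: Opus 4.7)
The plan is to identify the Riesz representer of $\nabla\Phi_{\alpha,\varepsilon}(\vf)$ in the product Hilbert space $\mathscr{L}_m$ and then read off its norm directly. Because $\mathscr{L}_m = L^2(\nu_1) \times \cdots \times L^2(\nu_m)$ is a Hilbert space with inner product $\langle \vg, \vh \rangle_{\mathscr{L}_m} = \sum_{j=1}^m \int g_j h_j\,d\nu_j$, any bounded linear functional has a unique representer, and the dual norm equals the $\mathscr{L}_m$-norm of that representer.

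The first step is to rewrite the defining pairing of $\nabla\Phi_{\alpha,\varepsilon}(\vf)$ by splitting the sum $\sum_j g_j$ and pulling each $g_j(x_j)$ outside the integration over the remaining variables. Setting
\[
R(\vx) := 1 - \exp\!\Big(\frac{\sum_{i=1}^m f_i(x_i) - c_\alpha(\vx)}{\varepsilon}\Big),
\]
Fubini's theorem (which applies because $R$ is bounded on the compact domain $\cX^m$ by Proposition~\ref{BDP}, hence in $L^1(\otimes_k \nu_k)$) gives
\[
\langle \nabla\Phi_{\alpha,\varepsilon}(\vf), \vg \rangle_{\mathscr{L}_m} = \sum_{j=1}^m \int g_j(x_j)\, h_j(x_j)\, d\nu_j(x_j), \qquad h_j(x_j) := \int R(\vx)\, d(\otimes_{k\neq j}\nu_k)(x_{-j}).
\]
Each $h_j$ depends only on $x_j$ and is bounded, hence lies in $L^2(\nu_j)$, so the tuple $\vh = (h_1,\dots,h_m)$ belongs to $\mathscr{L}_m$ and represents the functional $\nabla\Phi_{\alpha,\varepsilon}(\vf)$.

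The second step is to compute the dual norm. By the Riesz representation theorem applied componentwise (or equivalently by maximizing over $\vg$ with $\|\vg\|_{\mathscr{L}_m} \leq 1$ via Cauchy--Schwarz, with equality attained at $\vg = \vh/\|\vh\|_{\mathscr{L}_m}$), we obtain
\[
\|\nabla\Phi_{\alpha,\varepsilon}(\vf)\|_{\mathscr{L}_m}^2 = \|\vh\|_{\mathscr{L}_m}^2 = \sum_{j=1}^m \int h_j(x_j)^2\, d\nu_j(x_j),
\]
which upon substituting the definition of $h_j$ yields exactly the claimed formula.

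There is essentially no hard step here: the computation is a direct application of Fubini plus identification of the Riesz representer. The only point demanding a brief justification is integrability, which follows immediately from the boundedness of the integrand guaranteed by Proposition~\ref{BDP} and the compactness of $\cX$, so the interchange of integrals and the $L^2$-membership of each $h_j$ are automatic. No additional regularity of $\vf$ is required beyond $\vf \in \prod_j L^\infty(\nu_j)$.
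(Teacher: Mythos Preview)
Your proof is correct and is essentially the same as the paper's: both identify the representer $h_j(x_j)=\int R(\vx)\,d(\otimes_{k\neq j}\nu_k)$ and compute its $\mathscr{L}_m$-norm, you via Riesz representation, the paper via two applications of Cauchy--Schwarz with the equality case. One minor point: your appeal to Proposition~\ref{BDP} for the boundedness of $R$ is a misattribution, since that proposition concerns only the \emph{optimal} potentials $\vf^*$; the boundedness you need follows directly from $\vf\in\prod_j L^\infty(\nu_j)$ and the boundedness of $c_\alpha$ on the compact domain.
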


\noindent For $L\geq0$, we define the convex subset $\mathcal{S}_L$ of dual potentials
$$\mathcal{S}_L:=\Big\{\vf\in\prod_{j=1}^m L^{\infty}(\nu_j) : 
\|\sum_{j=1}^mf_j\|_{L^{\infty}(\otimes_{k=1}^m\nu_k)}\leq L,\,\,\nu_i(f_i)=0\,\,\text{for}\,\,i\in[m-1]\Big\}.$$

\begin{proposition}[Strong concavity of dual objective]\label{sconc}
The dual objective $\Phi_{\varepsilon}(\cdot)$ is concave with respect to the norm $\|\cdot\|_{\mathscr{L}_m}$ on $\mathscr{L}_m$, namely $\Phi_{\varepsilon}(\vf)-\Phi_{\varepsilon}(\vg)\geq\left\langle \nabla \Phi_{\varepsilon}(\vf),(\vf-\vg)\right\rangle_{\mathscr{L}_{m}}$ for all $\vf,\vg\in\mathscr{L}_m$. In addition, $\Phi_{\varepsilon}(\cdot)$ is $\beta$-concave with respect to the norm $\|\cdot\|_{\mathscr{L}_m}$ on $\mathcal{S}_{L}$ with $\beta=\frac{1}{\varepsilon}\exp\left(-\frac{L+\|c\|_{\infty}}{\varepsilon}\right)$, i.e., for all $\vf,\vg\in\mathcal{S}_L$,
\begin{equation}\label{sconci}
\Phi_{\varepsilon}(\vf)-\Phi_{\varepsilon}(\vg)\geq\left\langle \nabla \Phi_{\varepsilon} (\vf),(\vf-\vg)\right\rangle_{\mathscr{L}_{m}}+\frac{\beta}{2}\|\vf-\vg\|_{\mathscr{L}_m}^2.
\end{equation}
\end{proposition}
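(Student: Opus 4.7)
The plan is to obtain both statements from the Gateaux Hessian of $\Phi_{\alpha,\varepsilon}$. Writing $F_{\vf}(\vx) := \sum_{j=1}^m f_j(x_j)$, $H_{\vh}(\vx) := \sum_{j=1}^m h_j(x_j)$, and $\mu := \otimes_{k=1}^m \nu_k$, a direct differentiation of the exponential term in~\eqref{dual1} yields
\[
D^2 \Phi_{\alpha,\varepsilon}(\vf)[\vh,\vh] \;=\; -\frac{1}{\varepsilon}\int_{\mathcal{X}^m} H_{\vh}^2\,\exp\!\Big(\frac{F_{\vf}-c_\alpha}{\varepsilon}\Big)\, d\mu,
\]
which is nonpositive for every $\vf,\vh\in\mathscr{L}_m$. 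This alone yields the first claim: concavity of $\Phi_{\alpha,\varepsilon}$ is equivalent to the supporting-hyperplane inequality $\Phi_{\alpha,\varepsilon}(\vf)-\Phi_{\alpha,\varepsilon}(\vg)\geq\langle\nabla\Phi_{\alpha,\varepsilon}(\vf),\vf-\vg\rangle_{\mathscr{L}_m}$, and the gradient is precisely the one identified in Definition~\ref{normdef}.

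For the $\beta$-strong concavity on $\mathcal{S}_L$, first I would observe that $\mathcal{S}_L$ is convex, being the intersection of the closed ball $\{\|\sum_j f_j\|_\infty\leq L\}$ with the affine subspace $\{\nu_i(f_i)=0,\, i\in[m-1]\}$. Hence for $\vf,\vg\in\mathcal{S}_L$ the entire segment $\vf_t:=(1-t)\vf+t\vg$, $t\in[0,1]$, lies in $\mathcal{S}_L$, so $|F_{\vf_t}|\leq L$ pointwise, giving
\[
\exp\!\Big(\frac{F_{\vf_t}-c_\alpha}{\varepsilon}\Big)\;\geq\;\exp\!\Big(-\frac{L+\|c_\alpha\|_\infty}{\varepsilon}\Big)\qquad\text{on }\mathcal{X}^m.
\]

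The crucial identity needed to extract the sharp constant $\beta$ is the Pythagorean-type equality
\[
\int_{\mathcal{X}^m} H_{\vh}^2\, d\mu \;=\; \|\vh\|_{\mathscr{L}_m}^2 \qquad\text{for } \vh=\vf-\vg,\ \vf,\vg\in\mathcal{S}_L.
\]
Expanding the square against the product measure $\mu$ produces the diagonal contribution $\sum_j \int h_j^2\,d\nu_j=\|\vh\|_{\mathscr{L}_m}^2$ plus cross terms $\nu_i(h_i)\nu_j(h_j)$ for $i\neq j$; since $\nu_i(f_i)=\nu_i(g_i)=0$ for $i\in[m-1]$, the differences $h_i$ are $\nu_i$-mean-zero whenever $i\in[m-1]$, and for any $i\neq j$ at least one index lies in $[m-1]$, so every cross term vanishes. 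Combined with the exponential lower bound, this yields $D^2\Phi_{\alpha,\varepsilon}(\vf_t)[\vh,\vh]\leq -\beta\|\vh\|_{\mathscr{L}_m}^2$ uniformly in $t\in[0,1]$, with $\beta=\varepsilon^{-1}\exp(-(L+\|c_\alpha\|_\infty)/\varepsilon)$.

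The strong concavity inequality~\eqref{sconci} then follows from Taylor's theorem with integral remainder:
\[
\Phi_{\alpha,\varepsilon}(\vg)-\Phi_{\alpha,\varepsilon}(\vf)-\langle\nabla\Phi_{\alpha,\varepsilon}(\vf),\vg-\vf\rangle_{\mathscr{L}_m} \;=\; \int_0^1(1-t)\,D^2\Phi_{\alpha,\varepsilon}(\vf_t)[\vh,\vh]\,dt \;\leq\; -\frac{\beta}{2}\|\vh\|_{\mathscr{L}_m}^2.
\]
The main obstacle is the cross-term cancellation above, and seeing that the quotient-space normalization $\nu_i(f_i)=0$, $i\in[m-1]$, adopted after~\eqref{dual1} is not cosmetic but is doing genuine work: without it, a direct pointwise Taylor bound on $e^{-t}-1+t$ applied to $t=-H_{\vh}/\varepsilon$ would only give the inferior constant $\varepsilon^{-1}\exp(-(3L+\|c_\alpha\|_\infty)/\varepsilon)$, since $|H_{\vh}|$ could be as large as $2L$ pointwise rather than controlled in an $L^2$ sense.
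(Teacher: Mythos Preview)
Your proposal is correct and follows essentially the same approach as the paper's proof: both parametrize along the segment $\vf_t=(1-t)\vf+t\vg$, compute the second derivative $h''(t)=-\varepsilon^{-1}\int(\sum_j(g_j-f_j))^2\exp((F_{\vf_t}-c_\alpha)/\varepsilon)\,d\mu$, invoke the exponential lower bound from $\vf_t\in\mathcal{S}_L$, and use the mean-zero normalization $\nu_i(f_i)=\nu_i(g_i)=0$ for $i\in[m-1]$ to collapse $\int(\sum_j h_j)^2\,d\mu$ to $\|\vh\|_{\mathscr{L}_m}^2$ via cross-term cancellation. Your write-up is actually more explicit than the paper's about \emph{why} the cross terms vanish and about the role of the quotient-space normalization, but the argument is the same.
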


Throughout the rest of this paper, we assume that the regularization parameter $\varepsilon>0$, number of marginals $m$ and cost function $c$ are \emph{fixed}.

\section{Main Results}\label{main_results}
This section summarizes the main results of the paper. Section \ref{sec:statistical_rates} establishes rate of convergence of several EMOT related objects (cost functional, Schr\"{o}dinger coupling) under different discrepancies. Section \ref{section_weak_limits} derives the weak limits of these quantities while Section \ref{bootstrap_consistency_section} proves the bootstrap consistency. 

Let $\hat{\nu}_j^N=\frac{1}{N}\sum_{k=1}^N\delta_{X^{(j)}_k}$ be the empirical distribution of $\nu_j$. The empirical EMOT problem corresponding to~\eqref{eqn:EMOT_formulation} is defined as
\begin{equation}\label{primal2}
S_{\varepsilon}(\hat{\nu}^N_1,\dots,\hat{\nu}^N_m):=\inf_{\pi\in\Pi(\hat{\nu}^N_1,\dots,\hat{\nu}^N_m)}\int_{\cX} c d\pi+ \varepsilon\text{KL}(\pi||\hat{\nu}^N_1\otimes\cdots\otimes \hat{\nu}^N_m),
\end{equation}
and the empirical dual objective functional is defined as
\begin{equation}\label{dual2}
	\widehat{\Phi}_{\varepsilon}(\vf):=\sum_{j=1}^m\int f_jd\hat{\nu}^N_j-\varepsilon\int\exp\left(\frac{\sum_{j=1}^mf_j- c}{\varepsilon}\right)d (\otimes_{k=1}^m \hat{\nu}^N_k).  
\end{equation}

\noindent As in the population case (cf. Section~\ref{subsec:duality}), we denote $\hat{\mvf}^* = (\hat{f}^*_1,\dots,\hat{f}^*_m)$ as the unique (up to translation) maximizer of $\widehat{\Phi}_{\varepsilon}(\vf)$,
with $\hat{\nu}^N_k(\hat{f}^*_k)=0$ for $k\in[m-1]$.
To lighten the notation, we denote $\hat{p}_{\varepsilon}(x_1,\cdots,x_m) := \exp\left(\frac{\sum_{k=1}^m\hat{f}^*_k(x_k)-c(x_1,\cdots,x_m)}{\varepsilon}\right)$. Similar as the Schr\"odinger map extension in the population version~\eqref{Schrodinger_map_extension}, the marginal feasibility constraints enable us to do the following canonical extension: for $x_j\in\cX_j, \, j\in[m]$,
\begin{equation*}
\hat{f}_j^*(x_j) = - \varepsilon \log \left\{ \int \exp\left(\frac{\sum_{i \neq j} \hat{f}_i^*(x_i)-c(x_1, \dots, x_m)}{\varepsilon}\right)d\hat{\nu}^N_{-j}(x_{-j}) \right\}.
\end{equation*}
From this point on, will employ this extended empirical optimal potential and use $[\hat{\mvf}^*]$ to denote the equivalent class of the optimal potential $\hat{\mvf}^*$. The empirical dual objective function also enjoys similar bounded potential property and strong concavity w.r.t. to the geometry $\widehat{\mathscr{L}}_m:=\left(L^2(\hat{\nu}_1^N),\dots,L^2(\hat{\nu}_m^N)\right)$; see Appendix \ref{sec_concentration_of_empirical_potentials_and_joint_optimal_coupling_density} for precise formulations.

\subsection{Sample complexity of quantities of interest}
\label{sec:statistical_rates}

Our first main result is the convergence rate under the mean squared error (MSE) for the empirical cost to its population cost. The proof is deferred to Appendix \ref{sec_sample_complexity_proof}.

\begin{theorem}[Cost functional]
\label{thm:rate_cost}
For a given bounded cost function $c:\cX\to\bR$, there exists a constant $C=C(m,\varepsilon,\|c\|_\infty) > 0$ depending only on $m,\varepsilon, \|c\|_\infty$ such that
	\begin{equation}
    \label{eqn:rate_cost}
	\mathbb{E}\left[S_{\varepsilon}(\hat{\nu}^N_1,\dots,\hat{\nu}^N_m) - S_{\varepsilon}(\nu_1,\dots,\nu_m)\right]^2 \leq \frac{C}{N}.
	\end{equation}
\end{theorem}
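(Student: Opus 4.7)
The plan is to exploit strong duality together with the strong concavity of the empirical dual objective. By strong duality $S_{\alpha,\varepsilon}=\Phi_{\alpha,\varepsilon}(\vf^*)$ and $\hat S_{\alpha,\varepsilon}=\hat\Phi_{\alpha,\varepsilon}(\hat\vf^*)$. The invariance of both $\Phi_{\alpha,\varepsilon}$ and $\hat\Phi_{\alpha,\varepsilon}$ under shifts $(c_1,\dots,c_m)\in\mathbb R^m$ with $\sum_jc_j=0$ lets me translate $\vf^*$ so that $\hat\nu_k^N(f_k^*)=0$ for $k\in[m-1]$ without changing the values $\Phi_{\alpha,\varepsilon}(\vf^*)$ or $\hat\Phi_{\alpha,\varepsilon}(\vf^*)$, and Proposition~\ref{BDP} then places both $\vf^*$ and $\hat\vf^*$ inside $\hat{\mathcal S}_L$ with $L=2\|c_\alpha\|_\infty$. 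The natural decomposition
\begin{equation*}
\hat S_{\alpha,\varepsilon}-S_{\alpha,\varepsilon}=\underbrace{\hat\Phi_{\alpha,\varepsilon}(\hat\vf^*)-\hat\Phi_{\alpha,\varepsilon}(\vf^*)}_{I_1\,\geq\,0}+\underbrace{\hat\Phi_{\alpha,\varepsilon}(\vf^*)-\Phi_{\alpha,\varepsilon}(\vf^*)}_{I_2}
\end{equation*}
reduces the MSE bound, via $(a+b)^2\leq 2a^2+2b^2$, to showing $\mathbb E[I_1^2]\vee \mathbb E[I_2^2]\lesssim_{m,\varepsilon}N^{-1}$.

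For $I_2$, since $\vf^*$ is deterministic, I expand
\begin{equation*}
I_2=\sum_{j=1}^m(\hat\nu_j^N-\nu_j)(f_j^*)-\varepsilon\bigl[(\otimes_{k=1}^m\hat\nu_k^N)-(\otimes_{k=1}^m\nu_k)\bigr]\bigl(E(\vf^*)\bigr),\quad E(\vf):=\exp\bigl(({\textstyle\sum_j}f_j-c_\alpha)/\varepsilon\bigr).
\end{equation*}
Proposition~\ref{BDP} gives $\|f_j^*\|_\infty\leq\|c_\alpha\|_\infty$ and $\|E(\vf^*)\|_\infty\leq\exp(3\|c_\alpha\|_\infty/\varepsilon)$; each linear term is a centered empirical average of a bounded function and has variance $O(1/N)$. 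The product-measure fluctuation splits by the telescoping identity $\otimes_{k=1}^m\hat\nu_k^N-\otimes_{k=1}^m\nu_k=\sum_{j=1}^m(\otimes_{k<j}\hat\nu_k^N)\otimes(\hat\nu_j^N-\nu_j)\otimes(\otimes_{k>j}\nu_k)$, each summand of which, conditional on samples other than the $j$-th marginal, is a centered empirical mean of a uniformly bounded integrand in $X^{(j)}$ and therefore contributes variance $O(1/N)$. Summing yields $\mathbb E[I_2^2]\lesssim_{m,\varepsilon}N^{-1}$.

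For $I_1$, the empirical marginal constraint~(\ref{mconst}) at $\hat\vf^*$ forces $\nabla\hat\Phi_{\alpha,\varepsilon}(\hat\vf^*)\equiv 0$ as a functional on $\hat{\mathscr L}_m$. Combining this with the $\beta$-strong concavity of $\hat\Phi_{\alpha,\varepsilon}$ on $\hat{\mathcal S}_L$ (Lemma~\ref{sconc_empirical}, with $\beta=\varepsilon^{-1}\exp(-3\|c_\alpha\|_\infty/\varepsilon)$) and applying Fenchel--Young to the cross term produces the quadratic-in-gradient bound
\begin{equation*}
0\leq I_1\leq\langle\nabla\hat\Phi_{\alpha,\varepsilon}(\vf^*),\,\hat\vf^*-\vf^*\rangle_{\hat{\mathscr L}_m}-\tfrac{\beta}{2}\|\hat\vf^*-\vf^*\|_{\hat{\mathscr L}_m}^2\leq\tfrac{1}{2\beta}\|\nabla\hat\Phi_{\alpha,\varepsilon}(\vf^*)\|_{\hat{\mathscr L}_m^*}^2.
\end{equation*}
By Lemma~\ref{norm}, $\|\nabla\hat\Phi_{\alpha,\varepsilon}(\vf^*)\|_{\hat{\mathscr L}_m^*}^2=\sum_j\int h_j(x_j)^2\,d\hat\nu_j^N(x_j)$ with $h_j(x_j):=1-\int E(\vf^*)(x_j,\cdot)\,d(\otimes_{k\neq j}\hat\nu_k^N)$. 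The \emph{population} marginal constraint at $\vf^*$ gives $\mathbb E[h_j(x_j)]=0$ for $\nu_j$-a.e.\ $x_j$ when the expectation is over the other $m-1$ sample clouds, and a two-point covariance count over the product empirical factors---nonzero only for pairs of multi-indices sharing an entry in at least one coordinate---yields $\mathrm{Var}(h_j(x_j))\lesssim_{m,\varepsilon}N^{-1}$ uniformly in $x_j$. Using Jensen's inequality $(\int h_j^2\,d\hat\nu_j^N)^2\leq\int h_j^4\,d\hat\nu_j^N$ together with $\|h_j\|_\infty\leq 1+\exp(3\|c_\alpha\|_\infty/\varepsilon)$, I obtain $\mathbb E[\|\nabla\hat\Phi_{\alpha,\varepsilon}(\vf^*)\|_{\hat{\mathscr L}_m^*}^4]\lesssim_{m,\varepsilon}N^{-1}$, and hence $\mathbb E[I_1^2]\lesssim_{m,\varepsilon}N^{-1}$.

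The main technical hurdle is the fourth-moment control of the empirical dual-gradient norm at the population optimum, because its squared value is an integral of squared multimarginal U-statistics whose covariance structure depends on the combinatorial overlap of samples across the $m-1$ auxiliary marginals. The saving device is Jensen's inequality, which trades one power of $\mathbb E[h_j^2]$ for the uniform bound $\|h_j\|_\infty^2$; after that reduction, the vanishing population mean---a direct consequence of the marginal constraint~(\ref{mconst}) at $\vf^*$---together with the boundedness of the integrand close the estimate at the parametric rate~$N^{-1}$.
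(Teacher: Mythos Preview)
Your proof is correct and follows essentially the same architecture as the paper's: strong duality, the decomposition into $I_1=\hat\Phi_{\alpha,\varepsilon}(\hat\vf^*)-\hat\Phi_{\alpha,\varepsilon}(\vf^*)$ and $I_2=\hat\Phi_{\alpha,\varepsilon}(\vf^*)-\Phi_{\alpha,\varepsilon}(\vf^*)$, the Polyak--{\L}ojasiewicz (equivalently, strong concavity plus Fenchel--Young) bound $I_1\le\frac{1}{2\beta}\|\nabla\hat\Phi_{\alpha,\varepsilon}(\vf^*)\|_{\hat{\mathscr L}_m}^2$, and the reduction of the fourth moment of the gradient norm to its second moment via the a.s.\ boundedness of $h_j$. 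The only notable variation is in $I_2$: you handle the fluctuation $(\otimes_k\hat\nu_k^N-\otimes_k\nu_k)(E(\vf^*))$ by the telescoping identity $\sum_{j}(\otimes_{k<j}\hat\nu_k^N)\otimes(\hat\nu_j^N-\nu_j)\otimes(\otimes_{k>j}\nu_k)$, which needs only boundedness of the integrand, whereas the paper rewrites the same quantity as $-(\otimes_k\hat\nu_k^N)(1-p^*_{\alpha,\varepsilon})$ and performs an explicit two-index covariance count that exploits the marginal constraint~(\ref{mconst}) to kill additional cross terms. Both routes land at $O(m/N)$, so the difference is purely stylistic; your telescoping is slightly more elementary, while the paper's computation makes the role of the Schr\"odinger system more visible and yields marginally cleaner constants.
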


We highlight that the sample complexity result in Theorem~\ref{thm:rate_cost} does not require any smoothness assumption on the cost function $c$, as long as it is bounded on the domain $\mathcal{X}$. Moreover, one may choose $C_{m,\varepsilon,\|c\|_\infty} = m^2 \varepsilon e^{\|c\|_\infty/\varepsilon}$ (up to a numeric constant) in~\eqref{eqn:rate_cost}. Dependence on $\varepsilon$ is well-known to be exponentially poor even for $m = 2$ without further leveraging the additional smoothness of the cost function, and existing statistical convergence results do not allow $\varepsilon \to 0^+$; see, for example, \cite{AltschulerJonathanAustin22, BayraktarEcksteinZhang_stability, rigollet2022samplecomplexityentropicoptimal}.

\begin{remark}[Comparison with existing sample complexity bounds]
First, our result~\eqref{eqn:rate_cost} is \emph{dimension-free} and stronger than Theorem 5.3 in \citep{BayraktarEcksteinZhang_stability} where an intrinsic-dimension dependent sample complexity bound on the cost functional was obtained under regularity conditions on the marginal distributions. More specifically, they showed that: for $d_\nu > 2s$,
\begin{equation}\label{eqn:rate_cost_BEZ}
    \mathbb{E}\left|S_{\varepsilon}(\hat{\nu}^N_1,\dots,\hat{\nu}^N_m) - S_{\varepsilon}(\nu_1,\dots,\nu_m)\right|\lesssim_{m,\varepsilon ,\|c\|_\infty} N^{-s/d_{\nu}},
\end{equation}
where $d_\nu$ is an intrinsic dimension parameter of marginal distributions and $s$ is the order of differentiability of the entropy regularized MOT problem. Clearly, our convergence rate is much faster than~\eqref{eqn:rate_cost_BEZ} which suffers from curse-of-dimensionality, and we do not need any regularity assumption on the marginals $\nu_1, \dots, \nu_m$ and the smoothness of the cost function $c$. In addition, we mention that the constant in Theorem 5.3 for entropy regularization case in~\eqref{eqn:rate_cost_BEZ} is also exponentially poor with respect to $\varepsilon$. Next, for smooth cost with special graphical representation structure, Proposition 2 in \cite{DorZivKristjanHaim2024Neural} indicated a parametric rate for a neural network estimator when the neural network width $w=O(N)$. In contrast, our Theorem~\ref{thm:rate_cost} demonstrates that simple plug-in sample estimator for the EMOT can achieve the optimal rate of convergence with minimal regularity and smoothness. 

\end{remark}


Our next task is to present a dimension-free concentration bound for the empirical Schr\"{o}dinger coupling to their population analogs when acting on any bounded test function. The following Theorem~\ref{boundtest} not only proves such a parametric rate guarantee, but also serves as a stepping stone for deriving sharp rates of convergence under more general discrepancy measures (including $W_1$ and $W_2$ distances) between $\pi^*_\varepsilon$ and $\hat{\pi}^N_\varepsilon$ (cf. Theorem~\ref{supremp_coupling} below).

\begin{theorem}[Schr\"{o}dinger coupling]\label{boundtest}
Suppose that $\pi^*_\varepsilon$ (resp. $\hat{\pi}^N_\varepsilon$) is the Schr\"{o}dinger coupling of Problem (\ref{eqn:EMOT_formulation}) (resp. Problem (\ref{primal2})) with bounded cost function $c:\cX\to\bR$. Then there exists a constant  $C := C(m,\varepsilon,\|c\|_\infty) > 0$ such that for any $g\in L^\infty(\otimes_{j=1}^m\nu_j)$, and for all $t > 0$, we have with probability at least $1-(2m^2-2m+2)e^{-t}$,
\begin{equation}
    |(\pi_\varepsilon^*-\hat{\pi}^N_\varepsilon)(g)| \leq C \|g\|_\infty \sqrt{\frac{t}{N}}.
\end{equation}
\end{theorem}

Proof of Theorem~\ref{boundtest} can be found in Appendix \ref{sec_concentration_of_empirical_potentials_and_joint_optimal_coupling_density}. Here, we want to emphasize that the crux to establish Theorem \ref{boundtest} is embedded in Lemma \ref{gradnormp} on a novel concentration bound for the empirical gradient norm. Theorem \ref{boundtest} specialized to two marginal case was first obtained by \cite{rigollet2022samplecomplexityentropicoptimal}. Lemma \ref{gradnormp} marks the major technical divergence from the two-marginal case $m = 2$ in \cite{rigollet2022samplecomplexityentropicoptimal}, and its challenge deserves particular clarification. 

When the number of marginals $m \geq 3$, there are substantial obstacles in proving the concentration of the empirical gradient norm of dual objective functional (Lemma \ref{gradnormp}). The argument  in \cite{rigollet2022samplecomplexityentropicoptimal} is based on the standard Hoeffding inequality by simply stacking the independent samples, which {\it only} works for two marginals. For $m\geq3$, the nonlinear dependency structure in the empirical gradient creates highly nontrivial statistical challenges. Specifically, one needs to {\it simultaneously} handle all marginal constraints of the multimarginal Schr\"{o}dinger system by fully harnessing the independence among samples from different marginals. In words, there is no such issue in the 2-marginal case and simple stacking works. Thus, our technique and results are {\it not} extensions of \cite{rigollet2022samplecomplexityentropicoptimal} from two- to $m$-marginal, since their results heavily rely on the specific structure of two-marginal Schr\"{o}dinger system. To resolve the intricate dependencies inside the multimarginal system, one has to find the appropriate approach to decouple its structure in order to exploit the characteristics of the system. Our key Lemma \ref{gradnormp} (concentration of empirical gradient norm) and its supporting induction argument for decoupling are highly nontrivial and require new probability tools inspired from high-order $U$-statistics~\cite{ChenKato2019} to capture the nonlinearity in the dual objective gradient tailored for $m\geq3$.

Next, we can quantify the convergence rate of Schr\"{o}dinger coupling over a rich class of test functions. Specifically, we consider the standard $\beta$-smooth Hölder function class $\widetilde{\mathcal{H} }:= \cH([-1,1]^{md};\beta, L)$, where $\cH(\cX;\beta, L)$ contains the set of functions $f:\cX\to\mathbb{R}$ such that
\[\|f\|_\beta := \max_{|k| \leq\lfloor\beta\rfloor} \sup_{x\in \cX} |D^k f(x)| + \max_{|k|=\lfloor\beta\rfloor} \sup_{ \substack{ x\neq y \\ x,y \in \cX}
} \frac{|D^{k} f(x) - D^{k} f(y)|}{\|x-y\|^{\beta-\lfloor\beta\rfloor}} \leq L\]
for some parameter $L > 0$ and $\lfloor\beta\rfloor$ being the largest integer that is strictly smaller than $\beta$. Note that $\cH(\cX;1, L)$ is the class of Lipschitz continuous functions on $\cX$ with constant $L$. 

\begin{theorem}[Schr\"{o}dinger coupling convergence rate on H\"older class]\label{supremp_coupling}
There exists a constant $C > 0$ depending only on $m,\varepsilon,\beta$ and $d$ such that 
  \begin{equation}\label{eqn:holder_bound_coupling}
    \mathbb{E} \left[ \sup_{g\in\widetilde{\mathcal{H}}} |(\pi_\varepsilon^*-\hat{\pi}^N_\varepsilon)(g)| \right] \le \begin{cases}
        C LN^{-1/2} & \text{if}\quad d<2\beta,\\
        C LN^{-1/2}\log{N} & \text{if}\quad d=2\beta,\\
        C LN^{-\beta/d} &\text{if}\quad d>2\beta.
    \end{cases}
  \end{equation}
  In particular, we have the following expectation bound on the $p$-Wasserstein distances for $p=1,2$:
  \begin{equation}\label{eqn:wasserstein_bound_coupling}
    \mathbb{E} [W_p^p(\pi_\varepsilon^* , \hat{\pi}^N_\varepsilon)] \leq \begin{cases}
        CN^{-1/2} & \text{if}\quad d<2p,\\
        CN^{-1/2}\log{N} & \text{if}\quad d=2p,\\
        CN^{-p/d} &\text{if}\quad d>2p,
    \end{cases}
  \end{equation}
for some constant $C > 0$ depending only on $m,\varepsilon,d$. 
\end{theorem}

The proof of Theorem~\ref{supremp_coupling} is deferred to Appendix \ref{sec_concentration_of_empirical_potentials_and_joint_optimal_coupling_density}. When specializing this general result to the $p$-Wasserstein distance for $p = 1,2$, we obtain a sharp convergence rate for entropic multimarginal Schr\"{o}dinger coupling that substantially improves the state-of-the-art rates~\citep{BayraktarEcksteinZhang_stability}. More discussions on the optimality in are presented after Theorem \ref{supremp} in Section \ref{Multimarginal_Schrodinger_barycenter_section}.

It is now worth re-iterating that for all sample complexity results in Section~\ref{sec:statistical_rates}, we only assume that the cost function $c$ is bounded, without imposing any smoothness conditions. This advantage enables us to approximate a broader class of aggregation notions of probability measures, even going beyond the Wasserstein barycenter in Section \ref{Multimarginal_Schrodinger_barycenter_section}. For instance, one could approximate Wasserstein median with entropy regularization by the multimarginal formulation of Wasserstein median, cf. Theorem 5.1 in \cite{Wasserstein_Median_Carlier_chenchene_Eichinger24}. 
However, we do not further pursue the direction of statistical analysis for nonsmooth robust functionals in this paper and leave it as a future work.

\subsection{Weak limits of quantities of interest}\label{section_weak_limits}
Now, we consider the weak limits of several EMOT related objects. We start with the CLT for the cost functional whose proof is deferred to Appendix \ref{sec_sample_complexity_proof}.

\begin{theorem}[CLT for cost functional]
\label{CLT_for_cost_functional}
For a given bounded cost function $c:\cX\to\bR$,
    \begin{equation*}
\sqrt{N}\left( S_{\varepsilon}(\hat{\nu}^N_1,\dots,\hat{\nu}^N_m) - S_{\varepsilon}(\nu_1,\dots,\nu_m) \right)\overset{w}{\longrightarrow} \cN\left(0,\sigma^2_\varepsilon\right),
    \end{equation*}
as $N \to \infty$, where $\sigma^2_\varepsilon = \sum_{j=1}^m\text{Var}_{X_j\sim\nu_j}(f^*_j(X_j))$.
\end{theorem}

To the best of our knowledge, Theorem~\ref{CLT_for_cost_functional} provides the first CLT for the EMOT cost functional under general and nonsmooth $c$. Previous statistical results have focused exclusively on the two-marginal EOT setting. Using quite different techniques, relying on the smoothness of the cost function $c$, \cite{EustasioGozalezLoubesNiles23} obtained the CLT for two marginal entropic OT cost functional, by carefully controlling the derivatives of every order for the optimal potential $\mvf^*$. More recently, under significantly weaker (non-smooth) assumptions,
\cite{Alberto_Sanz_Gonzalez_Shayan_EOT_non_smooth_CLT23} got an analogous CLT for the two-marginal EOT problem.

Next, we provide the weak limit of the expectation of a bounded test function with respect to the Schr\"{o}dinger coupling. Let 
\begin{equation}
\label{eqn:term_K}
\begin{gathered}
    \cK(y^{(j)}_{\beta}, \beta = i_{j,1},\dots,i_{j,m-1}; j=1,\dots,m) \qquad \qquad \qquad \qquad \qquad \qquad \qquad \qquad \\
     = \frac{1}{\left((m-1)!\right)^m} \sum_{\substack{ \pi_j\in S_{m-1}\\1\leq j\leq m}}
    \Psi(y^{(j)}_{\beta}, \beta = i_{j,\pi_j(1)},\dots,i_{j,\pi_j(m-1)}; j=1,\dots,m)\\
     + \frac{1}{m-1}\sum_{j=1}^m\sum_{t=1}^{m-1}    \left[(\nu_{-j})(g\,p^*_{\varepsilon})(y_{i_{j,t}}^{(j)}) - (\otimes_{k=1}^m\nu_k)(g\,p^*_{\varepsilon})\right], 
\end{gathered}
\end{equation}
where $S_{m-1}$ denotes all the permutation on $\{1,\dots,m-1\}$ and 
\begin{align*}
    &\Psi(y^{(j)}_{\alpha}, \alpha = i_{j,1},\dots,i_{j,m-1}; j=1,\dots,m) \\
    &: =  -\bigint 
    \bigoplus\left( 
    \mvGamma^{-1}
    \begin{pmatrix}
         p^*_{\varepsilon}(x_1,y^{(2)}_{i_{2,1}},\dots,y^{(m)}_{i_{m,1}})-1\\ 
         p^*_{\varepsilon}(y^{(1)}_{i_{1,1}},x_2,\dots,y^{(m)}_{i_{m,2}})-1\\ 
        \vdots\\
        p^*_{\varepsilon}(y^{(1)}_{i_{1,m-2}},y^{(2)}_{i_{2,m-2}},\dots,x_{m-1},y^{(m)}_{i_{m,m-1}})-1\\ 
        p^*_{\varepsilon}(y^{(1)}_{i_{1,m-1}},y^{(2)}_{i_{2,m-1}},\dots,y^{(m-1)}_{i_{m-1,m-1}},x_m)-1
    \end{pmatrix}  \right) g (\mvx) p^*_{\varepsilon} (\mvx)
    d(\otimes_{k=1}^m\nu_k) (\mvx).
\end{align*}
Here, $\mvGamma^{-1}$ is the inverse of (linear, bounded) operator
\begin{align*}
    \mvGamma :\qquad
    \widetilde{\cC}^1 
    &\;\longrightarrow\; 
    \widetilde{\cC}^1 ,\\[4pt]
    \mvh=(h_1,\ldots,h_m)
    &\;\longmapsto\;
    (\mvGamma_1\mvh,\ldots,\mvGamma_m\mvh).
\end{align*}
with $\mvGamma_k\mvh = h_k + \sum_{l\neq k}\int h_l(x_k) p^*_\varepsilon(x_l, x_{-l})d \nu_{-l}(x_{-l})$ for $k\in[m]$. Such an inverse $\mvGamma^{-1}$ exists by Lemma 3.2 in \cite{CalierChizatLaborde_DisplacementSmoothnessofEOT}; see also more details in the proof in Appendix \ref{Sec_Weak_limit_of_expectation_under_entropic_optimal_transport_coupling}.

\begin{theorem}[CLT for Schr\"{o}dinger coupling]\label{CLT_test_function_very_complicated_computation}
If the cost function $c\in\cC^2$ and $g\in L^\infty(\otimes_{j=1}^m\nu_j)$ such that $\bar{\sigma}_\varepsilon^2(g)>0$, then we have that as $N \to \infty$,
    \begin{equation*}
        \sqrt{N}\left(\hat{\pi}^N_\varepsilon (g) - {\pi}^*_\varepsilon (g)\right)\overset{w}{\longrightarrow} \cN(0,\bar{\sigma}_\varepsilon^2(g)),
    \end{equation*}
where
\begin{align*}
     \bar{\sigma}_\varepsilon^2(g) = (m-1)^2\sum_{j=1}^m \text{Var}(\bar{\phi}^{(j)}(X_1^{(j)})),
\end{align*}
with
\begin{align*}
    \bar{\phi}^{(j^*)}(x) = \bE \left[  \cK (X^{(j)}_{\beta},  \beta = 1,\dots,m-1, j\in[m]\backslash\{j^*\}; x,X^{(j^*)}_{2},\dots,X^{(j^*)}_{m-1})  \right], 
\end{align*}
for $m$ independent sequences $X_1^{(j)},\dots, X_{m-1}^{(j)}\overset{\text{i.i.d.}}{\sim}\nu_j$ , $j\in[m]$ of independent random variables.
\end{theorem}

The proof of Theorem~\ref{CLT_test_function_very_complicated_computation} is quite involved and consists of seven steps we sketch below. To circumvent the missing regularity, we begin by linearizing the empirical EMOT system, relying on the concentration estimates established in Appendix \ref{sec_Concentration_bound_on_the_empirical_gradient_norm}. We then introduce several auxiliary operators and derive precise operator-norm bounds to control the resulting approximation errors. This analysis ultimately reveals that the objective can be reduces to a multi-sample $V$-statistic up to asymptotically negligible terms. The asymptotic variance $\bar{\sigma}_\varepsilon^2(g)$ in Theorem~\ref{CLT_test_function_very_complicated_computation} depends on linear (a.k.a. Hoeffding) projection of the kernel function $\cK$ in~\eqref{eqn:term_K}. The first term of $\cK$ quantifies the symmetrized fluctuation of the empirical Schr\"odinger potentials and the second term captures the marginal fluctuation of the centered empirical sum of empirical distributions, both weighted by the Gibbs kernel in the dual geometry~\eqref{dual1}. The proof details are given in the Appendix \ref{Sec_Weak_limit_of_expectation_under_entropic_optimal_transport_coupling}.

    We remark that the type of weak limit $\sqrt{N}\left(\hat{\pi}^N_\varepsilon - {\pi}^*_\varepsilon \right)$ was firstly proposed with a modified empirical estimator for  Schr\"{o}dinger’s lazy gas experiment and also conjectured to be valid in EOT setting (i.e., $m = 2$) in \cite{Zaid_Harchaoui_Lang_Liu_Soumik_Pal_conjecture}. In \cite{AlbertoGonzalez-SanzJean-MichelLoubesJonathanNiles-Weed2024weaklimit}, the conjectured was proven in the entropic OT setting with smooth cost function and \cite{Alberto_Sanz_Gonzalez_Shayan_EOT_non_smooth_CLT23} provided corresponding results for nonsmooth case. Recently, \cite{AlbertoGonzalez-SanzEcksteinStephanNutzMarcel2025Lpweaklimit} obtained this pointwise weak limit in the sparse regularized OT problem. 
    Moreover, using quite a different approach, \cite{ZivGoldfeldKengoKatoGabrielRiouxRitwikSadhu} investigated the weak limit by establishing Hadamard differentiability. Additionally, \cite{Kengo24IMAregularizedOT} developed a unified framework for deriving limit distributions of empirical proxies of various regularized OT distances, semiparametric efficiency of the plug-in empirical estimator and bootstrap consistency.
    
    In the multimarginal setting,  \cite{ZivGoldfeldKengoKatoGabrielRiouxRitwikSadhu} also established the Hadamard differentiability. The only work addressing the pointwise weak limit, apart from ours is \cite{Multimarginal_Unbalanced_OT_CLT_Xu21}, where they worked under the unbalanced multimarginal optimal transport setting but the exact limit remained unclear there. Our result provides a precise characterization of the pointwise weak limit in the multimarginal setting, thereby extending the results in the two marginal case \cite{AlbertoGonzalez-SanzJean-MichelLoubesJonathanNiles-Weed2024weaklimit, Zaid_Harchaoui_Lang_Liu_Soumik_Pal_conjecture}. For more comprehensive review on recent progress in the distributional limit of OT related quantities, see \cite{Alberto_Sanz_Gonzalez_del_Barrio_CLT_survey25}. 

We also would like to mention the following key lemma, which is of independent interest, in the proof of Theorem~\ref{CLT_test_function_very_complicated_computation}. To the best of our knowledge, Lemma~\ref{diff_empirical_population_f_i_bounded_by_other_potentials} provides the first convergence result for the optimal dual potentials in the EMOT problem under \emph{nonsmooth cost function} in the sense of $\cC^2$.

\begin{lemma}[Convergence rate for optimal dual potentials of EMOT]
\label{diff_empirical_population_f_i_bounded_by_other_potentials}
For a given cost function $c\in\cC^2$,
\begin{align*}
\| [\mvf^*] - [\hat{\mvf}^*] \|_{\widetilde{\cC}^1}
 = \cO_\bP \left( \sqrt{\frac{d \log N}{N}} \right).
\end{align*}
\end{lemma}

The proof is deferred to Appendix \ref{sec_concentration_of_empirical_potentials_and_joint_optimal_coupling_density}. We shall emphasize that existing literature about the empirical estimation of optimal Schr\"{o}dinger potential, either in the EOT or EMOT setting, relies heavily on higher-order smoothness assumptions than $\cC^2$, to mitigate the curse-of-dimensionality. Under the smooth cost function assumption $c\in\cC^\infty$, in the two marginal EOT problem, \cite{EustasioGozalezLoubesNiles23} showed that for $s\geq d/2+1$,
\begin{align*}
\| [\mvf^*] - [\hat{\mvf}^*] \|_{\widetilde{\cC}^s}
 = \cO_\bP \left( N^{-1/2} \right).
\end{align*}
Suppose $c\in\cC^{s+p}$ with $p>d/2$,  \cite{CalierChizatLaborde_DisplacementSmoothnessofEOT} established that for EMOT potential, 
\begin{align*}
\| [\mvf^*] - [\hat{\mvf}^*] \|_{\widetilde{\cC}^s}
 = \cO_\bP \left( N^{-1/2} \right).
\end{align*}
Clearly, our Lemma ~\ref{diff_empirical_population_f_i_bounded_by_other_potentials} achieves a genuine improvement over the previously known results with much lower regularity on the cost function.

\subsection{Bootstrap inference}
\label{bootstrap_consistency_section}
Despite we have obtained the limiting distribution for EMOT related objects in Theorem \ref{CLT_for_cost_functional} and Theorem \ref{CLT_test_function_very_complicated_computation}, one limitation is that the desired weak limit is non-pivotal, in the sense that it depends on the unknown population distribution $\nu_j$, $j\in[m]$ in a highly nonlinear manner. In this section, we consider the classical empirical bootstrap procedure described below, serving as a remedy for this obstacle.

For $j\in[m]$, given data $X_1^{(j)},\dots,X_N^{(j)}\overset{\text{i.i.d.}}{\sim}\nu_j$, let $X_1^{(j),B},\dots,X_N^{(j),B}$ be an independent sample from $\hat{\nu}_j := \frac{1}{N}\sum_{k=1}^N \delta_{X_k^{(j)}}$ and set $\hat{\nu}^B_j := \frac{1}{N}\sum_{k=1}^N \delta_{X_k^{(j),B}}$ as the bootstrap empirical distribution. We denote $\hat{\pi}^B_\varepsilon$ as the Schr\"{o}dinger coupling computed from the empirical bootstrap distributions $\hat{\nu}^B_1,\dots,\hat{\nu}^B_m$. Let $\bP^B$ denote the conditional probability given the data. The following two theorems establish the bootstrap validity for cost functional and Schr\"{o}dinger coupling under the Kolmogorov metric.


\begin{theorem}[Bootstrap validity for cost functional]
\label{bootstrap_cost}
    Suppose the cost function $c\in C^k$ for $k\geq \frac{d}{2}+1$ and $\sigma^2_\varepsilon>0$. Then we have
    \begin{align*}
        \sup_{t\in\bR} &
        \Big| \bP^B \left( \sqrt{N}\left( S_{\varepsilon}(\hat{\nu}^B_1,\dots,\hat{\nu}^B_m) - S_{\varepsilon}(\hat{\nu}^N_1,\dots,\hat{\nu}^N_m)\right) \leq t \right) 
        \\
        & \qquad - \bP\left( \sqrt{N}\left( S_{\varepsilon}(\hat{\nu}^N_1,\dots,\hat{\nu}^N_m) - S_{\varepsilon}(\nu_1,\dots,\nu_m) \right) \leq t\right)\Big| \overset{\bP}{\longrightarrow} 0.
    \end{align*}
\end{theorem}

\begin{theorem}[Bootstrap validity for Schr\"odinger coupling]
\label{bootstrap_coupling}
    Suppose the cost function $c\in C^k$ for $k\geq \frac{d}{2}+1$. If $\bar{\sigma}^2_\varepsilon(g)>0$ for $g\in L^{\infty}(\otimes_{j=1}^m \nu_j)$, then we have
    \begin{align*}
        \sup_{t\in\bR} 
        \left| \bP^B \left( \sqrt{N}\left(\hat{\pi}^B_\varepsilon (g) - \hat{\pi}^N_\varepsilon (g) \right) \leq t \right) 
        - \bP\left( \sqrt{N}\left(\hat{\pi}^N_\varepsilon (g) - \pi^*_\varepsilon (g) \right) \leq t \right)  \right| 
        \overset{\bP}{\longrightarrow} 0.
    \end{align*}
\end{theorem}

Note that in Theorems~\ref{bootstrap_cost} and~\ref{bootstrap_coupling}, we require a slightly stronger smoothness condition on cost function than the weak limit results in Section~\ref{section_weak_limits}. This is partially due to the steps of the Hadamard differentiability and functional delta method are intensely used in the proof. A quick review of them along with the proof of the theorems above can be found in Appendix \ref{Section_Hadamard_differentiability_Intro}. See \cite{vanderVaartWellner1996, Van_der_Vaart_Asymptotic_statistics00, Romisch_Werner_Delta_Method_Intro, ZivGoldfeldKengoKatoGabrielRiouxRitwikSadhu, Kengo24IMAregularizedOT} for more detailed introduction and applications. We shall leave the bootstrap consistency under $\cC^2$ as an open problem. 

\section{Multimarginal Schr\"{o}dinger barycenter} \label{Multimarginal_Schrodinger_barycenter_section}

In this section, we demonstrate how results derived in Section~\ref{main_results} provide statistical guarantees for the barycenter inference. The Wasserstein barycenter, introduced by~\cite{AguehCarlier2011}, has been widely considered as a natural model under the optimal transport metric for averaging measure-valued data. Given \( m \) probability measures \( \nu_1, \dots, \nu_m \) with support $\mathcal{X}_j \subset \mathbb{R}^d$, $j\in[m]$ with finite second moments and a barycentric coordinate vector \( \alpha := (\alpha_1, \dots, \alpha_m) \) such that $\alpha_j \geq 0$ and $\alpha_1 + \cdots + \alpha_m = 1$, the Wasserstein barycenter $\bar{\nu}$ is defined as a solution of minimizing the weighted variance functional
\begin{equation}\label{eqn:unregbary}
	\inf_{\nu} \sum_{i=1}^{m} \alpha_i W_2^2(\nu_i, \nu),
\end{equation}
where $W_2(\mu, \nu)$ denotes the 2-Wasserstein distance between $\mu$ and $\nu$.



For (discrete) probability measures with finite support of size $N$ (e.g., empirical measures on $N$-point clouds), their Wasserstein barycenter can be computed in poly$(m, N)$-time when the underlying domain dimension $d$ is fixed~\citep{AltschulerBoix-Adsera2021_fixed-dim-barycenter}, while computing (or even constant-accuracy approximating) the Wasserstein barycenter is NP-hard for general dimension~\citep{AltschulerBoix-Adsera2021_barycenter-NPhard}. On the other hand, quantitative statistical guarantees for estimating Wasserstein barycenters when we only have access to $\nu_1,\dots,\nu_m$ via their empirical analogs as random point clouds are far from being well-understood with a few exceptions by making strong curvature assumptions~\citep{AhidarLeGouicParis_barycenter-rate,LeGouic2019FastCO}.

It is known that the MOT and Wasserstein barycenter problems are closely connected. Specifically,~\citep{AguehCarlier2011} shows that the barycenter $\overline{\nu}$ can be recovered from the MOT solution $\pi_0^*$ of~\eqref{eqn:MOT_formulation} via 
$$\bar{\nu}:=\bar{\nu}_0={(T_{\alpha})}_{\sharp}\pi_0^*,$$ for $c(x_1,\dots,x_m) = c_\alpha(x_1,\dots,x_m) := \sum_{1\leq i<j\leq m}|\alpha_ix_i-\alpha_jx_j|^2$, where $T_{\alpha}(\mvx)=\sum_{j=1}^m\alpha_jx_j$ for $\mvx=(x_1,\dots, x_m)$. Here, the pushforward measure $T_\sharp \mu$ of a source probability measure $\mu$ on $\bR^{d_1}$ for a measurable map $T : \mathbb{R}^{d_1} \to \mathbb{R}^{d_2}$, is defined by $(T_\sharp \mu) (B) = \mu(T^{-1}(B))$ for measurable subset $B \subset \mathbb{R}^{d_2}$.

Given the fundamental limitation of computing,  estimating and performing inference on the exact barycenter solution in~\eqref{eqn:unregbary}, we propose a new notion of entropic barycenter for averaging probability measures based on the EMOT formulation~\eqref{eqn:EMOT_formulation}. Our primary goal is to study its statistical properties, including the sample complexity for estimating several key quantities (cost functional, coupling, barycenter) based on point clouds data sampled from the marginal distributions, as well as weak limits of these quantities of interest. 



Below, we introduce the \emph{multimarginal Schr\"{o}dinger barycenter} (MSB) based on the EMOT formulation, which admits fast linear time complexity off-the-shelf algorithms to compute~\citep{Carlier_MOT-Sinkhorn-linear,LinHoCuturiJordan_MOT-complexity}. This motivates the following definition.

\begin{definition}
	Given a collection of probability measures $\nu_1, \dots, \nu_m$, the {\bf multimarginal Schr\"{o}dinger barycenter} (MSB) $\bar{\nu}_{\varepsilon}$ is defined as
	\begin{equation}
    \label{eqn:MER-WB}
        \bar{\nu}_{\varepsilon}={(T_{\alpha})}_{\sharp}\pi_{\varepsilon}^*,
	\end{equation}
	where $\pi_{\varepsilon}^*$ is the unique minimizer of the EMOT problem
    \begin{equation}\label{primal1}
    S_{\alpha,\varepsilon} (\nu_1,\dots,\nu_m) 
    :=
    \inf_{\pi\in\Pi(\nu_1,\dots,\nu_m)} \Big\{ \int_{\cX} c_\alpha d\pi+\varepsilon \ \text{KL}(\pi||\nu_1\otimes\cdots\otimes \nu_m) \Big\},
    \end{equation}
    with the cost function $c_\alpha(x_1,\dots,x_m) = \sum_{1\leq i<j\leq m}|\alpha_ix_i-\alpha_jx_j|^2$.
\end{definition}

Compared to existing regularized barycenters, MSB strikes a careful balance offering enough regularization for statistical and computational gains without significantly distorting the original barycenter. One could examine this by checking the different notions of regularized barycenters of $m$ Dirac masses. This advantage of manipulating regularization on multimarginal formulation over pairwise regularization has also been empirically observed in a different problem \cite{HasslerRinghChenKarlsson21}. The following basic properties of MSB could be easily derived and their proofs are deferred to Appendix \ref{section_approximation_error} and Appendix \ref{omitted_proofs}.

\begin{proposition}[Approximation]
\label{approgamma}
Any cluster point of $(\bar{\nu}_{\varepsilon})_{\varepsilon>0}$ is a Wasserstein barycenter $\bar{\nu}$. If further $\bar{\nu}$ is unique, 
then we have $\lim_{\varepsilon\to0^+}W_2(\bar{\nu}_{\varepsilon}, \bar{\nu})=0.$
\end{proposition}

\begin{proposition}[Lipschitz continuity]\label{LipcontiMSB}
Let $\bar{\nu}_\varepsilon$ (resp. $\tilde{\nu}_\varepsilon$) be the MSB for $\bm{\nu}:=(\nu_1,\dots,
\nu_m)\in\Pi_{j=1}^m\mathcal{P}_2(\mathcal{X}_j)$ (resp. $\bm{\tilde{\nu}}:=(\tilde{\nu}_1,\dots,
\tilde{\nu}_m)\in\Pi_{j=1}^m\mathcal{P}_2(\mathcal{X}_j)$). We have 
\[W_1(\bar{\nu}_\varepsilon,\tilde{\nu}_\varepsilon)\leq \sqrt{m}W_2(\bm{\nu},\bm{\tilde{\nu}})+C W_2(\bm{\nu},\bm{\tilde{\nu}})^{1/2},\]
where $W_2(\bm{\nu},\bm{\tilde{\nu}}):=(\sum_{j=1}^mW_2(\nu_j,\tilde{\nu}_j)^2)^{1/2}$ and $C > 0$ is a constant depending on $\varepsilon$, and  the second moment of $\nu_j$ and $\tilde{\nu}_j$ for $j \in [m]$.
\end{proposition}

\subsection{Statistical guarantees}
Given the samples $\{X^{(j)}_i\}_{j \in [m], i \in [N]}$ from the standard sampling model, the empirical MSB $\hat{\nu}^N_{\varepsilon}$ is defined as $\hat{\nu}^N_{\varepsilon}= ({T_{\alpha}})_{\sharp}\hat{\pi}^N_{\varepsilon}$ where $\hat{\pi}^N_{\varepsilon}$ is the unique minimizer of the following problem

\begin{equation}
S_{\alpha,\varepsilon}(\hat{\nu}^N_1,\dots,\hat{\nu}^N_m):=\inf_{\pi\in\Pi(\hat{\nu}^N_1,\dots,\hat{\nu}^N_m)}\int_{\cX} c_\alpha d\pi+ \varepsilon\text{KL}(\pi||\hat{\nu}^N_1\otimes\cdots\otimes \hat{\nu}^N_m).
\end{equation}

\begin{corollary}[Concentration for empirical MSB]
    For $\bar{\nu}_\varepsilon$ (resp. $\hat{\nu}^N_\varepsilon$) the (resp. empirical) multimarginal Schr\"{o}dinger barycenter, there exists a constant $C := C(m,\varepsilon) > 0$ such that for any $h\in L^{\infty}(\bar{\nu}_\varepsilon)$, and for all $t > 0$, we have with probability at least $1-(2m^2-2m+2)e^{-t}$,
     \begin{equation}
     \label{eqn:rate_bounded_test}
        |(\bar{\nu}_\varepsilon-\hat{\nu}^N_\varepsilon)(h)| \leq C \|h\|_\infty \sqrt{\frac{t}{N}}.
    \end{equation}
\end{corollary}

\begin{proof}
    It comes obvious by setting $g=h\circ T_\alpha$ in Theorem \ref{boundtest}.
\end{proof}


Furthermore, we quantify the convergence rate of MSB over the $\beta$-smooth Hölder function class $\mathcal{H} := \cH([-1,1]^d;\beta, L)$ as considered in Theorem~\ref{supremp_coupling}. 

\begin{theorem}[Barycenter convergence rate on H\"older class]\label{supremp}
There exists a constant $C > 0$ depending only on $m,\varepsilon,\beta$ and $d$ such that 
  \begin{equation}\label{eqn:holder_bound}
    \mathbb{E}\sup_{h\in\mathcal{H}} |(\bar{\nu}_\varepsilon-\hat{\nu}^N_\varepsilon)(h)|\le \begin{cases}
        C LN^{-1/2} & \text{if}\quad d<2\beta,\\
        C LN^{-1/2}\log{N} & \text{if}\quad d=2\beta,\\
        C LN^{-\beta/d} &\text{if}\quad d>2\beta.
    \end{cases}
  \end{equation}
  In particular, we have the following expectation bound on the $p$-Wasserstein distances for $p=1,2$:
    \begin{equation}\label{eqn:wasserstein_bound}
    \mathbb{E} [W_p^p(\bar{\nu}_\varepsilon,\hat{\nu}^N_\varepsilon)] \leq \begin{cases}
        CN^{-1/2} & \text{if}\quad d<2p,\\
        CN^{-1/2}\log{N} & \text{if}\quad d=2p,\\
        CN^{-p/d} &\text{if}\quad d>2p,
    \end{cases}
  \end{equation}
for some constant $C > 0$ depending only on $m,\varepsilon,d$. 
\end{theorem}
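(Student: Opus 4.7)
The plan is to upgrade the pointwise concentration from Theorem~\ref{boundtest} to a uniform bound over the H\"older class $\mathcal{H} := \mathcal{H}(\mathcal{X};\beta,L)$ via generic chaining, with the three regimes in~\eqref{eqn:holder_bound} arising from the different behaviors of the chaining sum as a function of $d$ and $\beta$.

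Applying Theorem~\ref{boundtest} to the increment $h-h'$ shows that the centered process $Z_h := (\bar{\nu}_\varepsilon-\hat{\nu}^N_\varepsilon)(h)$ has sub-Gaussian tails with respect to the metric $C\|h-h'\|_\infty/\sqrt{N}$. Combined with the classical Kolmogorov--Tikhomirov covering bound $\log N(\mathcal{H},\eta,\|\cdot\|_\infty)\lesssim_{d,\beta,L}\eta^{-d/\beta}$, I would then carry out standard generic chaining on dyadic scales $\eta_k=2^{-k}$: fix successive-approximation projections $\pi_k(h)$ onto $\eta_k$-nets, telescope $Z_h-Z_{\pi_K(h)}=\sum_{k\leq K}(Z_{\pi_k(h)}-Z_{\pi_{k-1}(h)})$, union-bound over the $\lesssim e^{C\cdot 2^{kd/\beta}}$ pairs at each level using the sub-Gaussian tail with deviation parameter $t\asymp 2^{kd/\beta}$, and truncate the chain via the trivial bound $|Z_h-Z_{\pi_K(h)}|\leq 2\cdot 2^{-K}$. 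This yields
\[
\mathbb{E}\sup_{h\in\mathcal{H}}|Z_h| \;\lesssim\; 2^{-K}+N^{-1/2}\sum_{k=1}^{K}2^{-k}\cdot 2^{kd/(2\beta)},
\]
whose geometric sum behaves differently in the three cases: it is uniformly bounded for $d<2\beta$ (so $K\to\infty$ gives the rate $N^{-1/2}$); equals $K$ for $d=2\beta$ (balancing with $K\asymp\log_2 N$ gives $N^{-1/2}\log N$); and is dominated by its last term $2^{K(d/(2\beta)-1)}$ for $d>2\beta$, where balancing against $2^{-K}$ forces $K\asymp(\beta/d)\log_2 N$ and produces the rate $N^{-\beta/d}$.

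The Wasserstein bound~\eqref{eqn:wasserstein_bound} then follows by specialization. For $p=1$, Kantorovich--Rubinstein duality gives $W_1(\bar{\nu}_\varepsilon,\hat{\nu}^N_\varepsilon)\leq\sup_{h\in\mathcal{H}(1,L')}|(\bar{\nu}_\varepsilon-\hat{\nu}^N_\varepsilon)(h)|$ for $L'\lesssim\sqrt{d}$ (since any $1$-Lipschitz function on $\mathcal{X}\subset[-1,1]^d$, after recentering at a base point, belongs to $\mathcal{H}(\mathcal{X};1,L')$), so that~\eqref{eqn:holder_bound} with $\beta=1$ immediately yields the claimed rate. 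The $p=2$ case reduces analogously to~\eqref{eqn:holder_bound} with $\beta=2$ after establishing an inequality of the form $W_2^2(\bar{\nu}_\varepsilon,\hat{\nu}^N_\varepsilon)\leq C\sup_{h\in\mathcal{H}(\mathcal{X};2,L')}|(\bar{\nu}_\varepsilon-\hat{\nu}^N_\varepsilon)(h)|$ valid on the compact domain, which can be extracted from the Kantorovich dual of the quadratic cost by exploiting that the $c$-concave Brenier potentials are uniformly Lipschitz and bounded on $[-1,1]^d$ and hence admit a controlled $\mathcal{H}(2,L')$-approximation.

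The main obstacle is the large-dimension regime $d>2\beta$, where Dudley's entropy integral diverges so naive chaining is insufficient: the fix is the truncation at a carefully chosen level $K$ with the approximation error $2^{-K}$ balanced against the growing covering cost $N^{-1/2}\cdot 2^{K(d/(2\beta)-1)}$. Handling the logarithmic factor at the critical $d=2\beta$ and keeping track of all $(m,\varepsilon,d,\beta,L)$-dependent constants through the chaining sum and the Wasserstein reductions adds some further technical bookkeeping.
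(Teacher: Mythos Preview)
Your argument for~\eqref{eqn:holder_bound} is correct and essentially the paper's: both extract sub-Gaussian increments for $h\mapsto(\bar{\nu}_\varepsilon-\hat{\nu}^N_\varepsilon)(h)$ from Theorem~\ref{boundtest} (the paper re-opens the $(A)+(B)$ decomposition used in that theorem's proof, but this makes no material difference) and run Dudley-type chaining truncated at an optimized scale when $d\geq 2\beta$. The $W_1$ reduction via Kantorovich--Rubinstein is likewise the same.

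The gap is in the $W_2$ step. You claim that the $c$-concave Brenier potentials, being uniformly Lipschitz and bounded on $[-1,1]^d$, ``admit a controlled $\mathcal{H}(2,L')$-approximation,'' thereby reducing $W_2^2$ to~\eqref{eqn:holder_bound} with $\beta=2$. But a $c$-concave $\phi$ for the quadratic cost only satisfies that $\|x\|^2-\phi(x)$ is convex; convex Lipschitz functions are not $C^2$ in general, and mollifying at scale $\eta$ gives $\|\phi-\phi_\eta\|_\infty\lesssim\eta$ at the price $\|D^2\phi_\eta\|_\infty\sim\eta^{-1}$, so the mollified class $\mathcal{H}(2,C\eta^{-1})$ has $\log N\lesssim(\eta^{-2})^{d/2}=\eta^{-d}$ at resolution $\eta$ --- no better than the Lipschitz class --- and your argument recovers only the $\beta=1$ rate. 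The paper does not go through $\mathcal{H}(2,L')$ at all: it feeds the class of convex $1$-Lipschitz functions (equivalently, $c$-concave potentials shifted by the fixed quadratic) directly into the chaining and invokes Bronshtein's covering bound $\log N(\mathcal{H}(\mathcal{X};1,1)\cap\mathcal{C}(\mathcal{X}),\eta,\|\cdot\|_\infty)\lesssim_d\eta^{-d/2}$, which plays the role of the entropy estimate with effective $\beta=2$ without any $C^2$ approximation.
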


The proof of Theorem~\ref{supremp} is relegated to Appendix \ref{sec_concentration_of_empirical_potentials_and_joint_optimal_coupling_density}.

\begin{remark}
    We highlight that our convergence rate~\eqref{eqn:wasserstein_bound} is sharp and in general cannot be improved without extra structural assumptions on $\nu_1,\dots,\nu_m$. To see this, consider the case $m = 1$, or equivalently $m = 2$ and $\alpha = (1, 0)$. Then the MSB degenerates to the empirical measure on the point cloud $X^{(1)}_1, \dots, X^{(1)}_N$, which is known to have the optimal convergence rate under the Wasserstein distance $W_p$ for all $p \geq 1$~\citep{WeedBach,FournierGuillin}. 
    
    In addition, we make comparisons with existing literature. Using the concept of shadow, Theorem 3.3 in \cite{BayraktarEcksteinZhang_stability} implies that $W_1(\bar{\nu}_\varepsilon, \hat{\nu}^N_{\varepsilon}) \leq m^{1/2} \Delta_{N}+ C\Delta_{N}^{1/2}$, where $\Delta_{N}^2=\sum_{j=1}^mW_2^2(\nu_j,\hat{\nu}^N_j)$. Substituting the (optimal) $W_2$ rate of convergence of the empirical measure (e.g., Theorem 1 in \cite{FournierGuillin}) into the last inequality, one gets
\begin{align}\label{eqn:wasserstein_bound_Bar}
    \mathbb{E}W_1(\bar{\nu}_\varepsilon, \hat{\nu}^N_{\varepsilon}) \leq \begin{cases}
        CN^{-1/4}, &\quad\text{if} \quad d<4,\\
        CN^{-1/4}(\log{N})^{1/2}, &\quad\text{if} \quad d=4,\\
        CN^{-1/2d}, &\quad\text{if} \quad d>4,\\
    \end{cases}
\end{align}
for some constant $C > 0$ depending only on $m,\varepsilon,d$. 
\noindent In comparison, our rate~\eqref{eqn:wasserstein_bound} specialized to $p=1$ substantially improves~\eqref{eqn:wasserstein_bound_Bar} for every dimension $d \geq 1$. 
\end{remark}

Next, thanks to Theorem \ref{CLT_test_function_very_complicated_computation}, we are ready for the weak limit of the expectation of a bounded test function with respect to MSB, further indicating the advantage of our proposed notion of barycenter.

\begin{corollary}[CLT for MSB]
\label{CLT_test_function_very_complicated_computation_MSB}
For $h\in L^{\infty}(\bar{\nu}_\varepsilon)$, it holds true that 
    \begin{equation}
    \sqrt{N}\left(\hat{\nu}^N_\varepsilon (h) - \bar{\nu}_\varepsilon (h)\right)\overset{w}{\longrightarrow} \cN(0,\sigma_\varepsilon^2(h)),
    \end{equation}
where 
\begin{align*}
     \sigma_\varepsilon^2(h)=(m-1)^2\sum_{j=1}^m \text{Var}(\phi^{(j)}(X_1^{(j)})),
\end{align*}
with 
\begin{align*}
    \phi^{(j^*)}(x) = \bE \left[  \cK_0 (X^{(j)}_{\beta},  \beta = 1,\dots,m-1, j\in[m]\backslash\{j^*\}; x,X^{(j^*)}_{2},\dots,X^{(j^*)}_{m-1})  \right], 
\end{align*}
for $m$ independent sequences $X_1^{(j)},\dots, X_{m-1}^{(j)}\overset{\text{i.i.d.}}{\sim}\nu_j$ , $j\in[m]$ of independent random variables. Here, 
\begin{align*}
    &\quad \cK_0(y^{(j)}_{\beta}, \beta = i_{j,1},\dots,i_{j,m-1}; j=1,\dots,m)\\
    & = \frac{1}{\left((m-1)!\right)^m} \sum_{\substack{ \pi_j\in S_{m-1}\\1\leq j\leq m}}
    \Psi_0(y^{(j)}_{\beta}, \beta = i_{j,\pi_j(1)},\dots,i_{j,\pi_j(m-1)}; j=1,\dots,m)\\
    & \quad + \frac{1}{m-1}\sum_{j=1}^m\sum_{t=1}^{m-1} \left[(\nu_{-j})(\tilde{h}_\alpha\,p^*_{\varepsilon})(y_{i_{j,t}}^{(j)}) - (\otimes_{k=1}^m\nu_k)(\tilde{h}_\alpha\,p^*_{\varepsilon})\right],
\end{align*}
where $S_{m-1}$ denotes all the permutation on $\{1,\dots,m-1\}$, $\tilde{h}_\alpha = h\circ T_\alpha$, and
\begin{align*}
    &\Psi_0(y^{(j)}_{\alpha}, \alpha = i_{j,1},\dots,i_{j,m-1}; j=1,\dots,m) \\
    &: =  -\bigint 
    \bigoplus\left(
    \mvGamma^{-1}\begin{pmatrix}
         p^*_{\varepsilon}(x_1,y^{(2)}_{i_{2,1}},\dots,y^{(m)}_{i_{m,1}})-1\\ 
         p^*_{\varepsilon}(y^{(1)}_{i_{1,1}},x_2,\dots,y^{(m)}_{i_{m,2}})-1\\ 
        \vdots\\
        p^*_{\varepsilon}(y^{(1)}_{i_{1,m-2}},y^{(2)}_{i_{2,m-2}},\dots,x_{m-1},y^{(m)}_{i_{m,m-1}})-1\\ 
        p^*_{\varepsilon}(y^{(1)}_{i_{1,m-1}},y^{(2)}_{i_{2,m-1}},\dots,y^{(m-1)}_{i_{m-1,m-1}},x_m)-1
    \end{pmatrix}  \right) \tilde{h}_\alpha (\mvx) p^*_{\varepsilon} (\mvx)
    d(\otimes_{k=1}^m\nu_k) (\mvx).
\end{align*}

\end{corollary}

\begin{proof}
    It comes obvious by setting $g=h\circ T_\alpha$ in Theorem \ref{CLT_test_function_very_complicated_computation}.
\end{proof}

Finally, we provide the bootstrap consistency for MSB. Suppose $\hat{\nu}^B_\varepsilon$ is the MSB computed from the empirical bootstrap distributions $\hat{\nu}^B_1,\dots,\hat{\nu}^B_m$. 

\begin{corollary}[Bootstrap validity for MSB]
     Provided that $\sigma^2_\varepsilon(h)>0$ for $h\in L^{\infty}(\bar{\nu})$, we have
    \begin{align*}
        \sup_{t\in\bR} 
        \left| \bP^B \left( \sqrt{N}\left(\hat{\nu}^B_\varepsilon (h) - \hat{\nu}^N_\varepsilon (h) \right) \leq t \right) 
        - \bP\left( \sqrt{N}\left(\hat{\nu}^N_\varepsilon (h) - \bar{\nu}_\varepsilon (h) \right) \leq t \right)  \right| 
        \overset{\bP}{\longrightarrow} 0.
    \end{align*}
\end{corollary}

\begin{proof}
    It comes obvious by setting $g=h\circ T_\alpha$ in Theorem \ref{bootstrap_coupling}.
\end{proof}

\bibliographystyle{imsart-number} 
\bibliography{entropic_barycenter}       

@article{Caffarelli_1991_regularity,
author = {Caffarelli, Luis A.},
title = {Some regularity properties of solutions of Monge Ampère equation},
journal = {Communications on Pure and Applied Mathematics},
volume = {44},
number = {8-9},
pages = {965-969},
doi = {https://doi.org/10.1002/cpa.3160440809},
url = {https://onlinelibrary.wiley.com/doi/abs/10.1002/cpa.3160440809},
eprint = {https://onlinelibrary.wiley.com/doi/pdf/10.1002/cpa.3160440809},
year = {1991}
}

@book{villani2021topics,
  title={{Topics in Optimal Transportation}},
  author={Villani, C{\'e}dric},
  volume={58},
  year={2021},
  publisher={American Mathematical Soc.}
}

@book {CheNilRig25OT,
    AUTHOR = {Chewi, Sinho and Niles-Weed, Jonathan and Rigollet, Philippe},
     TITLE = {Statistical optimal transport},
    SERIES = {Lecture Notes in Mathematics},
    VOLUME = {2364},
      NOTE = {\'Ecole d'\'Et\'e{} de Probabilit\'es de Saint-Flour XLIX --
              2019},
 PUBLISHER = {Springer, Cham},
      YEAR = {2025},
     PAGES = {xiv+260},
}

@book{peyre2019computational,
  title={Computational Optimal Transport: With Applications to Data Science},
  author={Peyr{\'e}, G. and Cuturi, M.},
  isbn={9781680835502},
  series={Foundations and trends in machine learning},
  url={https://books.google.com/books?id=J0BiwgEACAAJ},
  year={2019},
  publisher={Now Publishers}
}

@article{Bronshtein_convex-covering,
	author = {Bronshtein, E.  M. },
	date = {1976/05/01},
	date-added = {2025-02-02 10:16:06 -0800},
	date-modified = {2025-02-02 10:16:06 -0800},
	doi = {10.1007/BF00967858},
	isbn = {1573-9260},
	journal = {Siberian Mathematical Journal},
	number = {3},
	pages = {393--398},
	title = {{$\varepsilon$-Entropy of convex sets and functions}},
	url = {https://doi.org/10.1007/BF00967858},
	volume = {17},
	year = {1976},
	bdsk-url-1 = {https://doi.org/10.1007/BF00967858}}

@article{nutz2022entropic,
  title={Entropic optimal transport: Convergence of potentials},
  author={Nutz, Marcel and Wiesel, Johannes},
  journal={Probability Theory and Related Fields},
  volume={184},
  number={1},
  pages={401--424},
  year={2022},
  publisher={Springer}
}

@article{WeedBach,
author = {Jonathan Weed and Francis Bach},
title = {{Sharp asymptotic and finite-sample rates of convergence of empirical measures in Wasserstein distance}},
volume = {25},
journal = {Bernoulli},
number = {4A},
publisher = {Bernoulli Society for Mathematical Statistics and Probability},
pages = {2620 -- 2648},
keywords = {Optimal transport, quantization, Wasserstein metrics},
year = {2019},
doi = {10.3150/18-BEJ1065},
URL = {https://doi.org/10.3150/18-BEJ1065}
}

@article{FournierGuillin,
	abstract = {Let {\$}{\$}{$\backslash$}mu {\_}N{\$}{\$}be the empirical measure associated to a {\$}{\$}N{\$}{\$}-sample of a given probability distribution {\$}{\$}{$\backslash$}mu {\$}{\$}on {\$}{\$}{$\backslash$}mathbb {\{}R{\}}\^{}d{\$}{\$}. We are interested in the rate of convergence of {\$}{\$}{$\backslash$}mu {\_}N{\$}{\$}to {\$}{\$}{$\backslash$}mu {\$}{\$}, when measured in the Wasserstein distance of order {\$}{\$}p>0{\$}{\$}. We provide some satisfying non-asymptotic {\$}{\$}L\^{}p{\$}{\$}-bounds and concentration inequalities, for any values of {\$}{\$}p>0{\$}{\$}and {\$}{\$}d{$\backslash$}ge 1{\$}{\$}. We extend also the non asymptotic {\$}{\$}L\^{}p{\$}{\$}-bounds to stationary {\$}{\$}{$\backslash$}rho {\$}{\$}-mixing sequences, Markov chains, and to some interacting particle systems.},
	author = {Fournier, Nicolas and Guillin, Arnaud},
	date = {2015/08/01},
	date-added = {2025-01-20 00:25:27 -0800},
	date-modified = {2025-01-20 00:25:27 -0800},
	doi = {10.1007/s00440-014-0583-7},
	isbn = {1432-2064},
	journal = {Probability Theory and Related Fields},
	number = {3},
	pages = {707--738},
	title = {{On the rate of convergence in Wasserstein distance of the empirical measure}},
	url = {https://doi.org/10.1007/s00440-014-0583-7},
	volume = {162},
	year = {2015},
	bdsk-url-1 = {https://doi.org/10.1007/s00440-014-0583-7}}

@article{Lei_empiricalWasserstein,
author = {Jing Lei},
title = {{Convergence and concentration of empirical measures under Wasserstein distance in unbounded functional spaces}},
volume = {26},
journal = {Bernoulli},
number = {1},
publisher = {Bernoulli Society for Mathematical Statistics and Probability},
pages = {767 -- 798},
keywords = {concentration inequality, empirical measure, empirical process, functional data, Wasserstein distance},
year = {2020},
doi = {10.3150/19-BEJ1151},
URL = {https://doi.org/10.3150/19-BEJ1151}
}

@book{Vil09,
	author = {Villani, C\'edric},
	date-added = {2021-01-25 06:11:32 -0600},
	date-modified = {2021-01-25 06:13:21 -0600},
	edition = {1},
	publisher = {Springer-Verlag Berlin Heidelberg},
	title = {Optimal Transport: Old and New},
	year = {2009}}

@article{CPT23,
	abstract = {We investigate the convergence rate of the optimal entropic cost {\$}{\$}v{\_}{$\backslash$}varepsilon {\$}{\$}to the optimal transport cost as the noise parameter {\$}{\$}{$\backslash$}varepsilon {$\backslash$}downarrow 0{\$}{\$}. We show that for a large class of cost functions c on {\$}{\$}{\{}{$\backslash$}mathbb {\{}R{\}}{\}}\^{}d{$\backslash$}times {\{}{$\backslash$}mathbb {\{}R{\}}{\}}\^{}d{\$}{\$}(for which optimal plans are not necessarily unique or induced by a transport map) and compactly supported and {\$}{\$}L\^{}{\{}{$\backslash$}infty {\}}{\$}{\$}marginals, one has {\$}{\$}v{\_}{$\backslash$}varepsilon -v{\_}0= {$\backslash$}frac{\{}d{\}}{\{}2{\}} {$\backslash$}varepsilon {$\backslash$}log (1/{$\backslash$}varepsilon )+ O({$\backslash$}varepsilon ){\$}{\$}. Upper bounds are obtained by a block approximation strategy and an integral variant of Alexandrov's theorem. Under an infinitesimal twist condition on c, i.e. invertibility of {\$}{\$}{$\backslash$}nabla {\_}{\{}xy{\}}\^{}2 c(x,y){\$}{\$}, we get the lower bound by establishing a quadratic detachment of the duality gap in d dimensions thanks to Minty's trick.},
	author = {Carlier, Guillaume and Pegon, Paul and Tamanini, Luca},
	date = {2023/03/17},
	date-added = {2025-01-20 00:06:20 -0800},
	date-modified = {2025-01-20 00:06:20 -0800},
	doi = {10.1007/s00526-023-02455-0},
	isbn = {1432-0835},
	journal = {Calculus of Variations and Partial Differential Equations},
	number = {4},
	pages = {116},
	title = {Convergence rate of general entropic optimal transport costs},
	url = {https://doi.org/10.1007/s00526-023-02455-0},
	volume = {62},
	year = {2023},
	bdsk-url-1 = {https://doi.org/10.1007/s00526-023-02455-0}}

@article{NP23,
title={Convergence rate of entropy-regularized multi-marginal optimal transport costs},
DOI={10.4153/S0008414X24000257},
journal={Canadian Journal of Mathematics},
author={Nenna, Luca and Pegon, Paul},
year={2024},
pages={1–21}}

@Misc{Brai,
  author = { Braides, Andrea },
  title = {{ A handbook of $\Gamma$-convergence }},
  journal = {  },
  year = { 2006 },
  pages = {  },
  URL = { http://cvgmt.sns.it/paper/57/ },
  note = { cvgmt preprint}
}

@article{leonard_SB,
title = {{A survey of the Schrödinger problem and some of its  connections with optimal transport}},
journal = {Discrete and Continuous Dynamical Systems},
volume = {34},
number = {4},
pages = {1533-1574},
year = {2014},
issn = {1078-0947},
doi = {10.3934/dcds.2014.34.1533},
url = {https://www.aimsciences.org/article/id/d5bcf817-901d-4104-b7da-eade7847c53e},
author = {Christian Léonard},
keywords = {Schrödinger problem, optimal transport, displacement interpolations, Markov measures, relative entropy, large deviations}
}

@book{sabtanbrogio2015_OT,
	author = {Santambrogio, Filippo},
	biburl = {https://www.bibsonomy.org/bibtex/281ed66090d576f173cc088fb4596d431/kirk86},
	date-added = {2021-01-25 18:41:29 -0600},
	date-modified = {2021-01-25 18:43:03 -0600},
	edition = {1},
	keywords = {book optimal-transport},
	publisher = {Birkh\"auser Basel},
	title = {Optimal Transport for Applied Mathematicians. Calculus of Variations, PDEs and Modeling},
	url = {https://www.math.u-psud.fr/~filippo/OTAM-cvgmt.pdf},
	year = 2015,
	bdsk-url-1 = {https://www.math.u-psud.fr/~filippo/OTAM-cvgmt.pdf}}

@article{BayraktarEcksteinZhang_stability,
author = {Erhan Bayraktar and Stephan Eckstein and Xin Zhang},
title = {{Stability and sample complexity of divergence regularized optimal transport}},
volume = {31},
journal = {Bernoulli},
number = {1},
publisher = {Bernoulli Society for Mathematical Statistics and Probability},
pages = {213 -- 239},
keywords = {Entropic optimal transport, f-divergence, intrinsic dimension, regularization},
year = {2025},
doi = {10.3150/24-BEJ1725},
URL = {https://doi.org/10.3150/24-BEJ1725}
}

@article{CalierChizatLaborde_DisplacementSmoothnessofEOT,
    author = {Carlier, Guillaume and Chizat, Lenaic and Laborde, Maxime},
    title = {Displacement smoothness of entropic optimal transport},
    journal = {ESAIM: Control, Optimisation and Calculus of Variations},
    year = {2025}
}

@article{ZivGoldfeldKengoKatoGabrielRiouxRitwikSadhu,
    author = {Ziv Goldfeld and Kengo Kato and Gabriel Rioux and Ritwik Sadhu},
    title = {Limit theorems for entropic optimal transport maps and Sinkhorn divergence},
    journal = {Electronic Journal of Statistics},
    volume = {18},
    issue = {1},
    year = {2024}
}

@article{CarlierLaborde_SS,
author = {Carlier, Guillaume and Laborde, Maxime},
title = {{A Differential Approach to the Multi-Marginal Schrödinger System}},
journal = {SIAM Journal on Mathematical Analysis},
volume = {52},
number = {1},
pages = {709-717},
year = {2020},
doi = {10.1137/19M1253800},

eprint = { 
https://doi.org/10.1137/19M1253800
},

abstract = { We develop an elementary and self-contained differential approach, in an \$L^{\infty}\$ setting, for well-posedness (existence, uniqueness, and smooth dependence with respect to the data) for the multi-marginal Schrödinger system which arises in the entropic regularization of optimal transport problems. }

}

@book{vanderVaartWellner1996,
author="van der Vaart, Aad W. and Wellner, Jon A.",
title="{Weak Convergence and Empirical Processes with Applications to Statistics}",
bookTitle="Weak Convergence and Empirical Processes: With Applications to Statistics",
year="1996",
publisher="Springer New York",
address="New York, NY",
isbn="978-1-4757-2545-2",
}

@InProceedings{KarimiNutiniSchmidt_PL,
author={Karimi, Hamed
and Nutini, Julie
and Schmidt, Mark},
editor= {Frasconi, Paolo
and Landwehr, Niels
and Manco, Giuseppe
and Vreeken, Jilles},
title={{Linear Convergence of Gradient and Proximal-Gradient Methods Under the Polyak-{\L}ojasiewicz Condition}},
booktitle={Machine Learning and Knowledge Discovery in Databases},
year={2016},
publisher={Springer International Publishing},
address="Cham",
pages="795--811",
abstract="In 1963, Polyak proposed a simple condition that is sufficient to show a global linear convergence rate for gradient descent. This condition is a special case of the {\L}ojasiewicz inequality proposed in the same year, and it does not require strong convexity (or even convexity). In this work, we show that this much-older Polyak-{\L}ojasiewicz (PL) inequality is actually weaker than the main conditions that have been explored to show linear convergence rates without strong convexity over the last 25 years. We also use the PL inequality to give new analyses of coordinate descent and stochastic gradient for many non-strongly-convex (and some non-convex) functions. We further propose a generalization that applies to proximal-gradient methods for non-smooth optimization, leading to simple proofs of linear convergence for support vector machines and L1-regularized least squares without additional assumptions.",
isbn="978-3-319-46128-1"
}

@book{Wainwright_2019, 
place={Cambridge}, 
series={Cambridge Series in Statistical and Probabilistic Mathematics}, 
title={High-Dimensional Statistics: A Non-Asymptotic Viewpoint}, 
publisher={Cambridge University Press}, 
author={Wainwright, Martin J.}, 
year={2019}, 
collection={Cambridge Series in Statistical and Probabilistic Mathematics}}

@inproceedings{Cuturi_Sinkhorn2013,
 author = {Cuturi, Marco},
 booktitle = {Advances in Neural Information Processing Systems},
 editor = {C.J. Burges and L. Bottou and M. Welling and Z. Ghahramani and K.Q. Weinberger},
 pages = {},
 publisher = {Curran Associates, Inc.},
 title = {Sinkhorn Distances: Lightspeed Computation of Optimal Transport},
 url = {https://proceedings.neurips.cc/paper_files/paper/2013/file/af21d0c97db2e27e13572cbf59eb343d-Paper.pdf},
 volume = {26},
 year = {2013}
}

@article{MarinoGerolin_SchrodingerBridge,
	abstract = {This paper exploit the equivalence between the Schr{\"o}dinger Bridge problem (L{\'e}onard in J Funct Anal 262:1879--1920, 2012; Nelson in Phys Rev 150:1079, 1966; Schr{\"o}dinger in {\"U}ber die umkehrung der naturgesetze. Verlag Akademie der wissenschaften in kommission bei Walter de Gruyter u, Company, 1931) and the entropy penalized optimal transport (Cuturi in: Advances in neural information processing systems, pp 2292--2300, 2013; Galichon and Salani{\'e}in: Matching with trade-offs: revealed preferences over competing characteristics. CEPR discussion paper no. DP7858, 2010) in order to find a different approach to the duality, in the spirit of optimal transport. This approach results in a priori estimates which are consistent in the limit when the regularization parameter goes to zero. In particular, we find a new proof of the existence of maximizing entropic-potentials and therefore, the existence of a solution of the Schr{\"o}dinger system. Our method extends also when we have more than two marginals: the main new result is the proof that the Sinkhorn algorithm converges even in the continuous multi-marginal case. This provides also an alternative proof of the convergence of the Sinkhorn algorithm in two marginals.},
	author = {Marino, Simone Di and Gerolin, Augusto},
	date = {2020/10/19},
	date-added = {2025-01-12 14:33:46 -0800},
	date-modified = {2025-01-12 14:33:46 -0800},
	doi = {10.1007/s10915-020-01325-7},
	isbn = {1573-7691},
	journal = {Journal of Scientific Computing},
	number = {2},
	pages = {27},
	title = {{An Optimal Transport Approach for the Schr{\"o}dinger Bridge Problem and Convergence of Sinkhorn Algorithm}},
	url = {https://doi.org/10.1007/s10915-020-01325-7},
	volume = {85},
	year = {2020},
	bdsk-url-1 = {https://doi.org/10.1007/s10915-020-01325-7}}

@article{Carlier_MOT-Sinkhorn-linear,
author = {Carlier, Guillaume},
title = {{On the Linear Convergence of the Multimarginal Sinkhorn Algorithm}},
journal = {SIAM Journal on Optimization},
volume = {32},
number = {2},
pages = {786-794},
year = {2022},
doi = {10.1137/21M1410634},

URL = { 
    
        https://doi.org/10.1137/21M1410634
    
    

},
eprint = { 
    
        https://doi.org/10.1137/21M1410634
    
    

}
,
    abstract = { The aim of this note is to give an elementary proof of linear convergence of the Sinkhorn algorithm for the entropic regularization of multimarginal optimal transport in the setting of general probability spaces. The proof simply relies on (i) the fact that Sinkhorn iterates are bounded, (ii) the strong convexity of the exponential on bounded intervals, and (iii) the convergence analysis of the coordinate descent (Gauss--Seidel) method of Beck and Tetruashvili [SIAM J. Optim, 23 (2013), pp. 2037--2060]. }
}

@article{GangboSwiech_MOT,
author = {Gangbo, Wilfrid and \'Swi\c{e}ch, Andrzej},
title = {{Optimal maps for the multidimensional Monge-Kantorovich problem}},
journal = {Communications on Pure and Applied Mathematics},
volume = {51},
number = {1},
pages = {23-45},
doi = {https://doi.org/10.1002/(SICI)1097-0312(199801)51:1<23::AID-CPA2>3.0.CO;2-H},
url = {https://onlinelibrary.wiley.com/doi/abs/10.1002/%28SICI%291097-0312%28199801%2951%3A1%3C23%3A%3AAID-CPA2%3E3.0.CO%3B2-H},
eprint = {https://onlinelibrary.wiley.com/doi/pdf/10.1002/%28SICI%291097-0312%28199801%2951%3A1%3C23%3A%3AAID-CPA2%3E3.0.CO%3B2-H},
abstract = {Abstract Let μ1,…, μN be Borel probability measures on ℝd. Denote by Γ(μ1,…, μN) the set of all N-tuples T=(T1,…, TN) such that Ti:ℝd ↔ ℝd (i = 1,…, N) are Borel-measurable and satisfy μ1\$\$[T\_i^{-1}(V)]\$\$ = μi[V] for all Borel V ⊂ ℝd. The multidimensional Monge-Kantorovich problem investigated in this paper consists of finding S=(S1,…, SN) ∈ Γ(μ1,…, μN) minimizing \$\$I[T] = \sum\_{i=1}^N \sum\_{j=i+1}^{N} \int\_{R^d} {|T\_i({\bf x}) -T\_j({\bf x})|^2 \over 2}\, d\mu\_1(x)\$\$ over the set Γ(μ1, ···, μN). We study the case where the μi's have finite second moments and vanish on (d - 1)-rectifiable sets. We prove existence and uniqueness of optimal maps S when we impose that S1(x) ≡ x and give an explicit form of the maps Si. The result is obtained by variational methods and to the best of our knowledge is the first available in the literature in this generality. As a consequence, we obtain uniqueness and characterization of an optimal measure for the multidimensional Kantorovich problem. © 1998 John Wiley \& Sons, Inc.},
year = {1998}
}

@article{LinHoCuturiJordan_MOT-complexity,
  author  = {Tianyi Lin and Nhat Ho and Marco Cuturi and Michael I. Jordan},
  title   = {On the Complexity of Approximating Multimarginal Optimal Transport},
  journal = {Journal of Machine Learning Research},
  year    = {2022},
  volume  = {23},
  number  = {65},
  pages   = {1--43},
  url     = {http://jmlr.org/papers/v23/19-843.html}
}

@article{ALTSCHULER2021100669,
title = {Hardness results for Multimarginal Optimal Transport problems},
journal = {Discrete Optimization},
volume = {42},
pages = {100669},
year = {2021},
issn = {1572-5286},
doi = {https://doi.org/10.1016/j.disopt.2021.100669},
url = {https://www.sciencedirect.com/science/article/pii/S1572528621000487},
author = {Jason M. Altschuler and Enric Boix-Adserà},
keywords = {Multimarginal optimal transport, -hardness, Inapproximability, Exponential-size linear program},
abstract = {Multimarginal Optimal Transport (MOT) is the problem of linear programming over joint probability distributions with fixed marginals. A key issue in many applications is the complexity of solving MOT: the linear program has exponential size in the number of marginals k and their support sizes n. A recent line of work has shown that MOT is poly(n,k)-time solvable for certain families of costs that have poly(n,k)-size implicit representations. However, it is unclear what further families of costs this line of algorithmic research can encompass. In order to understand these fundamental limitations, this paper initiates the study of intractability results for MOT. Our main technical contribution is developing a toolkit for proving NP-hardness and inapproximability results for MOT problems. This toolkit reduces proving intractability of MOT problems to proving intractability of more amenable discrete optimization problems. We demonstrate this toolkit by using it to establish the intractability of a number of MOT problems studied in the literature that have resisted previous algorithmic efforts. For instance, we provide evidence that repulsive costs make MOT intractable by showing that several such problems of interest are NP-hard to solve—even approximately.}
}

@inproceedings{KimYaoZhuChen2025_barycenter-nonconvex-concave,
  author = {Kim, Kaheon and Yao, Rentian and Zhu, Changbo and Chen, Xiaohui},
  month = {july},
  title = {Optimal transport barycenter via nonconvex concave minimax optimization},
  year = {2025},
  booktitle = {International Conference on Machine Learning (ICML)},
  url={https://arxiv.org/abs/2501.14635},
  eprint = {https://arxiv.org/abs/2501.14635}
}

@misc{kim2025sobolevgradientascentoptimal,
      title={{Sobolev Gradient Ascent for Optimal Transport: Barycenter Optimization and Convergence Analysis}}, 
      author={Kaheon Kim and Bohan Zhou and Changbo Zhu and Xiaohui Chen},
      year={2025},
      eprint={2505.13660},
      archivePrefix={arXiv},
      primaryClass={math.OC},
      url={https://arxiv.org/abs/2505.13660}, 
}

@article{LeGouic2019FastCO,
  title={{Fast convergence of empirical barycenters in Alexandrov spaces and the Wasserstein space}},
  author={Thibaut Le Gouic and Quentin Paris and Philippe Rigollet and Austin J. Stromme},
  journal={Journal of the European Mathematical Society},
  year={2019},
  url={https://api.semanticscholar.org/CorpusID:199405372}
}

@article{AhidarLeGouicParis_barycenter-rate,
	abstract = {This paper provides rates of convergence for empirical (generalised) barycenters on compact geodesic metric spaces under general conditions using empirical processes techniques. Our main assumption is termed a variance inequality and provides a strong connection between usual assumptions in the field of empirical processes and central concepts of metric geometry. We study the validity of variance inequalities in spaces of non-positive and non-negative Aleksandrov curvature. In this last scenario, we show that variance inequalities hold provided geodesics, emanating from a barycenter, can be extended by a constant factor. We also relate variance inequalities to strong geodesic convexity. While not restricted to this setting, our results are largely discussed in the context of the 2-Wasserstein space.},
	author = {Ahidar-Coutrix, A. and Le Gouic, T. and Paris, Q.},
	date = {2020/06/01},
	date-added = {2025-01-12 11:01:15 -0800},
	date-modified = {2025-01-12 11:01:15 -0800},
	doi = {10.1007/s00440-019-00950-0},
	
	isbn = {1432-2064},
	journal = {Probability Theory and Related Fields},
	number = {1},
	pages = {323--368},
	title = {Convergence rates for empirical barycenters in metric spaces: curvature, convexity and extendable geodesics},
	url = {https://doi.org/10.1007/s00440-019-00950-0},
	volume = {177},
	year = {2020},
	bdsk-url-1 = {https://doi.org/10.1007/s00440-019-00950-0}}

@article{LeGouicJeanMichel_barycenter-existunique,
	abstract = {Based on the Fr{\'e}chet mean, we define a notion of barycenter corresponding to a usual notion of statistical mean. We prove the existence of Wasserstein barycenters of random probabilities defined on a geodesic space (E, d). We also prove the consistency of this barycenter in a general setting, that includes taking barycenters of empirical versions of the probability measures or of a growing set of probability measures.},
	author = {Le Gouic, Thibaut and Loubes, Jean-Michel},
	date = {2017/08/01},
	date-added = {2025-01-12 10:58:05 -0800},
	date-modified = {2025-01-12 10:58:05 -0800},
	doi = {10.1007/s00440-016-0727-z},
	isbn = {1432-2064},
	journal = {Probability Theory and Related Fields},
	number = {3},
	pages = {901--917},
	title = {{Existence and consistency of Wasserstein barycenters}},
	url = {https://doi.org/10.1007/s00440-016-0727-z},
	volume = {168},
	year = {2017},
	bdsk-url-1 = {https://doi.org/10.1007/s00440-016-0727-z}}

@article{KIM2017640,
title = {{Wasserstein barycenters over Riemannian manifolds}},
journal = {Advances in Mathematics},
volume = {307},
pages = {640-683},
year = {2017},
issn = {0001-8708},
doi = {https://doi.org/10.1016/j.aim.2016.11.026},
url = {https://www.sciencedirect.com/science/article/pii/S0001870815304643},
author = {Young-Heon Kim and Brendan Pass},
keywords = {Optimal Transport, Riemannian manifolds, Wasserstein barycenters, Multi-marginal optimal transport, Ricci curvature bounds, Jensen's inequality, Random Brunn–Minkowski inequality},
abstract = {We study barycenters in the space of probability measures on a Riemannian manifold, equipped with the Wasserstein metric. Under reasonable assumptions, we establish absolute continuity of the barycenter of general measures Ω∈P(P(M)) on Wasserstein space, extending on one hand, results in the Euclidean case (for barycenters between finitely many measures) of Agueh and Carlier [1] to the Riemannian setting, and on the other hand, results in the Riemannian case of Cordero-Erausquin, McCann, Schmuckenschläger [12] for barycenters between two measures to the multi-marginal setting. Our work also extends these results to the case where Ω is not finitely supported. As applications, we prove versions of Jensen's inequality on Wasserstein space and a generalized Brunn–Minkowski inequality for a random measurable set on a Riemannian manifold.}
}

@article{CarlierEichingerKroshnin_entropic-barycenter,
author = {Carlier, Guillaume and Eichinger, Katharina and Kroshnin, Alexey},
title = {{Entropic-Wasserstein Barycenters: PDE Characterization, Regularity, and CLT}},
journal = {SIAM Journal on Mathematical Analysis},
volume = {53},
number = {5},
pages = {5880-5914},
year = {2021},
doi = {10.1137/20M1387262},

URL = { 
    
        https://doi.org/10.1137/20M1387262
    
    

},
eprint = { 
    
        https://doi.org/10.1137/20M1387262
    
    

}
,
    abstract = { In this paper, we investigate properties of entropy-penalized Wasserstein barycenters introduced in [J. Bigot, E. Cazelles, and N. Papadakis, SIAM J. Math. Anal., 51 (2019), pp. 2261--2285] as a regularization of Wasserstein barycenters [M. Agueh and G. Carlier, SIAM J. Math. Anal., 43 (2011), pp. 904--924]. After characterizing these barycenters in terms of a system of Monge--Ampère equations, we prove some global moment and Sobolev bounds as well as higher regularity properties. We finally establish a central limit theorem for entropic-Wasserstein barycenters. }
}

@inproceedings{janati2020debiased,
  title={{Debiased Sinkhorn barycenters}},
  author={Janati, Hicham and Cuturi, Marco and Gramfort, Alexandre},
  booktitle={International Conference on Machine Learning},
  pages={4692--4701},
  year={2020},
  organization={PMLR}
}

@article{chizat2023doublyregularizedentropicwasserstein,
    author = {Chizat, Lénaïc},
    title = {Doubly Regularized Entropic Wasserstein Barycenters},
    journal = {Foundations of Computational Mathematics},
    pages = {1-38},
    year = {2025},
}

@inproceedings{NEURIPS2020_cdf1035c,
 author = {Li, Lingxiao and Genevay, Aude and Yurochkin, Mikhail and Solomon, Justin M},
 booktitle = {Advances in Neural Information Processing Systems},
 editor = {H. Larochelle and M. Ranzato and R. Hadsell and M.F. Balcan and H. Lin},
 pages = {17755--17765},
 publisher = {Curran Associates, Inc.},
 title = {{Continuous Regularized Wasserstein Barycenters}},
 url = {https://proceedings.neurips.cc/paper_files/paper/2020/file/cdf1035c34ec380218a8cc9a43d438f9-Paper.pdf},
 volume = {33},
 year = {2020}
}

@article{AltschulerBoix-Adsera2021_fixed-dim-barycenter,
  author  = {Jason M. Altschuler and Enric Boix-Adsera},
  title   = {Wasserstein barycenters can be computed in polynomial time in fixed dimension},
  journal = {Journal of Machine Learning Research},
  year    = {2021},
  volume  = {22},
  number  = {44},
  pages   = {1--19},
  url     = {http://jmlr.org/papers/v22/20-588.html}
}

@article{AltschulerBoix-Adsera2021_barycenter-NPhard,
author = {Altschuler, Jason M. and Boix-Adser\`{a}, Enric},
title = {{Wasserstein Barycenters Are NP-Hard to Compute}},
journal = {SIAM Journal on Mathematics of Data Science},
volume = {4},
number = {1},
pages = {179-203},
year = {2022},
doi = {10.1137/21M1390062},

URL = { 
    
        https://doi.org/10.1137/21M1390062
    
    

},
eprint = { 
    
        https://doi.org/10.1137/21M1390062
    
    

}
,
    abstract = { Computing Wasserstein barycenters (a.k.a. optimal transport barycenters) is a fundamental problem in geometry which has recently attracted considerable attention due to many applications in data science. While there exist polynomial-time algorithms in any fixed dimension, all known running times suffer exponentially in the dimension. It is an open question whether this exponential dependence is improvable to a polynomial dependence. This paper proves that unless \${P} = {NP}\$, the answer is no. This uncovers a “curse of dimensionality” for Wasserstein barycenter computation which does not occur for optimal transport computation. Moreover, our hardness results for computing Wasserstein barycenters extend to approximate computation, to seemingly simple cases of the problem, and to averaging probability distributions in other optimal transport metrics. }
}

@article{BigotCazellesPapadakis_barycenter-penalization,
author = {Bigot, J\'{e}r\'{e}mie and Cazelles, Elsa and Papadakis, Nicolas},
title = {{Penalization of Barycenters in the Wasserstein Space}},
journal = {SIAM Journal on Mathematical Analysis},
volume = {51},
number = {3},
pages = {2261-2285},
year = {2019},
doi = {10.1137/18M1185065},

URL = { 
    
        https://doi.org/10.1137/18M1185065
    
    

},
eprint = { 
    
        https://doi.org/10.1137/18M1185065
    
    

}
,
    abstract = { In this paper, a regularization of Wasserstein barycenters for random measures supported on \$\mathbb{R}^{d}\$ is introduced via convex penalization. The existence and uniqueness of such barycenters is first proved for a large class of penalization functions. The Bregman divergence associated to the penalization term is then considered to obtain a stability result on penalized barycenters. This allows the comparison of data made of \$n\$ absolutely continuous probability measures, within the more realistic setting where one only has access to a dataset of random variables sampled from unknown distributions. The convergence of the penalized empirical barycenter of a set of \$n\$ independent and identically distributed random probability measures toward its population counterpart is finally analyzed. This approach is shown to be appropriate for the statistical analysis of either discrete or absolutely continuous random measures. It also allows one to construct, from a set of discrete measures, consistent estimators of population Wasserstein barycenters that are absolutely continuous. }
}

@inproceedings{rabin2012wasserstein,
  title={Wasserstein barycenter and its application to texture mixing},
  author={Rabin, Julien and Peyr{\'e}, Gabriel and Delon, Julie and Bernot, Marc},
  booktitle={Scale Space and Variational Methods in Computer Vision: Third International Conference (SSVM-2011)},
  pages={435--446},
  year={2012}
}

@InProceedings{pmlr-v32-cuturi14,
  title = 	 {Fast Computation of {W}asserstein Barycenters},
  author = 	 {Cuturi, Marco and Doucet, Arnaud},
  booktitle = 	 {Proceedings of the 31st International Conference on Machine Learning (ICML-2014)},
  pages = 	 {685--693},
  year = 	 {2014},
  pdf = 	 {http://proceedings.mlr.press/v32/cuturi14.pdf},
  url = 	 {https://proceedings.mlr.press/v32/cuturi14.html},
  abstract = 	 {We present new algorithms to compute the mean of a set of $N$ empirical probability measures under the optimal transport metric. This mean, known as the Wasserstein barycenter&nbsp;(Agueh and Carlier, 2011; Rabin et al, 2012), is the measure that minimizes the sum of its Wasserstein distances to each element in that set. We argue through a simple example that Wasserstein barycenters have appealing properties that differentiate them from other barycenters proposed recently, which all build on kernel smoothing and/or Bregman divergences. Two original algorithms are proposed that require the repeated computation of primal and dual optimal solutions of transport problems. However direct implementation of these algorithms is too costly as optimal transports are notoriously computationally expensive. Extending the work of Cuturi (2013), we smooth both the primal and dual of the optimal transport problem to recover fast approximations of the primal and dual optimal solutions. We apply these algorithms to the visualization of perturbed images and to a clustering problem.}
}

@inproceedings{ZhuangChenYang2022,
  author = {Yubo Zhuang and Xiaohui Chen and Yun Yang},
  booktitle = {Proceedings of Thirty-sixth Conference on Neural Information Processing Systems (NeurIPS)},
  date-added = {2022-09-14 20:28:06 -0500},
  date-modified = {2022-09-30 09:00:11 -0500},
  title = {{Wasserstein $K$-means for clustering probability distributions}},
  year = {2022},
}

@article{srivastava2018scalable,
  title={Scalable {B}ayes via barycenter in {W}asserstein space},
  author={Srivastava, Sanvesh and Li, Cheng and Dunson, David B.},
  journal={The Journal of Machine Learning Research},
  volume={19},
  number={1},
  pages={312--346},
  year={2018}
}

@article{Solomon_2015,
	address = {New York, NY, USA},
	articleno = {66},
	author = {Solomon, Justin and de Goes, Fernando and Peyr\'{e}, Gabriel and Cuturi, Marco and Butscher, Adrian and Nguyen, Andy and Du, Tao and Guibas, Leonidas},
	date-added = {2022-08-01 23:45:59 -0500},
	date-modified = {2022-08-01 23:45:59 -0500},
	doi = {10.1145/2766963},
	issn = {0730-0301},
	journal = {ACM Transactions on Graphs},
	keywords = {optimal transportation, wasserstein distances, entropy, displacement interpolation},
	number = {4},
	numpages = {11},
	publisher = {Association for Computing Machinery},
	title = {Convolutional {W}asserstein Distances: Efficient Optimal Transportation on Geometric Domains},
	url = {https://doi.org/10.1145/2766963},
	volume = {34},
	year = {2015},
	bdsk-url-1 = {https://doi.org/10.1145/2766963}}

@article{AguehCarlier2011,
	author = {Agueh, Martial and Carlier, Guillaume},
	date-added = {2022-08-03 13:00:23 -0500},
	date-modified = {2022-08-03 13:00:23 -0500},
	journal = {SIAM Journal on Mathematical Analysis},
	number = {2},
	pages = {904-924},
	title = {Barycenters in the {W}asserstein Space},
	volume = {43},
	year = {2011}}

@article{rigollet2022samplecomplexityentropicoptimal,
      title={On the sample complexity of entropic optimal transport}, 
      author={Philippe Rigollet and Austin J. Stromme},
      journal = {The Annals of Statistics,},
      volume = {53},
      number = {1},
      pages = {61-90},
      year={2025},
}

@inproceedings{DorZivKristjanHaim2024Neural,
  author = {Tsur, Dor and Goldfeld, Ziv and Greenewald, Kristjan and Permuter, Haim },
  booktitle = {OPT 2024: Optimization for Machine Learning},
  title = {{Neural Entropic Multimarginal Optimal Transport}},
  year = {2024},
}

@article{HasslerRinghChenKarlsson21,
    author = {Isabel Haasler and Axel Ringh and Yongxin  Chen and Johan Karlsson},
    title = {Multimarginal optimal transport with a tree-structured cost and the Schr{\"o}dinger  bridge problem},
    journal = {SIAM Journal on Control and Optimization},
    number = {4},
    pages = {2428-2453},
    year = {2021},
}

@article{ChenKato2019,
    author = {Chen, Xiaohui and Kato, Kengo},
    title = {Randomized incomplete $U$-statistics in high dimensions.},
    journal = {Annals of Statistics},
    year = {2019},
    pages = {3127-3156},
    volume = {46},
    number = {6},
}

@article{AltschulerJonathanAustin22,
    author = {Altschuler, Jason M. and Jonathan Niles-Weed and Austin J. Stromme},
    title = {Asymptotics for semidiscrete entropic optimal transport},
    journal = {SIAM Journal on Control and Optimization},
    volume = {54},
    number = {2},
    pages = {1718-1741},
    year = {2022},
}

@article{EustasioGozalezLoubesNiles23,
    author = {del Barrio, Eustasio and Alberto González Sanz and Jean-Michel Loubes and Jonathan Niles-Weed},
    title = {An improved central limit theorem and fast convergence rates for entropic transportation costs},
    journal = {SIAM Journal on Mathematics of Data Science},
    volume = {5},
    number = {3},
    pages = {639-669},
    year = {2023},
}

@inproceedings{ChizatRoussillonLegerVialardPeyer20,
 author = {Chizat, L\'{e}na\"{\i}c and Roussillon, Pierre and L\'{e}ger, Flavien and Vialard, Fran\c{c}ois-Xavier and Peyr\'{e}, Gabriel},
 booktitle = {Advances in Neural Information Processing Systems},
 editor = {H. Larochelle and M. Ranzato and R. Hadsell and M.F. Balcan and H. Lin},
 pages = {2257--2269},
 publisher = {Curran Associates, Inc.},
 title = {Faster Wasserstein Distance Estimation with the Sinkhorn Divergence},
 url = {https://proceedings.neurips.cc/paper_files/paper/2020/file/17f98ddf040204eda0af36a108cbdea4-Paper.pdf},
 volume = {33},
 year = {2020}
}

@misc{AlbertoGonzalez-SanzJean-MichelLoubesJonathanNiles-Weed2024weaklimit,
      title = {Weak limits of entropy regularized optimal transport, potentials, plans and divergences}, 
      author={Alberto Gonzalez-Sanz and Jean-Michel Loubes and Jonathan Niles-Weed},
      year={2024},
      eprint={arXiv:2207.07427},
      archivePrefix={arXiv},
      primaryClass={math.PR},
}

@misc{AlbertoGonzalez-SanzEcksteinStephanNutzMarcel2025Lpweaklimit,
      title = {Sparse regularized optimal transport without curse of dimensionality}, 
      author={Alberto Gonzalez-Sanz and Stephan Eckstein and JMarcel Nutz},
      year={2025},
      eprint={arXiv:2505.04721},
      archivePrefix={arXiv},
      primaryClass={math.PR},
}

@misc{Multimarginal_Unbalanced_OT_CLT_Xu21,
      title = {Matching for causal effects via multimarginal unbalanced optimal transport}, 
      author={Florian Gunsilius and Yuliang Xu},
      year={2021},
      eprint={arXiv:2112.04398 },
      archivePrefix={arXiv},
      primaryClass={math.ME},
}

@article{Wasserstein_Median_Carlier_chenchene_Eichinger24,
    author = {Guillaume Carlier and Enis Chenchene and Katharina Eichinger},
    title = {Wasserstein medians: Robustness, pde characterization, and numerics},
    journal = {SIAM Journal on Mathematical Analysis},
    volume = {56},
    number = {5},
    pages = {6483-6520},
    year = {2024},
}

@book{Korolyuk_Borovskich_U_Statistics,
    author = {Vladimir S. Korolyuk and Yu V. Borovskich},
    title = {Theory of U-statistics},
    publisher = {Springer Science \& Business Media},
    volume = {273},
    year = {2013},
}

@misc{Alberto_Sanz_Gonzalez_Shayan_EOT_non_smooth_CLT23,
      title = {Weak limits for empirical entropic optimal transport: Beyond smooth costs}, 
      author={Alberto González-Sanz and Shayan Hundrieser},
      year={2023},
      eprint={arXiv:2305.09745},
      archivePrefix={arXiv},
      primaryClass={math.ST},
}

@book{Robert_Serfling_Approximation09,
    author = {Robert J. Serfling},
    title = {Approximation theorems of mathematical statistics},
    publisher = { John Wiley \& Sons },
    year = {2009},
}

@book{Van_der_Vaart_Asymptotic_statistics00,
    author = {van der Vaart, Aad W.},
    title = {Asymptotic statistics},
    publisher = {Cambridge university press},
    volume = {3},
    year = {2000}
}

@article{Zaid_Harchaoui_Lang_Liu_Soumik_Pal_conjecture,
    author = {Zaid Harchaoui and  Lang Liu and Soumik Pal},
    title = {Asymptotics of discrete Schrödinger bridges via chaos decomposition},
    journal = {Bernoulli},
    volume = {30},
    number = {3},
    pages = {1945 - 1970},
    year = {2024},
}

@misc{Alberto_Sanz_Gonzalez_del_Barrio_CLT_survey25,
      title = {Distributional Limit Theory for Optimal Transport}, 
      author={Eustasio del Barrio and Alberto González-Sanz and Jean-Michel Loubes and David Rodríguez-Vítores},
      year={2025},
      eprint={arXiv:2505.19104},
      archivePrefix={arXiv},
      primaryClass={math.ST},
}

@misc{Romisch_Werner_Delta_Method_Intro,
      title = {Delta method, infinite dimensional}, 
      author={Werner Römisch},
      year={2014},
      eprint={Wiley StatsRef: Statistics Reference Online },
}

@misc{BangxianHan24,
  title = {On the geometry of Wasserstein barycenter I}, 
      author={Bangxian Han and Dengyu Liu and Zhuonan Zhu},
      year={2024},
      eprint={arXiv:2412.01190},
      archivePrefix={arXiv},
      primaryClass={math.MG},
}

@article{Kengo24IMAregularizedOT,
    author = {Ziv Goldfeld and Kengo Kato and Gabriel Rioux and Ritwik Sadhu},
    title = {Statistical inference with regularized optimal transport},
    journal = {Information and Inference: A Journal of the IMA},
    year = {2024},
    volume = {13},
    number = {1},
}

@article{MOT_application_physics,
   author = {Codina Cotar and Gero Friesecke and Claudia Klüppelberg},
    title = {Density functional theory and optimal transportation with Coulomb cost},
    journal = {Communications on Pure and Applied Mathematics},
    year = {2013},
    volume = {66},
    number = {4},
    pages = {548-599},
}

@inproceedings{multimarginal_Wasserstein_Gan19,
    author = {Jiezhang Cao and Langyuan Mo and Yifan Zhang and Kui Jia and Chunhua Shen and Mingkui Tan},
    title = {Multi-marginal wasserstein gan},
    booktitle = {Advances in Neural Information Processing Systems 32 },
    year = {2019},
}

@inproceedings{Genevay_Chizat_Bach_Cuturi_peyer19,
    author = {Aude Genevay and Lénaic Chizat and Francis Bach and Marco Cuturi and Gabriel Peyré},
    title = {Sample complexity of sinkhorn divergences},
    booktitle = {In The 22nd international conference on artificial intelligence and statistics},
    year = {2019},
    pages = {pp. 1574-1583},
}

@misc{ZoeMichalJamesCuturi24,
      title = {Contrasting multiple representations with the multi-marginal matching gap}, 
      author={Zoe Piran and Michal Klein and James Thornton and Marco Cuturi},
      year={2024},
      eprint={arXiv:2405.19532},
      archivePrefix={arXiv},
}

@article{GiuseppeButtazzoLuigiDePascalePaolaGori-Giorgi12PRA,
    author = {Giuseppe Buttazzo and Luigi De Pascale and Paola Gori-Giorgi},
    title = {Optimal-transport formulation of electronic density-functional theory},
    journal = {Physical Review A—Atomic, Molecular, and Optical Physics },
    year = {2012},
    volume = {85},
    number = {6}
}

@article{CarlierIvarEcon10,
    author = {Guillaume Carlier and Ivar Ekeland},
    title = {Matching for teams},
    journal = {Economic theory},
    year = {2010},
    volume = {42},
    number = {2},
    pages = {397-418},
}

@article{Brendan_Pass_MOT15,
    author = {Brendan Pass},
    title = {Multi-marginal optimal transport: theory and applications},
    journal = {ESAIM: Mathematical Modelling and Numerical Analysis},
    year = {2015},
    volume = {49},
    number = {6},
    pages = {1771-1790},
}

@article{KimPassMOTRiemannian15,
    author = {Young-Heon Kim and Brendan Pass},
    title = {Multi-marginal optimal transport on Riemannian manifolds},
    journal = {American Journal of Mathematics},
    year = {2015},
    volume = {137},
    number = {4},
    pages = {1045-1060},
}

@article{BredanPassAdolfoVargasJimenez24,
    author = {Brendan Pass and Adolfo Vargas-Jiménez.},
    title = {A general framework for multi-marginal optimal transport},
    journal = {Mathematical Programming},
    year = {2024},
    volume = {208},
    number = {1},
    pages = {75-110},
}

@article{BredanPassAdolfoVargasJimenez22,
    author = {Brendan Pass and Adolfo Vargas-Jiménez.},
    title = {Multi-marginal optimal transportation problem for cyclic costs},
    journal = {Siam Journal On Mathematical Analysis},
    year = {2021},
    volume = {53},
    number = {4},
    pages = {4386-4400},
}

@inproceedings{MenaNilesWeed19,
    author = {Gonzalo Mena and Jonathan Niles-Weed},
    title = {Statistical bounds for entropic optimal transport: sample complexity and the central limit theorem},
    year = {2019},
    booktitle = {Advances in neural information processing systems},
}

@book{Gerald_Teschl_Taylor_Banach_20,
    author = {Gerald Teschl},
    title = {Topics in linear and nonlinear functional analysis},
    publisher = {American Mathematical Society},
    year = {2020},
}

@article{Mathias_Labordere_Penkner13,
    author = {Mathias Beiglböck and Pierre Henry-Labordere and Friedrich Penkner},
    title = {Model-independent bounds for option prices—a mass transport approach},
    journal = {Finance and Stochastics},
    year = {2013},
    pages = {477-501},
    volume = {17}, 
    number = {3},
}

@article{Hamza_Quentin_Luca_Brendan24,
    author = {Hamza Ennaji and Quentin Mérigot and Luca Nenna and Brendan Pass},
    title = {Robust risk management via multi-marginal optimal transport},
    journal = {Journal of Optimization Theory and Applications},
    year = {2024},
    pages = {554-581},
    volume = {202}, 
    number = {2},
}

@misc{Michael_Nicholas_Michael_Eric23,
    author = {Michael S. Albergo and Nicholas M. Boffi and Michael Lindsey and Eric Vanden-Eijnden},
    title = {Multimarginal generative modeling with stochastic interpolants},
    year={2023},
    eprint={arXiv:2310.03695},
    archivePrefix={arXiv},
}

@article{Application_Tomographic17,
    author = {Isabelle Abraham and Romain Abraham and Maıtine Bergounioux and Guillaume Carlier},
    title = {Tomographic reconstruction from a few views: a multi-marginal optimal transport approach},
    journal = {Applied mathematics and optimization},
    year = {2017},
    pages = {55-73},
    volume = {71}, 
    number = {1},
}

@article{Pierre_Robert_Lars10,
    author = {Pierre-André Chiappori and Robert J. McCann and Lars P. Nesheim},
    title = {Hedonic price equilibria, stable matching, and optimal transport: equivalence, topology, and uniqueness},
    journal = {Economic Theory},
    year = {2010},
    pages = {317-354},
    volume = {42}, 
    number = {2},
}

@article{Brendan_Pass11SIAM,
    author = {Brendan Pass},
    title = {Uniqueness and Monge solutions in the multimarginal optimal transportation problem},
    journal = {SIAM Journal on Mathematical Analysis},
    year = {2011},
    pages = {2758-2775},
    volume = {43}, 
    number = {6},
}

@article{HenriMOT02,
    author = {Henri Heinich},
    title = {Problème de Monge pour n probabilités},
    journal = {Comptes Rendus Mathematique},
    year = {2002},
    pages = {793-795},
    volume = {334}, 
    number = {9},
}

@article{Colombo_Pascale_Marino_repulsive15,
    author = {Maria Colombo and Luigi De Pascale and Simone Di Marino},
    title = {Multimarginal optimal transport maps for one–dimensional repulsive costs},
    journal = {Canadian Journal of Mathematics},
    year = {2015},
    pages = {350-368},
    volume = {67}, 
    number = {2},
}

@article{Gangbo_McCann_Acta_96,
    author = {Wilfrid Gangbo and Robert J. McCann},
    title = {The geometry of optimal transportation},
    journal = {Acta Mathematica, Acta Math. },
    year = {1996},
    pages = {113-161},
    volume = {177}, 
    number = {2},
}

@article{Brenier_87,
    author = {Yann Brenier},
    title = {Décomposition polaire et réarrangement monotone des champs de vecteurs},
    journal = {CR Acad. Sci. Paris Sér. I Math},
    year = {1987},
    pages = {805-808},
    volume = {305}, 
}

@article{Jason_Enric_NP_MOT21,
    author = {Jason M. Altschuler and Enric Boix-Adsera},
    title = {Hardness results for multimarginal optimal transport problems},
    journal = {Discrete Optimization},
    year = {2021},
    volume = {42}, 
}

@article{Maximal_inequalities_degenerate_U_processes_94_AOS,
    author = {Sherman, Robert P},
    title = {Maximal inequalities for degenerate $ U $-processes with applications to optimization estimators},
    journal = {The Annals of Statistics},
    year = {1994},
    pages = {439-459},
    volume = {22}, 
    number = {1},
}

\newpage
\appendix

\begin{center}
    \textbf{\huge Appendix}
\end{center}

\noindent The appendix is organized as follows.
\begin{itemize}
    \item Appendix \ref{appendix_section_notations} introduces additional notations used throughout the appendix section.
    
    \item Appendix \ref{section_approximation_error} presents proof of approximation error of EMOT as the regularization $\varepsilon\to 0$.

    \item Appendix \ref{sec_Concentration_bound_on_the_empirical_gradient_norm} focuses on the high probability bound on the empirical gradient norm $\|\nabla\widehat{\Phi}_{\varepsilon}(\vf^*)\|_{\widehat{\mathscr{L}}_m}$, one of the central quantity for the analysis.
    
    \item Appendix \ref{sec_concentration_of_empirical_potentials_and_joint_optimal_coupling_density} develops detailed properties about population and empirical optimal Schr\"{o}dinger potentials $f^*_j, \hat{f}^*_j,\,j\in[m]$ along with proof of Theorem \ref{boundtest}, Lemma \ref{diff_empirical_population_f_i_bounded_by_other_potentials} and Theorem \ref{supremp}.

    \item Appendix \ref{sec_sample_complexity_proof} contains proofs of results related to the cost functional, namely proof of Theorem \ref{thm:rate_cost} (sample complexity) and proof of Theorem \ref{CLT_for_cost_functional} (weak limit).

    \item Appendix \ref{Sec_Weak_limit_of_expectation_under_entropic_optimal_transport_coupling} outlines the proof structure of Theorem \ref{CLT_test_function_very_complicated_computation}, giving weak limit of expectation under entropic optimal transport coupling.
    
   \item Appendices~\ref{appendix_lineratization_empirical_EMOT} and \ref{appendix_technical_details_for_operator_norms_estimates} 
   complete the detailed technical steps deferred from Appendix~\ref{Sec_Weak_limit_of_expectation_under_entropic_optimal_transport_coupling}.
    
    \item Appendix~\ref{omitted_proofs} collects the remaining proofs of the main theorems.

    \item Appendix \ref{appendix_technical_lemmas} gathers auxiliary technical lemmas.
    
    \item Appendix \ref{Section_Hadamard_differentiability_Intro} concerns the Hadamard differentiability used for establishing bootstrap consistency.

\end{itemize}

\section{List of notations}\label{appendix_section_notations}

\noindent For any parameter $\varepsilon$, we use  $x \lesssim_\varepsilon y$ (resp. $x \gtrsim_\varepsilon y$) to denote  $x \leq C_\varepsilon y$ (resp.  $x \geq C_\varepsilon y$) for some constant  $C_\varepsilon > 0$  depending on $\varepsilon$. If $x\lesssim_\varepsilon y$ and $x \gtrsim_\varepsilon y$, we write $x\asymp_\varepsilon y$. For $f_j : \cal X \to \bR$, we shall denote a vector-valued function using the boldface $\vf = (f_1,\dots,f_m)$. We shall also denote a vector $\mvx=(x_1,\dots,x_m)\in\mathbb{R}^m$, $\|\mvx\|_2=\sqrt{\sum_{j=1}^mx_j^2}$, $\|\mvx\|_\infty=\max_{1\leq j \leq m} |x_j|$,
and $\mvx^T$ denotes the transpose of a vector. For a vector space \(V\)  over some field $\bF$, the dual space of \(V\), denoted by \(V'\), is the set of all linear functionals from \(V\) to \(\bF\).

\noindent Given a set $\cD$, $|\cD|$ denotes the cardinality of $\cD$. We define the diameter of a set $S$ in a metric space with distance $d$ as $\text{diam}(S) = \sup_{x,y\in S}d(x,y)$. $a\vee b:=\max\{a,b\}$. For $f:\cX_2\to\cX_3$ and $g:\cX_1\to\cX_2$, the compound function $f\circ g:\cX_1\to\cX_3$ is defined as $f\circ g(x):=f(g(x))$. For a function class $\cF$, $N(\cF,\varepsilon,\|\cdot\|)$ denotes the $\varepsilon$-covering number of class $\cF$ under the metric induced by $\|\cdot\|$. Given $\vf=(f_1,\dots,f_m)\in\prod_{j=1}^m \cC(\cX_j)$, $\bigoplus (f_1,\dots,f_m) := \sum_{j=1}^m f_j$. Often $\sum_{i=1}^m f_i(x_i)$ is written as $f_1\oplus\cdots\oplus f_m$. For $f_j:\cX_j\to\bR$, $\nabla f_j :\cX_j\to\bR^d$ denotes its gradient and $\nabla \vf = (\nabla f_1,\dots, \nabla f_m)$.

\noindent We sometimes consider the $\beta$-smooth Hölder function class $\cH(\cX;\beta, L)$ containing the set of functions $f:\cX\to\mathbb{R}$ such that
\[\|f\|_\beta := \max_{|k| \leq\lfloor\beta\rfloor} \sup_{x\in \cX} |D^k f(x)| + \max_{|k|=\lfloor\beta\rfloor} \sup_{ \substack{ x\neq y \\ x,y \in \cX}
} \frac{|D^{k} f(x) - D^{k} f(y)|}{\|x-y\|^{\beta-\lfloor\beta\rfloor}} \leq L\]
for some parameter $L > 0$, where $\lfloor\beta\rfloor$ is the largest integer that is strictly smaller than $\beta$. Here, for a multi-index $k=(k_1,\dots,k_m)$ of $m$ integers, $|k|=\sum_{j=1}^mk_j$ and the differential operator 
$D^{k}:=\frac{\partial^{|k|}}{\partial x_1^{k_1}\cdots\partial x_m^{k_m}}.$ Note that $\cH(\cX;1, L)$ is the class of Lipschitz continuous functions on $\cX$ with constant $L$.

\noindent We say $a_n=\cO(b_n)$ if $\limsup_{n\to\infty}|\frac{a_n}{b_n}|<\infty$.
For a sequence of random variables $(\xi_n)_{n\geq 1}$ and a sequence of scalars $a_n > 0$, we write $\xi_n = o_\bP(a_n)$
if for every $\varepsilon > 0$,
$\lim_{n \to \infty} \bP\left( \frac{|\xi_n|}{a_n} > \varepsilon \right) = 0.$ We write $\xi_n = \cO_\bP(a_n)$
if for every $\varepsilon > 0$ there exists $M > 0$ such that $\sup_{n} \bP\left( \frac{|\xi_n|}{a_n} > M \right) < \varepsilon.$ 
We use $\cP(\Omega)$ (resp. $\cP_2(\Omega)$) to denote all the probability measures on $\Omega$ (resp. all the probability measures on $\Omega$ with finite second moment). For $\mu\in\bR^d$ and a positive definite matrix $\Sigma$, $\cN(\mu,\Sigma)$ represents the Gaussian distribution with mean vector $\mu$ and covariance matrix $\Sigma$. Let $(\xi_n)_{n\geq 1}$ be a sequence of random variables taking values in a Polish space $\Xi$, and let $\xi$ be another random variable on $\Xi$. We say that $\xi_n$ converges weakly to $\xi$, and write $\xi_n \overset{w}{\longrightarrow} \xi$
if the sequence of laws $(\mathcal{L}(\xi_n))_{n\geq 1}$ converges weakly to the law $\mathcal{L}(\xi)$. Equivalently, for every bounded continuous function $f:\Xi\to\mathbb{R}$, $\lim_{n\to\infty} \mathbb{E}[f(\xi_n)] \;=\; \mathbb{E}[f(\xi)].$

\section{Approximation error}\label{section_approximation_error}
Recall that we suppose that $\nu_j\in\mathcal{P}_2(\mathbb{R}^d)$ is compactly supported on $\mathcal{X}_j\subset\mathbb{R}^d$ for $j\in[m]$. Let $d_0$ be the Euclidean distance in $\mathbb{R}^d$. We endow $\mathcal{P}_2(\mathcal{X}_j)$ with the topology induced by the Wasserstein metric $W_2$. It is known (see \cite{villani2021topics}) that in this compact support setting $W_2$ metrizes the weak topology (namely, $Q_n\to Q$ if and only if $\int fdQ_n\to\int fdQ$ for all $f\in \cC_b(\mathcal{X}_j)$).
Define the functionals
$C_\varepsilon : \mathcal{P}\left(\prod_{j=1}^m\cX_j\right)\to\mathbb{R}\cup\{\infty\}$,
\begin{equation*}
C_\varepsilon(\pi) = 
\begin{cases}
    \int c \, d\pi + \varepsilon\text{KL}(\pi|| \nu_1 \otimes \cdots \otimes \nu_m) & \text{if } \pi \in \Pi(\nu_1, \dots, \nu_m),\\
    \infty & \text{otherwise},
\end{cases}
\end{equation*}
and functional $C_{0}:\mathcal{P}\left(\prod_{j=1}^m\cX_j\right)\to\mathbb{R}\cup\{\infty\}$,
\begin{equation*}
C_0(\pi) = 
\begin{cases}
    \int c \, d\pi  & \text{if } \pi \in \Pi(\nu_1, \dots, \nu_m),\\
    \infty & \text{otherwise}.
\end{cases}
\end{equation*}
Obviously, $S_{\varepsilon}(\nu_1,\dots,\nu_m)=\inf_{\pi\in\Pi(\nu_1,\dots,\nu_m)}C_{\varepsilon}(\pi)$ and $S_{0}(\nu_1,\dots,\nu_m)=\inf_{\pi\in\Pi(\nu_1,\dots,\nu_m)}C_{0}(\pi)$.

\begin{definition}[$\Gamma-$convergence]
A sequence of functional $\mathcal{F}_\varepsilon$ is said to \emph{$\Gamma$-converge} to $\mathcal{F}$ if the following two conditions hold for every $x$,
\begin{enumerate}
    \item for any sequence $x_{\varepsilon}$ converging to $x$, $\mathcal{F}(x)\leq\liminf_{\varepsilon\to0}\mathcal{F}_{\varepsilon}(x_\varepsilon)$,
    \item there exists a sequence $x_{\varepsilon}$ converging to $x$, $\mathcal{F}(x)\geq\limsup_{\varepsilon\to0}\mathcal{F}_{\varepsilon}(x_\varepsilon)$.
\end{enumerate}
\end{definition}
\noindent For equivalent definitions and other more background information on $\Gamma$-convergence, we refer the reader to \cite{Brai}.

\begin{proposition}\label{Gamma1}
The sequence $(C_\varepsilon)_{\varepsilon>0}$ $\Gamma$-converges to $C_0$ w.r.t. the weak  topology. 
\end{proposition}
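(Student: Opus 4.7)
I would verify the two defining conditions of $\Gamma$-convergence separately, treating the liminf inequality as a soft compactness argument and the recovery sequence as the main technical step.

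\emph{Liminf inequality.} Let $\pi_\varepsilon\to\pi$ weakly. If $\pi_\varepsilon\notin\Pi(\nu_1,\dots,\nu_m)$ along a subsequence, then $\mathcal{C}_{\alpha,\varepsilon}(\pi_\varepsilon)=+\infty$ along that subsequence and the inequality is immediate. Otherwise, since each projection $e_i$ is continuous and pushforward is weakly continuous, the marginal identities $(e_i)_\sharp\pi_\varepsilon=\nu_i$ pass to the limit, so $\pi\in\Pi(\nu_1,\dots,\nu_m)$. Because $c_\alpha$ is continuous and bounded on the compact set $\mathcal{X}^m$, the map $\pi'\mapsto\int c_\alpha\,d\pi'$ is weakly continuous, and combining with $\mathrm{KL}\geq 0$ yields
\[
\liminf_{\varepsilon\to 0}\mathcal{C}_{\alpha,\varepsilon}(\pi_\varepsilon)\geq\liminf_{\varepsilon\to 0}\int c_\alpha\,d\pi_\varepsilon=\int c_\alpha\,d\pi=\mathcal{C}_{\alpha,0}(\pi).
\]

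\emph{Recovery sequence.} If $\pi\notin\Pi(\nu_1,\dots,\nu_m)$, then $\mathcal{C}_{\alpha,0}(\pi)=+\infty$ and any approximating sequence works. If $\pi\in\Pi(\nu_1,\dots,\nu_m)$ with $\mathrm{KL}(\pi\,\|\,\otimes_j\nu_j)<\infty$, the constant sequence $\pi_\varepsilon\equiv\pi$ works because $\mathcal{C}_{\alpha,\varepsilon}(\pi)=\int c_\alpha\,d\pi+\varepsilon\,\mathrm{KL}(\pi\,\|\,\otimes_j\nu_j)\to\mathcal{C}_{\alpha,0}(\pi)$. For general $\pi\in\Pi$, I would introduce a block approximation: partition $\mathcal{X}$ into finitely many Borel cells $\{A_i^k\}_{i=1}^{n_k}$ of diameter at most $1/k$ and set
\[
\pi_k:=\sum_{(i_1,\dots,i_m)}\pi\!\left(\prod_{j=1}^m A_{i_j}^k\right)\bigotimes_{j=1}^m\frac{\nu_j|_{A_{i_j}^k}}{\nu_j(A_{i_j}^k)},
\]
where the sum is restricted to multi-indices with $\nu_j(A_{i_j}^k)>0$ for all $j$ (other terms contribute zero $\pi$-mass since the marginals of $\pi$ are the $\nu_j$'s). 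A direct check on test sets of the form $\mathcal{X}\times\cdots\times B\times\cdots\times\mathcal{X}$ shows $(e_j)_\sharp\pi_k=\nu_j$, so $\pi_k\in\Pi(\nu_1,\dots,\nu_m)$. The Radon--Nikodym derivative of $\pi_k$ w.r.t.\ $\otimes_j\nu_j$ is piecewise constant, so
\[
\mathrm{KL}(\pi_k\,\|\,\otimes_j\nu_j)=\sum_{(i_1,\dots,i_m)}\pi\!\left(\prod_j A_{i_j}^k\right)\log\frac{\pi(\prod_j A_{i_j}^k)}{\prod_j\nu_j(A_{i_j}^k)}\leq \max_{(i_1,\dots,i_m)}\log\frac{1}{\prod_j\nu_j(A_{i_j}^k)}<\infty
\]
for each fixed $k$. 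Weak convergence $\pi_k\to\pi$ follows by testing against uniformly continuous $\varphi$: the cell-diameter bound $1/k$ together with the modulus of continuity of $\varphi$ shows that $\int\varphi\,d\pi_k$ differs from a Riemann-type sum for $\int\varphi\,d\pi$ by $o(1)$. Finally, a diagonal extraction selects $k(\varepsilon)\to\infty$ slowly enough that $\varepsilon\,\mathrm{KL}(\pi_{k(\varepsilon)}\,\|\,\otimes_j\nu_j)\to 0$, and weak continuity of $\int c_\alpha\,d(\cdot)$ gives
\[
\limsup_{\varepsilon\to 0}\mathcal{C}_{\alpha,\varepsilon}(\pi_{k(\varepsilon)})=\int c_\alpha\,d\pi=\mathcal{C}_{\alpha,0}(\pi).
\]

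\emph{Main obstacle.} The liminf step is essentially soft, resting on weak continuity of the marginals and of the bounded continuous cost. The substantive work is the recovery sequence in the case $\mathrm{KL}(\pi\,\|\,\otimes_j\nu_j)=+\infty$: a simple mollification or mixture with $\otimes_j\nu_j$ either destroys the marginal constraints or fails to yield finite entropy. The block approximation is the correct device because each cell is replaced by a local product, which simultaneously restores absolute continuity w.r.t.\ $\otimes_j\nu_j$ and preserves all $m$ marginals. The only delicate point is choosing the diagonal schedule $k(\varepsilon)$ to balance the finite but $k$-dependent entropy blow-up against the vanishing regularizer $\varepsilon$; this is always possible because $\mathrm{KL}(\pi_k\,\|\,\otimes_j\nu_j)$ is finite for each individual $k$.
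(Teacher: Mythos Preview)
Your approach is essentially the paper's: the liminf via $\mathrm{KL}\ge 0$ plus weak continuity of the bounded cost, and the limsup via the same block approximation that replaces $\pi$ on each product cell by a local product of conditioned marginals, preserving all $m$ marginals while producing finite entropy.

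Two small remarks. First, your liminf case split is not a valid dichotomy as written: having $\pi_\varepsilon\notin\Pi$ along \emph{some} subsequence does not make the full $\liminf$ infinite, since the complementary subsequence may stay in $\Pi$ and yield finite values. The clean version (which the paper uses) is to argue via weak closedness of $\Pi(\nu_1,\dots,\nu_m)$: if $\pi\notin\Pi$ then eventually $\pi_\varepsilon\notin\Pi$, so the $\liminf$ is $+\infty$; otherwise $\pi\in\Pi$ and your cost argument applies. Second, the paper indexes the block approximation directly by mesh $\varepsilon$ and bounds the entropy via Jensen by $\sum_{j=1}^{m-1}\log L_j(\varepsilon)$, where $L_j\lesssim \varepsilon^{-d}$ on a compact domain, so $\varepsilon\cdot\mathrm{KL}(\pi_\varepsilon\,\|\,\otimes_j\nu_j)\lesssim \varepsilon\log(1/\varepsilon)\to 0$ without any diagonal extraction. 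Your cruder $\max$-bound on the entropy plus a diagonal schedule $k(\varepsilon)$ reaches the same conclusion but loses this explicit rate.
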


\begin{proof}[Proof of Proposition~\ref{Gamma1}]
    The proof follows from Lemma \ref{liminf} and Lemma \ref{limsup}. 
\end{proof}

\begin{lemma}[liminf inequality]\label{liminf}
    Let $\pi_\varepsilon$ be an arbitrary sequence in $\mathcal{P}(\Pi_{j=1}^m\mathcal{X}_j)$ that converges to $\pi_0\in\mathcal{P}(\Pi_{j=1}^m\mathcal{X}_j)$. Then
    $$C_{0}(\pi_{0})\leq\liminf_{\varepsilon\to0^+}C_{\varepsilon}(\pi_\varepsilon).$$
\end{lemma}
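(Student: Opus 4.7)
The plan is to split on whether or not the weak limit $\pi_0$ satisfies the marginal constraint, and in each case exploit the continuity/lower semicontinuity of the two pieces of $C_{\alpha,\varepsilon}$ separately.

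First, I would dispose of the infeasible case. Suppose $\pi_0 \notin \Pi(\nu_1,\dots,\nu_m)$, so that there exists some index $i \in [m]$ with $(e_i)_\sharp \pi_0 \neq \nu_i$. Because each projection map $e_i$ is continuous, the pushforward $\pi \mapsto (e_i)_\sharp \pi$ is continuous from the weak topology on $\mathcal{P}(\prod_j \mathcal{X}_j)$ to the weak topology on $\mathcal{P}(\mathcal{X}_i)$. Consequently $(e_i)_\sharp \pi_\varepsilon \to (e_i)_\sharp \pi_0 \neq \nu_i$, and if infinitely many $\pi_\varepsilon$ lay in $\Pi(\nu_1,\dots,\nu_m)$ we would get $(e_i)_\sharp \pi_0 = \nu_i$ by uniqueness of the weak limit, a contradiction. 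Hence $\pi_\varepsilon \notin \Pi(\nu_1,\dots,\nu_m)$ for all sufficiently small $\varepsilon$, whence $C_{\alpha,\varepsilon}(\pi_\varepsilon) = +\infty$ eventually and the liminf inequality is trivial.

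Next I would handle the feasible case $\pi_0 \in \Pi(\nu_1,\dots,\nu_m)$. If $\liminf_{\varepsilon \to 0^+} C_{\alpha,\varepsilon}(\pi_\varepsilon) = +\infty$ there is nothing to prove, so I may pass to a subsequence $\varepsilon_k \downarrow 0$ achieving the liminf and along which $C_{\alpha,\varepsilon_k}(\pi_{\varepsilon_k}) < \infty$; in particular $\pi_{\varepsilon_k} \in \Pi(\nu_1,\dots,\nu_m)$ for every $k$. Since each $\mathcal{X}_j$ is compact and $c_\alpha$ is continuous and bounded on $\prod_j \mathcal{X}_j$, weak convergence $\pi_{\varepsilon_k} \to \pi_0$ yields
\begin{equation*}
\lim_{k \to \infty} \int c_\alpha\, d\pi_{\varepsilon_k} = \int c_\alpha\, d\pi_0.
\end{equation*}
Using nonnegativity of the relative entropy $\mathrm{KL}(\pi_{\varepsilon_k} \Vert \nu_1 \otimes \cdots \otimes \nu_m) \geq 0$, we obtain
\begin{equation*}
\liminf_{\varepsilon \to 0^+} C_{\alpha,\varepsilon}(\pi_\varepsilon) = \lim_{k \to \infty} \Bigl[\int c_\alpha\, d\pi_{\varepsilon_k} + \varepsilon_k\, \mathrm{KL}(\pi_{\varepsilon_k} \Vert \nu_1 \otimes \cdots \otimes \nu_m)\Bigr] \geq \int c_\alpha\, d\pi_0 = C_{\alpha,0}(\pi_0),
\end{equation*}
which is the desired inequality.

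This argument is essentially routine; the only subtle point is making sure the infeasible case is handled before invoking continuity of the cost integral, since otherwise one might worry that the entropic penalty could blow up in a way that compensates for the shift in marginals. The potential obstacle, if $\mathcal{X}_j$ were merely closed rather than compact, would be that $c_\alpha$ is only lower semicontinuous (not bounded), so one would need to invoke the weak lower semicontinuity of $\pi \mapsto \int c_\alpha\, d\pi$ for nonnegative continuous $c_\alpha$; but under the compact-support assumption made at the top of the appendix this is automatic.
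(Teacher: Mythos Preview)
Your proposal is correct and follows essentially the same two-case split as the paper's proof: first dispose of $\pi_0\notin\Pi(\nu_1,\dots,\nu_m)$ by closedness of the constraint set (the paper phrases this via tightness, you via continuity of the marginal maps---same content), then in the feasible case drop the nonnegative KL term and use weak convergence of $\int c_\alpha\,d\pi_\varepsilon$. Your version is somewhat more careful in passing to a subsequence realizing the liminf before invoking the limit of the cost integral, but the argument is otherwise identical.
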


\begin{lemma}[limsup inequality]\label{limsup}
    There exists a sequence $\pi_\varepsilon\in$ $\mathcal{P}(\Pi_{j=1}^m\mathcal{X}_j)$ converging to $\pi_0\in\mathcal{P}(\Pi_{j=1}^m\mathcal{X}_j)$, such that
    $$C_{0}(\pi_{0})\geq\limsup_{\varepsilon\to0^+}C_{\varepsilon}(\pi_\varepsilon).$$
\end{lemma}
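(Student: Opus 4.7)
The plan is to construct an explicit recovery sequence $(\pi_\varepsilon)_{\varepsilon>0}$ by a \emph{block approximation} of $\pi_0$ on a partition whose scale $\delta=\delta(\varepsilon)\downarrow 0$ is tuned so that the factor $\varepsilon$ kills the (possibly divergent) relative entropy. First I would dispose of the trivial case: if $\pi_0\notin\Pi(\nu_1,\dots,\nu_m)$, then $C_{\alpha,0}(\pi_0)=\infty$ and any constant sequence in $\Pi(\nu_1,\dots,\nu_m)$ (e.g.\ $\pi_\varepsilon\equiv \nu_1\otimes\cdots\otimes\nu_m$) fulfills the inequality vacuously. So assume $\pi_0\in\Pi(\nu_1,\dots,\nu_m)$.

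Fix $\delta>0$ and partition each compact $\mathcal{X}_j$ into finitely many Borel blocks $\{Q_{k}^{(j)}\}_{k}$ of diameter at most $\delta$ (say via a cube grid of side $\delta$ intersected with $\mathcal{X}_j$). Set $q_{\vk}:=\pi_0\bigl(Q_{k_1}^{(1)}\times\cdots\times Q_{k_m}^{(m)}\bigr)$, with the convention that terms with any vanishing denominator below are dropped, and define
\begin{equation*}
\pi_\delta \;:=\; \sum_{\vk}\, q_{\vk}\,\prod_{j=1}^{m}\frac{\nu_j|_{Q^{(j)}_{k_j}}}{\nu_j(Q^{(j)}_{k_j})}.
\end{equation*}
Marginalizing along the $j$-th coordinate collapses $\sum_{\vk\setminus\{k_j\}}q_{\vk}$ to $\nu_j(Q^{(j)}_{k_j})$, so $\pi_\delta\in\Pi(\nu_1,\dots,\nu_m)$, and $\pi_\delta$ is absolutely continuous with respect to $\otimes_{j=1}^m\nu_j$ with a piecewise constant density equal to $q_{\vk}/\prod_j\nu_j(Q^{(j)}_{k_j})$ on the $\vk$-th box.

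Next I would verify three properties as $\delta\to 0^+$: (i) weak convergence $\pi_\delta\to \pi_0$, since for any $\varphi\in C_b(\prod_j\mathcal{X}_j)$, uniform continuity on the compact product domain gives $|\int\varphi\,d\pi_\delta-\int\varphi\,d\pi_0|\lesssim \omega_\varphi(m\delta)\to 0$ (within each block the mass $q_{\vk}$ is rearranged by at most $m\delta$); (ii) convergence of the cost, $\int c_\alpha\,d\pi_\delta\to \int c_\alpha\,d\pi_0$, by the same argument since $c_\alpha$ is continuous on the compact domain; and (iii) the crucial entropy estimate
\begin{equation*}
\text{KL}\bigl(\pi_\delta\,\|\,\otimes_{j=1}^m\nu_j\bigr) \;=\; \sum_{\vk}q_{\vk}\log q_{\vk}\;-\;\sum_{j=1}^{m}\sum_{k_j}\nu_j(Q^{(j)}_{k_j})\log\nu_j(Q^{(j)}_{k_j}) \;\leq\; C\,m\,d\,\log(1/\delta),
\end{equation*}
obtained by noting that $\sum_{\vk} q_{\vk}\log q_{\vk}\leq 0$ and that each $j$-th discrete entropy is bounded by the logarithm of the number of non-empty cubes, which is $O(\delta^{-d})$ since $\mathcal{X}_j$ is bounded.

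Finally I set $\pi_\varepsilon:=\pi_{\delta(\varepsilon)}$ with any $\delta(\varepsilon)\downarrow 0$ satisfying $\varepsilon\log(1/\delta(\varepsilon))\to 0$ (e.g.\ $\delta(\varepsilon)=\exp(-1/\sqrt{\varepsilon})$). Then (ii) and (iii) combine to yield $C_{\alpha,\varepsilon}(\pi_\varepsilon)= \int c_\alpha\,d\pi_\varepsilon + \varepsilon\,\text{KL}(\pi_\varepsilon\|\otimes_j\nu_j)\to \int c_\alpha\,d\pi_0 = C_{\alpha,0}(\pi_0)$, while (i) supplies the required weak convergence $\pi_\varepsilon\to \pi_0$; this is in fact stronger than the $\limsup$ inequality required. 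The main obstacle I anticipate is controlling the relative entropy: the pointwise density $q_{\vk}/\prod_j\nu_j(Q^{(j)}_{k_j})$ can blow up on boxes where some $\nu_j$ has tiny mass, so one cannot bound KL pointwise. The key is the global telescoping identity above, which turns the bound into a sum of discrete entropies of the marginals and therefore grows only logarithmically in $1/\delta$, leaving ample room for the damping factor $\varepsilon$.
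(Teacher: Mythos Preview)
Your approach is essentially the same as the paper's: both construct the recovery sequence by block approximation on a fine partition, verify the marginals, bound $\mathrm{KL}(\pi_\delta\|\otimes_j\nu_j)$ by the sum of discrete marginal entropies (hence by $O(md\log(1/\delta))$), and let the factor $\varepsilon$ absorb this. The paper simply takes the block scale equal to $\varepsilon$ itself and uses a slightly sharper telescoping that saves one marginal entropy (bounding by $\sum_{j=1}^{m-1}\log L_j$ via $\pi_0(\cdot)\leq\nu_m(A_m^{n_m})$), but this is cosmetic.

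One small slip: in the trivial case $\pi_0\notin\Pi(\nu_1,\dots,\nu_m)$ your proposed sequence $\pi_\varepsilon\equiv\nu_1\otimes\cdots\otimes\nu_m$ does \emph{not} converge to $\pi_0$, so it is not an admissible recovery sequence. The fix is immediate: take $\pi_\varepsilon\equiv\pi_0$; then $\pi_\varepsilon\to\pi_0$ trivially and $C_{\alpha,\varepsilon}(\pi_0)=\infty=C_{\alpha,0}(\pi_0)$, so the inequality holds.
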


\begin{proof}[Proof of Proposition \ref{approgamma_coupling} and Proposition ~\ref{approgamma}]
Since $\Pi(\nu_1,\dots,\nu_m)$ is tight via Lemma \ref{tight}, we have the compactness via Prohorov's theorem. Hence, $C_{\varepsilon}: \Pi(\nu_1,\dots,\nu_m)\to\mathbb{R}$ is equi-coercive: for all $t\in\mathbb{R}$, the set $\{\pi\in\Pi(\nu_1,\dots,\nu_m),C_\varepsilon(\pi)\leq t\}$ is included in some compact set $K_t$. Indeed, the set above is closed in the topology induced by the Wasserstein metric. 

Combining with the $\Gamma-$convergence in Proposition~\ref{Gamma1}, based on Theorem 2.10 in \cite{Brai}, we have that any cluster point of $\pi^*_\varepsilon$, the optimizer of $C_{\varepsilon}$, is a minimizer of $C_{0}$. All the results follow.
\end{proof}

\begin{proof}[Proof of Lemma \ref{liminf}]
Since $\Pi(\nu_1, \dots, \nu_m)$ is tight (Lemma \ref{tight}), if $\pi_0\notin\Pi(\nu_1, \dots, \nu_m)$, we will eventually have $\pi_\varepsilon\notin\Pi(\nu_1, \dots, \nu_m)$ for $\varepsilon > 0$ sufficiently small. Thus $C_{\varepsilon}(\pi_\varepsilon)=C_{0}(\pi_0)=\infty$. So we assume $\pi_0\in\Pi(\nu_1, \dots, \nu_m)$ from now on.
By the definition of the topology induced by the Wasserstein metric here and the non-negativity of the entropy term, we obtain that
\begin{align*}
\liminf_{\varepsilon\to0^+}C_{\varepsilon}(\pi_\varepsilon)&=\liminf_{\varepsilon\to0^+}\int cd\pi_\varepsilon+\varepsilon\text{KL}(\pi_\varepsilon||\otimes_{j=1}^m\nu_j)\geq\lim_{\varepsilon\to0^+}\int cd\pi_\varepsilon=C_{0}(\pi_0).
\end{align*}
\end{proof}


\noindent To prove Lemma~\ref{limsup}, we essentially follow the block approximation technique that was used to show the convergence rate of two marginal entropic optimal transport costs~\citep{CPT23}. Recently, \cite{NP23} adapted this method to multimarginal problems and gained the convergence rate of the corresponding entropic optimal transport costs. As mentioned in \citep{NP23}, the $\Gamma$-convergence of multimarginal entropic optimal transport costs is clear by the block approximation method. Here for completeness, we write down our proof details of Lemma~\ref{limsup}.

\begin{proof}[Proof of Lemma \ref{limsup}]
For every $\varepsilon > 0$ and $i \in [m]$, consider a partition 
\[ \mathcal{X}_i = \bigsqcup_{1\leq n\leq L_i} A_i^n \]
of Borel sets such that $\operatorname{diam}(A_i^n) \leq \varepsilon $ for every $1\leq n \leq L_i$, with $L_i=L_i(\varepsilon)\lesssim(\frac{1}{\varepsilon})^d$ due to the compactness of $\mathcal{X}_i$. Also, set
\[
\nu_i^n := 
\begin{cases} 
\frac{\nu_i\lfloor_{A_i^n}}{\nu_i(A_i^n)}, & \text{if } \nu_i(A_i^n) > 0, \\ 
0, & \text{otherwise},
\end{cases}
\]
where $\mu \lfloor A$ means the restriction of the Borel measure $\mu$ to the Borel set $A$ defined by $\mu\lfloor A(E):=\mu(A\cap E)$ for every $E$. Then for every $m$-tuple $n = (n_1, \dots, n_m) \in \prod_{j=1}^m[L_j]$, define
\[
(\pi_0)^n := \pi^*_0(A_1^{n_1} \times \dots \times A_m^{n_m})\otimes_{k=1}^m\nu^{n_k}_{k},
\]
with $\pi_0^*$ the optimizer of the Problem (\ref{eqn:MOT_formulation}) and finally define 
\[
\pi_\varepsilon  := \sum_{n \in \prod_{j=1}^m[L_j]} (\pi_0)^n.
\]

\noindent By definition, $\pi_\varepsilon  \ll \otimes_{i=1}^m \nu_i$ and it is easily checked that ${e_i}_{\sharp}\pi_\varepsilon=\nu_i$, for $i\in[m]$. Based on the construction above, we also know that 
$$W_2(\pi^*_0,\pi_\varepsilon )\leq2\varepsilon,$$
equipping the $\Pi_{j=1}^m\mathcal{X}_j$ with the metric $D((z_1,\dots,z_m),(z'_1,\dots,z'_m)):=\sum_{j=1}d_0(z_j,z'_j)$. As a consequence, $\pi_\varepsilon $ converges to $\pi_0$ in the weak topology as $\varepsilon\to0^+$. 

\noindent Besides, $\pi_\varepsilon (A) = \pi^*_0(A)$ for every $A = \Pi_{i=1}^m A_i^{n_i}$ where $n \in \mathbb{N}^m$, and for $\otimes_{i=1}^m \nu_i$-almost every $(x_1, \dots, x_m) \in \otimes_{i=1}^m A_i^{n_i}$,
\[
\frac{d\pi_\varepsilon }{d\otimes_{i=1}^m\nu_i}(x_1, \dots, x_m) := 
\begin{cases}
\frac{\pi^*_0(A_1^{n_1} \times \dots \times A_m^{n_m})}{\nu_1(A_1^{n_1})\cdots\nu_m(A_m^{n_m})}, &  \text{if }  \nu_1(A_1^{n_1})  \cdots  \nu_m(A_m^{n_m}) > 0,\\
0, &  \text{otherwise}.
\end{cases}
\]
For the entropy term, we have

\begin{align*}
&\text{KL}(\pi_\varepsilon  | \otimes_{i=1}^{m} \nu_i) = \sum_{n \in \prod_{j=1}^m[L_j]} \int_{\prod_{i=1}^{m} A^{n_i}_i} \log \left( \frac{\pi^*_0(A^{n_1}_1 \times \dots \times A^{n_m}_m)}{\nu_1(A^{n_1}_1) \cdots \nu_m(A^{n_m}_m)} \right) d\pi_\varepsilon\\
= & \sum_{n \in \prod_{j=1}^m[L_j]} \pi^*_0(A^{n_1}_1 \times \dots \times A^{n_m}_m) \log \left( \frac{\pi^*_0(A^{n_1}_1 \times \dots \times A^{n_m}_m)}{\nu_1(A^{n_1}_1) \cdots \nu_m(A^{n_m}_m)} \right)\\
= & \sum_{n \in \prod_{j=1}^m[L_j]} \pi^*_0(A^{n_1}_1 \times \dots \times A^{n_m}_m) \log \left( \frac{\pi^*_0(A^{n_1}_1 \times \dots \times A^{n_m}_m)}{\nu_m(A^{n_m}_m)} \right)\\
& + \sum_{j=1}^{m-1} \sum_{n \in \prod_{j=1}^m[L_j]} 	\pi^*_0(A^{n_1}_1\times\dots\times A^{n_m}_m)\log( 1/\nu_j(A_j))\\
\leq & \sum_{n \in \prod_{j=1}^m[L_j]} \pi^*_0(A_1^{n_1} \times \dots \times A_m^{n_m}) 
\log\left(\frac{\pi^*_0(A_1^{n_1} \times \dots \times A_m^{n_m})}{\nu_m(A_m^{n_m})}\right)\\
& + \sum_{j=1}^{m-1} \sum_{n_j \in [L_j]} 
\pi^*_0\left(\prod_{i=1}^{j-1} \mathcal{X}_i \times A_j^{n_j}\times\prod_{i=j+1}^m \mathcal{X}_i\right) 
 \log(1/\nu_j(A_j^{n_j}))\\
\leq & \sum_{j=1}^{m-1} \sum_{n_j \in [L_j]} \nu_j(A_j^{n_j}) \log(1/\nu_j(A_j^{n_j})).
\end{align*}
where the last inequality comes from the inequality 
$\pi^*_0(A_1^{n_1} \times \dots \times A_m^{n_m}) 
\leq \nu_m(A_m^{n_m})$. 
By the concavity of $t\mapsto t\log(1/t)$, Jensen's inequality gives
$$\sum_{1\leq n_j\leq L_j}\nu_j(A^{n_j}_j)\log(1/\nu_j(A^{n_j}_j))\leq\log(L_j).$$
Thus, we get

\[
\text{KL}(\pi_\varepsilon  | \otimes_{i=1}^{m} \nu_i) \leq \sum_{j=1}^{m-1} \log L_j.
\]
All the analysis above yields that
\begin{align*}
    C_{\varepsilon}(\pi_\varepsilon )\leq\int c d\pi_\varepsilon +\varepsilon\sum_{j=1}^{m-1} \log L_j.
\end{align*}
By the compactness of $\mathcal{X}_j\subset\mathbb{R}^d$, we know that $\varepsilon\sum_{j=1}^{m-1}\log L_j\lesssim md\varepsilon\log(\frac{1}{\varepsilon})\to0$ as $\varepsilon\to0^+$. As a consequence, we obtain that
\begin{align*}
    \limsup_{\varepsilon\to0^+} C_{\varepsilon}(\pi_\varepsilon)\leq C_{0}(\pi^*_0).
\end{align*}
\end{proof}

\section{Concentration bound on the empirical gradient norm}\label{sec_Concentration_bound_on_the_empirical_gradient_norm}

This section is devoted to a high probability bound on the empirical gradient norm $\|\nabla\widehat{\Phi}_{\varepsilon}(\vf^*)\|_{\widehat{\mathscr{L}}_m}$, which is fundamental to our main results. Specifically, our goal is to prove the following Lemma \ref{gradnormp}. We present our results in a more general setting as in Lemma \ref{gradnormp} under the following assumptions, which might be of broader usage for multimarginal system analysis. Lemma \ref{gradnormp} offers new technical insights and improvements on the existing literature concerning the multimarginal regularized optimal transport problem.

\begin{assumption}\label{assumption_independent}
We have access to $m$ independent sequence $X^{(j)}_1,\dots,X^{(j)}_N\overset{\text{i.i.d.}}{\sim}\nu_j$ for $\nu_j\in\cP(\bR^d),\,j\in[m]$ of independent random variables.
\end{assumption}

\begin{assumption}\label{assumption_bounded_potential}
    Suppose $c:\prod_{i=1}^m \cX_i \to\bR$ is bounded.
\end{assumption}
    
\begin{assumption}\label{assumption_schrodinger_system}
    Suppose $p_\varepsilon^* = \exp \left( \frac{\sum_{j=1}^mf_j^* - c}{\varepsilon} \right):\prod_{i=1}^m \cX_i \to\bR\,$ for $\,f^*_j:\cX_j\to\bR, j\in[m]$
    satisfies for $j\in[m]$,  
    \begin{align}\label{appendix_system_condition}
        \int p_\varepsilon^*(x_1,\dots,x_m)d\nu_{-j}(x_1,\dots,x_m) = 1.
    \end{align}
\end{assumption}

\begin{lemma}[Concentration of empirical gradient norm]\label{gradnormp}
Under Assumption (\ref{assumption_independent})-(\ref{assumption_schrodinger_system}), for all $t>0 $, we have with probability at least $1-2m(m-1)e^{-t}$,
\[
\|\nabla\widehat{\Phi}_{\varepsilon}(\vf^*)\|^2_{\widehat{\mathscr{L}}_m} \lesssim_{m,\varepsilon} \frac{t}{N}.
\]
\end{lemma}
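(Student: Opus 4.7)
The plan is a high-moment estimate combined with Markov's inequality, exploiting both the marginal feasibility constraint~\eqref{mconst} and the sharp bounded-differences constant for a multisample V-statistic.

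By the empirical analogue of Lemma~\ref{norm},
\[
\|\nabla\hat{\Phi}_{\alpha,\varepsilon}(\vf^*)\|^2_{\hat{\mathscr{L}}_m} = \sum_{j=1}^m \frac{1}{N}\sum_{k=1}^N G_j(X^{(j)}_k)^2,\qquad G_j(x) := \int \bigl(1 - p^*_{\alpha,\varepsilon}\bigr)(x,\cdot)\, d\otimes_{i\neq j}\hat{\nu}^N_i.
\]
The critical observation is that, by the $j$-th marginal constraint~\eqref{mconst}, $\mathbb{E}[G_j(x)] = 0$ for every $x \in \cX$, and $G_j$ depends only on $\{X^{(i)}_k : i\neq j, k\in[N]\}$, hence is independent of the samples $\{X^{(j)}_k\}$ against which it is integrated. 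By Proposition~\ref{BDP}, $p^*_{\alpha,\varepsilon}$ is uniformly bounded by some constant $M = M(m,\varepsilon) < \infty$, so conditionally on $x\in\cX$, $G_j(x)$ is a zero-mean multisample V-statistic of $(m-1)N$ independent samples with kernel bounded in $[1-M, 1]$. Since any single index $X^{(i)}_{k_0}$ (for $i \neq j$) appears in only $N^{m-2}$ of the $N^{m-1}$ summands defining $G_j(x)$, altering it changes $G_j(x)$ by at most $M/N$, so McDiarmid's bounded-differences inequality gives conditional sub-Gaussianity with variance proxy $\sigma^2 \lesssim_{m,\varepsilon} 1/N$, from which $\mathbb{E}[G_j(x)^{2p} \mid x] \lesssim_{m,\varepsilon} (p/N)^p$ for every integer $p \ge 1$.

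Combining this with the power-mean inequality $\bigl(\sum_{j=1}^m a_j\bigr)^p \le m^{p-1}\sum_j a_j^p$ and Jensen's inequality $\bigl(N^{-1}\sum_k b_k\bigr)^p \le N^{-1}\sum_k b_k^p$, together with symmetry across $k$, yields the uniform moment bound $\mathbb{E}\|\nabla\hat{\Phi}_{\alpha,\varepsilon}(\vf^*)\|^{2p}_{\hat{\mathscr{L}}_m} \lesssim_{m,\varepsilon} (p/N)^p$. Applying Markov's inequality with $s = e\,C_{m,\varepsilon}\,p/N$ and the choice $p = \lceil t \rceil$ then delivers $\|\nabla\hat{\Phi}_{\alpha,\varepsilon}(\vf^*)\|^2_{\hat{\mathscr{L}}_m} \lesssim_{m,\varepsilon} t/N$ with probability at least $1 - e^{-t}$. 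The stated prefactor $2m(m-1)$ can be recovered alternatively by expanding $G_j$ in its Hoeffding decomposition and applying concentration for completely degenerate order-$2$ U-statistics to each pair $(i,j)$ with $i \neq j$ (each pair contributing an event of probability $2e^{-t}$), followed by a union bound over the $m(m-1)$ such pairs.

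The main technical obstacle is securing the correct bounded-differences constant $M/N$ (rather than the trivial $O(M)$) in the McDiarmid step; without exploiting the combinatorial fact that each sample $X^{(i)}_{k_0}$ touches only an $O(1/N)$-fraction of the summands, one would obtain only $\sqrt{t/N}$ concentration for $\|\nabla\hat{\Phi}_{\alpha,\varepsilon}(\vf^*)\|^2_{\hat{\mathscr{L}}_m}$, which is insufficient. A related subtlety is that the terms $\{G_j(X^{(j)}_k)^2\}_{k\in[N]}$ are \emph{not} independent across $k$, because they share all the other-marginal samples $\{X^{(i)} : i \neq j\}$; this rules out a direct Bernstein argument on the $k$-sum and is precisely why the conditional sub-Gaussian bound on $G_j(x)$, followed by averaging against $\nu_j$, is the cleanest route.
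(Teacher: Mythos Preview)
Your argument is correct and takes a genuinely different route from the paper. The paper proceeds by a recursive telescoping of each $K_j$ (Lemma~\ref{claim} via the induction Lemma~\ref{induction}): writing $Z_{\ell_1\dots\ell_m}=1-p^*_{\alpha,\varepsilon}$, it successively subtracts off conditional expectations $\mathbb{E}_{r,\dots,m}Z$ to produce $m-1$ pieces $K_{1,r}$, each of which is controlled by Hoeffding's inequality in Hilbert space (Lemma~\ref{lem:hoeffding_hilbert}) applied to a vector-valued sum, and then takes a union bound over the $m(m-1)$ pieces. Your approach bypasses the telescoping entirely: you apply McDiarmid directly to the scalar $G_j(x)$, exploiting that each sample $X^{(i)}_{k_0}$ touches only an $N^{-1}$-fraction of the summands to get the sharp bounded-differences constant $M/N$ and hence variance proxy $\lesssim_{m,\varepsilon} 1/N$, then convert sub-Gaussianity to moment bounds and finish with Markov. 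This is more elementary and in fact yields a better probability constant ($e^{-t}$ rather than $2m(m-1)e^{-t}$). The paper's decomposition has the advantage of making the source of the $m(m-1)$ pieces explicit and of reusing the Hilbert-space Hoeffding machinery already employed for the cost functional. One minor remark: your closing sentence about recovering the $2m(m-1)$ prefactor via ``completely degenerate order-$2$ U-statistics'' over pairs $(i,j)$ does not match the paper's actual decomposition, which is a sequential conditioning (martingale-type) telescoping indexed by $j\in[m]$ and $r\in\{3,\dots,m+1\}$, not a classical Hoeffding decomposition into order-$2$ components; but this side remark is inessential to your main argument.
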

\noindent The proof of Lemma~\ref{gradnormp} is quite involved. So we divide it into several subsections as follows.

\subsection{Decomposition of the empirical gradient norm}
Recall that
\[
\|\nabla\widehat{\Phi}_{\varepsilon}(\vf^*)\|^2 = \sum_{j=1}^m \frac{1}{N}\sum_{\ell_j=1}^N \left[\frac{1}{N^{m-1}}\sum_{1\leq\ell_{1},\dots,\ell_{j-1},\ell_{j+1},\dots,\ell_{m}\leq N} \left(1-p_{\varepsilon}^*(X^{(1)}_{\ell_{1}},\dots,X^{(j)}_{\ell_{j}},\dots,X_{\ell_{m}}^{(m)})\right)  \right]^ 2.
\]
Denote $Z_{\ell_1\dots\ell_m}=1-p_{\varepsilon}^*(X^{(1)}_{\ell_{1}},\dots,X^{(j)}_{\ell_{j}},\dots,X_{\ell_{m}}^{(m)})$ and
$$K_j=\frac{1}{N}\sum_{\ell_j=1}^N \left[\frac{1}{N^{m-1}}\sum_{1\leq\ell_{1},\dots,\ell_{j-1},\ell_{j+1},\dots,\ell_{m}\leq N} Z_{\ell_1\dots\ell_m}  \right]^ 2.$$
Then, we can write
\begin{align}\label{sumK_j}
\|\nabla\widehat{\Phi}_{\varepsilon}(\vf^*)\|^2 = \sum_{j=1}^m\frac{1}{N}\sum_{\ell_j=1}^N\left[\frac{1}{N^{m-1}}\sum_{1\leq\ell_{1},\dots,\ell_{j-1},\ell_{j+1},\dots,\ell_{m}\leq N}Z_{\ell_1\dots\ell_m}  \right]^ 2 = \sum_{j=1}^mK_j.
\end{align}
In the following, we only detail the analysis of term $K_1$ for simplicity and readability. The terms $K_j$, $j=2,\dots,m$ can be dealt with in the same way. The next lemma is a crucial observation.

\begin{lemma}\label{claim}
Under Assumption (\ref{assumption_independent})-(\ref{assumption_schrodinger_system}), we can bound
\begin{align}\label{claim_equation}
    K_1 \leq & \frac{2}{N^{m+1}}\sum_{1\leq\ell_1,\dots,\ell_{m-1}\leq N}\left(\sum_{1\leq\ell_m\leq N}Z_{\ell_1\dots\ell_m}-\mathbb{E}_{m}Z_{\ell_1\dots\ell_m}\right)^2 \nonumber\\
    &+\sum_{r=3}^m\frac{2^{m-r+2}}{N^r}\sum_{1\leq\ell_1,\dots,\ell_{r-2}\leq N}\left(\sum_{1\leq \ell_{r-1}\leq N}\E_{r,\dots,m}Z_{\ell_1\dots\ell_m}-\mathbb{E}_{r-1,\dots,m}Z_{\ell_1\dots\ell_m}\right)^2,
\end{align}
where $\mathbb{E}_{q,...,m}$ means the expectation taken with respect to $\{X^{(j)}_i\}_{j\in\{q,\dots,m\},i\in[N]}$.
\end{lemma}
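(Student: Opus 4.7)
The plan is to peel off one level of conditioning at a time from the inner sum, exploiting the marginal feasibility~(\ref{mconst}) of the multimarginal Schr\"odinger system to terminate the peeling cleanly. For $1 \leq k \leq m$, introduce the conditional expectation
$$U^{(k)}_{\ell_1,\ldots,\ell_k} := \mathbb{E}_{k+1,\ldots,m}\, Z_{\ell_1\ldots\ell_m},$$
which depends only on $X^{(1)}_{\ell_1},\ldots,X^{(k)}_{\ell_k}$. By construction $U^{(m)}_{\ell_1,\ldots,\ell_m} = Z_{\ell_1\ldots\ell_m}$, and crucially $U^{(1)}_{\ell_1} \equiv 0$ thanks to~(\ref{mconst}). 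The identities $\mathbb{E}_{r,\ldots,m} Z = U^{(r-1)}$ and $\mathbb{E}_{r-1,\ldots,m} Z = U^{(r-2)}$ also show that the right-hand side of~(\ref{claim_equation}) is expressed entirely in terms of the $U^{(\cdot)}$'s.

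Starting from $K_1 = N^{-(2m-1)}\sum_{\ell_1}\bigl(\sum_{\ell_2,\ldots,\ell_m} U^{(m)}\bigr)^2$, I would iterate the following single step for $q = m, m-1, \ldots, 2$: split $U^{(q)} = (U^{(q)} - U^{(q-1)}) + U^{(q-1)}$, apply the elementary bound $(a+b)^2 \leq 2a^2 + 2b^2$, and use the fact that $U^{(q-1)}$ depends only on $\ell_1,\ldots,\ell_{q-1}$, so summing out the trailing indices $\ell_q,\ldots,\ell_m$ in that piece yields a clean factor of a power of $N$. On the first subterm (the one carrying the increment $U^{(q)} - U^{(q-1)}$), I would then apply Cauchy--Schwarz to the outer sum over $\ell_2,\ldots,\ell_{q-1}$ while keeping $\sum_{\ell_q}$ trapped inside the square. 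Tracking the resulting prefactors shows that this first subterm is dominated by $(2^{m-r+2}/N^r) \sum_{\ell_1,\ldots,\ell_{r-2}}\bigl(\sum_{\ell_{r-1}}(U^{(r-1)} - U^{(r-2)})\bigr)^2$ with $r = q+1$; for $q \in \{2,\ldots,m-1\}$ this is the $r$-th summand of~(\ref{claim_equation}) with $r \in \{3,\ldots,m\}$, and the top step $q=m$ produces the first displayed line (with $r = m+1$, coefficient $2/N^{m+1}$).

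The second subterm of each split is a constant multiple of $\sum_{\ell_1}\bigl(\sum_{\ell_2,\ldots,\ell_{q-1}} U^{(q-1)}\bigr)^2$, which has exactly the same shape as the quantity we started from but with $q$ decremented; it therefore feeds directly into the next iteration. After $m-1$ iterations, the leftover residual is a constant times $\sum_{\ell_1}[U^{(1)}_{\ell_1}]^2$, which vanishes by~(\ref{mconst}). Hence the recursion terminates, and summing up the accumulated first subterms yields~(\ref{claim_equation}).

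The main obstacle is purely bookkeeping: keeping careful track of the accumulated factors $2^k$ from the iterated $(a+b)^2$ inequality, together with the powers of $N$ arising from both summing out trailing indices against $U^{(q-1)}$ and from the Cauchy--Schwarz step that pulls $\ell_2,\ldots,\ell_{q-1}$ outside the square. No additional conceptual ingredient is needed; the only structural input is the vanishing $U^{(1)} \equiv 0$, which is precisely the marginal feasibility condition~(\ref{mconst}) of the multimarginal Schr\"odinger system.
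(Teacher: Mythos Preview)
Your proposal is correct and follows essentially the same route as the paper: the paper also peels off one coordinate at a time via the split $\mathbb{E}_{s,\ldots,m}Z = (\mathbb{E}_{s,\ldots,m}Z - \mathbb{E}_{s-1,\ldots,m}Z) + \mathbb{E}_{s-1,\ldots,m}Z$, applies $(a+b)^2 \le 2a^2 + 2b^2$, and then uses Jensen's inequality (your Cauchy--Schwarz step) to pull the intermediate indices outside the square, with the recursion terminating because $\mathbb{E}_{2,\ldots,m}Z = 0$ by~(\ref{mconst}). The only cosmetic difference is that the paper packages the single peeling step as a separate ``Induction Lemma'' and then invokes it recursively, whereas you run the recursion directly in your $U^{(k)}$ notation.
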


Lemma \ref{induction} is needed to prove the Lemma \ref{claim}. Specifically, the second term in (\ref{claim_equation}) comes from recursively applying Lemma \ref{induction} by induction.

\begin{lemma}[Induction lemma]\label{induction} 
Under Assumption (\ref{assumption_bounded_potential}), for any $s\in\{3,\dots,m\}$, we can bound
\begin{align}\label{induction_equation}
&\frac{1}{N}\sum_{1\leq\ell_1\leq N}\left(\frac{1}{N^{s-2}}\sum_{1\leq\ell_2,\dots,\ell_{s-1}\leq N}\E_{s,\dots,m}Z_{\ell_1\dots\ell_m}\right)^2 \nonumber\\
\leq& \frac{2}{N^s}\sum_{1\leq\ell_1,\dots,\ell_{s-2}\leq N}\left(\sum_{1\leq\ell_{s-1}\leq N} \E_{s,\dots,m}Z_{\ell_1\dots\ell_m}-\E_{s-1,\dots,m}Z_{\ell_1\dots\ell_m}\right)^2\\
&\quad +\frac{2}{N}\sum_{1\leq\ell_1\leq N}\left(\frac{1}{N^{s-3}}\sum_{1\leq\ell_2,\dots,\ell_{s-2}\leq N}\E_{s-1,\dots,m}Z_{\ell_1\dots\ell_m}\right)^2. \nonumber
\end{align}
\end{lemma}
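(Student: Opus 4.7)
The plan is to prove Lemma \ref{induction} by a direct algebraic manipulation that adds and subtracts the conditional expectation $\E_{s-1,\dots,m} Z_{\ell_1\dots\ell_m}$ inside the innermost summation over $\ell_{s-1}$, then applies the elementary inequality $(a+b)^2 \le 2a^2 + 2b^2$ and Cauchy--Schwarz. The crucial observation is that $\E_{s-1,\dots,m} Z_{\ell_1\dots\ell_m}$ depends on $X^{(1)}_{\ell_1},\dots,X^{(s-2)}_{\ell_{s-2}}$ only, since integrating out $X^{(s-1)}$ erases any dependence on the index $\ell_{s-1}$. Therefore, summing it over $\ell_{s-1}$ simply multiplies it by $N$, which produces the second term on the right-hand side of \eqref{induction_equation} after appropriate normalization.

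Concretely, I would first write the inner sum as
\begin{align*}
    \frac{1}{N^{s-2}}\sum_{\ell_2,\dots,\ell_{s-1}} \E_{s,\dots,m}Z_{\ell_1\dots\ell_m}
    &= \frac{1}{N^{s-2}}\sum_{\ell_2,\dots,\ell_{s-2}}\sum_{\ell_{s-1}} \bigl(\E_{s,\dots,m} Z_{\ell_1\dots\ell_m} - \E_{s-1,\dots,m}Z_{\ell_1\dots\ell_m}\bigr) \\
    &\qquad + \frac{1}{N^{s-3}}\sum_{\ell_2,\dots,\ell_{s-2}} \E_{s-1,\dots,m}Z_{\ell_1\dots\ell_m},
\end{align*}
where the second equality uses that the summand in the second piece is constant in $\ell_{s-1}$. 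Squaring and applying $(a+b)^2 \le 2a^2 + 2b^2$ then cleanly separates the two terms that will appear on the right-hand side of \eqref{induction_equation}.

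Next, I would apply Cauchy--Schwarz to the difference term across the $N^{s-3}$ free indices $\ell_2,\dots,\ell_{s-2}$ to obtain
\begin{equation*}
    \Bigl(\frac{1}{N^{s-2}}\sum_{\ell_2,\dots,\ell_{s-2}}\sum_{\ell_{s-1}}(\E_{s,\dots,m}Z - \E_{s-1,\dots,m}Z)\Bigr)^2 \le \frac{1}{N^{s-1}}\sum_{\ell_2,\dots,\ell_{s-2}}\Bigl(\sum_{\ell_{s-1}}(\E_{s,\dots,m}Z - \E_{s-1,\dots,m}Z)\Bigr)^2.
\end{equation*}
Finally, summing over $\ell_1\in[N]$ and dividing by $N$ turns the $1/N^{s-1}$ prefactor into $2/N^s$ and yields exactly the first term of the right-hand side of \eqref{induction_equation}, while the second term is already in the required form.

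I do not expect a substantive obstacle here --- the statement is essentially a bookkeeping identity combined with the elementary inequalities mentioned above. The one subtle point that must be tracked carefully is the reduction in the number of free indices when $\E_{s-1,\dots,m}$ is applied, so that the normalizing factor collapses from $1/N^{s-2}$ to $1/N^{s-3}$. Once this index accounting is done, both terms on the right-hand side of \eqref{induction_equation} read off directly.
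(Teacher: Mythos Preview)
Your proposal is correct and follows essentially the same route as the paper's proof: add and subtract $\E_{s-1,\dots,m}Z_{\ell_1\dots\ell_m}$, apply $(a+b)^2\le 2a^2+2b^2$, then use the convexity of $x\mapsto x^2$ (the paper phrases it as Jensen's inequality rather than Cauchy--Schwarz) on the average over $\ell_2,\dots,\ell_{s-2}$. The index bookkeeping you describe, including the collapse of the $\ell_{s-1}$-sum in the centered term, matches the paper exactly.
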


\begin{proof}[Proof of Lemma \ref{induction}]
\begin{align*}
&\frac{1}{N}\sum_{\ell_1=1}^N\Big(\frac{1}{N^{s-2}}\sum_{1\leq\ell_2,\dots,\ell_{s-1}\leq N}\E_{s,\dots,m}Z_{\ell_1\dots\ell_m}\Big)^2 \\
=&\frac{1}{N} \sum_{\ell_1=1}^N \Big(\frac{1}{N^{s-2}}\sum_{1\leq\ell_2,\dots,\ell_{s-1}\leq N}(\E_{s,\dots,m}Z_{\ell_1\dots\ell_m}-\E_{s-1,\dots,m}Z_{\ell_1\dots\ell_m}) \\
&\qquad \qquad +\frac{1}{N^{s-3}}\sum_{1\leq\ell_2,\dots,\ell_{s-2}\leq N}\E_{s-1,\dots,m}Z_{\ell_1\dots\ell_m}\Big)^2\\
\leq&\frac{2}{N} \sum_{\ell_1=1}^N \Big(\frac{1}{N^{s-2}}\sum_{1\leq\ell_2,\dots,\ell_{s-1}\leq N}(\E_{s,\dots,m}Z_{\ell_1\dots\ell_m}-\E_{s-1,\dots,m}Z_{\ell_1\dots\ell_m})\Big)^2\\
&\qquad \qquad +\frac{2}{N} \sum_{\ell_1=1}^N\Big(\frac{1}{N^{s-3}}\sum_{1\leq\ell_2,\dots,\ell_{s-2}\leq N}\E_{s-1,\dots,m}Z_{\ell_1\dots\ell_m}\Big)^2\\
=&\frac{2}{N^3} \sum_{\ell_1=1}^N \Big(\frac{1}{N^{s-3}}\sum_{1\leq\ell_2,\dots,\ell_{s-2}\leq N}\sum_{1\leq\ell_{s-1}\leq N}(\E_{s,\dots,m}Z_{\ell_1\dots\ell_m}-\E_{s-1,\dots,m}Z_{\ell_1\dots\ell_m})\Big)^2\\
&\qquad \qquad +\frac{2}{N} \sum_{\ell_1=1}^N\Big(\frac{1}{N^{s-3}}\sum_{1\leq\ell_2,\dots,\ell_{s-2}\leq N}\E_{s-1,\dots,m}Z_{\ell_1\dots\ell_m}\Big)^2\\
\leq&\frac{2}{N^s}\sum_{1\leq\ell_1,\dots,\ell_{s-2}\leq N}\Big(\sum_{1\leq\ell_{s-1}\leq N} \E_{s,\dots,m}Z_{\ell_1\dots\ell_m}-\E_{s-1,\dots,m}Z_{\ell_1\dots\ell_m}\Big)^2\\
&\qquad \qquad +\frac{2}{N}\sum_{\ell_1=1}^N\Big(\frac{1}{N^{s-3}}\sum_{1\leq\ell_2,\dots,\ell_{s-2}\leq N}\E_{s-1,\dots,m}Z_{\ell_1\dots\ell_m}\Big)^2, \nonumber
\end{align*}
where we use basic inequality for the first inequality and Jensen's inequality for the second inequality.
\end{proof}

Now we are ready to prove Lemma \ref{claim}.
\begin{proof}[Proof of Lemma \ref{claim}]
Observe that 
\begin{align}\label{claimpf1}
K_1=\frac{1}{N}\sum_{\ell_1=1}^N&\left[\frac{1}{N^{m-1}}\sum_{1\leq\ell_{2},\dots,\ell_{m}\leq N} \left(Z_{\ell_1\dots\ell_m}-\mathbb{E}_{m}Z_{\ell_1\dots\ell_m}\right) + 
\frac{1}{N^{m-1}}\sum_{1\leq\ell_{2},\dots,\ell_{m}\leq N} \mathbb{E}_{m}Z_{\ell_1\dots\ell_m} \right]^ 2,\nonumber \\
\end{align}
so we get
\begin{align}\label{claimpf2}
K_1\leq&\frac{2}{N}\sum_{\ell_1=1}^N \left[\frac{1}{N^{m-1}}\sum_{1\leq\ell_{2},\dots,\ell_{m-1}\leq N} \sum_{1\leq\ell_{m}\leq N} \left(Z_{\ell_1\dots\ell_m}-\mathbb{E}_{m}Z_{\ell_1\dots\ell_m}\right)
\right]^2    \nonumber \\
&+\frac{2}{N}\sum_{\ell_1=1}^N \left[\frac{1}{N^{m-1}}\sum_{1\leq\ell_{2},\dots,\ell_{m-1}\leq N} \sum_{1\leq\ell_{m}\leq N} \mathbb{E}_{m}Z_{\ell_1\dots\ell_m}
\right]^2     \nonumber\\
=&\frac{2}{N^{3}}\sum_{\ell_1=1}^N \left[\frac{1}{N^{m-2}}\sum_{1\leq\ell_{2},\dots,\ell_{m-1}\leq N} \sum_{1\leq\ell_{m}\leq N} \left(Z_{\ell_1\dots\ell_m}-\mathbb{E}_{m}Z_{\ell_1\dots\ell_m}\right)
\right]^2    \nonumber\\
&+\frac{2}{N}\sum_{\ell_1=1}^N \left[\frac{1}{N^{m-1}}\sum_{1\leq\ell_{2},\dots,\ell_{m-1}\leq N} \sum_{1\leq\ell_{m}\leq N} \mathbb{E}_{m}Z_{\ell_1\dots\ell_m}
\right]^2   \nonumber\\
\leq&\frac{2}{N^{m+1}} \sum_{1\leq\ell_1,\ell_{2},\dots,\ell_{m-1}\leq N} \left[\sum_{1\leq\ell_{m}\leq N} \left(Z_{\ell_1\dots\ell_m}-\mathbb{E}_{m}Z_{\ell_1\dots\ell_m}\right)
\right]^2   \nonumber \\
&+\frac{2}{N}\sum_{\ell_1=1}^N \left[\frac{1}{N^{m-2}}\sum_{1\leq\ell_{2},\dots,\ell_{m-1}\leq N} \mathbb{E}_{m}Z_{\ell_1\dots\ell_m}
\right]^2,   \nonumber \\
\end{align}
where we use Jensen's inequality for the second inequality. Moreover, the marginal feasibility constrain (\ref{appendix_system_condition}) indicates that 
\begin{equation}\label{E2m=0}
\mathbb{E}_{2,\dots,m}Z_{\ell_1,\dots,\ell_m}=0,
\end{equation}
as well as
\begin{equation}\label{E1m=0}
\mathbb{E}_{1,\dots,m}Z_{\ell_1,\dots,\ell_m}=0.
\end{equation}
As a consequence of equation (\ref{E2m=0}), (\ref{E1m=0}) and using Lemma \ref{induction} recursively, we can deduce that 
\begin{align}\label{claimpf3}
    &\frac{1}{N}\sum_{\ell_1=1}^N \left[\frac{1}{N^{m-2}}\sum_{1\leq\ell_{2},\dots,\ell_{m-1}\leq N}  \mathbb{E}_{m}Z_{\ell_1\dots\ell_m}\right]^2
    \nonumber \\ 
    &\leq\sum_{s=3}^m\frac{2^{m-s+2}}{N^s}\sum_{1\leq\ell_2,\dots,\ell_{s-2}\leq N}\left(\sum_{1\leq\ell_{s-1}\leq N} E_{s,\dots,m}Z_{\ell_1\dots\ell_m}-E_{s-1,\dots,m}Z_{\ell_1\dots\ell_m}\right)^2.
\end{align}
Combining (\ref{claimpf1}), (\ref{claimpf2}) and (\ref{claimpf3}), the proof of the lemma is done.
\end{proof}

\subsection{Concentration of the empirical gradient norm }

Recall that as proved in Lemma \ref{claim},
\begin{align*}
    K_1\leq & \frac{2}{N^{m+1}}\sum_{1\leq\ell_1,\dots,\ell_{m-1}\leq N}\left(\sum_{1\leq\ell_m\leq N}Z_{\ell_1\dots\ell_m}-\mathbb{E}_{m}Z_{\ell_1\dots\ell_m}\right)^2 \nonumber\\
    &+\sum_{r=3}^m\frac{2^{m-r+2}}{N^r}\sum_{1\leq\ell_1,\dots,\ell_{r-2}\leq N}\left(\sum_{1\leq \ell_{r-1}\leq N}E_{r,\dots,m}Z_{\ell_1\dots\ell_m}-\mathbb{E}_{r-1,\dots,m}Z_{\ell_1\dots\ell_m}\right)^2 \\
    =:&K_{1,m+1}+\sum_{r=3}^{m}K_{1,r}.
\end{align*}
Motivated by this fact, we are going to derive high probability bounds for $K_{1,s}$ separately in the following lemmas, for $s\in\{3,\dots,m+1\}$. These bounds together will establish the concentration for $K_1$ and consequently the empirical gradient norm $\|\nabla\widehat{\Phi}_{\varepsilon}(\vf^*)\|_{\widehat{\mathscr{L}}_m}^2$. To lighten the notation, we denote $\mathscr{X}_k=\{X^{(k)}_1,\dots,X^{(k)}_N\}$ for any $k\in[m]$ in the following proofs

\begin{lemma}[Concentration of $K_{1,m+1}$]\label{K1m+1}
Under Assumption (\ref{assumption_independent})-(\ref{assumption_schrodinger_system}), for any $t>0$, with probability more than $1-2e^{-t}$,
\begin{equation}
    K_{1,m+1}\lesssim_\varepsilon\frac{t}{N}.
\end{equation}
\end{lemma}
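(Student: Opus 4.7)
The plan is to condition on $\cF := \sigma(X^{(j)}_i : j < m,\, i \in [N])$ and rewrite $K_{1,m+1} = 2T/N^{m+1}$ with
\[
T := \sum_{\vec{\ell} \in [N]^{m-1}} Q(\vec{\ell})^2, \qquad Q(\vec{\ell}) := \sum_{\ell_m=1}^N g_{\vec{\ell}}(X^{(m)}_{\ell_m}),
\]
where $g_{\vec{\ell}}(x) := [1 - p^*_{\alpha,\varepsilon}(X^{(1)}_{\ell_1},\dots,X^{(m-1)}_{\ell_{m-1}},x)] - \int [1 - p^*_{\alpha,\varepsilon}(X^{(1)}_{\ell_1},\dots,X^{(m-1)}_{\ell_{m-1}},y)]\,d\nu_m(y)$ is the centered version of $Z_{\vec{\ell},\ell_m}$ in its last coordinate. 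The centering is built in, so no use of the multimarginal Schr\"odinger system is needed for this particular term, and Proposition~\ref{BDP} yields a deterministic uniform bound $\|g_{\vec{\ell}}\|_{L^{\infty}(\nu_m)} \le C_\varepsilon$. Conditionally on $\cF$, for each fixed $\vec{\ell}$ the summands in $Q(\vec{\ell})$ are i.i.d.\ mean-zero across $\ell_m$.

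Regard $Q = (Q(\vec{\ell}))_{\vec{\ell}} \in \mathbb{R}^{N^{m-1}}$ as a random vector, so $T = \|Q\|_2^2$. A short variance computation gives
$\mathbb{E}[T \mid \cF] = \sum_{\vec{\ell}} \sum_{\ell_m} \mathbb{E}[g_{\vec{\ell}}(X^{(m)}_{\ell_m})^2 \mid \cF] \le C_\varepsilon^2 N^m$,
and in particular $\mathbb{E}[\sqrt{T} \mid \cF] \le C_\varepsilon N^{m/2}$ by Jensen. The key step is to apply McDiarmid's bounded-difference inequality to $\sqrt{T}$, viewed as a function of the i.i.d.\ sample $X^{(m)}_1, \dots, X^{(m)}_N \sim \nu_m$ with $\cF$ frozen. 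Swapping a single sample $X^{(m)}_i$ perturbs every coordinate $Q(\vec{\ell})$ by at most $2 C_\varepsilon$, so it shifts the whole vector $Q$ by at most $2 C_\varepsilon N^{(m-1)/2}$ in $\ell_2$-norm; by the reverse triangle inequality this same quantity bounds $|\sqrt{T} - \sqrt{T'}|$. McDiarmid then gives
\[
\Pr\!\left[\sqrt{T} \ge \mathbb{E}[\sqrt{T} \mid \cF] + s \,\Big|\, \cF\right] \le 2 \exp\!\left(-\frac{s^2}{2 C_\varepsilon^2 N^m}\right),
\]
and plugging in $s = C_\varepsilon \sqrt{2 t N^m}$ and squaring yields $T \lesssim_\varepsilon N^m (1+t)$ with conditional probability at least $1 - 2 e^{-t}$. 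Dividing by $N^{m+1}/2$ produces $K_{1,m+1} \lesssim_\varepsilon (1+t)/N$; since the probability guarantee is only meaningful for $t \gtrsim \log 2$, on which range $1 + t \lesssim t$, we obtain $K_{1,m+1} \lesssim_\varepsilon t/N$. Integrating over $\cF$ removes the conditioning.

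The main obstacle is avoiding the spurious $\log N$ loss that the naive strategy---Hoeffding on each $Q(\vec{\ell})$ followed by a union bound over the $N^{m-1}$ indices---would incur. Applying McDiarmid to $\sqrt{T}$ rather than to $T$ (whose one-coordinate Lipschitz modulus is random, of size $\sqrt{T}$) or to a single scalar $Q(\vec{\ell})$ (which would force the union bound) is precisely what sidesteps this: treating $Q$ as a vector and using the triangle inequality delivers the correct, $\log$-free variance proxy $\sum_i c_i^2 \asymp C_\varepsilon^2 N^m$, and the sub-Gaussian deviation for $\sqrt{T}$ squares back to the desired $N^m t$ bound on $T$.
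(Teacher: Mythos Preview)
Your proof is correct and follows the paper's approach: condition on the first $m-1$ samples, recognize $K_{1,m+1}$ as (twice) the squared $\ell_2$-norm of a sum of $N$ independent bounded mean-zero vectors in $\mathbb{R}^{N^{m-1}}$, and apply Hilbert-space concentration. The only cosmetic difference is that the paper invokes a Hilbert-space Hoeffding inequality (Lemma~\ref{lem:hoeffding_hilbert}) directly, whereas you re-derive the same bound from McDiarmid applied to $\sqrt{T}$ together with a separate control of $\mathbb{E}[\sqrt{T}\mid\cF]$---a standard proof of that lemma.
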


\begin{proof}
For $1\leq\ell_m\leq N$, let 
$$A_{\ell_m}=(Z_{\ell_1\dots\ell_m}-\mathbb{E}_{m}Z_{\ell_1\dots\ell_m})_{1\leq\ell_1,\dots,\ell_{m-1}\leq N}\in\mathbb{R}^{N^{m-1}}.$$
Conditional on $\prod_{j=1}^{m-1}\mathscr{X}_j$,
$A_1,\dots,A_N$ are independent due to assumption \ref{assumption_independent}. Also, $\|A_{\ell_m}\|_2\lesssim_\varepsilon N^{\frac{m-1}{2}}$ because of assumption \ref{assumption_bounded_potential} and $\mathbb{E}_{m}A_{\ell_m}=0$ owing to assumption \ref{assumption_schrodinger_system}.  Note that
$$\frac{1}{N^{m+1}}\left\|\sum_{\ell_m=1}^NA_{\ell_m}\right\|^2_2=\frac{1}{N^{m+1}}\sum_{1\leq\ell_1,\ell_{2},\dots,\ell_{m-1}\leq N} \left[\sum_{1\leq\ell_{m}\leq N} \left(Z_{\ell_1\dots\ell_m}-\mathbb{E}_{m}Z_{\ell_1\dots\ell_m}\right)
\right]^2=\frac{1}{2}K_{1,m+1},$$ using Lemma~\ref{lem:hoeffding_hilbert}, and we have for any $u>0$,
\begin{equation*}   \mathbb{P}\left(\left\|\sum_{\ell_m=1}^NA_{\ell_m}\right\|\gtrsim_\varepsilon u\right)\leq2\exp\left(-\frac{u^2}{ N\cdot N^{m-1}}\right),
\end{equation*}
i.e.,\begin{equation}
\mathbb{P}\left(\left\|\frac{1}{N^{\frac{m+1}{2}}}\sum_{\ell_m=1}^NA_{\ell_m}\right\|^2\gtrsim_\varepsilon \frac{t}{N}\right)\leq 2e^{-t}.
\end{equation}
Namely, for any $t>0$,
\begin{equation}
    \mathbb{P}\left(K_{1,m+1}\lesssim_\varepsilon\frac{t}{N}\right)\geq1-2e^{-t}.
\end{equation}
\end{proof}

\begin{lemma}[Concentration of $K_{1,r}$]\label{K1r}
Under Assumption (\ref{assumption_independent})-(\ref{assumption_schrodinger_system}), for any $t>0$, $r\in\{3,\dots,m\}$, with probability exceeding $1-2e^{-t}$,
\begin{equation}
    K_{1,r}\lesssim_\varepsilon\frac{t}{N}.
\end{equation}
\end{lemma}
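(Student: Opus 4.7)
The plan is to mirror the structure of the proof of Lemma~\ref{K1m+1}, using the tower property of conditional expectation to identify a suitable layer of independence, and then invoke the Hoeffding inequality in Hilbert space (Lemma~\ref{lem:hoeffding_hilbert}).

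First, I would introduce compact notation. Set
\[
W_{\ell_1\dots\ell_{r-1}} := \mathbb{E}_{r,\dots,m} Z_{\ell_1\dots\ell_m}, \qquad V_{\ell_1\dots\ell_{r-2}} := \mathbb{E}_{r-1,\dots,m} Z_{\ell_1\dots\ell_m}.
\]
Note that $W_{\ell_1\dots\ell_{r-1}}$ depends only on $X^{(1)}_{\ell_1},\dots,X^{(r-1)}_{\ell_{r-1}}$, while $V_{\ell_1\dots\ell_{r-2}}$ depends only on $X^{(1)}_{\ell_1},\dots,X^{(r-2)}_{\ell_{r-2}}$, and the tower property yields $V_{\ell_1\dots\ell_{r-2}} = \mathbb{E}_{r-1} W_{\ell_1\dots\ell_{r-1}}$. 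Then
\[
K_{1,r} \;=\; \frac{2^{m-r+2}}{N^r}\sum_{1\leq \ell_1,\dots,\ell_{r-2}\leq N}\Big(\sum_{\ell_{r-1}=1}^N\big(W_{\ell_1\dots\ell_{r-1}}-V_{\ell_1\dots\ell_{r-2}}\big)\Big)^2.
\]

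Next, for each $\ell_{r-1}\in[N]$, define the vector
\[
A_{\ell_{r-1}} \;:=\; \big(W_{\ell_1\dots\ell_{r-1}}-V_{\ell_1\dots\ell_{r-2}}\big)_{1\leq \ell_1,\dots,\ell_{r-2}\leq N}\in\mathbb{R}^{N^{r-2}}.
\]
Conditional on $\{X^{(k)}_i : k\leq r-2,\, i\in[N]\}$, the random vectors $A_1,\dots,A_N$ are independent (they depend on disjoint samples $X^{(r-1)}_{\ell_{r-1}}$ and on integrated-out samples from marginals $r,\dots,m$), and mean zero by the tower relation above. Moreover, Proposition~\ref{BDP} gives the uniform pointwise bound $|p^*_{\alpha,\varepsilon}|\lesssim_\varepsilon 1$, and hence $|W|,|V|\lesssim_\varepsilon 1$, so $\|A_{\ell_{r-1}}\|_2\lesssim_\varepsilon N^{(r-2)/2}$.

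Finally, I would apply the Hilbert space Hoeffding bound (Lemma~\ref{lem:hoeffding_hilbert}) conditionally on the earlier samples:
\[
\mathbb{P}\Big(\Big\|\sum_{\ell_{r-1}=1}^N A_{\ell_{r-1}}\Big\|_2 \gtrsim_\varepsilon u\Big) \;\leq\; 2\exp\!\Big(-\frac{u^2}{N\cdot N^{r-2}}\Big) \;=\; 2\exp\!\Big(-\frac{u^2}{N^{r-1}}\Big).
\]
Choosing $u$ so that $u^2/N^{r-1}=t$, and using that $K_{1,r} = \frac{2^{m-r+2}}{N^r}\|\sum_{\ell_{r-1}} A_{\ell_{r-1}}\|_2^2$, the conditional bound integrates (trivially, since the RHS does not depend on the conditioning variables) to give $K_{1,r}\lesssim_\varepsilon t/N$ with probability at least $1-2e^{-t}$. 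The only real checks are (i) the conditional independence of the $A_{\ell_{r-1}}$'s after integrating out the later marginals and (ii) the correct exponent $N^{(r-2)/2}$ on $\|A_{\ell_{r-1}}\|_2$, both of which are immediate from the construction and from Proposition~\ref{BDP}; consequently there is no substantive obstacle beyond bookkeeping of the constants (which depend on $m$ and $\varepsilon$ but not on $N$).
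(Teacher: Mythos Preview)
Your proposal is correct and follows essentially the same approach as the paper's proof: defining the vectors $A_{\ell_{r-1}}\in\mathbb{R}^{N^{r-2}}$ (the paper calls them $B_{\ell_{r-1}}$), conditioning on the samples from marginals $1,\dots,r-2$, and applying the Hilbert-space Hoeffding inequality (Lemma~\ref{lem:hoeffding_hilbert}). The only differences are cosmetic notation (your auxiliary $W,V$) and your explicit remark that the conditional bound integrates out trivially; the argument and all key estimates match.
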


\begin{proof}
For $1\leq\ell_{r-1}\leq N$, let 
$$B_{\ell_{r-1}}=(\mathbb{E}_{r,\dots,m}Z_{\ell_1\dots\ell_m}-\mathbb{E}_{r-1,\dots,m}Z_{\ell_1\dots\ell_m})_{1\leq\ell_1,\dots,\ell_{r-2}\leq N}\in\mathbb{R}^{N^{r-2}}.$$
Conditional on $\prod_{j=1}^{r-2}\mathscr{X}_j$, 
$B_1,\dots,B_N$ are independent on account of assumption \ref{assumption_independent}. Also, $\|B_{\ell_{r-1}}\|_2\lesssim_\varepsilon N^{\frac{r-2}{2}}$ as a result of assumption \ref{assumption_bounded_potential} and $\mathbb{E}_{r-1,\dots,m}B_{\ell_{r-1}}=0$ due to assumption \ref{assumption_schrodinger_system}. It is observed that
\begin{align*}
\frac{1}{N^{r}}\left\|\sum_{\ell_{r-1}=1}^NB_{\ell_{r-1}}\right\|^2_2
&= \frac{1}{N^{r}}\sum_{1\leq\ell_1,\ell_{2},\dots,\ell_{r-2}\leq N} \left[\sum_{1\leq\ell_{r-1}\leq N} \left(\mathbb{E}_{r,\dots,m}Z_{\ell_1\dots\ell_m}-\mathbb{E}_{r-1,\dots,m}Z_{\ell_1\dots\ell_m}\right)
\right]^2\\
&=\frac{1}{2^{m-r+2}}K_{1,r}.
\end{align*}
Via Lemma~\ref{lem:hoeffding_hilbert}, we obtain

\begin{equation*}   \mathbb{P}\left(\left\|\sum_{\ell_{r-1}=1}^NB_{\ell_{r-1}}\right\|\gtrsim_\varepsilon u\right)\leq2\exp\left(-\frac{u^2}{N\cdot N^{r-2}}\right),
\end{equation*}
i.e.,\begin{equation}
\mathbb{P}\left(\left\|\frac{1}{N^{\frac{r}{2}}}\sum_{\ell_{r-1}=1}^NB_{\ell_{r-1}}\right\|^2\gtrsim_\varepsilon \frac{t}{N}\right)\leq 2e^{-t}.
\end{equation}
Namely, for any $t>0$,
\begin{equation}
    \mathbb{P}\left(K_{1,r}\lesssim_\varepsilon\frac{t}{N}\right)\geq1-2e^{-t}.
\end{equation}

\end{proof}

\subsection{Concluding Lemma~\ref{gradnormp}}

By (\ref{sumK_j}), we know that 
$$\|\nabla\widehat{\Phi}_{\varepsilon}(\vf^*)\|^2= \sum_{j=1}^mK_j.$$
A combination of Lemma \ref{claim}, Lemma \ref{K1m+1} and Lemma \ref{K1r} gives Lemma~\ref{gradnormp}.

\section{Concentration of empirical potentials and joint optimal coupling density}\label{sec_concentration_of_empirical_potentials_and_joint_optimal_coupling_density}

We first derive a high probability bound for the empirical multimarginal distributions.

\begin{lemma}[Concentration of empirical multimarginal distributions]
\label{Ustat}
Let $\phi \in L^{\infty}(\otimes_{j=1}^m \nu_j)$ be such that $(\otimes_{j=1}^m \nu_j)(\phi) = 0$. Then, for all $t > 0$, we have with probability at least $1-2e^{-t}$, 
\[
|(\otimes_{j=1}^m \hat{\nu}^N_j)(\phi)| \leq \|\phi\|_\infty\sqrt{\frac{2t}{N}}.
\]
\end{lemma}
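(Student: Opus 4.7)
The plan is to recognize $(\otimes_{j=1}^m \hat{\nu}^N_j)(\phi)$ as an $m$-sample V-statistic of independent bounded terms and then apply Hoeffding's exponential inequality via the classical permutation-averaging trick of Hoeffding, which is what gives the factor $\sqrt{2t/N}$ without an extra $\sqrt{m}$ (as a naive McDiarmid bound on bounded differences over all $mN$ coordinates would yield).

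First I would expand
\begin{equation*}
U := (\otimes_{j=1}^m \hat{\nu}^N_j)(\phi) = \frac{1}{N^m}\sum_{k_1,\dots,k_m=1}^N \phi\bigl(X^{(1)}_{k_1},\dots,X^{(m)}_{k_m}\bigr),
\end{equation*}
and introduce, for each $m$-tuple of permutations $\sigma=(\sigma_1,\dots,\sigma_m)\in(\mathfrak{S}_N)^m$, the ``diagonal" average
\begin{equation*}
W(\sigma) := \frac{1}{N}\sum_{k=1}^N \phi\bigl(X^{(1)}_{\sigma_1(k)},\dots,X^{(m)}_{\sigma_m(k)}\bigr).
\end{equation*}
A direct counting argument (for each fixed tuple $(k_1,\dots,k_m)$, the number of pairs $(\sigma,k)$ with $\sigma_j(k)=k_j$ for all $j$ equals $N\cdot((N-1)!)^m$) shows the identity $U=(N!)^{-m}\sum_\sigma W(\sigma)$.

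Next, for each fixed $\sigma$, $W(\sigma)$ is an average of $N$ independent random variables (independence comes from independence across the $m$ samples, since $k$ varies through distinct indices in each sample) each bounded in $[-\|\phi\|_\infty,\|\phi\|_\infty]$, with common mean $(\otimes_{j=1}^m\nu_j)(\phi)=0$. Hoeffding's lemma then yields the MGF bound
\begin{equation*}
\mathbb{E}\exp(\lambda W(\sigma)) \leq \exp\Bigl(\frac{\lambda^2\|\phi\|_\infty^2}{2N}\Bigr) \qquad \forall\,\lambda\in\mathbb{R}.
\end{equation*}

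Finally, by convexity of $x\mapsto e^{\lambda x}$ and Jensen's inequality applied to the uniform average over $\sigma$,
\begin{equation*}
\mathbb{E}\exp(\lambda U) \leq \frac{1}{(N!)^m}\sum_\sigma \mathbb{E}\exp(\lambda W(\sigma)) \leq \exp\Bigl(\frac{\lambda^2\|\phi\|_\infty^2}{2N}\Bigr),
\end{equation*}
so $U$ is sub-Gaussian with proxy $\|\phi\|_\infty^2/N$. A standard Chernoff optimization with the two-sided union bound gives $\mathbb{P}(|U|\geq s)\leq 2\exp(-Ns^2/(2\|\phi\|_\infty^2))$, and setting the RHS equal to $2e^{-t}$ and solving for $s$ produces the stated bound $\|\phi\|_\infty\sqrt{2t/N}$. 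There is no serious obstacle here; the only point to be careful about is the $(N!)^m$ counting identity linking $U$ to the permutation average of $W(\sigma)$, since all subsequent steps are textbook Hoeffding/Chernoff arguments.
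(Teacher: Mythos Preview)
Your proposal is correct and follows essentially the same approach as the paper: represent the V-statistic as a convex average of diagonal sums over permutations, apply Jensen to push the exponential inside, and invoke Hoeffding's lemma on the resulting i.i.d.\ sum before Chernoff optimization. The only cosmetic difference is that the paper fixes $\sigma_1=\mathrm{id}$ and averages over $(m-1)$ permutations in $(\Sigma_N)^{m-1}$ (giving the normalizing factor $(N!)^{m-1}$), whereas you average over all $m$ permutations; this redundancy is harmless since the extra $\sigma_1$-average disappears by symmetry.
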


\begin{proof}
We only detail the proof of the inequality for one direction. The other tail follows analogously. For any $\lambda>0$, Chernoff's bound gives
\[
\mathbb{P}_{\cX}[(\otimes_{j=1}^m \hat{\nu}^N_j)(\phi) > t] \leq e^{-\lambda t}\mathbb{E}_{\mathcal{X}}\left[\exp\left\{\lambda(\otimes_{j=1}^m \hat{\nu}^N_j)(\phi)\right\}\right].
\]
Observe that $(\otimes_{j=1}^m \hat{\nu}^N_j)(\phi) $ could be expressed as
\[
(\otimes_{j=1}^m \hat{\nu}^N_j)(\phi) = \frac{1}{(N!)^{m-1}} \sum_{\sigma_2,\dots,\sigma_m\in\Sigma_N} \frac{1}{N}\sum_{k=1}^N \phi\left(X^{(1)}_k, X^{(2)}_{\sigma_2(k)},\dots,X^{(m)}_{\sigma_m(k)}\right),
\]
where $\Sigma_N$ is the set of permutations on $N$ elements. Combining this fact with Jensen's inequality gives the bound
\begin{align*}
\mathbb{P}_{\mathcal{X}}[(\otimes_{j=1}^m \hat{\nu}^N_j)(\phi) > t] &\leq e^{-\lambda t}\mathbb{E}_{\mathcal{X}}\left[\exp\left\{ \frac{\lambda}{(N!)^{m-1}} \sum_{\sigma_2,\dots,\sigma_m\in\Sigma_N} \frac{1}{N}\sum_{k=1}^N \phi(X^{(1)}_k, X^{(2)}_{\sigma_2(k)},\dots,X^{(m)}_{\sigma_m(k)})\right\}\right]\\
&\leq e^{-\lambda t}\mathbb{E}_{\mathcal{X}}\left[\frac{1}{(N!)^{m-1}} \sum_{\sigma_2,\dots,\sigma_m\in\Sigma_N} \exp\left\{\frac{\lambda}{N}\sum_{k=1}^N \phi(X^{(1)}_k, X^{(2)}_{\sigma_2(k)},\dots,X^{(m)}_{\sigma_m(k)})\right\}\right].
\end{align*}
Note that for any group of fixed permutations $(\sigma_2,\dots,\sigma_m)$, the joint law of $(X^{(1)}_k, X^{(2)}_{\sigma_2(k)},\dots,X^{(m)}_{\sigma_m(k)})_{k=1}^N$ is identical to that of $(\xi_1,\ldots,\xi_N)$ where $\xi_k \sim \otimes_{j=1}^m \nu_j$ are independent and identically distributed. Let $\Xi$ denote such an i.i.d. sample $(\xi_1,\ldots,\xi_N)$. Thus it holds that
\begin{align*}
&\mathbb{E}_{\mathcal{X}}\left[\frac{1}{(N!)^{m-1}} \sum_{\sigma_2,\dots,\sigma_m\in\Sigma_N} \exp\left\{\frac{\lambda}{N}\sum_{k=1}^N \phi(X^{(1)}_k, X^{(2)}_{\sigma_2(k)},\dots,X^{(m)}_{\sigma_m(k)})\right\}\right]\\
=& \frac{1}{(N!)^{m-1}} \sum_{\sigma_2,\dots,\sigma_m\in\Sigma_N} \mathbb{E}_{\mathcal{X}}\left[\exp\left\{\frac{\lambda}{N}\sum_{k=1}^N \phi(X^{(1)}_k, X^{(2)}_{\sigma_2(k)},\dots,X^{(m)}_{\sigma_m(k)})\right\}\right]\\
=&\mathbb{E}_{\Xi}\left[\exp\left\{\frac{\lambda}{N}\sum_{k=1}^N \phi(\xi_k)\right\}\right].
\end{align*}
Applying Hoeffding's Lemma and optimizing over $\lambda>0$ yields
\[
\mathbb{P}_{\mathcal{X}}[(\otimes_{j=1}^m \hat{\nu}^N_j)(\phi) > t]  \leq \exp\left\{-\frac{Nt^2}{2\|\phi\|_\infty^2}\right\}.
\]
\end{proof}

\noindent Now we provide the precise formulation of the bounded potential property as well as the optimization geometry in the empirical setting, for better clarity. The arguments are almost the same as the population version. Similarly as in the population version, we can define
$$\widehat{\cS}_L:=\Big\{\vf\in\prod_{j=1}^m L^{\infty}(\hat{\nu}^N_j) : 
\|\sum_{j=1}^mf_j\|_{L^{\infty}(\otimes_{k=1}^m\hat{\nu}^N_k)}\leq L,\,\,\hat{\nu}^N_i(f_i)=0\,\,\text{for}\,\,i\in[m-1]\Big\},$$
and compute the gradient 
$\nabla\hat{\Phi}_{\varepsilon}:\widehat{\mathscr{L}}_m\to\widehat{\mathscr{L}}_m'$ as
\begin{align*}
\left\langle \nabla \hat{\Phi}_{\varepsilon} (\vf), \vg \right\rangle_{\widehat{\mathscr{L}}_m} 
 =\int\Big[\Big(\sum_{j=1}^m g_j\Big)\Big(1-\exp\Big({\frac{\sum_{i=1}^mf_i-c}{\varepsilon}}\Big)\Big)\Big]d\left(\otimes_{k=1}^m\hat{\nu}^N_k\right),
\end{align*}
where $\widehat{\mathscr{L}}_m'$ denotes the dual space of $\widehat{\mathscr{L}}_m$. The optimization geometry w.r.t. $\widehat{\mathscr{L}}_m$ induces a norm of the gradient. For $\vg=(g_1,\dots, g_m)\in \widehat{\mathscr{L}}_m$, we define $\|\vg\|_{\widehat{\mathscr{L}}_m}:=( \sum_{j=1}^m\int g_j^2d\hat{\nu}^N_j )^{1/2}.$
The norm of empirical dual objective gradient could be similarly calculated as
\begin{equation}\label{empirical_gradient_dual_norm}
    \left\|\nabla\hat{\Phi}_{\varepsilon}(\vf)\right\|_{\widehat{\mathscr{L}}_m}^2=\sum_{j=1}^m\int\Big[\int1-\exp\Big({\frac{\sum_{i=1}^mf_i-c}{\varepsilon}}\Big)d\hat{\nu}^N_{-j}(x_{-j})\Big]^2d\hat{\nu}^N_j.
\end{equation}

\begin{proposition}[Bounded empirical dual potentials]\label{empirical_BDP}
The optimal empirical dual potential $\hat{\mvf}^*$ satisfies that 
\[
\max_{j \in [m]} \|\hat{f}^*_j\|_{\infty} \leq m\|c\|_{\infty} .
\]
where $\|\hat{f}^*_j\|_{\infty} := \|\hat{f}^*_j\|_{L^{\infty}(\nu_j)}$ and $\|c\|_{\infty} := \|c\|_{L^{\infty}(\otimes_{i=1}^m\nu_i)}$.
\end{proposition}

\begin{proof}
    The same argument as in the proof of Proposition \ref{BDP} implies that
\[
    \max_{j \in [m]} \|\hat{f}^*_j\|_{L^\infty(\hat{\nu}_j^N)} \leq \|c\|_{\infty} .
\]
At the same time, note that the canonical extension defines 
that for all $x_j\in\cX_j,\,j\in[m]$, 
\begin{align*}
\hat{f}_j^*(x_j) &= - \varepsilon \log \left\{ \int \exp\left(\frac{\sum_{i \neq j} \hat{f}_i^*(x_i)-c(x_1, \dots, x_m)}{\varepsilon}\right)d\hat{\nu}^N_{-j}(x_{-j}) \right\} \\
&\geq - \varepsilon \log \left\{ \int \exp\left(\frac{\sum_{i \neq j} \hat{f}_i^*(x_i)}{\varepsilon}\right)d\hat{\nu}^N_{-j}(x_{-j}) \right\}\\
&\geq (m-1) \|c\|_\infty.
\end{align*}
Also notice that 
\begin{align*}
\hat{f}_j^*(x_j) &= - \varepsilon \log \left\{ \int \exp\left(\frac{\sum_{i \neq j} \hat{f}_i^*(x_i)-c(x_1, \dots, x_m)}{\varepsilon}\right)d\hat{\nu}^N_{-j}(x_{-j}) \right\} \\
&\leq \|c\|_\infty - \varepsilon \log \left\{ \int \exp\left(\frac{\sum_{i \neq j} \hat{f}_i^*(x_i)}{\varepsilon}\right)d\hat{\nu}^N_{-j}(x_{-j}) \right\}\\
&\leq m \|c\|_\infty.
\end{align*}
\end{proof}

\begin{lemma}[Strong concavity of empirical dual objective]\label{sconc_empirical}
The empirical dual objective $\widehat{\Phi}_{\varepsilon}(\cdot)$ is $\beta$-concave w.r.t the norm $\|\cdot\|_{\widehat{\mathscr{L}}_m}$ on $\hat{\mathcal{S}}_{L}$ with $\beta=\frac{1}{\varepsilon}\exp\left(-\frac{L+\|c\|_{\infty}}{\varepsilon}\right)$, i.e., for all $\vf,\vg\in\hat{\mathcal{S}}_L$,
\begin{equation}\label{esconci}
\widehat{\Phi}_{\varepsilon}(\vf)-\widehat{\Phi}_{\varepsilon}(\vg)\geq\left\langle \nabla \widehat{\Phi}_{\varepsilon} (\vf),(\vf-\vg)\right\rangle_{\widehat{\mathscr{L}}_{m}}+\frac{\beta}{2}\|\vf-\vg\|_{\widehat{\mathscr{L}}_m}^2.
\end{equation}
\end{lemma}

\begin{proof}
    The proof is the same as the proof of Lemma \ref{sconc}.
\end{proof}

\noindent The concentration of empirical potentials and joint density will be derived next. First, recall that we use $\mvf^*\in[\mvf^*]$ to denote the optimal potential associated with $(\nu_1,\dots,\nu_m)$ satisfying $\nu_k(f^*_k) = 0$ for $k\in[m-1]$
and $\hat{\mvf}^*\in[\hat{\mvf}^*]$ to denote the optimal potential associated with $(\hat{\nu}^N_1,\dots,\hat{\nu}^N_m)$ satisfying $\hat{\nu}^N_k(f^*_k) = 0$ for $k\in[m-1]$. Next, we define
$$\bar{\vf}^*=(\bar{f}^*_1,\dots,\bar{f}^*_m):=\left(f^*_1-\hat{\nu}_1^N(f^*_1),\dots,f^*_{m-1}-\hat{\nu}_{m-1}^N(f^*_{m-1}),f^*_m+\sum_{k=1}^{m-1}\hat{\nu}_m^N(f^*_k)\right).$$
In view of the bounded dual potential proposition, we know that almost surely, $\bar{\vf}^*\in\hat{\mathcal{S}}_{2\|c\|_\infty}.$ The following lemmas concerning $\bar{\vf}^*$ will be useful in the upcoming analysis.

\begin{proposition}\label{13}
    For $\otimes_{j=1}^m\nu_j$-almost $\mvx=(x_1,\dots,x_m)$, we have
    \begin{equation}\label{13eq}
        |\hat{p}_{\varepsilon}(\mvx)-p^*_{\varepsilon}(\mvx)|\lesssim_\varepsilon\sum_{j=1}^m|\bar{f}_j^*(x_j)-\hat{f}^*_j(x_j)|.
    \end{equation}
\end{proposition}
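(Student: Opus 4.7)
The plan is to exploit the shift-invariance of the optimal coupling density together with an elementary Lipschitz estimate for the exponential on a bounded interval. By construction, $\bar{\vf}^*$ is obtained from $\vf^*$ by adding a constant vector $(c_1,\dots,c_m)\in\bR^m$ with $\sum_{j=1}^m c_j = 0$ (explicitly, $c_j = -\hat{\nu}_j^N(f^*_j)$ for $j\in[m-1]$ and $c_m$ is chosen so the coordinates sum to zero). Since the optimal dual potential is unique only up to such zero-sum translations, the pointwise sum is preserved: $\sum_{j=1}^m \bar{f}^*_j(x_j) = \sum_{j=1}^m f^*_j(x_j)$ for every $\mvx$. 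Substituting into~\eqref{eqn:optimal_marginal_coupling},
\[
p^*_{\alpha,\varepsilon}(\mvx) = \exp\!\left(\frac{\sum_{j=1}^m \bar{f}^*_j(x_j) - c_\alpha(\mvx)}{\varepsilon}\right),
\]
so the representation of $p^*$ uses the same template as the one defining $\hat{p}_{\alpha,\varepsilon}$ in terms of $\hat{\vf}^*$.

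With this rewrite, the cost $c_\alpha$ cancels in the difference of exponents. Setting $a(\mvx) := \varepsilon^{-1}(\sum_j \bar{f}^*_j(x_j) - c_\alpha(\mvx))$ and $b(\mvx) := \varepsilon^{-1}(\sum_j \hat{f}^*_j(x_j) - c_\alpha(\mvx))$, the mean value theorem for the exponential yields
\[
|p^*_{\alpha,\varepsilon}(\mvx) - \hat{p}_{\alpha,\varepsilon}(\mvx)| = |e^{a(\mvx)} - e^{b(\mvx)}| \leq e^{\max(a,b)}\,|a-b| \leq \frac{e^{\max(a,b)}}{\varepsilon}\sum_{j=1}^m|\bar{f}^*_j(x_j)-\hat{f}^*_j(x_j)|.
\]

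It remains to show $e^{\max(a,b)}$ is bounded by a constant depending only on $m$ and $\varepsilon$. Proposition~\ref{BDP} gives $\|f^*_j\|_\infty\leq\|c_\alpha\|_\infty$, and its empirical analog (obtained by the same argument applied to the sample problem) gives $\|\hat{f}^*_j\|_\infty\leq\|c_\alpha\|_\infty$. The additive shifts defining $\bar{\vf}^*$ are bounded in absolute value by $\|f^*_j\|_\infty$, so $\|\bar{f}^*_j\|_\infty \lesssim_m \|c_\alpha\|_\infty$, and therefore $\bar{\vf}^*\in\hat{\cS}_{2\|c_\alpha\|_\infty}$ (as already noted in the excerpt). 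Since $\|c_\alpha\|_\infty\leq 2m$, this shows $\max(a(\mvx),b(\mvx))\lesssim_m \varepsilon^{-1}$ uniformly in $\mvx$, yielding the claimed bound. There is no real obstacle here; the only conceptual point is to recognize that the normalization shifts leave the density invariant, reducing the estimate to a one-line application of the mean value theorem backed by the uniform $L^\infty$ bounds on the (population and empirical) optimal potentials.
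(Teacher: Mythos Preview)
Your proof is correct and follows essentially the same route as the paper: rewrite $p^*_{\alpha,\varepsilon}$ in terms of $\bar{\vf}^*$ using the zero-sum shift invariance, then apply the Lipschitz property of $\exp$ on a bounded interval (the paper phrases this as ``Lipschitzness of $h(\cdot)=\exp(\cdot)$ on a bounded domain'' rather than invoking the mean value theorem explicitly), and finish with the triangle inequality. You simply make the boundedness of the exponent more explicit than the paper does by appealing to Proposition~\ref{BDP} and its empirical analog.
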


\begin{proof}By the Lipschitzness of $h(\cdot)=\exp(\cdot)$ on a bounded domain, we have
\begin{align*}
       |\hat{p}_{\varepsilon}(\mvx)-p^*_{\varepsilon}(\mvx)|&=\left|\exp\left(\frac{\sum_{j=1}^m\hat{f}^*_j(x_j)-c(\mvx)}{\varepsilon}\right)-\exp\left(\frac{\sum_{j=1}^mf^*_j(x_j)-c(\mvx)}{\varepsilon}\right)\right|\\
        &=\left|\exp\left(\frac{\sum_{j=1}^m\hat{f}^*_j(x_j)-c(\mvx)}{\varepsilon}\right)-\exp\left(\frac{\sum_{j=1}^m\bar{f}^*_j(x_j)-c(\mvx)}{\varepsilon}\right)\right|\\
        &\lesssim_{\varepsilon}\left|\sum_{j=1}^m\hat{f}^*_j(x_j)-\sum_{j=1}^m\bar{f}^*_j(x_j)\right|\\
        &\leq\sum_{j=1}^m\left|\hat{f}^*_j(x_j)-\bar{f}^*_j(x_j)\right|.
\end{align*}
\end{proof}

\begin{lemma}[Bound on empirical potential functions]\label{barf}
$$\|\hat{\vf}^*-\bar{\vf}^*\|^2_{\widehat{\mathscr{L}}_m}\leq\varepsilon^2\exp\left(\frac{6\|c\|_\infty}{\varepsilon}\right)\|\nabla\widehat{\Phi}_{\varepsilon}(\vf^*)\|^2_{\widehat{\mathscr{L}}_m}.$$
\end{lemma}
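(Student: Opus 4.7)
The plan is to combine the $\beta$-strong concavity of $\hat{\Phi}_{\alpha,\varepsilon}$ on $\hat{\mathcal{S}}_{2\|c_\alpha\|_\infty}$ from Lemma~\ref{sconc_empirical} with the first-order optimality $\nabla\hat{\Phi}_{\alpha,\varepsilon}(\hat{\vf}^*)=0$ and with the shift-invariance of the gradient. The first step is to verify that both $\hat{\vf}^*$ and $\bar{\vf}^*$ lie in $\hat{\mathcal{S}}_{2\|c_\alpha\|_\infty}$: membership of $\hat{\vf}^*$ follows from the empirical analogue of Proposition~\ref{BDP}, while $\bar{\vf}^*\in\hat{\mathcal{S}}_{2\|c_\alpha\|_\infty}$ because the defining shifts are constants whose coordinates sum to zero, so $\sum_j \bar f_j^*=\sum_j f_j^*$ as a function and $\|\sum_j\bar f_j^*\|_\infty=\|\sum_j f_j^*\|_\infty\le 2\|c_\alpha\|_\infty$ by Proposition~\ref{BDP}, while the normalization $\hat{\nu}_i^N(\bar f_i^*)=0$ for $i\in[m-1]$ is built into the definition of $\bar{\vf}^*$.

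The second step is to apply the strong-concavity bound~\eqref{esconci} to the ordered pair $(\hat{\vf}^*,\bar{\vf}^*)$ and to $(\bar{\vf}^*,\hat{\vf}^*)$ and add the two inequalities. Because $\hat{\vf}^*$ is the maximizer of $\hat{\Phi}_{\alpha,\varepsilon}$, the marginal constraints~\eqref{mconst} satisfied by $\hat p_{\alpha,\varepsilon}$ force $\nabla\hat{\Phi}_{\alpha,\varepsilon}(\hat{\vf}^*)=0$ as an element of $\hat{\mathscr{L}}_m^*$; after adding, the left-hand side vanishes and one is left with the monotone-gradient inequality
\[
\beta\,\|\hat{\vf}^*-\bar{\vf}^*\|_{\hat{\mathscr{L}}_m}^2 \le \langle\nabla\hat{\Phi}_{\alpha,\varepsilon}(\bar{\vf}^*),\hat{\vf}^*-\bar{\vf}^*\rangle_{\hat{\mathscr{L}}_m}.
\]
Under the Riesz identification of $\hat{\mathscr{L}}_m^*$ with $\hat{\mathscr{L}}_m$ implicit in Lemma~\ref{norm} (this is what legitimizes the notation $\|\nabla\hat{\Phi}_{\alpha,\varepsilon}(\vf^*)\|_{\hat{\mathscr{L}}_m}$ appearing in the target bound), Cauchy--Schwarz on the right, followed by dividing once by $\|\hat{\vf}^*-\bar{\vf}^*\|_{\hat{\mathscr{L}}_m}$ and squaring, yields
\[
\|\hat{\vf}^*-\bar{\vf}^*\|_{\hat{\mathscr{L}}_m}^2 \le \beta^{-2}\,\|\nabla\hat{\Phi}_{\alpha,\varepsilon}(\bar{\vf}^*)\|_{\hat{\mathscr{L}}_m}^2.
\]
Substituting $L=2\|c_\alpha\|_\infty$ in Lemma~\ref{sconc_empirical} gives $\beta=\varepsilon^{-1}\exp\bigl(-3\|c_\alpha\|_\infty/\varepsilon\bigr)$, hence $\beta^{-2}=\varepsilon^{2}\exp\bigl(6\|c_\alpha\|_\infty/\varepsilon\bigr)$, matching the prefactor in the target inequality.

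The third step is to replace $\bar{\vf}^*$ by $\vf^*$ inside the gradient. The integrand in Definition~\ref{normdef} and in the explicit expression of Lemma~\ref{norm} depends on the potentials only through the sum $\sum_j f_j$, and by construction $\sum_j \bar f_j^*=\sum_j f_j^*$, so $\|\nabla\hat{\Phi}_{\alpha,\varepsilon}(\bar{\vf}^*)\|_{\hat{\mathscr{L}}_m}=\|\nabla\hat{\Phi}_{\alpha,\varepsilon}(\vf^*)\|_{\hat{\mathscr{L}}_m}$, completing the argument. I do not expect a substantive obstacle here; the only point requiring care is ensuring $\bar{\vf}^*\in\hat{\mathcal{S}}_{2\|c_\alpha\|_\infty}$ so that we may invoke $\beta$-strong concavity with $L=2\|c_\alpha\|_\infty$, and this choice is precisely what produces the $6\|c_\alpha\|_\infty$ in the exponent of the final bound.
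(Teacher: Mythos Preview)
Your argument is correct and arrives at the same constant $\beta^{-2}=\varepsilon^{2}\exp(6\|c_\alpha\|_\infty/\varepsilon)$ as the paper, but the route is slightly different. The paper works at the level of \emph{function values}: it applies strong concavity once at the maximizer $\hat{\vf}^*$ (where the gradient term vanishes) to get the lower bound $\hat{\Phi}_{\alpha,\varepsilon}(\hat{\vf}^*)-\hat{\Phi}_{\alpha,\varepsilon}(\bar{\vf}^*)\ge\tfrac{\beta}{2}\|\hat{\vf}^*-\bar{\vf}^*\|_{\hat{\mathscr{L}}_m}^2$, and then invokes the Polyak--\L ojasiewicz inequality (Lemma~\ref{PLineq}) to get the matching upper bound $\hat{\Phi}_{\alpha,\varepsilon}(\hat{\vf}^*)-\hat{\Phi}_{\alpha,\varepsilon}(\bar{\vf}^*)\le\tfrac{1}{2\beta}\|\nabla\hat{\Phi}_{\alpha,\varepsilon}(\bar{\vf}^*)\|_{\hat{\mathscr{L}}_m}^2$. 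You instead bypass function values entirely, summing the two strong-concavity inequalities to obtain the monotone-gradient bound and closing with Cauchy--Schwarz. Your path is marginally more self-contained since it does not require PL as a separate named lemma; the paper's path makes the role of the suboptimality gap explicit. The shift-invariance observation $\nabla\hat{\Phi}_{\alpha,\varepsilon}(\bar{\vf}^*)=\nabla\hat{\Phi}_{\alpha,\varepsilon}(\vf^*)$ and the membership checks $\hat{\vf}^*,\bar{\vf}^*\in\hat{\mathcal{S}}_{2\|c_\alpha\|_\infty}$ are handled identically in both proofs.
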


\begin{proof}
Due to the strong concavity proved in Lemma \ref{sconc_empirical}, we have
$$\widehat{\Phi}_{\varepsilon}(\hat{\vf}^*)-\widehat{\Phi}_{\varepsilon}(\bar{\vf}^*)\geq\frac{1}{2\varepsilon}\exp\left({-\frac{3\|c\|_\infty}{\varepsilon}}\right)\|\hat{\vf}^*-\bar{\vf}^*\|^2_{\widehat{\mathscr{L}}_m}.$$
Also, the Polyak-\L ojasiewicz inequality stated in Lemma \ref{PLineq} indicates that
$$\widehat{\Phi}_{\varepsilon}(\hat{\vf}^*)-\widehat{\Phi}_{\varepsilon}(\bar{\vf}^*)\leq\frac{\varepsilon\exp(\frac{3\|c\|_\infty}{\varepsilon})}{2}\|\nabla\widehat{\Phi}_{\varepsilon}(\bar{\vf}^*)\|^2_{\widehat{\mathscr{L}}_m}=\frac{\varepsilon\exp(\frac{3\|c\|_\infty}{\varepsilon})}{2}\|\nabla\widehat{\Phi}_{\varepsilon}(\vf^*)\|^2_{\widehat{\mathscr{L}}_m}.$$
The observation above implies the lemma.
\end{proof}

\begin{proposition}[Concentration of empirical joint density of optimal coupling]\label{diffp2}
    For $t>0$, with probability more than $1-2m(m-1)e^{-t}$, 
    $$\|p^*_{\varepsilon}(\mvx)-\hat{p}_{\varepsilon}(\mvx)\|^2_{L^2(\otimes_{j=1}^m\hat{\nu}_j^N)}\lesssim_{m,\varepsilon}\frac{t}{N}.$$
\end{proposition}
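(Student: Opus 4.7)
The plan is to chain together three ingredients that have already been established: the pointwise comparison between $\hat{p}_{\alpha,\varepsilon}$ and $p^*_{\alpha,\varepsilon}$ in Proposition~\ref{13}, the quadratic comparison in Lemma~\ref{barf} between $\|\hat{\vf}^*-\bar{\vf}^*\|^2_{\hat{\mathscr{L}}_m}$ and the empirical gradient norm squared at $\vf^*$, and the concentration of the empirical gradient norm in Lemma~\ref{gradnormp}. No new probabilistic arguments should be required; the whole proposition is an algebraic consequence of bounds already in hand.

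First, I would square the pointwise bound from Proposition~\ref{13} and integrate with respect to $\otimes_{j=1}^m\hat{\nu}_j^N$. Using the Cauchy--Schwarz inequality to handle the sum $\sum_{j=1}^m|\bar{f}_j^*(x_j)-\hat{f}_j^*(x_j)|$ and then integrating out the marginals not indexed by $j$ (each coordinate appears in only one summand, which is the key simplification), one gets
\begin{equation*}
\|p^*_{\alpha,\varepsilon}(\mvx)-\hat{p}_{\alpha,\varepsilon}(\mvx)\|^2_{L^2(\otimes_{j=1}^m\hat{\nu}_j^N)}\lesssim_{\varepsilon}\ m\sum_{j=1}^m\|\bar{f}_j^*-\hat{f}_j^*\|^2_{L^2(\hat{\nu}_j^N)}=m\,\|\hat{\vf}^*-\bar{\vf}^*\|^2_{\hat{\mathscr{L}}_m}.
\end{equation*}

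Next, I apply Lemma~\ref{barf} to absorb this into a multiple of $\|\nabla\hat{\Phi}_{\alpha,\varepsilon}(\vf^*)\|^2_{\hat{\mathscr{L}}_m}$, with a constant depending only on $\varepsilon$ (and on $\|c_\alpha\|_\infty\le 2m$, hence only on $m,\varepsilon$). Finally, I invoke Lemma~\ref{gradnormp}, which says that for every $t>0$, with probability at least $1-2m(m-1)e^{-t}$ we have $\|\nabla\hat{\Phi}_{\alpha,\varepsilon}(\vf^*)\|^2_{\hat{\mathscr{L}}_m}\lesssim_{m,\varepsilon} t/N$. Combining the three bounds yields the claimed $t/N$ concentration with the same probability $1-2m(m-1)e^{-t}$.

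The argument is essentially a bookkeeping exercise; the only place where care is needed is ensuring that when the pointwise bound of Proposition~\ref{13} is squared and integrated, the resulting constant depends only on $m$ and $\varepsilon$ and not on the individual samples. Since $\|\bar{\vf}^*\|$ and $\|\hat{\vf}^*\|$ are both almost surely in $\hat{\cS}_{2\|c_\alpha\|_\infty}$ by Proposition~\ref{BDP} and the identifiability convention, the Lipschitz constant of the exponential in Proposition~\ref{13} is uniformly controlled, so there is no hidden obstacle there. The main substantive input is Lemma~\ref{gradnormp}, which has already been proved; apart from invoking it, the present proposition requires no further probabilistic machinery.
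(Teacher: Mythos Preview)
Your proposal is correct and follows essentially the same route as the paper: square and integrate the pointwise estimate of Proposition~\ref{13}, use Cauchy--Schwarz to reduce to $\|\hat{\vf}^*-\bar{\vf}^*\|^2_{\hat{\mathscr{L}}_m}$, apply Lemma~\ref{barf} to pass to $\|\nabla\hat{\Phi}_{\alpha,\varepsilon}(\vf^*)\|^2_{\hat{\mathscr{L}}_m}$, and conclude via Lemma~\ref{gradnormp}. Your remark about the Lipschitz constant being uniform because $\bar{\vf}^*,\hat{\vf}^*\in\hat{\cS}_{2\|c_\alpha\|_\infty}$ is exactly the justification the paper leaves implicit in Proposition~\ref{13}.
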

\begin{proof}
    By Proposition \ref{13}, it holds that
        $$|p^*_{\varepsilon}(\mvx)-\hat{p}_{\varepsilon}(\mvx)|\lesssim_\varepsilon\sum_{j=1}^m|\bar{f}_j^*(x_j)-\hat{f}^*_j(x_j)|.$$
Lemma \ref{gradnormp} and Lemma \ref{barf} yield that with probability exceeding $1-2m(m-1)e^{-t}$,
    $$\|\hat{\vf}^*-\bar{\vf}^*\|^2_{\widehat{\mathscr{L}}_m}\lesssim_\varepsilon\|\nabla\widehat{\Phi}_{ \varepsilon}(\vf^*)\|^2_{\widehat{\mathscr{L}}_m}\lesssim_{m,\varepsilon}\frac{t}{N}.$$
Thus, with probability at least $1-2m(m-1)e^{-t}$,
\begin{align*}
\|p^*_{\varepsilon}(\mvx)-\hat{p}_{\varepsilon}(\mvx)\|^2_{L^2(\otimes_{j=1}^m\hat{\nu}_j^N)}&\lesssim_{\varepsilon}  (\otimes_{j=1}^m\hat{\nu}_j^N)\left(\sum_{j=1}^m|\bar{f}_j^*(x_j)-\hat{f}^*_j(x_j)|^2\right)\\
&=\|\hat{\vf}^*-\bar{\vf}^*\|^2_{\widehat{\mathscr{L}}_m}\lesssim_{m,\varepsilon}\frac{t}{N}.
\end{align*}
\end{proof}

\noindent The lemmas above make us ready for the proof of theorem \ref{boundtest}.
\begin{proof}[Proof of Theorem \ref{boundtest}] 
Put $\mvx = (x_1, \dots, x_m)$. Note that
\begin{align}\label{boundtest_decomp}
     |(\pi_\varepsilon^*-\hat{\pi}^N_\varepsilon)(g)|
     =
     &\left|\int g\,p^*_{\varepsilon} d\otimes_{j=1}^m\nu_j-\int g\,\hat{p}_{\varepsilon} d\otimes_{j=1}^m\hat{\nu}^N_j\right|\nonumber\\
     \leq 
     & \left|\int g p^*_{\varepsilon} d\otimes_{j=1}^m\hat{\nu}^N_j-\int g \hat{p}_{\varepsilon} d\otimes_{j=1}^m\hat{\nu}^N_j\right|\nonumber\\
     & + \left|\int g\,  p^*_{\varepsilon} d\otimes_{j=1}^m\nu_j-\int g\, p^*_{\varepsilon} d\otimes_{j=1}^m\hat{\nu}^N_j\right|\nonumber\\
     \leq & \|g\|_\infty \|p^*_{\varepsilon}(\mvx)-\hat{p}_{\varepsilon}(\mvx)\|_{L^1(\otimes_{j=1}^m\hat{\nu}_j^N)} +\left|\left(\otimes_{j=1}^m\hat{\nu}^N_j-\otimes_{j=1}^m\nu_j\right)(g\, p^*_{\varepsilon})\right|\nonumber\\
    =: & (A)+(B).
\end{align}
By Proposition \ref{diffp2}, with probability at least $1-2m(m-1)e^{-t}$, the first term (A) above satisfies 
\begin{equation}\label{eqn:term_A}
    \|p^*_{\varepsilon}(\mvx)-\hat{p}_{\varepsilon}(\mvx)\|_{L^1(\otimes_{j=1}^m\hat{\nu}_j^N)}\leq\|p^*_{\varepsilon}(\mvx)-\hat{p}_{\varepsilon}(\mvx)\|_{L^2(\otimes_{j=1}^m\hat{\nu}_j^N)}\lesssim_{m,\varepsilon}\sqrt{\frac{t}{N}}.
\end{equation}
The bound for the second term (B) follows from Lemma \ref{Ustat} and the boundedness of the potentials from Proposition \ref{BDP}. Indeed, with probability exceeding $1-2e^{-t}$, 
$$\left|\left(\otimes_{j=1}^m\hat{\nu}^N_j-\otimes_{j=1}^m\nu_j\right)(g\,p^*_{\varepsilon})\right|\lesssim_{\varepsilon}\sqrt{\frac{2t}{N}}\|g\|_\infty.$$ 
\end{proof}

\begin{lemma}[Uniform bound on empirical joint density of optimal coupling]
\label{main_lemma_bound_on_norm_Aij-Dij}
For a given cost function $c\in\cC^2$,
    \begin{equation*}
        \sup_{x_i\in[-1,1]^d}\int \left| (\hat{p}_\varepsilon - p^*_\varepsilon)(\mvx) \right|^2 d\hat{\nu}_{-i}^N =\cO_\bP\left(\frac{d\log N}{N}\right).
    \end{equation*}
\end{lemma}

\begin{proof}[Proof of Lemma \ref{main_lemma_bound_on_norm_Aij-Dij}]
By Proposition \ref{13}, we have
\begin{equation*}
        |\hat{p}_{\varepsilon}(\mvx)-p^*_{\varepsilon}(\mvx)|^2\lesssim_\varepsilon m \sum_{j=1}^m|\bar{f}_j^*(x_j)-\hat{f}^*_j(x_j)|^2.
\end{equation*}
Thus, 
\begin{align*}
    \sup_{x_i\in[-1,1]^d}\int \left| (\hat{p}_\varepsilon - p^*_\varepsilon)(\mvx) \right|^2 d\hat{\nu}_{-i}^N \lesssim_{m,\varepsilon} 
    \sup_{x_i\in[-1,1]^d} |\bar{f}_i^*(x_i)-\hat{f}^*_i(x_i)|^2 + \sum_{j\neq i} \frac{1}{N} \sum_{\ell^{(j)}=1}^N \left|\bar{f}_j^*(X^{(j)}_{\ell^{(j)}})-\hat{f}^*_j(X^{(j)}_{\ell^{(j)}}) \right|^2.
\end{align*}
The first term on the right hand side is dealt with in Lemma \ref{diff_empirical_population_f_i_bounded_by_other_potentials}.
The second term on the right hand side is bounded by Lemma \ref{gradnormp} and Lemma \ref{barf}.
\end{proof}

\begin{proof}[Proof of Lemma \ref{diff_empirical_population_f_i_bounded_by_other_potentials}]
It is sufficient to show that 
\begin{align*}
\| [\mvf^*] - [\hat{\mvf}^*] \|_{\widetilde{\cC}^1}
\leq
\|\bar{\mvf}^* - \hat{\mvf}^*\|_{\cC^1} = \cO_\bP \left( \sqrt{\frac{d \log N}{N}} \right).
\end{align*}
For that to hold, it is enough to establish
\begin{align*}
\sup_{\mvx} \left\| \bar{\mvf}^* (\mvx)  - \hat{\mvf}^* (\mvx) \right\|
= \cO_\bP \left( \sqrt{\frac{d \log N}{N}} \right),\quad \text{and} \quad
\sup_{\mvx} \left\| \nabla \bar{\mvf}^* (\mvx)  - \nabla \hat{\mvf}^* (\mvx) \right\|
= \cO_\bP \left( \sqrt{\frac{d \log N}{N}} \right).
\end{align*} 
In this proof, to shorten the notation, given a function 
\[
F : \underbrace{\mathbb{R}^d \otimes \cdots \otimes \mathbb{R}^d}_{m\text{ times}} \to \mathbb{R},
\]
and $X_1^{(j)},\dots,X_N^{(j)}\overset{i.i.d.}{\sim} \nu_j$
we define $F_{-i}( x_i, \mvX_{\mvl_{(-i)}})$ as 
\begin{align*}
    F\left(X_{\ell^{(1)}}^{(1)},\dots,X_{\ell^{(i-1)}}^{(i-1)},
        x_i,
        X_{\ell^{(i+1)}}^{(i+1)}, \dots,
        X_{\ell^{(m)}}^{(m)} \right),
\end{align*}
where we denote $(X_{\ell^{(1)}}^{(1)},\dots,X_{\ell^{(i-1)}}^{(i-1)},
        x_i,
        X_{\ell^{(i+1)}}^{(i+1)}, \dots,
        X_{\ell^{(m)}}^{(m)})$ as
$( x_i, \mvX_{\mvl_{(-i)}})$ for $\mvl_{(-i)} = (\ell^{(1)},\dots,\ell^{(i-1)},\ell^{(i+1)}, \dots,\ell^{(m)})$.

\noindent\underline{\textbf{Step 1.}} We first prove the first asymptotic equality. It is sufficient to show
\begin{align*}
\sup_{x_i\in[-1,1]^d}|\bar{f}_i^*(x_i) - \hat{f}_i^*(x_i)| = \cO_\bP \left( \sqrt{\frac{d \log N}{N}} \right).
\end{align*}
Define $\tilde{f}^*_i(x_i) : = -\varepsilon\log \left(\frac{1}{N^{m-1}}\sum_{\mvl_{(-i)}\in [N]^{m-1}} e^{\frac{\left(\oplus_{j\neq i} \bar{f}_j^* -c\right)_{-i} \left( x_i, \mvX_{\mvl_{-i}}\right) }{\varepsilon}} \right).$ On the one hand,
    \begin{align*}
        |\bar{f}_i^*(x_i) &- \tilde{f}_i^*(x_i)| \\
        &= \varepsilon \left|\log \left(\frac{1}{N^{m-1}}\sum_{\mvl_{(-i)}\in [N]^{m-1}} e^{\frac{\left(\oplus_{j\neq i} \bar{f}_j^* -c\right)_{-i} \left(x_i, \mvX_{\mvl_{-i}}\right) }{\varepsilon}} \right) 
        - \log\int e^{\frac{\oplus_{j\neq i} \bar{f}_j^*(x_j) -c(x_i,x_{-i})  }{\varepsilon}} d\nu_{-i}(\mvx)  \right|\\
        &\lesssim \left|\frac{1}{N^{m-1}}\sum_{\mvl_{(-i)}\in [N]^{m-1}} e^{\frac{\left(\oplus_{j\neq i} \bar{f}_j^* -c\right)_{-i} \left(x_i, \mvX_{\mvl_{-i}}\right) }{\varepsilon}} - \int e^{\frac{\oplus_{j\neq i} \bar{f}_j^*(x_j) -c(x_i,x_{-i})  }{\varepsilon}} d\nu_{-i}(\mvx) \right|.
    \end{align*}
By Lemma \ref{Lipschitz_plus_light_tail_union_bound}, we know that
\begin{align*}
    \sup_{x_i} |\bar{f}_i^*(x_i) &- \tilde{f}_i^*(x_i)| = \cO_\bP \left(\sqrt{\frac{d \log N}{N}}\right).
\end{align*}
On the other hand,
\begin{align*}
    |&\tilde{f}^*_i(x_i) - \hat{f}^*_i(x_i)|\\
   &= \varepsilon \left|\log \left(\frac{1}{N^{m-1}}\sum_{\mvl_{(-i)}\in [N]^{m-1}} e^{\frac{\left(\oplus_{j\neq i} \bar{f}_j^* -c\right)_{-i} \left(x_i, \mvX_{\mvl_{-i}}\right) }{\varepsilon}} \right) 
    - \log \left(\frac{1}{N^{m-1}}\sum_{\mvl_{(-i)}\in [N]^{m-1}}  e^{\frac{\left(\oplus_{j\neq i} \hat{f}_j^* -c\right)_{-i} \left(x_i, \mvX_{\mvl_{-i}}\right) }{\varepsilon}} \right) \right|\\
    &\lesssim \frac{1}{N^{m-1}}\sum_{\mvl_{(-i)}\in [N]^{m-1}} 
    \left|  e^{\frac{\left(\oplus_{j\neq i} \bar{f}_j^* -c\right)_{-i} \left( x_i, \mvX_{\mvl_{-i}}\right) }{\varepsilon}} - e^{\frac{\left(\oplus_{j\neq i} \hat{f}_j^* -c\right)_{-i} \left(x_i, \mvX_{\mvl_{-i}}\right) }{\varepsilon}}  \right|\\
    & \lesssim \sum_{\substack{j\neq i\\j\in[m]}} \frac{1}{N} \sum_{\ell=1}^N |\hat{f}^N_j-\bar{f}^*_j|(X_\ell^{(j)}).
\end{align*}
Based on Lemma \ref{gradnormp} and Lemma \ref{barf}, we know that $$\sup_{x_i}\sum_{j\neq i}\| \bar{f}^*_j -\hat{f}^*_j\|_{L^1(\hat{\nu}^N_j)}\lesssim\|\nabla\widehat{\Phi}_{\varepsilon}(\vf^*)\|^2_{\widehat{\mathscr{L}}_m} = \cO_\bP( N^{-1/2}).$$

\noindent\underline{\textbf{Step 2.}} Now, we prove the second asymptotic equality. Based on the topology we have, it is equivalent to show
\begin{align*}
\sup_{x_i\in[-1,1]^d} \max_{1\leq k\leq d} \left| \frac{\partial}{\partial_{x_{i,k}}} \left( \bar{f}_i^*(x_i) - \hat{f}_i^*(x_i) \right) \right|= \cO_\bP \left( \sqrt{\frac{d \log N}{N}} \right).
\end{align*}

\noindent Taking derivative, and we have via multimarginal Schr\"{o}dinger system that
\begin{align*}
\frac{\partial}{\partial_{x_{i,k}}}  \bar{f}_i^*(x_i) 
& = -\int \frac{\partial}{\partial_{x_{i,k}}}  c(\mvx) p^*_\varepsilon(\mvx) d\nu_{-i}(\mvx),\\
\frac{\partial}{\partial_{x_{i,k}}}  \hat{f}_i^*(x_i) 
& = -\int\frac{\partial}{\partial_{x_{i,k}}}  c(\mvx) \hat{p}_\varepsilon(\mvx) d\hat{\nu}^N_{-i}(\mvx).
\end{align*}  

\noindent Define $S(x_i) := -\int \frac{\partial}{\partial_{x_{i,k}}}  c(\mvx) p^*_\varepsilon(\mvx) d\hat{\nu}^N_{-i}(\mvx)$ and triangle inequality gives

\begin{align*}
    \left|\frac{\partial}{\partial_{x_{i,k}}} \hat{f}_i^*(x_i) - \frac{\partial}{\partial_{x_{i,k}}}  \bar{f}_i^*(x_i)\right|
    \leq 
    \left| \frac{\partial}{\partial_{x_{i,k}}} \hat{f}_i^*(x_i) - S(x_i)\right|
    + \left| S(x_i) - \frac{\partial}{\partial_{x_{i,k}}}  \bar{f}_i^*(x_i) \right|.\\
\end{align*}
By Lemma \ref{Lipschitz_plus_light_tail_union_bound}, 
\begin{align*}
\max_{1\leq k\leq d}\sup_{x_i\in[-1,1]^d}\left|S(x_i)-\frac{\partial}{\partial_{x_{i,k}}}  \bar{f}_i^*(x_i)\right| = \cO_\bP \left(\sqrt{\frac{d \log N}{N}}\right).
\end{align*}
The other term could be bounded as follows. 
\begin{align*}
    \left| \frac{\partial}{\partial_{x_{i,k}}}  \hat{f}_i^*(x_i) - S(x_i)\right| &= \left| \int \frac{\partial}{\partial_{x_{i,k}}}  c(\mvx) p^*_\varepsilon(\mvx) d\hat{\nu}^N_{-i}(\mvx)- \int \frac{\partial}{\partial_{x_{i,k}}}  c(\mvx) \hat{p}_\varepsilon(\mvx) d\hat{\nu}^N_{-i}(\mvx) \right| \\
    &\lesssim  \int  \left| p^*_\varepsilon(\mvx) - \hat{p}_\varepsilon(\mvx)  \right| d\hat{\nu}^N_{-i}(\mvx) \\
    &= \frac{1}{N^{m-1}}\sum_{\mvl_{(-i)}\in [N]^{m-1}} 
    \left|  e^{\frac{\left(\oplus_{j\neq i} \bar{f}_j^* -c\right)_{-i} \left(x_i,\mvX_{\mvl_{-i}}\right) }{\varepsilon}} - e^{\frac{\left(\oplus_{j\neq i} \hat{f}_j^* -c\right)_{-i} \left(x_i,\mvX_{\mvl_{-i}}\right) }{\varepsilon}} \right|\\
    & \lesssim \sum_{\substack{j\neq i\\j\in[m]}} \frac{1}{N} \sum_{\ell=1}^N |\hat{f}^N_j-\bar{f}^*_j|(X_\ell^{(j)}).
\end{align*}
Based on Lemma \ref{gradnormp} and Lemma \ref{barf}, we know that $$\sup_{x_i}\sum_{j\neq i}\| \bar{f}^*_j -\hat{f}^*_j\|_{L^1(\hat{\nu}^N_j)}\lesssim\|\nabla\widehat{\Phi}_{\varepsilon}(\vf^*)\|^2_{\widehat{\mathscr{L}}_m} = \cO_\bP( N^{-1/2}).$$

\end{proof}

\begin{proof}[Proof of Theorem \ref{supremp_coupling}]
In this proof, we shall denote $C > 0$ as a generic constant depending only on $m,\varepsilon,\beta$ and $d$, whose value may vary from line to line. From (\ref{boundtest_decomp}) in the proof of Theorem \ref{boundtest}, we know that for every $g\in\widetilde{\mathcal{H}}$, $|(\pi_\varepsilon^*-\hat{\pi}^N_\varepsilon)(g)|$ can be bounded as following
\begin{align*}
|(\pi_\varepsilon^*-\hat{\pi}^N_\varepsilon)(g)|\leq
\|g\|_\infty \|p^*_{\varepsilon}(\mvx)-\hat{p}_{\varepsilon}(\mvx)\|_{L^1(\otimes_{j=1}^m\hat{\nu}_j^N)} +\left|\left(\otimes_{j=1}^m\hat{\nu}^N_j-\otimes_{j=1}^m\nu_j\right)(g\, p^*_{\varepsilon})\right|
\end{align*}
As a result, 
\begin{align*}
    \mathbb{E}\sup_{g\in\widetilde{\mathcal{H}}}|(\pi_\varepsilon^*-\hat{\pi}^N_\varepsilon)(g)|
    \leq & \mathbb{E}\sup_{g\in\widetilde{\mathcal{H}}}\left[\|g\|_\infty \|p^*_{\varepsilon}(\mvx)-\hat{p}_{\varepsilon}(\mvx)\|_{L^1(\otimes_{j=1}^m\hat{\nu}_j^N)}\right] \nonumber\\ & +\mathbb{E}\sup_{g\in\widetilde{\mathcal{H}}}\left|\left(\otimes_{j=1}^m\hat{\nu}^N_j-\otimes_{j=1}^m\nu_j\right)(gp^*_\varepsilon)\right|\nonumber\\
    =: & (\text{I}) + (\text{II}).
\end{align*}
Note that we have proved that for any $t > 0$,~\eqref{eqn:term_A} holds with probability at least $1-2m(m-1)e^{-t}$. 
Integrating this tail probability bound, we have the following expectation bound for term $(\text{I})$
\begin{equation}\label{nonpA}
\mathbb{E}\sup_{g\in\widetilde{\mathcal{H}}}\left[\|g\|_\infty \|p^*_{\varepsilon}(\mvx)-\hat{p}_{\varepsilon}(\mvx)\|_{L^1(\otimes_{j=1}^m\hat{\nu}_j^N)}\right]\lesssim_{m,\varepsilon} L/\sqrt{N}.
\end{equation}
Next, we are going to control term $(\text{II})$. Let $X_h:=\sqrt{N}\left(\otimes_{j=1}^m\hat{\nu}^N_j-\otimes_{j=1}^m\nu_j\right)(p^*_{\varepsilon}g)$ be a mean-zero process indexed by $g \in \widetilde{\cH}$. To lighten the notation, for $\mvr=(r_1,\dots,r_m)\in\{0,1\}^m$, we write $\tilde{\nu}_{r_j}=\nu_j$ if $r_j=0$ and $\tilde{\nu}_{r_j}=\hat{\nu}^N_j-\nu_j$ if $r_j=1$. $\sharp\mvr=\sum_{j=1}^m r_j$. We define $\tilde{\mvnu}_{\mvr}:=\otimes \tilde{\nu}_{r_j}$. For example, $ \tilde{\mvnu}_{\mvr}=(\hat{\nu}_1^N-\nu_1)\otimes\nu_{2}\otimes\dots\otimes\nu_m$ if $\mvr=(1,0,\dots,0)$,  $ \tilde{\mvnu}_{\mvr}=(\hat{\nu}_1^N-\nu_1)\otimes\nu_{2}\otimes(\hat{\nu}_3^N-\nu_3)\otimes\nu_4\dots\otimes\nu_m$ if $\mvr=(1,0,1,0\dots,0)$. We could decompose term $(\text{II})$ as $\sum_{j=1}^m H_j(g) + R(g)$ where
\begin{align*}
     H_j(g) &= \int gp^*_\varepsilon d(\nu_1\otimes\dots\nu_{j-1}\otimes (\hat{\nu}_j^N-\nu_j)\otimes\nu_{j+1}\otimes\dots\otimes\nu_m)(\mvx), \\
     R(g) &= \sum _{\sharp\mvr=2}^m \sum_{ \tilde{\mvnu}_{\mvr}}  \int  gp^*_\varepsilon d\tilde{\mvnu}_{\mvr}. 
\end{align*}
Corollary 7 in  \cite{Maximal_inequalities_degenerate_U_processes_94_AOS} and Lemma \ref{V_statistics_U_statistics_difference} give that 
$$\sup_{g\in\widetilde{\cH}} |R(g)| = \cO_\bP(N^{-1/2}).$$ 
For $j\in[m]$, same chaining argument as in the proof of Theorem \ref{supremp} Appendix \ref{sec_concentration_of_empirical_potentials_and_joint_optimal_coupling_density} yields the 

\begin{equation*}
    \mathbb{E} \left[ \sup_{g\in\widetilde{\mathcal{H}}} |H_j(g)| \right] \le \begin{cases}
        C LN^{-1/2} & \text{if}\quad d<2\beta,\\
        C LN^{-1/2}\log{N} & \text{if}\quad d=2\beta,\\
        C LN^{-\beta/d} &\text{if}\quad d>2\beta.
    \end{cases}
  \end{equation*}
Combining the inequalities and the proof is complete.
\end{proof}

\begin{proof}[Proof of Theorem \ref{supremp}]
In this proof, we shall denote $C > 0$ as a generic constant depending only on $m,\varepsilon,\beta$ and $d$, whose value may vary from line to line. From (\ref{boundtest_decomp}) in the proof of Theorem \ref{boundtest}, we know that for every $h\in\mathcal{H}$, $|(\bar{\nu}_\varepsilon-\hat{\nu}^N_\varepsilon)(h)|$ can be bounded as following
\begin{align}
|(\bar{\nu}_\varepsilon-\hat{\nu}^N_\varepsilon)(h)| 
&\leq  \|h\|_\infty \|p^*_{\varepsilon}(\mvx)-\hat{p}_{\varepsilon}(\mvx)\|_{L^1(\otimes_{j=1}^m\hat{\nu}_j^N)} +\left|\left(\otimes_{j=1}^m\hat{\nu}^N_j-\otimes_{j=1}^m\nu_j\right)(p^*_{\varepsilon}(h\circ T_\alpha))\right|.\nonumber
\end{align}
As a result, 
\begin{align}\label{controlAB}
    \mathbb{E}\sup_{h\in\mathcal{H}}|(\bar{\nu}_\varepsilon-\hat{\nu}^N_\varepsilon)(h)| 
    \leq & \mathbb{E}\sup_{h\in\mathcal{H}}\left[\|h\|_\infty \|p^*_{\varepsilon}(\mvx)-\hat{p}_{\varepsilon}(\mvx)\|_{L^1(\otimes_{j=1}^m\hat{\nu}_j^N)}\right] \nonumber\\ & +\mathbb{E}\sup_{h\in\mathcal{H}}\left|\left(\otimes_{j=1}^m\hat{\nu}^N_j-\otimes_{j=1}^m\nu_j\right)(p^*_{\varepsilon}(h\circ T_\alpha))\right|\nonumber\\
    =: & (A)+(B).
\end{align}
Note that we have proved that for any $t > 0$,~\eqref{eqn:term_A} holds with probability at least $1-2m(m-1)e^{-t}$. 
Integrating this tail probability bound, we have the following expectation bound for term $(A)$
\begin{equation}\label{nonpA}
\mathbb{E}\sup_{h\in\mathcal{H}}\left[\|h\|_\infty \|p^*_{\varepsilon}(\mvx)-\hat{p}_{\varepsilon}(\mvx)\|_{L^1(\otimes_{j=1}^m\hat{\nu}_j^N)}\right]\lesssim_{m,\varepsilon} L/\sqrt{N}.
\end{equation}
Next, we are going to control term $(B)$. Let $X_h:=\sqrt{N}\left(\otimes_{j=1}^m\hat{\nu}^N_j-\otimes_{j=1}^m\nu_j\right)(p^*_{\varepsilon}(h\circ T_\alpha))$ be a mean-zero process indexed by $h \in \cH$. By Lemma \ref{Ustat}, we know that with probability more than $1-2e^{-t}$, 
\begin{equation}\label{incre}
\left|X_h-X_{\tilde{h}}\right|\lesssim_{\varepsilon}\|h-\tilde{h}\|_\infty \sqrt{t}.
\end{equation}
Thus, $X_h$ satisfies the sub-Gaussian condition in Lemma \ref{dudley}. For any given $h\in\mathcal{H}$, it is obvious  from (\ref{incre}) that $\mathbb{E}|X_h|\lesssim_{m,\varepsilon} L$. From Theorem 2.7.1 in \cite{vanderVaartWellner1996}, we know
\begin{equation}\label{coveringn}
    \log N(\mathcal{H}, \varepsilon, \|\cdot\|_\infty) \lesssim_{\beta,d} \Big({\frac{L} {\varepsilon}}\Big)^{d / \beta}.
\end{equation}

\noindent \underline{\textbf{Case 1: $d<2\beta$}}. By (\ref{coveringn}), it is clear that $\int_{0}^{2L}\sqrt{\log N\left(\mathcal{H}, \varepsilon,\|\cdot\|_\infty\right)}d\varepsilon<\infty$ under this setting. Starting from the increment condition (\ref{incre}), one can obtain, via the usual Dudley's bound (see, for example, Lemma \ref{dudley} with $\delta=0$), 
\begin{align}\label{0case1}
    \mathbb{E}\sup_{h\in\mathcal{H}} |\left(\otimes_{j=1}^m\hat{\nu}^N_j-\otimes_{j=1}^m\nu_j\right)(p^*_{\varepsilon}(h\circ T_\alpha))|&\lesssim_{m,\varepsilon}\ N^{-1/2}\int_{0}^{2L}\sqrt{\log N\left(\mathcal{H}, \varepsilon,\|\cdot\|_\infty\right)}d\varepsilon \nonumber\\
    &\lesssim_{\beta,d} L/\sqrt{N}.
\end{align}
Combining (\ref{controlAB}), (\ref{nonpA}) and (\ref{0case1}), we see that there is some constant $C = C(m,\varepsilon,\beta,d) > 0$ such that
\begin{equation}
    \mathbb{E}\sup_{h\in\mathcal{H}}|(\bar{\nu}_\varepsilon-\hat{\nu}^N_\varepsilon)(h)|\leq C L N^{-1/2}.
\end{equation}

\noindent \underline{\textbf{Case 2: $d>2\beta$}}. 
Write $\rho_X(h,\tilde{h}):=\|h-\tilde{h}\|_\infty$ for brevity and then we have that $D=:\sup_{h,\tilde{h}\in\mathcal{H}}\rho_X(h,\tilde{h})\leq 2L$. For any $\delta\in[0,D]$, notice that
\begin{equation}\label{nonp1}
\mathbb{E} \Big[\sup_{\{\gamma,\gamma'\in \mathcal{H} : \; \rho_X(\gamma,\gamma')\leq\delta\}} (X_\gamma - X_{\gamma'}) \Big] \lesssim_\varepsilon\sqrt{N}\delta,
\end{equation}
and
\begin{equation}\label{nonp2}
\int_{\delta/4}^D\sqrt{\log N(\mathcal{H},\varepsilon, \rho_X)}d\varepsilon\lesssim_{\beta,d} L^{\frac{d}{2\beta}} \int_{\delta/4}^{2L}\varepsilon^{-\frac{d}{2\beta}}d\varepsilon\lesssim_{\beta,d} L^{\frac{d}{2\beta}} \delta^{-\frac{d-2\beta}{2\beta}}.
\end{equation}
By virtue of (\ref{nonp1}) and (\ref{nonp2}), applying Lemma \ref{dudley}, for any $\delta\in[0,D]$, it holds true that  
\begin{equation*}
\mathbb{E}\sup_{h\in\mathcal{H}}\left|\sqrt{N}\left(\otimes_{j=1}^m\hat{\nu}^N_j-\otimes_{j=1}^m\nu_j\right)(p^*_{\varepsilon}(h\circ T_\alpha))\right| \leq C [L+\sqrt{N}\delta + L^{\frac{d}{2\beta}} \delta^{-\frac{d-2\beta}{2\beta}}].
\end{equation*}
Optimize over $\delta\in[0, D]$, and we know that for $\delta\asymp LN^{-\beta/d}$, we have
\begin{equation}\label{nonpB}
\mathbb{E}\sup_{h\in\mathcal{H}}\left|\left(\otimes_{j=1}^m\hat{\nu}^N_j-\otimes_{j=1}^m\nu_j\right)(p^*_{\varepsilon}(h\circ T_\alpha))\right| \leq C L N^{-\beta/d},
\end{equation}
for some constant $C> 0$. Combining (\ref{controlAB}), (\ref{nonpA}) and (\ref{nonpB}), the result under Case 2 is proved.

\noindent \underline{\textbf{Case 3: $d=2\beta$}}. The argument under this setting is similar to that for the previous case. Inequalities $(\ref{nonp1})$ and $\mathbb{E}|X_h|\lesssim_{m,\varepsilon} L$
still hold. Additionally, observe that for $d=2\beta$,
\begin{equation}\label{case3dud}
\int_{\delta/4}^D\sqrt{\log N(\mathcal{H},\varepsilon, \rho_X)}d\varepsilon\lesssim_{\beta,d} L \int_{\delta/4}^{2L}\varepsilon^{-1}d\varepsilon= L\log(8L) + L\log(1/\delta).
\end{equation}
Due to (\ref{nonp1}), (\ref{case3dud}), and Lemma \ref{dudley}, we find that for some constant $C>0$,
\begin{equation*}
\mathbb{E}\sup_{h\in\mathcal{H}}\left|\sqrt{N}\left(\otimes_{j=1}^m\hat{\nu}^N_j-\otimes_{j=1}^m\nu_j\right)(p^*_{\varepsilon}(h\circ T_\alpha))\right| \leq C [ L\log(8eL) + \sqrt{N}\delta + L \log(1/ \delta)].
\end{equation*}
Taking $\delta \asymp L N^{-1/2} \log{N}$, the above bound becomes 
\begin{equation}\label{case3bound}
\mathbb{E}\sup_{h\in\mathcal{H}}\left|\left(\otimes_{j=1}^m\hat{\nu}^N_j-\otimes_{j=1}^m\nu_j\right)(p^*_{\varepsilon}(h\circ T_\alpha))\right| \leq C L N^{-1/2} \log{N}.
\end{equation}
The combination of (\ref{controlAB}), (\ref{nonpA}) and (\ref{case3bound}) leads to the conclusion in~\eqref{eqn:holder_bound}.

\noindent Finally, we prove~\eqref{eqn:wasserstein_bound}. For the $W_1$ bound, invoking the well-known Kantorovich duality~\citep{villani2021topics}, 

\begin{equation*}
W_1(\bar{\nu}_\varepsilon,\hat{\nu}^N_\varepsilon)=\sup\left\{\int fd\bar{\nu}_\varepsilon-\int fd\hat{\nu}^N_\varepsilon : f\in\mathcal{H}([-1,1]^d; 1,1) \right\},
\end{equation*}
where we recall that $\mathcal{H}([-1,1]^d; 1,1)$ is the class of Lipschitz continuous functions with constant one. This observation concludes the proof combined with the covering bound $\log N(\mathcal{H}([-1,1]^d; 1,1), \varepsilon, \|\cdot\|_\infty) \lesssim_{d} \varepsilon^{-d}$ from Example 5.10 in~\cite{Wainwright_2019}. As for $W_2$ bound in (\ref{eqn:wasserstein_bound}), by arguments in Lemma 3 of \cite{ChizatRoussillonLegerVialardPeyer20},

\begin{align*}
W^2_2(\bar{\nu}_\varepsilon,\hat{\nu}^N_\varepsilon)&\leq
\sup_\mu |W^2_2(\bar{\nu}_\varepsilon,\mu)-W^2_2(\hat{\nu}^N_\varepsilon,\mu)|\nonumber\\
&\leq 
\sup \left\{\int fd\bar{\nu}_\varepsilon-\int fd\hat{\nu}^N_\varepsilon : f\in\mathcal{H}([-1,1]^d; 1,1) \cap \text{Conv}([-1,1]^d) \right\}
+\int \|\cdot\|^2 d(\bar{\nu}_\varepsilon- \hat{\nu}^N_\varepsilon),
\end{align*}
with $\text{Conv}([-1,1]^d)$ the collection of convex functions $f : [-1,1]^d \to \bR$. From~\cite{Bronshtein_convex-covering}, we have $\log N(\mathcal{H}([-1,1]^d; 1,1) \cap \text{Conv}([-1,1]^d), \varepsilon, \|\cdot\|_\infty) \lesssim_{d}  \varepsilon^{-d/2}$ for the first term above. The second term of the preceding display has already been well-controlled by Theorem \ref{boundtest}. Finally, ~\eqref{eqn:wasserstein_bound} follows from~\eqref{eqn:holder_bound} with $\beta = p$ for $p = 1$ and $p = 2$.

\end{proof}

\section{Sample complexity and weak limit of the cost functional}\label{sec_sample_complexity_proof}

\noindent The proof of sample complexity (Theorem~\ref{thm:rate_cost}) and that of weak limit (Theorem \ref{CLT_for_cost_functional}) of the cost functional are similar. We therefore report them together in this section.

\begin{proof}[Proof of Theorem~\ref{thm:rate_cost}]
By the strong duality between~\eqref{primal2} and~\eqref{dual2}, we have
\[
\mathbb{E}\left[S_{\varepsilon}(\nu_1,\dots,\nu_m) - S_{\varepsilon}(\hat{\nu}^N_1,\dots,\hat{\nu}^N_m)\right]^2 = \mathbb{E}\left[\Phi_{ \varepsilon}(\vf^*) - \widehat{\Phi}_{\varepsilon}(\hat{\vf}^*)\right]^2.
\]
Decomposing
$\Phi_{ \varepsilon}(\vf^*) - \widehat{\Phi}_{\varepsilon}(\hat{\vf}^*)
=\Phi_{ \varepsilon}(\vf^*) - \widehat{\Phi}_{\varepsilon}(\vf^*)+ \widehat{\Phi}_{\varepsilon}(\vf^*) - \widehat{\Phi}_{\varepsilon}(\hat{\vf}^*),$
we may bound 
\begin{equation}\label{decom}
\left|\Phi_{ \varepsilon}(\vf^*) - \widehat{\Phi}_{\varepsilon}(\hat{\vf}^*)\right|^2
\leq2\left|\Phi_{ \varepsilon}(\vf^*) - \widehat{\Phi}_{\varepsilon}(\vf^*)\right|^2
+ 2\left|\widehat{\Phi}_{ \varepsilon}(\vf^*) - \widehat{\Phi}_{\varepsilon}(\hat{\vf}^*)\right|^2=:(\text{I})+(\text{II}).
\end{equation}
\underline{\bf Term (I)}. Plugging into the definition of population and empirical dual objective functions in~\eqref{dual1} and~\eqref{dual2}, we have
\begin{equation}\label{term1,0}
\begin{gathered}
\mathbb{E}\Big[\Phi_{ \varepsilon}(\vf^*) - \widehat{\Phi}_{\varepsilon}(\vf^*)\Big]^2
= \mathbb{E}\Big[\sum_{j=1}^m(v_j-\hat{v}^N_j)(f_j^*)+\varepsilon(\otimes_{k=1}^m\hat{v}^N_k-\otimes_{k=1}^mv_k)\exp\Big(\frac{\sum_{k=1}^m f^*_k-c}{\varepsilon}\Big)\Big]^2
\\
\leq2\mathbb{E}\Big[\sum_{j=1}^m(v_j-\hat{v}^N_j)(f_j^*)\Big]^2+2\varepsilon^2\mathbb{E}\Big[(\otimes_{k=1}^m\hat{v}^N_k-\otimes_{k=1}^mv_k)\exp\Big(\frac{\sum_{k=1}^m f^*_k-c}{\varepsilon}\Big)\Big]^2.
\end{gathered}
\end{equation}
For the first term on the RHS of~\eqref{term1,0}, we use the sample independence to obtain
\begin{align}\label{term1,1}
\nonumber
&\mathbb{E}\Big[\sum_{j=1}^m(\nu_j-\hat{\nu}^N_j)(f_j^*)\Big]^2
=\mathbb{E}\Big[\sum_{j=1}^m\frac{1}{N}\sum_{k=1}^N\Big(f^*_j(X^{(j)}_k)-\nu_j(f^*_j)\Big)\Big]^2\\
&=\frac{1}{N^2}\mathbb{E}\Big[\sum_{1\leq j, j'\leq m}\sum_{1\leq k, k'\leq N}\left(f^*_j(X^{(j)}_k)-\nu_j(f^*_j)\right)\left(f^*_{j'}(X^{(j')}_{k'})-\nu_{j'}(f^*_{j'})\right)\Big]\nonumber\\
&=\frac{1}{N}\sum_{j=1}^m\mathbb{E}\Big[\left(f^*_j(X_1^{(j)})-\nu_j(f^*_j)\right)^2\Big]=\frac{1}{N}\sum_{j=1}^m\text{Var}_{\nu_j}(f^*_j)\leq\frac{m\|c\|_{\infty}^2}{N},
\end{align}
where the last inequality follows from the pointwise boundedness of the dual potentials in Proposition \ref{BDP}. For the second term on the RHS of~\eqref{term1,0}, we use the marginal feasibility constrain (\ref{multiconst_equiv}) for the multimarginal Schr\"{o}dinger system to deduce that
\begin{align}\label{cross1}
&\mathbb{E}\Big[(\otimes_{k=1}^m\hat{\nu}^N_k-\otimes_{k=1}^m\nu_k)\exp\Big(\frac{\sum_{k=1}^m f^*_k-c}{\varepsilon}\Big)\Big]^2
=\mathbb{E}\Big[ \left(\otimes_{k=1}^m\hat{\nu}^N_k\right)\Big(1-\exp\Big(\frac{\sum_{k=1}^m f^*_k-c}{\varepsilon}\Big)\Big)\Big]^2\nonumber\\
&=\frac{1}{N^{2m}}\mathbb{E}\Big\{ \sum_{1\leq k^{(1)},\dots,k^{(m)}\leq N} \Big[ 1-\exp\Big({\frac{\sum_{i=1}^mf^*_i(X^{(i)}_{k^{(i)}})-c(X^{(1)}_{k^{(1)}},\dots,X^{(m)}_{k^{(m)}})}{\varepsilon}}\Big) \Big] \Big\}^2 \nonumber\\
&=\frac{1}{N^{2m}}\sum_{\substack{1\leq k^{(1)},\dots,k^{(m)}\leq N \\ 1\leq k'^{(1)},\dots,k'^{(m)}\leq N}}  \mathbb{E}\left[\left(1-p^*_{\varepsilon}(X^{(1)}_{k^{(1)}},\dots, X^{(m)}_{k^{(m)}})\right)\left(1-p^*_{\varepsilon}(X^{(1)}_{k'^{(1)}},\dots, X^{(m)}_{k'^{(m)}})\right)\right], \nonumber \\
\end{align}
where $\mathbb{E}[(1-p^*_{\varepsilon}(X^{(1)}_{k^{(1)}},\dots, X^{(m)}_{k^{(m)}}))(1-p^*_{\varepsilon}(X^{(1)}_{k'^{(1)}},\dots, X^{(m)}_{k'^{(m)}}))]=0$,
if one of the following is true:
\begin{enumeratei}
	\item  $k^{(j)}\neq k'^{(j)}$, for every $j\in[m]$, which leverages the independence;
	\item $k^{(j_0)}=k'^{(j_0)}$  for some $j_0\in[m]$ and $k^{(j)}\neq k'^{(j)}$, for every $j\in[m]\backslash\{j_0\}$, due to the marginal feasibility condition (\ref{multiconst_equiv}) of the multimarginal Schr\"{o}dinger system.
\end{enumeratei}
Note that there are $(N(N-1))^m$ many terms in Case (i) and $mN(N(N-1))^{(m-1)}$ many terms in Case (ii). Thus we have
\begin{align}\label{term1,2}
(\ref{cross1})&\leq \frac{N^{2m}-N^m(N-1)^m-mN^m(N-1)^{m-1}}{N^{2m}}\exp\left(\frac{4\|c\|_{\infty}}{\varepsilon}\right)\nonumber\\
&\leq\exp\left(\frac{4\|c\|_{\infty}}{\varepsilon}\right)\left(1-(1-\frac{1}{N})^{m-1}\right) \leq\exp\left(\frac{4\|c\|_{\infty}}{\varepsilon}\right)\frac{m}{N}.
\end{align}	
Combining (\ref{term1,0}), (\ref{term1,1}) and (\ref{term1,2}), we establish
\begin{equation}\label{term1}
(I) \leq\frac{4m\|c\|_{\infty}^2}{N}+\frac{4m\varepsilon^2}{N}\exp\left(\frac{4\|c\|_{\infty}}{\varepsilon}\right).
\end{equation}
\underline{\bf Term (II)}. Since $\widehat{\Phi}_{ \varepsilon}(\cdot)$ is strongly convex (cf. Lemma~\ref{sconc_empirical}), it follows from the Polyak-{\L}ojasiewicz (PL) inequality in Lemma \ref{PLineq} that
\begin{equation}\label{PL0}
\mathbb{E}\left[\widehat{\Phi}_{ \varepsilon}(\vf^*) - \widehat{\Phi}_{ \varepsilon}( \hat\vf^*)\right]^2 \leq\frac{\varepsilon^2}{4} \exp{\left(\frac{4\|c\|_{\infty}}{\varepsilon}\right)} \mathbb{E}\|\nabla\widehat{\Phi}_{ \varepsilon}(\vf^*)\|_{\widehat{\mathscr{L}}_m}^4.
\end{equation}
Using (\ref{empirical_gradient_dual_norm}), the empirical dual version of Lemma~\ref{norm}, we find that
\begin{align*}
&\mathbb{E}\|\nabla\widehat{\Phi}_{ \varepsilon}(\vf^*)\|_{\widehat{\mathscr{L}}_m}^2= \mathbb{E}\sum_{j=1}^m\int \Big[\int (1-p^*_{\varepsilon}) d\otimes_{i\neq j}\hat{\nu}^N_i\Big]^ 2d\hat{\nu}^N_j \\
&=\frac{1}{N^{2m-1}}\sum_{\substack{1\leq j\leq m\\ 1\leq k^{(j)}\leq N}}\mathbb{E}\Big[\sum_{1\leq \ell^{(1)},\dots,\ell^{(j-1)},\ell^{(j+1)},\dots, \ell^{(m)}\leq N}\left(1-p^*_{\varepsilon}(X^{(1)}_{\ell^{(1)}},\dots,X^{(j-1)}_{\ell^{(j-1)}},X^{(j)}_{k^{(j)}},X^{(j+1)}_{\ell^{(j+1)}},\dots,X^{(m)}_{\ell^{(m)}})\right)\Big]^2\nonumber\\
\end{align*}
	\begin{align}\label{cross2}
	=\frac{1}{N^{2m-1}} \sum_{\substack{
			1 \leq j \leq m \\
			1 \leq k^{(j)} \leq N \\
			1 \leq \ell^{(1)}, \dots, \ell^{(j-1)}, \ell^{(j+1)}, \dots, \ell^{(m)} \leq N \\
			1 \leq \ell'^{(1)}, \dots, \ell'^{(j-1)}, \ell'^{(j+1)}, \dots, \ell'^{(m)} \leq N
	}} &\mathbb{E} 
	\left[ 
	\left( 1 - p^*_{\varepsilon}(X^{(1)}_{\ell^{(1)}}, \dots, X^{(j-1)}_{\ell^{(j-1)}}, X^{(j)}_{k^{(j)}}, X^{(j+1)}_{\ell^{(j+1)}}, \dots, X^{(m)}_{\ell^{(m)}}) 
	\right) 
	\right.\nonumber\\
	&\quad\times 
	\left. 
	\left( 1 - p^*_{\varepsilon}(X^{(1)}_{\ell'^{(1)}}, \dots, X^{(j-1)}_{\ell'^{(j-1)}}, X^{(j)}_{k^{(j)}}, X^{(j+1)}_{\ell'^{(j+1)}}, \dots, X^{(m)}_{\ell'^{(m)}}) 
	\right)
	\right],
\end{align}
where the summand in the last expression equals to
\begin{align*}
	&\text{Cov}\left[p^*_{\varepsilon}(X^{(1)}_{\ell'^{(1)}}, \dots, X^{(j-1)}_{\ell'^{(j-1)}}, X^{(j)}_{k^{(j)}}, X^{(j+1)}_{\ell'^{(j+1)}}, \dots, X^{(m)}_{\ell'^{(m)}}) , \,p^*_{\varepsilon}(X^{(1)}_{\ell'^{(1)}}, \dots, X^{(j-1)}_{\ell'^{(j-1)}}, X^{(j)}_{k^{(j)}}, X^{(j+1)}_{\ell'^{(j+1)}}, \cdots, X^{(m)}_{\ell'^{(m)}}) \right]\\
	&=0, \qquad \text{whenever}
\end{align*}
\begin{enumeratei}
    \setcounter{enumi}{2}
	\item  $\ell^{(j)}\neq \ell'^{(j)}$, for every $j\in[m]$, once again due to the marginal feasibility constraints (\ref{multiconst_equiv}) of the multimarginal Schr\"{o}dinger system.
\end{enumeratei}
Note that there are $mN\left(N(N-1)\right)^{(m-1)}$ many terms in case (iii), so we can bound 
\begin{align}\label{PL1}
(\ref{cross2}) & \leq\frac{N^{2m-1}-N^m(N-1)^{m-1}}{N^{2m-1}}\exp\Big(\frac{4\|c\|_{\infty}}{\varepsilon}\Big)\nonumber\\
&=\exp\Big(\frac{4\|c\|_{\infty}}{\varepsilon}\Big)\Big(1-(1-\frac{1}{N})^{(m-1)}\Big)\leq\exp\Big(\frac{4\|c\|_{\infty}}{\varepsilon}\Big)\frac{m}{N}.
\end{align}
At the same time, since the dual potentials are bounded as Proposition \ref{BDP}, we have
\begin{equation}\label{PL2}
\|\nabla\widehat{\Phi}_{ \varepsilon}(\vf^*)\|_{\widehat{\mathscr{L}}_m}^2 = \sum_{j=1}^m\int \Big(\int (1-p^*_{\varepsilon}) d\hat{\nu}^N_{-j}\Big)^ 2d\hat{\nu}^N_j \leq m\exp\Big(\frac{4\|c\|_{\infty}}{\varepsilon}\Big).
\end{equation}
Combining (\ref{PL0}), (\ref{cross2}), (\ref{PL1}) and (\ref{PL2}), we get
\begin{equation}\label{PL}
	\mathbb{E}\left[\widehat{\Phi}_{ \varepsilon}(\vf^*) - \widehat{\Phi}_{\varepsilon}( \hat{\vf}^*)\right]^2 \leq\frac{m^2\varepsilon^2}{4N} \exp{\left(\frac{12\|c\|_{\infty}}{\varepsilon}\right)}.
\end{equation}
Finally, putting all (\ref{decom}), (\ref{term1}) and (\ref{PL}) pieces together, we obtain the desired bound
\begin{equation}
\mathbb{E}\left|\Phi_{ \varepsilon}(\vf^*) - \widehat{\Phi}_{\varepsilon}(\hat{\vf}^*)\right|^2\leq \frac{C}{N},
\end{equation}
where $C= C({m,\varepsilon,\|c\|_\infty}) > 0$ is a constant depending only on $m,\varepsilon,\|c\|_\infty$.
\end{proof}

\begin{proof}[Proof of Theorem \ref{CLT_for_cost_functional}]
Decomposing
$$\widehat{\Phi}_{ \varepsilon}(\hat{\vf}^*) - \Phi_{\varepsilon}(\vf^*) 
=\widehat{\Phi}_{\varepsilon}(\vf^*) - \Phi_{\varepsilon}(\vf^*)
+\widehat{\Phi}_{ \varepsilon}(\hat{\vf}^*) - \widehat{\Phi}_{ \varepsilon}(\vf^*):=(\text{I})+(\text{II}).$$
\noindent\underline{\bf Term (I)}
To shorten the notation, we write 
\begin{align*}
    U_N &: =\widehat{\Phi}_{\varepsilon}(\vf^*) - \Phi_{\varepsilon}(\vf^*)\\ 
    & = \sum_{j=1}^m\frac{1}{N}\sum_{k=1}^N \left( f^*_j(X^{(k)}_j) - \int f^*_jd\nu_j \right)  -  \frac{\varepsilon}{N^m}  \sum_{1\leq k^{(1)},\dots,k^{(m)}\leq N} \Big( p^*_{\varepsilon}(X^{(1)}_{k^{(1)}},\dots,X^{(m)}_{k^{(m)}}) - 1\Big).
\end{align*}

\noindent Now, we define 
\begin{equation*}
    U_{N,\,\text{proj}} := \sum_{j=1}^m\frac{1}{N}\sum_{k=1}^N \left( f^*_j(X^{(k)}_j) - \int f^*_jd\nu_j \right).
\end{equation*}

\noindent It is easily observed that  $\text{Var}(U_{N,\,\text{proj}})
= \frac{1}{N}
\sum_{j=1}^m\text{Var}_{X_j\sim\nu_j}(f^*_j(X_j))$ and hence one easily obtains that
\begin{equation*}
    \sqrt{N}\, U_{N,\,\text{proj}}\overset{w}{\longrightarrow} \cN\left(0,\sum_{j=1}^m\text{Var}_{X_j\sim\nu_j}(f^*_j(X_j))\right).
\end{equation*}

\noindent Denote $R_N = U_N-U_{N,\,\text{proj}} = \frac{-\varepsilon}{N^m}  \sum_{k^{(1)},\dots,k^{(m)}} \Big( p^*_{\varepsilon}(X^{(1)}_{k^{(1)}},\dots,X^{(m)}_{k^{(m)}}) - 1\Big)$ and Cauchy-Schwarz inequality gives
\begin{align}\label{Var_Cauchy}
   \left| \text{Var}(U_{N}) - (\text{Var}(U_{N,\,\text{proj}})+\text{Var}(R_N)) \right| \leq 2\sqrt{\text{Var}(U_{N,\,\text{proj}}) \text{Var}(R_N)}.\nonumber
\end{align}

\noindent The marginal feasibility constrain (\ref{multiconst_equiv}) for the multimarginal Schr\"{o}dinger system implies that 
\begin{align*}
    \text{Var}(R_{N}) & =\bE R^2_{N} = \frac{\varepsilon^2}{N^{2m}} \bE \left[ \sum_{1\leq k^{(1)},\dots,k^{(m)}\leq N} \Big( p^*_{\varepsilon}(X^{(1)}_{k^{(1)}},\dots,X^{(m)}_{k^{(m)}}) - 1\Big) \right]^2\\
    &=\frac{\varepsilon^2}{N^{2m}}  \sum_{\substack{1\leq k^{(1)},\dots,k^{(m)}\leq N \\ 1\leq k'^{(1)},\dots,k'^{(m)}\leq N}}  \mathbb{E}\left[\left(1-p^*_{\varepsilon}(X^{(1)}_{k^{(1)}},\dots, X^{(m)}_{k^{(m)}})\right)\left(1-p^*_{\varepsilon}(X^{(1)}_{k'^{(1)}},\dots, X^{(m)}_{k'^{(m)}})\right)\right], \nonumber\\
    &= \frac{\varepsilon^2}{N^{2m}} \sum_{r=0}^m  \sum_{|\vS|=r} \bE \left( 1-p^*_{\varepsilon}(\vX_{\vS},\vX_{\vS^c}) \right) \left( 1-p^*_{\varepsilon} (\vX_{\vS},\vX'_{\vS^c}) \right).
\end{align*}
Here, to lighten the notation, we use $\vX_{\vS}$ to indicate the same elements between $(X^{(1)}_{k^{(1)}},\dots, X^{(m)}_{k^{(m)}})$ and $(X^{(1)}_{k'^{(1)}},\dots, X^{(m)}_{k'^{(m)}})$. $\vX_{\vS^c}$(resp. $\vX'_{\vS^c}$) denotes the remaining elements in 
$(X^{(1)}_{k^{(1)}},\dots, X^{(m)}_{k^{(m)}})$ (resp.$(X^{(1)}_{k'^{(1)}},\dots, X^{(m)}_{k'^{(m)}})$ ). Note that 
$$\bE \left( 1-p^*_{\varepsilon}(\vX_{\vS},\vX_{\vS^c}) \right) \left( 1-p^*_{\varepsilon} (\vX_{\vS},\vX'_{\vS^c}) \right)=0,$$
if one of the following is true:
\begin{enumeratei}
\item $|\vS|=0$, namely, $k^{(j)}\neq k'^{(j)}$, for every $j\in[m]$, which leverages the independence;
\item $|\vS|=1$, namely, $k^{(j_0)}=k'^{(j_0)}$  for some $j_0\in[m]$ and $k^{(j)}\neq k'^{(j)}$, for every $j\in[m]\backslash\{j_0\}$, due to the marginal feasibility condition (\ref{multiconst_equiv}) of the multimarginal Schr\"{o}dinger system.
\end{enumeratei}
As a consequence,
\begin{align}
    \text{Var}(R_N) & = \frac{\varepsilon^2}{N^{2m}} \sum_{r=2}^m  \sum_{|\vS|=r} \bE \left( 1-p^*_{\varepsilon}(\vX_{\vS},\vX_{\vS^c}) \right) \left( 1-p^*_{\varepsilon} (\vX_{\vS},\vX'_{\vS^c}) \right)\nonumber\\
    &\leq \frac{\varepsilon^2}{N^{2m}} \sum_{r=2}^m \binom{m}{r} N^r(N(N-1))^{(m-r)} e^{\frac{4\|c\|_\infty}{\varepsilon}}\lesssim_\varepsilon \sum_{r=2}^m\frac{1}{N^r}\lesssim \frac{1}{N^2}.
\end{align}
So we have $\frac{\text{Var}(U_N)}{\text{Var}(U_{N,\,\text{proj}})}\to 1$ as $N\to\infty$  in combination with (\ref{Var_Cauchy}) and thus 
$$\frac{U_N}{\sqrt{\text{Var}(U_N)}}-\frac{U_{N,\,\text{proj}}}{\sqrt{\text{Var}(U_{N,\,\text{proj}}})}\overset{\bP}{\longrightarrow} 0.$$
Slutsky's theorem yields
\begin{equation}\label{ConvNormalI}
    \sqrt{N}\, U_{N}\overset{w}{\longrightarrow} \cN\left(0,\sum_{j=1}^m\text{Var}_{X_j\sim\nu_j}(f^*_j(X_j))\right).
\end{equation}

\noindent\underline{\bf Term (II)}
Since $\widehat{\Phi}_{ \varepsilon}(\cdot)$ is strongly convex (cf. Lemma~\ref{sconc_empirical}), it follows from the Polyak-{\L}ojasiewicz (PL) inequality in Lemma \ref{PLineq} that
\begin{equation}
    0\leq \widehat{\Phi}_{ \varepsilon}(\hat{\vf}^*) - \widehat{\Phi}_{\varepsilon}(\vf^*)  
    \leq \frac{\varepsilon}{2}\exp\left(\frac{2\|c\|_\infty}{\varepsilon}\right)\|\nabla\widehat{\Phi}_{\varepsilon}(\vf^*)\|_{\widehat{\mathscr{L}}_m}^2.
\end{equation}

\noindent Using (\ref{empirical_gradient_dual_norm}), the empirical dual version of Lemma~\ref{norm}, we find that
\begin{align*}
&\mathbb{E}\|\nabla\widehat{\Phi}_{\varepsilon}(\vf^*)\|_{\widehat{\mathscr{L}}_m}^2= \mathbb{E}\sum_{j=1}^m\int \Big[\int (1-p^*_{\varepsilon}) d\otimes_{i\neq j}\hat{\nu}^N_i\Big]^ 2d\hat{\nu}^N_j \\
&=\frac{1}{N^{2m-1}}\sum_{\substack{1\leq j\leq m\\ 1\leq k^{(j)}\leq N}}\mathbb{E}\Big[\sum_{1\leq \ell^{(1)},\dots,\ell^{(j-1)},\ell^{(j+1)},\dots, \ell^{(m)}\leq N}\left(1-p^*_{\varepsilon}(X^{(1)}_{\ell^{(1)}},\dots,X^{(j-1)}_{\ell^{(j-1)}},X^{(j)}_{k^{(j)}},X^{(j+1)}_{\ell^{(j+1)}},\dots,X^{(m)}_{\ell^{(m)}})\right)\Big]^2\nonumber\\
\end{align*}
	\begin{align*}
	=\frac{1}{N^{2m-1}} \sum_{\substack{
			1 \leq j \leq m \\
			1 \leq k^{(j)} \leq N \\
			1 \leq \ell^{(1)}, \dots, \ell^{(j-1)}, \ell^{(j+1)}, \dots, \ell^{(m)} \leq N \\
			1 \leq \ell'^{(1)}, \dots, \ell'^{(j-1)}, \ell'^{(j+1)}, \dots, \ell'^{(m)} \leq N
	}} &\mathbb{E} 
	\left[ 
	\left( 1 - p^*_{\varepsilon}(X^{(1)}_{\ell^{(1)}}, \dots, X^{(j-1)}_{\ell^{(j-1)}}, X^{(j)}_{k^{(j)}}, X^{(j+1)}_{\ell^{(j+1)}}, \dots, X^{(m)}_{\ell^{(m)}}) 
	\right) 
	\right.\nonumber\\
	&\quad\times 
	\left. 
	\left( 1 - p^*_{\varepsilon}(X^{(1)}_{\ell'^{(1)}}, \dots, X^{(j-1)}_{\ell'^{(j-1)}}, X^{(j)}_{k^{(j)}}, X^{(j+1)}_{\ell'^{(j+1)}}, \dots, X^{(m)}_{\ell'^{(m)}}) 
	\right)
	\right],\nonumber\\
\end{align*}
    \begin{align*}
        =\frac{1}{N^{2m-1}}\sum_{r=1}^m \sum_{|\vS|=r} \bE (1-p^*_{\varepsilon}(\vX_{\vS},\vX_{\vS^c})) (1-p^*_{\varepsilon}(\vX_{\vS},\vX'_{\vS^c})).
\end{align*}
where the summand in the last expression equals to 0 whenever 
\begin{enumeratei}
    \setcounter{enumi}{2}
	\item  $|\vS|=1$, namely, $\ell^{(j)}\neq \ell'^{(j)}$, for every $j\in[m]$, once again due to the marginal feasibility constraints (\ref{multiconst_equiv}) of the multimarginal Schr\"{o}dinger system.
\end{enumeratei}
So we can bound  
\begin{align}
\bE \left[\widehat{\Phi}_{ \varepsilon}(\hat{\vf}^*) - \widehat{\Phi}_{ \varepsilon}(\vf^*)\right] 
&=\frac{1}{N^{2m-1}} \sum_{r=2}^m \sum_{|\vS|=r} \bE (1-p^*_{\varepsilon}(\vX_{\vS},\vX_{\vS^c})) (1-p^*_{\varepsilon}(\vX_{\vS},\vX'_{\vS^c}))\nonumber\\
& \leq\frac{1}{N^{2m-1}}\sum_{r=2}^m
\binom{m}{r} N^r(N(N-1))^{(m-r)} e^{\frac{4\|c\|_\infty}{\varepsilon}}\nonumber\lesssim_\varepsilon \frac{1}{N}.\nonumber
\end{align}
Thus, 
\begin{equation}\label{Convprob0II}
\sqrt{N}\left[\widehat{\Phi}_{ \varepsilon}(\hat{\vf}^*) - \widehat{\Phi}_{ \varepsilon}(\vf^*)\right]\overset{\bP}{\longrightarrow}0\quad.
\end{equation}
Combining (\ref{ConvNormalI}), (\ref{Convprob0II}) and Slutsky's Lemma, the proof is complete.
\end{proof}

\section{Weak limit of expectation under entropic optimal transport coupling}\label{Sec_Weak_limit_of_expectation_under_entropic_optimal_transport_coupling}
This section is primarily concerned with proving Theorem \ref{CLT_test_function_very_complicated_computation}. For better readability, we describe the main structure of the argument here, while the more technical components are provided in the referenced sections.

\begin{proof}[Proof of Theorem \ref{CLT_test_function_very_complicated_computation}]
In the remainder of the proof, to simplify the notation, we assume, without loss of generality, $\cX_j = [-1,1]^d$ for $j\in[m]$. Also, we write  $\bC(\mvx):= \exp\left({-\frac{c(\mvx)}{\varepsilon}}\right)$. At the same time, due to the specifics of multimarginal Schr\"{o}dinger system, we use the quotient space
\[
\widetilde{\cC}^1:= \cC^1/ \sim,
\]
with $\cC^1 = \prod_{j=1}^m \cC^1(\mathcal{X}_j) $.
Now we consider the map
\[
\widehat{\mvT} : \widetilde{\cC}^1 \to \widetilde{\cC}^1 
\]
defined for $j \in [m]$ and $x_j \in [-1,1]^d$ as
\begin{align*}
\widehat{T}_j(\mvphi)(x_j) :=& \quad \varepsilon\log \left( \int_{\mathcal{X}_{-j}} e^{ \frac{\sum_{i=1}^m \phi_i(x_i) - c(x_j, \, x_{-j})}{\varepsilon}} \, d\hat{\nu}^N_{-j}(x_{-j}) \right).
\end{align*}
The corresponding empirical Schrödinger potential $[\hat{\mvf}^*]= (\hat{f}^*_1, \dots, \hat{f}^*_m) \in \widetilde{\cC}^1$ associated with $(\hat{\nu}^N_1,\dots,\hat{\nu}^N_m)$ solves the empirical Schrödinger system
$$\widehat{\mvT}([\hat{\mvf}^*]) = 0.$$
Similarly, the map
\[
\mvT : \widetilde{\cC}^1 \to \widetilde{\cC}^1 
\]
is defined for $j \in [m]$ and $x_j \in [-1,1]^d$ as
\begin{align*}
T_j(\mvphi)(x_j) :=& \quad \varepsilon\log \left( \int_{\mathcal{X}_{-j}} e^{ \frac{\sum_{i=1}^m \phi_i(x_i) - c(x_j, \, x_{-j})}{\varepsilon}} \, d\nu_{-j}(x_{-j}) \right).
\end{align*}
The corresponding population Schrödinger potential $[\mvf^*]= (f^*_1, \dots, f^*_m) \in \widetilde{\cC}^1$ associated with $(\nu_1,\dots,\nu_m)$ solves the population Schrödinger system
$$\mvT([\mvf^*]) = 0.$$

\noindent \underline{\textbf{Step 1: Linearization of the empirical EMOT system: preliminary step}}. 
\begin{lemma}\label{Linearization_of_the_empirical_EMOT_system} Under standard assumptions,
\begin{align*}
\bE \left \|\widehat{\mvGamma} \left( [\mvf^*] - [\hat{\mvf}^*] \right) - \fL \right\|_{\cC^1} \lesssim \frac{d \log N}{N},
\end{align*}
where 
\begin{align}
 \fL := \begin{pmatrix}
   B_{-1}(\hat{\nu}^N_{-1} - \nu_{-1} ) \\
    \vdots\\
    B_{-m}(\hat{\nu}^N_{-m} - \nu_{-m} ) 
\end{pmatrix}
\end{align}
with
\begin{align}
    B_{-i}(\hat{\nu}_{-i}^N - \nu_{-i}) := \int p_\varepsilon^*(x_i, x_{-i}) d (\hat{\nu}_{-i}^N - \nu_{-i})(x_{-i}).
\end{align}
\end{lemma}
\begin{lemma}\label{order_of_centered_linerized_B_i}
Under standard assumptions,
\begin{align}
    \| B_{-i}( \hat{\nu}^N_{-i} - \nu_{-i})\|_{\cC^1} = \cO_\bP \left(\sqrt{\frac{d\log N}{N}} \right).
\end{align}
\end{lemma}

The proof of Lemma \ref{Linearization_of_the_empirical_EMOT_system} and Lemma \ref{order_of_centered_linerized_B_i} could be found in Appendix \ref{appendix_lineratization_empirical_EMOT}.

\noindent \underline{\textbf{Step 2: Constructing auxiliary operators to approximate empirical operators}}.

\noindent Define for $i\neq j$, $D_{ij} : \cC^1([-1,1]^d)\to \cC^1([-1,1]^d)$ as
\begin{align*}
    (D_{ij} f_j) (x_i) 
    =
    \int f_j(x_j) p^*_\varepsilon(x_i, x_{-i})d \hat{\nu}^N_{-i}(x_{-i}).
\end{align*}
We could consider $\widetilde{\mvGamma}: \cC^1 \to \cC^1 $ 
\begin{align}
    \begin{pmatrix}
    f_1\\ \vdots\\f_m
    \end{pmatrix}
        \mapsto 
    \begin{pmatrix}
    f_1\\ \vdots\\f_m
    \end{pmatrix}
    +
    \begin{pmatrix}
    \sum_{k\neq 1}D_{1k}f_k\\ 
    \vdots\\ 
    \sum_{k\neq m}D_{mk}f_k 
        \end{pmatrix} 
    : = (\text{I} + \bD) \begin{pmatrix}
    f_1\\ \vdots\\f_m
    \end{pmatrix}.
\end{align}

\noindent \underline{\textbf{Step 3: Bridging auxiliary operators and population operators}}.

\noindent The Fr\'{e}chet derivative $\widehat{\mvGamma} : \widetilde{\cC}^1 \to \widetilde{\cC}^1 $ of $\widehat{\mvT}$ could be computed as
\begin{align}
    \begin{pmatrix}
    f_1\\ \vdots\\f_m
    \end{pmatrix}
        \mapsto 
    \begin{pmatrix}
    f_1\\ \vdots\\f_m
    \end{pmatrix}
    +
    \begin{pmatrix}
    \sum_{k\neq 1}A_{1k}f_k \\ 
    \vdots\\ 
    \sum_{k\neq m}A_{mk}f_k 
        \end{pmatrix} 
    : = ( \text{I} +\bA) \begin{pmatrix}
    f_1\\ \vdots\\f_m
    \end{pmatrix},
\end{align}
where $A_{ij} : \cC^1([-1,1]^d)\to \cC^1([-1,1]^d)$ is defined for $i\neq j$ as
\begin{align*}
    (A_{ij} f_j) (x_i) 
    =
    \int f_j(x_j) \hat{p}_\varepsilon(x_i, x_{-i})d \hat{\nu}^N_{-i}(x_{-i}).
\end{align*}
It is easily observed that $\text{I} + \bA$ is the Fr\'{e}chet derivative $\widehat{\mvGamma} : \widetilde{\cC}^1 \to \widetilde{\cC}^1 $ of $\mvT$. 
Define for $i\neq j$, $H_{ij} : \cC^1([-1,1]^d)\to \cC^1([-1,1]^d)$ as
\begin{align*}
    (H_{ij} f_j) (x_i) 
    =
    \int f_j(x_j) p^*_\varepsilon(x_i, x_{-i})d \nu_{-i}(x_{-i}).
\end{align*}
We could consider $\mvGamma: \widetilde{\cC}^1 \to \widetilde{\cC}^1 $ 
\begin{align}
    \begin{pmatrix}
    f_1\\ \vdots\\f_m
    \end{pmatrix}
        \mapsto 
    \begin{pmatrix}
    f_1\\ \vdots\\f_m
    \end{pmatrix}
    +
    \begin{pmatrix}
    \sum_{k\neq 1}H_{1k}f_k \\ 
    \vdots\\ 
    \sum_{k\neq m}H_{mk}f_k 
        \end{pmatrix} 
    : = (\text{I} + \bH) \begin{pmatrix}
    f_1\\ \vdots\\f_m
    \end{pmatrix}.
\end{align}
It is also easily observed that $\text{I} + \bH$ is the Fr\'{e}chet derivative $\mvGamma : \widetilde{\cC}^1 \to \widetilde{\cC}^1 $ of $\mvT$.

\noindent \underline{\textbf{Step 4: Operator invertibility and operator norm bounds}}. 

\noindent As demonstrated by Lemma 3.2 in \cite{CalierChizatLaborde_DisplacementSmoothnessofEOT}, one could leverage Fredholm Alternative Theorem to show that $\mvGamma(\cdot)$ is an invertible linear self-map of $\widetilde{\cC}^1$ satisfying 
\begin{equation*}
    \left\|\mvGamma^{-1}\right\|\leq C,
\end{equation*}
for some constant $C$ depending on the marginals $\nu_1,\dots,\nu_m$ and the cost function $c$. Now we consider the operator norm bound of $\|\bA-\bH\|$.

\begin{lemma}\label{bound_on_norm_A-H}
For any $i,j\in[m]$ with $i\neq j$, we have that
\begin{equation}
        \| \bA - \bH\|_{\widetilde{\cC}^1\to\widetilde{\cC}^1}  = \cO_\bP \left( (m-1)\, \sqrt{\frac{d \log N}{N}} + (m-1)\,N^{-\frac{1}{(m-1)d}}\right).
\end{equation}
\end{lemma}

\noindent \underline{\textbf{Step 5: Linearization of the empirical EMOT system: final step}}.

\noindent This section is to conclude the linearization of the empirical EMOT system and thus establish the building block of the weak limit proof, given in the following lemma.
\begin{lemma}\label{Linearization_potential}
Under standard assumptions, we have
\begin{align*}
    \left \| \left( [\mvf^*] - [\hat{\mvf}^*] \right) - \mvGamma^{-1}\fL \right\|_{\cC^1} = o_\bP \left( N^{-1/2} \right).
\end{align*}
\end{lemma}

\noindent \underline{\textbf{Step 6: Decomposition of $\sqrt{N} \left(\hat{\pi}^N_\varepsilon (g) - \pi^*_\varepsilon (g) \right)$}}.
\begin{equation}\label{decomposition_four_part_CLT_coupling}
    \sqrt{N} \left( \hat{\pi}^N_\varepsilon(g) - \pi_\varepsilon^* (g) \right)= \sqrt{N} \int g\,(d\hat{\pi}^N_\varepsilon-d\pi^*_\varepsilon) := A + B + \sum_{j=1}^m C_j + D.
\end{equation}
Here,
\begin{equation*}
    A := \sqrt{N}\int g (\mvx) e^{-\frac{c(\mvx)}{\varepsilon}}(e^{\frac{\sum_{i=1}^m \hat{f}^*_i(x_i)}{\varepsilon}} - e^{\frac{\sum_{i=1}^m f^*_i(x_i)}{\varepsilon}})d(\otimes_{k=1}^m\nu_k)(\mvx).
\end{equation*}

\begin{equation*}
    B := \sqrt{N}\int g (\mvx) e^{-\frac{c(\mvx)}{\varepsilon}}(e^{\frac{\sum_{i=1}^m \hat{f}^*_i(x_i)}{\varepsilon}} - e^{\frac{\sum_{i=1}^m f^*_i(x_i)}{\varepsilon}})d\left(\otimes_{k=1}^m\hat{\nu}^N_k - \otimes_{k=1}^m\nu_k\right)(\mvx).
\end{equation*}
To lighten the notation, for $\mvr=(r_1,\dots,r_m)\in\{0,1\}^m$, we write $\tilde{\nu}_{r_j}=\nu_j$ if $r_j=0$ and $\tilde{\nu}_{r_j}=\hat{\nu}^N_j-\nu_j$ if $r_j=1$. $\sharp\mvr=\sum_{j=1}^m r_j$. We define $\tilde{\mvnu}_{\mvr}:=\otimes \tilde{\nu}_{r_j}$. For example, $ \tilde{\mvnu}_{\mvr}=(\hat{\nu}_1^N-\nu_1)\otimes\nu_{2}\otimes\dots\otimes\nu_m$ if $\mvr=(1,0,\dots,0)$,  $ \tilde{\mvnu}_{\mvr}=(\hat{\nu}_1^N-\nu_1)\otimes\nu_{2}\otimes(\hat{\nu}_3^N-\nu_3)\otimes\nu_4\dots\otimes\nu_m$ if $\mvr=(1,0,1,0\dots,0)$.
\begin{equation*}
    C_j := \sqrt{N}\int g (\mvx) e^{-\frac{c(\mvx)}{\varepsilon}}e^{\frac{\sum_{i=1}^m f^*_i(x_i)}{\varepsilon}} d(\nu_1\otimes\dots\nu_{j-1}\otimes (\hat{\nu}_j^N-\nu_j)\otimes\nu_{j+1}\otimes\dots\otimes\nu_m)(\mvx).
\end{equation*}
\begin{equation*}
    D := \sum _{\sharp\mvr=2}^m \sum_{ \tilde{\mvnu}_{\mvr}} \sqrt{N}\int  g  (\mvx) e^{-\frac{c(\mvx)}{\varepsilon}}e^{\frac{\sum_{i=1}^m f^*_i(x_i)}{\varepsilon}} d \tilde{\mvnu}_{\mvr}(\mvx).
\end{equation*}
Lemma \ref{exponential_potential_infinity_norm_order} gives that 
$$A = \frac{\sqrt{N}}{\varepsilon}\int g (\mvx) e^{\frac{\sum_{k=1}^m f^*_k(x_k)-c(\mvx)}{\varepsilon}}\left(\sum_{i=1}^m \hat{f}^*_i(x_i)-f^*_i(x_i)\right)d(\otimes_{k=1}^m\nu_k)(\mvx) + o_\bP(1).$$
Using (\ref{Linearization_potential}), we have that $A = \bar{A} + o_\bP(1)$ with
$$\bar{A} = -\sqrt{N} \bigint \bigoplus\left(
      \mvGamma^{-1}
  \begin{pmatrix}
        \displaystyle \int p^*_\varepsilon(x_1, x_{-1}) d (\hat{\nu}_{-1}^{N} - \nu_{-1})\\ 
        \vdots\\
        \displaystyle\int p^*_\varepsilon(x_m, x_{-m}) d (\hat{\nu}_{-m}^{N} - \nu_{-m})
    \end{pmatrix}  \right) g\, p^*_{\varepsilon} \,\,\, d(\otimes_{k=1}^m\nu_k).$$
Lemma \ref{vanishing_term_B_proved_via_chaining} indicates that $B=o_\bP(1).$ Fubini's theorem gives that 
\begin{equation*}
    C_j = \sqrt{N} (\hat{\nu}_j^N-\nu_j)(g^{(j)})
\end{equation*}
where $g^{(j)}:\cX_j\to\bR$ is defined as $(\otimes_{k\neq j}\nu_k)(g\,p^*_{\varepsilon}).$

\noindent In advantage of Lemma B in Section 6.3.2 in \cite{Robert_Serfling_Approximation09}, we get that $D = o_\bP(1)$.

\noindent \underline{\textbf{Step 7: Manipulating the multi-sample V-statistics}}. Finally, we realize that the term of interest $(\ref{decomposition_four_part_CLT_coupling})$ is actually a centered V-statistics plus an extra asymptotically negligible term. At the same time, it is noteworthy that term $\bar{A}$ is a V-statistics of order $m$ and degree $(m-1,\dots,m-1)$ with a non-symmetric kernel and $C_j$ is a U-statistics of order 1 for $j\in[m]$. To derive the weak limit of this type, we need to first adjust and symmetrize the kernel and then use U-statistics to approximate V-statistics as Lemma \ref{V_statistics_U_statistics_difference}. Finally we apply the CLT for multi-sample U-statistics, thanks to Theorem 4.5.1 in \cite{Korolyuk_Borovskich_U_Statistics}, and then the conclusion follows. First define
\begin{align*}
    &\Psi(y^{(j)}_{\alpha}, \alpha = i_{j,1},\dots,i_{j,m-1}; j=1,\dots,m) \\
    &: =  - \bigint 
    \bigoplus\left(\mvGamma^{-1}\begin{pmatrix}
         p^*_{\varepsilon}(x_1,y^{(2)}_{i_{2,1}},\dots,y^{(m)}_{i_{m,1}})-1\\ 
         p^*_{\varepsilon}(y^{(1)}_{i_{1,1}},x_2,\dots,y^{(m)}_{i_{m,2}})-1\\ 
        \vdots\\
        p^*_{\varepsilon}(y^{(1)}_{i_{1,m-2}},y^{(2)}_{i_{2,m-2}},\dots,x_{m-1},y^{(m)}_{i_{m,m-1}})-1\\ 
        p^*_{\varepsilon}(y^{(1)}_{i_{1,m-1}},y^{(2)}_{i_{2,m-1}},\dots,y^{(m-1)}_{i_{m-1,m-1}},x_m)-1
    \end{pmatrix}  \right) g (\mvx) p^*_{\varepsilon} (\mvx)d(\otimes_{k=1}^m\nu_k) (\mvx).
\end{align*}
To symmetrize the kernel above, we further define
\begin{align*}
    &\quad \Psi^{\text{sym}}(y^{(j)}_{\alpha}, \alpha = i_{j,1},\dots,i_{j,m-1}; j=1,\dots,m) \\
    &= \frac{1}{\left((m-1)!\right)^m} \sum_{\substack{ \pi_j\in S_{m-1}\\1\leq j\leq m}}
    \Psi(y^{(j)}_{\beta}, \beta = i_{j,\pi_j(1)},\dots,i_{j,\pi_j(m-1)}; j=1,\dots,m),
\end{align*}
where $S_{m-1}$ denotes all the permutation on $\{1,\dots,m-1\}$. Finally, we define
\begin{align*}
    &\quad \cK(y^{(j)}_{\beta}, \beta = i_{j,1},\dots,i_{j,m-1}; j=1,\dots,m)\\
    &= \Psi^{\text{sym}}(y^{(j)}_{\beta}, \beta = i_{j,1},\dots,i_{j,m-1}; j=1,\dots,m) + \frac{1}{m-1}\sum_{j=1}^m\sum_{t=1}^{m-1} s_j(y_{i_{j,t}}^{(j)})
\end{align*}
with $s_j(\cdot)= (\nu_{-j})(g\,p^*_{\varepsilon})(\cdot) - (\otimes_{k=1}^m\nu_k)(g\,p^*_{\varepsilon})$. One could verify that
\begin{align*}
   \frac{1}{\sqrt{N}} (\bar{A} + \sum_{j=1}^m C_j) = \frac{1}{N^{m(m-1)}}\sum_\flat \cK(X^{(j)}_{\beta}, \beta = i_{j,1},\dots,i_{j,m-1}; j=1,\dots,m):=V_N, 
\end{align*}
where $\sum_\flat $ denotes the summation over all indices $1\leq i_{j,k}\leq N,\, 1\leq k\leq m-1,\, j=1,\dots, m.$ It is noteworthy that $V_N$ is centered here due to the multimarginal Schr\"{o}dinger system and the fact that the precomposition of a centered random
variable with a bounded linear operator is again centered. Thus, let us define
\begin{align*}
    U_N :=  \frac{1}{\tbinom{N}{m-1}^m }
        \sum_{\substack{1\leq i_{j,1}<\dots<i_{j,m-1}\leq N\\\, j=1,\dots,m}}
         \cK(X^{(j)}_{\beta}, \beta = i_{j,1},\dots,i_{j,m-1}; j=1,\dots,m).
\end{align*}
By Theorem 4.5.1 in \cite{Korolyuk_Borovskich_U_Statistics}, we get that
\begin{align*}
    \sqrt{N} U_N\overset{w}{\to} \cN(0,\bar{\sigma}_\varepsilon^2(g)), 
\end{align*}
where 
\begin{align*}
     \bar{\sigma}_\varepsilon^2(g)=(m-1)^2\sum_{j=1}^m \text{Var}(\phi^{(j)}(X_1^{(j)})),
\end{align*}
with 
\begin{align*}
    \phi^{(j^*)}(x) = \bE \left[  \cK(X^{(j)}_{\beta},  \beta = 1,\dots,m-1, j\in[m]\backslash\{j^*\}; x,X^{(j^*)}_{2},\dots,X^{(j^*)}_{m-1})  \right], 
\end{align*}
for $m$ independent sequences $X_1^{(j)},\dots, X_{m-1}^{(j)}\overset{\text{i.i.d.}}{\sim}\nu_j$ , $j\in[m]$ of independent random variables. Finally, Slutsky's Lemma and Lemma \ref{V_statistics_U_statistics_difference} concludes the proof.

\end{proof}

\section{Technical details for Linearization of the empirical EMOT system}\label{appendix_lineratization_empirical_EMOT}
The purpose of this section is to present the detailed proofs of Lemma \ref{Linearization_of_the_empirical_EMOT_system} and Lemma \ref{order_of_centered_linerized_B_i}. We also include several technical results that will recur in other sections. The proofs of the two lemmas are given first, followed by a detailed discussion of the supporting arguments.

\begin{proof}[Proof of Lemma \ref{Linearization_of_the_empirical_EMOT_system}]
The proof is carried out in two steps, with all technical details collected in Appendix \ref{appendix_lineratization_empirical_EMOT}. The first step, conducted in 
subsection \ref{appendix_step_1}, 
is to establish Lemma \ref{Step_1_Linearization}, implying that
\begin{align*}
        \|\widehat{\mvT} ([\mvf^*]) -\widehat{\mvT} ([\hat{\mvf}^*]) -\widehat{\mvGamma}([\mvf^*] - [\hat{\mvf}^*]) \|_{\cC^1} = \cO_\bP \left( \| [\mvf^*] - [\hat{\mvf}^*] \|_{\cC^1}^2 \right) = \cO_\bP \left(\frac{d \log N}{N}\right),
    \end{align*}
where the last equality is implied by Lemma \ref{diff_empirical_population_f_i_bounded_by_other_potentials}.
The second step, which is detailed in subsection \ref{appendix_step_2}, starts from noting that 
\begin{align*}
    \widehat{\mvT} ([\hat{\mvf}^*]) = \mvT ([\mvf^*]) = 0. 
\end{align*}
So we have
\begin{align}\label{equation_Linearization_step_1}
    \|\widehat{\mvT} ([\mvf^*]) -\mvT ([\mvf^*]) -\widehat{\mvGamma} ( [\mvf^*] - [\hat{\mvf}^*] ) \|_{\cC^1} =  \cO_\bP \left(\frac{d \log N}{N}\right).
\end{align}
Lemma \ref{Step_2_Linearization}, as the main lemma in subsection \ref{appendix_step_2}
establishes that 
\begin{align}\label{equation_Linearization_step_2}
    \|\widehat{\mvT} ([\mvf^*]) -\mvT ([\mvf^*]) - \fL\|_{\cC^1} = \cO_\bP \left( \|[\mvf^*] - [\hat{\mvf}^*]\|_{\cC^1}^2 \right)=\cO_\bP \left( \frac{d \log N}{N} \right).
\end{align}
The proof is complete combining (\ref{equation_Linearization_step_1}) and (\ref{equation_Linearization_step_2}) together.
\end{proof}

\begin{proof}[Proof of Lemma \ref{order_of_centered_linerized_B_i}]
It is enough to show that, for all $i\in[m]$,
\begin{align*}
        \|B_{-i}( \hat{\nu}^N_{-i} - \nu_{-i})\|_\infty = \sup_{x_i} |B_{-i}( \hat{\nu}^N_{-i} - \nu_{-i})|
        = \cO_\bP \left(\sqrt{\frac{d\log N}{N}} \right),
\end{align*}
as well as
\begin{align*}
    \| \nabla_{x_i} B_{-i}( \hat{\nu}^N_{-i} - \nu_{-i})\|_\infty = \sup_{x_i}\max_{1\leq k\leq d} \left| \frac{\partial}{\partial x_{i,k}} B_{-i}( \hat{\nu}^N_{-i} - \nu_{-i}) \right|
    =\cO_\bP \left(\sqrt{\frac{d \log N}{N}} \right).
\end{align*}
These two asymptotic equality comes as a corollary of Lemma \ref{Lipschitz_plus_light_tail_union_bound}.
\end{proof}

\noindent The subsequent discussion provides additional details.

\subsection*{\texorpdfstring{\underline{Step 1. Building Fréchet differentiability}}{Step 1. Building Fréchet differentiability}}
\label{appendix_step_1}
We first briefly recapitulate the settings along with notations from Section \ref{Sec_Weak_limit_of_expectation_under_entropic_optimal_transport_coupling} as follows.
Let $m\ge2$ and $\cX_j\subset[-1,1]^{d}$ be compact sets.
For each $j$, let $\nu_j$ be a Borel probability on $\cX_j$,
and $\widehat{\nu}_j^N=\frac1N\sum_{k=1}^N\delta_{X_k^{(j)}}$
the empirical measure of $N$ i.i.d.\ samples from $\nu_j$.
Write the empirical product over all coordinates except $i$ as
$\widehat{\nu}_{-i}^{\,N}:=\bigotimes_{j\ne i}\widehat{\nu}_j^N$.
Let $c\in \cC^2(\cX)$ and set $\bC=e^{-c/\varepsilon}$.
Since each $\cX_j$ is compact, the derivatives $\partial_{x_{i,\ell}}c$ are continuous and bounded:
\[
  L_i := \|\nabla_{x_i} c\|_\infty 
  = \max_{1\leq k\leq d} \sup_{\mvx\in\cX}  |\frac{\partial}{\partial_{x_{i,k}}}c(\mvx)|
  < \infty, \qquad L:=\sum_{1\le i\le m}L_i < \infty.
\]
We consider the Banach space
\[
\cC^1:=  \prod_{j=1}^m \cC^1(\mathcal{X}_j). 
\]
For $\mvphi=(\phi_1,\dots,\phi_m)\in\cC^1$ and $i\in [m]$, define
\begin{equation}\label{eq:F1C1}
  \widehat{T}_i(\mvphi)(x_i)
  := \phi_i(x_i)
     + \varepsilon\log\!\int \exp\!\Big( \frac{\sum_{j\ne i}\phi_j(x_j)}{\varepsilon}\Big)\,
     \bC(x_1,\dots,x_m)\,d\widehat{\nu}_{-i}^{\,N}(x_{-i}),
\end{equation}
and set $\widehat{T}(\mvphi):=(\widehat{T}_1(\mvphi),\dots,\widehat{T}_m(\mvphi))$. All the upcoming integrals and differentiations make sense due to the regularity we have.

\begin{proposition}[First and second Fr\'{e}chet derivative]\label{prop_frechet_12_derivative}
\quad
\begin{enumeratea}
    \item $\widehat{T}: \cC^1 \to \cC^1$ satisfies that
    \begin{equation*}
      \nabla_{x_i}\widehat{T}_i(\mvphi)(x_i)
      = \nabla_{x_i} \phi_i(x_i)
        + \varepsilon\,\mathbb{E}_{\pi^{(i,N)}_{\mvphi,x_i}}\!\big[\nabla_{x_i}\log \bC(x_1,\dots,x_m)\big]
      = \nabla_{x_i} \phi_i(x_i) -\mathbb{E}_{\pi^{(i,N)}_{\mvphi,x_i}}\!\big[\nabla_{x_i} c(x_1,\dots,x_m)\big],
    \end{equation*}
    where
    \[
     \pi^{(i,N)}_{\mvphi,x_i}(dx_{-i})
     := \frac{\exp \left( \frac{\sum_{j\ne i}\phi_j(x_j)}{\varepsilon}\right) \bC(x_1,\dots,x_m)}
    {\int \exp\left ( \frac{\sum_{j\ne i}\phi_j(x_j)}{\varepsilon}\right) \bC(x_1,\dots,x_m)\,d\widehat{\nu}_{-i}^{\,N}(x_{-i})}
     \,d\widehat{\nu}_{-i}^{\,N}(x_{-i}).
    \]
Thus,
\[
  \|\nabla_{x_i} \widehat{T}_i(\mvphi)\|_\infty \le \|\nabla_{x_i} \phi_i\|_\infty + L_i.
\]
\item For $\mvphi\in\cC^1$ and $\mvh=(h_1,\dots,h_m)\in\cC^1$,
\begin{equation}\label{First_order_Frechet_derivative_equation}
  D\widehat{T}_i(\mvphi)[\mvh](x_i)
  = h_i(x_i) + \sum_{j\ne i}\mathbb{E}_{\pi^{(i,N)}_{\mvphi,x_i}}\!\big[h_j(X_j)\big].
\end{equation}
Hence,
$$\|D\widehat{T}(\mvphi)\|_{\cC^1\to\cC^1}\le \max\{m,\,1+2(m-1)L/\varepsilon\}.$$

\item  For $(\mvh^{(1)},\mvh^{(2)})\in\cC^1\times\cC^1$,
\begin{align}\label{Second_order_Frechet_derivative_equation}
  D^2\widehat{T}_i(\mvphi)[(\mvh^{(1)},\mvh^{(2)})](x_i)
  =\frac{1}{\varepsilon} \sum_{j\ne i}\sum_{k\ne i}
     \mathrm{Cov}_{\pi^{(i,N)}_{\mvphi,x_i}}\!
       \Big(h^{(1)}_j(X_j),h^{(2)}_k(X_k)\Big).
\end{align}
Consequently,
\[
\|D^2\widehat{T}(\mvphi)\|_{(\cC^1,\,\cC^1)\to\cC^1}
  \le \max\left\{\frac{2(m-1)^2}{\varepsilon},\,\frac{6(m-1)^2L}{\varepsilon^2}\right\}.
\]
\end{enumeratea}
\end{proposition}

\begin{proof}
(a) Direct computation and boundedness of $\nabla_{x_i}c$ gives the stated result.

(b) Linearization of the log-sum-exp operator \eqref{eq:F1C1} yields
the first equality, see \cite{chizat2023doublyregularizedentropicwasserstein} for details on log-sum-exp operator.  For the gradient, use the
score identity (\ref{score_differentiation_formula}) with
$S^{(i,N)}_{\mvphi,x_i}
  := \nabla_{x_i}\log \bC(x_1,\dots,x_m)
   - \mathbb{E}_{\pi^{(i,N)}_{\mvphi,x_i}}[\nabla_{x_i}\log \bC]$:
\[
  \nabla_{x_i} D\widehat{T}_i(\mvphi)[\mvh]
  = \nabla_{x_i} h_i
    + \sum_{j\ne i}\mathbb{E}_{\pi^{(i,N)}_{\mvphi,x_i}}\!\big[h_j(X_j)\,S^{(i, N)}_{\mvphi,x_i}(X_{-i})\big].
\]
Thus,
\[
  \|D\widehat{T}_i(\mvphi)[\mvh]\|_{\cC^1}
  \le \max\Big(
      m\,\|\mvh\|_\infty,\,
      \|\nabla h_i\|_\infty + 2(m-1)\frac{L_i}{\varepsilon}\,\|\mvh\|_\infty
      \Big),
\]
because of $\|S^{(i, N)}_{\mvphi,x_i}\|_\infty\le2L_i/\varepsilon$. Namely,
$$\|D\widehat{T}_i(\mvphi)\| \leq \max\{m,\,1+2(m-1)L/\varepsilon\}.$$
The claimed bound follows in virtue of Lemma \ref{matrix_like_operator_norm_bound_0}.

(c) Linearization of equation (\ref{First_order_Frechet_derivative_equation}) gives (\ref{Second_order_Frechet_derivative_equation}), see \cite{chizat2023doublyregularizedentropicwasserstein} for properties on log-sum-exp operator. Note that by Lemma \ref{Differentiation_of_expectation_against_exponential_tilted_probability},
\begin{align*}
\nabla_{x_i} &\mathrm{Cov}_{\pi^{(i,N)}_{\mvphi,x_i}}\!\Big(h^{(1)}_j(X_j),h^{(2)}_k(X_k)\Big) \\
= & \,\, \bE_{\pi^{(i,N)}_{\mvphi,x_i}}[h^{(1)}_j(X_j)h^{(2)}_k(X_k)S^{(i, N)}_{\mvphi,x_i}]\\
& - \,\,\bE_{\pi^{(i,N)}_{\mvphi,x_i}}[h^{(1)}_j(X_j)] \bE_{\pi^{(i,N)}_{\mvphi,x_i}}[h^{(2)}_k(X_k)S^{(i, N)}_{\mvphi,x_i}]
-\bE_{\pi^{(i,N)}_{\mvphi,x_i}}[h^{(2)}_k(X_k)] \bE_{\pi^{(i,N)}_{\mvphi,x_i}}[h^{(1)}_j(X_j)S^{(i, N)}_{\mvphi,x_i}].
\end{align*}
For $(\mvh^{(1)},\mvh^{(2)})\in\cC^1\times\cC^1$, since $\|S^{(i)}_{\mvphi,x_i}\|_\infty\le2L_i/\varepsilon$,
the following bound is obtained by summing over $j\neq i, k\neq i$.
\begin{align*}
  \|\nabla_{x_i} D^2\widehat{T}_i(\mvphi)[(\mvh^{(1)},\mvh^{(2)})]\|_\infty
  &\le \frac{6(m-1)^2L_i}{\varepsilon^2}\,
     \|\mvh^{(1)}\|_\infty\|\mvh^{(2)}\|_\infty.
\end{align*}
As a result,
\[
\|D^2\widehat{T}(\mvphi)\|_{(\cC^1,\,\cC^1)\to\cC^1}
= \max_{1\leq i\leq m}\|D^2\widehat{T}_i(\mvphi)\|_{(\cC^1,\,\cC^1)\to\cC^1}
  \le \max\left\{\frac{2(m-1)^2}{\varepsilon},\,\frac{6(m-1)^2\,L}{\varepsilon^2}\right\}.
\]
\end{proof}

\begin{lemma}\label{Step_1_Linearization}
For $\mvphi\in\cC^1$ and $\mvh\in\cC^1$, we define $\widehat{R}(\mvh) := (\widehat{R}_1(\mvh),\dots\widehat{R}_m(\mvh))$, where 
\[
  \widehat{R}_i(\mvh)
  := \widehat{T}_i(\mvphi + \mvh) - \widehat{T}_i(\mvphi) - D\widehat{T}_i(\mvphi)[\mvh]
  = \int_0^1 (1-t)\,D^2\widehat{T}_i(\mvphi +t \mvh)[\mvh,\mvh]\,dt.
\]
Then we have

\begin{equation}\label{eq:R-bound-C1}
  \|\widehat{R}(\mvh)\|_{\cC^1}  = \max_{1\leq i \leq m}\|\widehat{R}_i(\mvh)\|_{\cC^1} 
  \le
  \max \left\{ \frac{(m-1)^2}{\varepsilon},\, \frac{3(m-1)^2L}{\varepsilon^2} \right\}\,\|\mvh\|_{\cC^1}^2.
\end{equation}

\end{lemma}

\begin{proof}
Taylor’s theorem in Banach spaces (see, cf. \cite{Gerald_Teschl_Taylor_Banach_20}) yields the integral remainder formula.
Realizing that
$\|D^2\widehat{T}_i(\mvphi +t \mvh)[\mvh,\mvh]\|_{\cC^1}\le   \max\{2(m-1)^2/\varepsilon,\,6(m-1)^2\,L/\varepsilon^2\} \|\mvh\|_{\cC^1}^2$,
we have
\begin{equation*}
  \|\widehat{R}_i(\mvh)\|_{\cC^1} 
  \le
  \max \left\{ \frac{(m-1)^2}{\varepsilon},\, \frac{3(m-1)^2L}{\varepsilon^2} \right\}\,\|\mvh\|_{\cC^1}^2.
\end{equation*}
This completes the proof.
\end{proof}

\begin{lemma}[Differentiation under an exponentially tilted law]
\label{Differentiation_of_expectation_against_exponential_tilted_probability}
Let $\mu_j$ be arbitrary probability with compact support $\cX_j$ for $j\in [m]$. 
For $k:\cX_i\times\cX_{-i}\to(0,\infty) \in \cC^2$, each $x_i\in\cX_i$, define
\[
Z(x_i):=\int_{\cX_{-i}} k(x_i,x_{-i})\,d\mu_{-i}(x_{-i}).
\]
Define the probability $\pi_i(\,\cdot\,|x_i)$ on $\cX_{-i}$ via
\[
\frac{d\pi_i(\,\cdot\,|x_i)}{d\mu_{-i}}(x_{-i})
=\frac{k(x_i,x_{-i})}{Z(x_i)}.
\]
For a bounded measurable function $\psi:\cX_{-i}\to\mathbb{R}$, $x_i\mapsto \mathbb{E}_{\pi_i(\cdot|x_i)}[\psi]$ is differentiable and
\begin{align}\label{score_differentiation_formula}
\nabla_{x_i}\,\mathbb{E}_{\pi_i(\cdot|x_i)}[\psi]
\;=\;
\mathbb{E}_{\pi_i(\cdot|x_i)}\big[\psi\,S^{(i)}(x_i,\cdot)\big],
\end{align}
with $S^{(i)}(x_i,x_{-i})
:=\nabla_{x_i}\log k(x_i,x_{-i})
-\mathbb{E}_{\pi_i(\cdot|x_i)}\!\big[\nabla_{x_i}\log k(x_i,\cdot)\big].$ Equivalently,
\[
\nabla_{x_i}\,\mathbb{E}_{\pi_i(\cdot|x_i)}[\psi]
\;=\;\mathrm{Cov}_{\pi_i(\cdot|x_i)}\!\big(\psi,\ \nabla_{x_i}\log k(x_i,\cdot)\big).
\]
\end{lemma}

\begin{proof}
We sometimes in the proof write $\pi_i(\cdot|x_i)$ as $\pi_i$ for simplicity. By definition, we have $\mathbb{E}_{\pi_i(\cdot|x_i)}[\psi]=\frac{N(x_i)}{Z(x_i)}$, with
$$N(x_i):=\int_{\cX_{-i}}\psi(x_{-i})\,k(x_i,x_{-i})\,d\mu_{-i}(x_{-i}).$$ Dominated convergence theorem yields
\[
\nabla_{x_i}N(x_i)=\int \psi(x_{-i})\,\nabla_{x_i}k(x_i,x_{-i})\,d\mu_{-i}(x_{-i}), 
\qquad
\nabla_{x_i}Z(x_i)=\int \nabla_{x_i}k(x_i,x_{-i})\,d\mu_{-i}(x_{-i}).
\]
Differentiate the quotient $N(x_i)/Z(x_i)$:
\[
\nabla_{x_i}\,\mathbb{E}_{\pi_i}[\psi]
=\frac{\nabla_{x_i}N(x_i)}{Z(x_i)}-\frac{N(x_i)}{Z(x_i)^2}\,\nabla_{x_i}Z(x_i)
=\frac{1}{Z(x_i)}\!\int\!\psi\,\nabla_{x_i}k\,d\mu_{-i}
-\Big(\mathbb{E}_{\pi_i}[\psi]\Big)\frac{1}{Z(x_i)}\!\int\!\nabla_{x_i}k\,d\mu_{-i}.
\]
We could rearrange and obtain
\[
\frac{1}{Z(x_i)}\int \psi\,\nabla_{x_i}k\,d\mu_{-i}
=\frac{1}{Z(x_i)}\int \psi\,k\,\nabla_{x_i}\log k\,d\mu_{-i}
=\mathbb{E}_{\pi_i}\big[\psi\,\nabla_{x_i}\log k\big],
\]
and similarly
\[
\frac{1}{Z(x_i)}\int \nabla_{x_i}k\,d\mu_{-i}
=\mathbb{E}_{\pi_i}\big[\nabla_{x_i}\log k\big].
\]
Hence
\[
\nabla_{x_i}\,\mathbb{E}_{\pi_i}[\psi]
=\mathbb{E}_{\pi_i}\big[\psi\,\nabla_{x_i}\log k\big]
-\mathbb{E}_{\pi_i}[\psi]\,\mathbb{E}_{\pi_i}\big[\nabla_{x_i}\log k\big]
=\mathrm{Cov}_{\pi_i}\!\big(\psi,\nabla_{x_i}\log k\big).
\]
Writing $S^{(i)}:=\nabla_{x_i}\log k-\mathbb{E}_{\pi_i}[\nabla_{x_i}\log k]$ gives the stated form 
$\nabla_{x_i}\mathbb{E}_{\pi_i}[\psi]=\mathbb{E}_{\pi_i}[\psi S^{(i)}]$.

\end{proof}

\begin{remark}
If $k(x_i,x_{-i})=\exp\{s(x_i,x_{-i})\}$ with a differentiable function $s$, then $\nabla_{x_i}\log k=\nabla_{x_i}s$ and the identity becomes
\[
\nabla_{x_i}\,\mathbb{E}_{\pi_i}[\psi]
=\mathrm{Cov}_{\pi_i}\!\big(\psi,\nabla_{x_i}s\big)
=\mathbb{E}_{\pi_i}\big[\psi\,\big(\nabla_{x_i}s-\mathbb{E}_{\pi_i}[\nabla_{x_i}s]\big)\big].
\]
\end{remark}

\subsection*{\texorpdfstring{\underline{Step 2. Linearization of log difference}}{Step 2. Linearization of log difference}}\label{appendix_step_2} 

For given continuously differentiable cost function $c:\cX\to\bR$, population potentials $ f^*_j : \cX_j \to \bR,\,j\in[m]$, we write for $i\in[m]$,
\begin{align*}
  h_{x_i}(x_{-i}) \;:=\; \exp\!\big(\oplus_{j\neq i} f^*_j(x_j)-c(x_{i},x_{-i})\big),
\end{align*}
and we have
\begin{align*}
    \nabla_{x_i} h_{x_i}(x_{-i}) \;=\; -\,h_{x_i}(x_{-i})\,\nabla_{x_i} c(x_i,x_{-i}).
\end{align*}
We know the nondegeneracy condition through Proposition \ref{BDP} that for some constant $b_0>0$,
\[
  V(x_i)\;:=\;\nu_{-i}h_{x_i} \;=\; \int h_{x_i}(x_{-i})\,d\nu_{-i}(x_{-i}) \ge\ b_0>0
  \quad \text{for all } x_i\in\cX_i.
\]
\begin{lemma}\label{Step_2_Linearization}
Define the linearized remainder
\[
  \Delta_N(x_i)
  \;:=\; \log (\hat{\nu}^N_{-i} h_{x_i}) \;-\; \log  (\nu_{-i} h_{x_i}) \;-\; \frac{(\hat{\nu}^N_{-i} - \nu_{-i})h_{x_i}}{\nu_{-i} h_{x_i}}.
\]
Then under standard assumptions,
\[
\sup_{x_i} |\Delta_N(x_i)|  \;=\; \cO_\bP\!\Big(\frac{d\log N}{N}\Big) \;=\; o_\bP(N^{-1/2}).
\]
If, in addition, $c\in \cC^2$, then
\[
  \sup_{x_i}\|\nabla_{x_i}\Delta_N(x_i)\| = \sup_{x_i} \max_{1\leq k\leq d}\left| \frac{\partial}{\partial_{x_{i,k}}}\Delta_N(x_i) \right|
  \;=\; \cO_\bP\!\Big(\frac{d\log N}{N}\Big) \;=\; o_\bP(N^{-1/2}).
\]
\end{lemma}

\begin{proof}
Set
\[
  W(x_i):=(\hat{\nu}_{-i}^N - \nu_{-i})h_{x_i},\qquad V(x_i):=\nu_{-i}h_{x_i},\qquad u(x_i):=\frac{W(x_i)}{V(x_i)}.
\]
Taylor expansion of $\log(1+u)$ with integral remainder leads to
\[
  \Delta_N(x_i) \;=\; -\tfrac12\,u(x_i)^2 \;+\; R\big(u(x_i)),
\]
where
\[
  R(u) \;=\; \int_0^u \frac{(u-t)^2}{(1+t)^3}\,dt.
\]
It is easily checked that 
\begin{align*}
     \qquad R(u)=\cO(u^3),\quad R'(u)=\cO(u^2)\quad \text{for}\ |u|\le \tfrac12 .
\end{align*}
Differentiation yields
\[
  \nabla_{x_i}\Delta_N(x_i)
  \;=\; -\,u(x_i)\,\nabla_{x_i} u(x_i) \;+\; \nabla_{x_i}u(x_i) R'(u(x_i)),
\]
where
\[
  \nabla_{x_i} u(x_i)
  \;=\; \frac{V(x_i)\,\nabla_{x_i} W(x_i) - W(x_i)\,\nabla_{x_i} V(x_i)}{V^2(x_i)}
  \;=\; \frac{(\hat{\nu}^N_{-i}-\nu_{-i})\nabla_{x_i} h_{x_i}}{V(x_i)}
        \;-\; \frac{W(x_i)}{V(x_i)}\cdot \frac{\nu_{-i}\nabla_{x_i} h_{x_i}}{V(x_i)}.
\]
Differentiation under the integrals is justified by the regularity of 
$h_{x_i}$. Hence, for $h_{x_i}$ and $\nabla_{x_i} h_{x_i}$ bounded as a consequence of $c\in\cC^2$,
\[
  W(x_i)=(\hat{\nu}^N_{-i}-\nu_{-i})h_{x_i}=\cO_\bP(N^{-1/2}),\qquad
  (\hat{\nu}^N_{-i}-\nu_{-i})\nabla_{x_i} h_{x_i}=\cO_\bP(N^{-1/2}).
\]
So $u(x_i)= \cO_\bP(N^{-1/2})$ and $\nabla_{x_i} u(x_i) = \cO_\bP(N^{-1/2})$ (pointwise in $x_i$). Therefore
\[
  u(x_i)\,\nabla_{x_i} u(x_i)=\cO_\bP(N^{-1}).
\]
On the event $\sup_{x_i} |u(x_i)|\le \tfrac12$ (which holds with probability~$\to1$),
\begin{align*}
\nabla_{x_i}(R(u(x_i))
  \; =\; R'(u(x_i))\,\nabla_{x_i} u(x_i) =\cO_\bP\!\big(|u(x_i)|^2 \nabla_{x_i} u(x_i)\big)
  \;=\; o_\bP\!\big(u(x_i)\,\nabla_{x_i} u(x_i)\big).
\end{align*}
Hence $\nabla_{x_i}\Delta_N(x_i)=
-u(x_i)\,\nabla_{x_i} u(x_i)
+o_\bP(u(x_i)\nabla_{x_i} u(x_i))=\cO_\bP(N^{-1})$. If $c\in \cC^2$ in $x_i$ with bounded $\nabla_{x_i}^2 c$, then Lemma \ref{Lipschitz_plus_light_tail_union_bound} gives
\[
  \sup_{x_i}|(\hat{\nu}^N_{-i}-\nu_{-i})h_{x_i}|,
  \quad
  \sup_{x_i} \|(\hat{\nu}^N_{-i}-\nu_{-i})\nabla_{x_i} h_{x_i}\|
  \;=\; \cO_\bP\!\Big(\sqrt{d \log N/N} \Big).
\]
Thus $\sup_{x_i} |u(x_i)|,\sup_{x_i}\|\nabla_{x_i} u(x_i)\|=\cO_\bP\!\big(\sqrt{d \log N/N}\big)$, thereby
\[
  \sup_{x_i}\|\nabla_{x_i}\Delta_N(x_i)\|
  \;\le\; \sup_{x_i} |u(x_i)|\,\sup_{x_i} \|\nabla_{x_i} u(x_i)\|(1+o_\bP(1))
  \;=\; \cO_\bP\!\Big(d \log N/N\Big).
\]
\end{proof}

\noindent The following lemma is presented in this section for its immediate use in the linearization of the EMOT system. However, it is of broad relevance and will be applied in subsequent developments in deriving weak limits.

\begin{lemma}\label{Lipschitz_plus_light_tail_union_bound}
Suppose that $\cX_j\subset\bR^d$ is compact for $j\in[m]$ and $K = (K_1,\dots,K_r): \prod_{j=1}^m \cX_j \to \bR^r$ satisfies for some $i^*\in[m]$,
$$\sup_{y_j}\|K(y_1,\dots,y_{(i^*-1)}, x, y_{(i^*+1)} ,\dots,y_{m})-K(y_1,\dots,y_{(i^*-1)}, \tilde{x}, y_{(i^*+1)} ,\dots,y_{m})\|_{\infty} \leq L \|x-\tilde{x}\|_2,$$
and 
$$\|K\|_\infty := \sup_{\mvy\in  \prod_{j=1}^m \cX_j }\|K(y_1,\dots,y_m)\|_{\infty} := \sup_{\mvy\in  \prod_{j=1}^m \cX_j } \max_{1\leq s\leq r} |K_s(y_1,\dots,y_m)| < \infty.$$
Define
$$F_N(x) = \frac{1}{N^{m-1}} \sum_{
\substack{1\leq\ell^{(i)}\leq N\\1\leq i\leq m\\ i\neq i^*}} \overline{K}_{-i^*} (X_{\ell^{(1)}}^{(1)},\dots,X_{\ell^{(i^*-1)}}^{(i^*-1)}, x, X_{\ell^{(i^*+1)}}^{(i^*+1)} ,\dots,X_{\ell^{(m)}}^{(m)}),
$$
with $\overline{K}_{-i^*}(\cdot)$ the centered version of $K(\cdot)$,
\begin{align*}
\overline{K}_{-i^*} (Z_1,&\dots,Z_{(i^*-1)}, x, Z_{(i^*+1)} ,\dots,Z_{m}) \\
&:= K(Z_1,\dots,Z_{(i^*-1)}, x, Z_{(i^*+1)} ,\dots,Z_{m}) - \bE K(Z_1,\dots,Z_{(i^*-1)}, x, Z_{(i^*+1)} ,\dots,Z_{m}).
\end{align*}
Then we have
\begin{align*}
    \bE \sup_{x\in\cX_{i^*}} \|F_N(x)\|_{\infty} \lesssim \|K\|_\infty \sqrt{\frac{d\log (L^2N)}{N}}.
\end{align*}

\end{lemma}

\begin{proof}
For any fixed $x$, bounded difference inequality gives that for all $u>0$,
\begin{align*}
    \bP\left( \|F_N(x)\|_{\infty} \geq u \right)\leq 2\exp \left(-\frac{C_1 N u^2}{\|K\|^2_\infty} \right) 
\end{align*}
for some universal constant $C_1>0$ depending on $m,\|c\|_\infty$.  For any $u>0$, pick a $\frac{u}{2L}$-net $\cN_{\frac{u}{2L}}$ of $\cX_{i^*}$ and then we have via the Lipschitzness,
\begin{align*}
        \sup_{x\in\cX_{i^*}} \|F_N(x)\|_{\infty} \leq \max_{x\in\cN_{\frac{u}{2L}}} \|F_N(x)\|_{\infty} + \frac{u}{2}.
\end{align*}
 Note that $|\cN_{\frac{u}{2L}}|\leq (\frac{C_2 L}{u})^d$ for some universal constant $C_2>0$ depending on the diameter of $\cX_{i^*}$. As a consequence, union bound provides
\begin{align*}
    \bP\left(\sup_{x\in \cX_{i^*}} \|F_N(x)\|_{\infty} \geq u\right) \leq \bP\left(\max_{x\in\cN_{\frac{u}{2L}}} \|F_N(x)\|_{\infty} \geq u/2\right) \leq \Big(\frac{C_2 L}{u}\Big)^{d}\exp \left(-\frac{C_1 N u^2}{\|K\|^2_\infty} \right).
\end{align*}

\noindent Let us define
\[
v_0\ :=\ \max\!\Big\{1,\ \frac{d}{2}\,\log \left(\frac{L^2 C_1 C_2^2 N}{\|K\|^2_\infty} \right)\Big\},
\qquad
u_0\ :=\ \|K\|_\infty \sqrt{\frac{v_0}{C_1N}}.
\]

\noindent Write $S:= \sup_{x\in\cX_{i^*}} \|F_N(x)\|_{\infty} $ and then split the tail integral at $u_0$:
\[
\mathbb{E}S
\ =\ \int_0^\infty \mathbb{P}(S>u)\,du
\ \le\ \underbrace{\int_0^{u_0}1\,du}_{=:I_1}
\ +\ 
\underbrace{\int_{u_0}^\infty \Big(\frac{C_2 L}{u}\Big)^{d}e^{-C_1 N u^2/\|K\|_\infty^2}\,du}_{=:I_2}.
\]

\noindent Clearly $I_1=u_0$. For $I_2$, change of variables $v = C_1 N u^2/\|K\|^2_\infty$ gives
\begin{align*}
I_2
&= \frac{C_2^d L^{d}}{2}\,\frac{(C_1 N)^{\frac{d}{2}-\frac12}}{\|K\|^{d-1}_\infty}
\int_{v_0}^{\infty} v^{-\frac{d+1}{2}} e^{-v}\,dv\\
&\leq \frac{C_2^d L^{d}}{2}\,\frac{(C_1N)^{\frac{d}{2}-\frac12}}{\|K\|^{d-1}_\infty}
\,v_0^{-\frac{d+1}{2}}\,e^{-v_0}.
\end{align*}
By the definition of $v_0$,
\[
v_0^{-(d+1)/2}\ \le\ 1, \quad\quad e^{-v_0}\ \le\  \left(\frac{\|K\|_\infty}{C_2 L}\right)^d \Big(\frac{ 1 }{C_1 N}\Big)^{d/2}.
\]
Substituting into the bound for $I_2$ yields
\[
I_2\ \le\ \frac{1}{2}\,(C_1 N)^{-1/2} \|K\|_\infty.
\]
Finally, using $v_0\le \frac{d}{2}\log\left(\frac{L^2 C_1 C_2^2 N}{\|K\|^2_\infty}\right)$ when that term exceeds $1$ (otherwise $v_0=1$ and the displayed upper bound still holds), we obtain
\[
\mathbb{E}S\ 
\ \le\
\frac{ \|K\|_\infty}{\sqrt{C_1}}\,
\sqrt{\frac{d\,\log \left(\frac{L^2 C_1 C_2^2 N}{\|K\|^2_\infty} \right)}{2\,N}} + \frac{ \|K\|_\infty}{2\sqrt{C_1}} \frac{1}{\sqrt{N}}.
\]

\end{proof}

\section{Technical details for operator norms estimates}\label{appendix_technical_details_for_operator_norms_estimates}
This section collects several technical lemmas establishing bounds on the operator norms of auxiliary operators that appear throughout the paper. We focus on the operator norm bound of $\|\bA-\bH\|$ through $\|\bD-\bH\|$ and $\|\bD-\bA\|$. As in Appendix \ref{Sec_Weak_limit_of_expectation_under_entropic_optimal_transport_coupling} and Appendix \ref{appendix_lineratization_empirical_EMOT}, we follow the notation convention that
$\cC^1:=  \prod_{j=1}^m \cC^1(\mathcal{X}_j)$.

\begin{proof}[proof of Lemma \ref{bound_on_norm_A-H}]
First note that $\bA$ and $\bH$ could be considered as operators on $\widetilde{\cC}^1$ due to multimarginal Schr\"{o}dinger systems. Thus,
\begin{align*}
    \| \bA - \bH\|_{\widetilde{\cC}^1\to\widetilde{\cC}^1} \leq \| \bA - \bH\|_{\cC^1\to\cC^1} \leq \| \bA - \bD\|_{\cC^1\to\cC^1} + \| \bD - \bH\|_{\cC^1\to\cC^1}.
\end{align*}
The bounds for the right-hand-side terms in the inequality are obtained in Lemma \ref{bound_on_norm_A-D} and Lemma \ref{bound_on_norm_D-H}, respectively.   
\end{proof}

\begin{lemma}\label{bound_on_norm_A-D}
For any $i,j\in[m]$ with $i\neq j$, we have that
\begin{equation}
        \| \bD - \bA\|_{\cC^1\to\cC^1}  = \cO_\bP \left( (m-1)\, \sqrt{\frac{d \log N}{N}}\right).
\end{equation}
\end{lemma}

\begin{lemma}\label{bound_on_norm_D-H}
For any $i,j\in[m]$ with $i\neq j$, we have that
\begin{equation}
        \| \bD - \bH \|_{\cC^1\to\cC^1} = \cO_\bP \left( (m-1)\,N^{-\frac{1}{(m-1)d}}\right).
\end{equation}
\end{lemma}

\begin{proof}[Proof of Lemma \ref{bound_on_norm_A-D}]
By Lemma \ref{matrix_like_operator_norm_bound}, it suffices to show that
\begin{equation}
        \| A_{ij} - D_{ij}\|  = \cO_\bP \left(\sqrt{\frac{d \log N}{N}}\right).
\end{equation}
We have that
    \begin{align*}
         \| &(A_{ij} - D_{ij})f_j \|_\infty^2
         =
         \sup_{x_i\in[-1,1]^d} \left|\int f_j(x_j) (\hat{p}_\varepsilon - p^*_\varepsilon)(\mvx) d\hat{\nu}_{-i}^N (\mvx) \right|^2\\
         &\leq
         \|f_j\|_\infty^2 \sup_{x_i\in[-1,1]^d}\int \left| (\hat{p}_\varepsilon - p^*_\varepsilon)(\mvx) \right|^2 d\hat{\nu}_{-i}^N(\mvx) \\
         & = \frac{\|f_j\|^2_\infty}{N^{m-1}} \sup_{x_i\in[-1,1]^d}  \sum_{\ell^{(1)},\dots,\ell^{(i-1)},\ell^{(i+1)},\dots,\ell^{(m)}} \left| (\hat{p}_\varepsilon - p^*_\varepsilon)(X_{\ell^{(1)}}^{(1)}, \dots, X_{\ell^{(i-1)}}^{(i-1)}, x_i, X_{\ell^{(i+1)}}^{(i+1)}, \dots, X_{\ell^{(m)}}^{(m)}) \right|^2.  \\
    \end{align*}
This term is bounded by Lemma \ref{main_lemma_bound_on_norm_Aij-Dij}. Using Proposition \ref{empirical_BDP}, we have
\begin{align*}
         \| &\nabla_{x_i} (A_{ij} - D_{ij})f_j \|_{\infty}
         =
         \max_{1\leq k \leq d} \sup_{x_i\in[-1,1]^d} \left| \frac{\partial}{\partial_{x_{i,k}}} \int f_j(x_j) (\hat{p}_\varepsilon - p^*_\varepsilon)(\mvx) d\hat{\nu}_{-i}^N (\mvx) \right|\\
         & = \max_{1\leq k \leq d} \sup_{x_i\in[-1,1]^d} \left |\int  f_j \left[ \hat{p}_\varepsilon(\mvx) \left( \frac{\partial}{\partial_{x_{i,k}}} [\hat{f}^*_i] - \frac{\partial}{\partial_{x_{i,k}}} c \right) - p^*_\varepsilon(\mvx)\left(\frac{\partial}{\partial_{x_{i,k}}} [f^*_i] - \frac{\partial}{\partial_{x_{i,k}}}  c \right)\right]  d\hat{\nu}_{-i}^N (\mvx) \right | \\
         &\lesssim
         \|f_j\|_\infty \max_{1\leq k \leq d}\sup_{x_i\in[-1,1]^d}\int \left| (\hat{p}_\varepsilon - p^*_\varepsilon)(\mvx) \right| d\hat{\nu}_{-i}^N (\mvx)\\
         &+  \|f_j\|_\infty \max_{1\leq k \leq d} \sup_{x_i\in[-1,1]^d} \left |\int  \hat{p}_\varepsilon(\mvx) \frac{\partial}{\partial_{x_{i,k}}} [\hat{f}^*_i]  - p^*_\varepsilon(\mvx)  \frac{\partial}{\partial_{x_{i,k}}} [f^*_i]  d\hat{\nu}_{-i}^N (\mvx) \right |\\
         &=
         \|f_j\|_\infty  \max_{1\leq k \leq d} \sup_{x_i\in[-1,1]^d}\int \left| (\hat{p}_\varepsilon - p^*_\varepsilon)(\mvx) \right| d\hat{\nu}_{-i}^N (\mvx) \\
         &+  \|f_j\|_\infty  \max_{1\leq k \leq d}\sup_{x_i\in[-1,1]^d} \left |\int  \hat{p}_\varepsilon(\mvx) (\frac{\partial}{\partial_{x_{i,k}}} [\hat{f}^*_i] - \frac{\partial}{\partial_{x_{i,k}}} [f^*_i]) +\frac{\partial}{\partial_{x_{i,k}}} [f^*_i] (\hat{p}_\varepsilon(\mvx)- p^*_\varepsilon(\mvx))  d\hat{\nu}_{-i}^N (\mvx) \right |\\
         &\lesssim
         \|f_j\|_\infty  \max_{1\leq k \leq d}\sup_{x_i\in[-1,1]^d}\int \left| (\hat{p}_\varepsilon - p^*_\varepsilon)(\mvx) \right| d\hat{\nu}_{-i}^N (\mvx)  +  \|f_j\|_\infty  \max_{1\leq k \leq d}\sup_{x_i\in[-1,1]^d} \left | \frac{\partial}{\partial_{x_{i,k}}} [\hat{f}^*_i] - \frac{\partial}{\partial_{x_{i,k}}} [f^*_i] \right|. \\
    \end{align*}
The proof is complete due to Lemma \ref{main_lemma_bound_on_norm_Aij-Dij} and Lemma \ref{diff_empirical_population_f_i_bounded_by_other_potentials}.
\end{proof}

\begin{proof}[Proof of Lemma \ref{bound_on_norm_D-H}]
By Lemma \ref{matrix_like_operator_norm_bound}, it suffices to show that
\begin{equation}
        \| D_{ij} - H_{ij} \| = \cO_\bP \left(N^{-\frac{1}{(m-1)d}}\right).
\end{equation}

\noindent Note that $\| f_j(x_j)  p^*_\varepsilon(\mvx)\|_{\cC^1} \lesssim \|f_j\|_{\cC^1}$ since $|f_j(x_j)  p^*_\varepsilon(\mvx)|\lesssim \|f_j\|_\infty$ and $\| \nabla_{x_i} f_j(x_j)  p^*_\varepsilon(\mvx)\|_{\infty} = \max_{1\leq k\leq d}\| \frac{\partial}{\partial x_{i,k}} f_j(x_j)  p^*_\varepsilon(\mvx)\|_{\infty} \lesssim \|f_j\|_{\cC^1}$ due to Proposition \ref{BDP}, we know that
\begin{align*}
    \sup_{f_j\in \cC^1([-1,1]^d)}&\quad\sup_{x_i\in[-1,1]^d}
     \left|\int f_j(x_j)  p^*_\varepsilon(\mvx) d(\hat{\nu}_{-i}^N - \nu_{-i})(\mvx) \right|\\
     &=\sup_{x_i\in[-1,1]^d}\quad
    \sup_{f_j\in \cC^1([-1,1]^d)}
     \left|\int f_j(x_j)  p^*_\varepsilon(\mvx) d(\hat{\nu}_{-i}^N - \nu_{-i})(\mvx) \right|\\
     &\lesssim \|f_j\|_{\cC^1} \sup_{f\in \cC^1([-1,1]^{d(m-1)})}
     \left|\int f(\mvx) d(\hat{\nu}_{-i}^N - \nu_{-i})(\mvx) \right| = \|f_j\|_{\cC^1} \cO_\bP(N^{-\frac{1}{(m-1)d}}),
\end{align*}
where the last asymptotic equality follows standard result \cite{Lei_empiricalWasserstein,villani2021topics,WeedBach}.

\end{proof}

\begin{proof}[Proof of Lemma \ref{Linearization_potential}]
Lemma \ref{Linearization_of_the_empirical_EMOT_system} gives
\begin{align}\label{linerization_equation_1}
\left \|\widehat{\mvGamma} \left( [\mvf^*] - [\hat{\mvf}^*] \right) - \fL \right\|_{\cC^1} = \cO_\bP \left( \frac{d}{N} \right).
\end{align}
Lemma \ref{order_of_centered_linerized_B_i} and Lemma \ref{bound_on_norm_A-D} gives that 
\begin{align}\label{linerization_equation_2}
\left \|\widetilde{\mvGamma} \left( [\mvf^*] - [\hat{\mvf}^*] \right) - 
\widehat{\mvGamma} \left( [\mvf^*] - [\hat{\mvf}^*] \right) \right\|_{\cC^1} = o_\bP \left( N^{-1/2} \right).
\end{align}
Lemma \ref{order_of_centered_linerized_B_i} and Lemma \ref{bound_on_norm_D-H} gives that on some event $E^{(2)}_N$ with $\bP(E^{(2)}_N)\to1$ as $N\to\infty$, 
\begin{align}\label{linerization_equation_3}
\left \|\mvGamma \left( [\mvf^*] - [\hat{\mvf}^*] \right) - \widetilde{\mvGamma} \left( [\mvf^*] - [\hat{\mvf}^*] \right) \right\|_{\cC^1} = o_\bP \left( N^{-1/2} \right).
\end{align}
As a consequence of (\ref{linerization_equation_1}), (\ref{linerization_equation_2}) and (\ref{linerization_equation_3}),
\begin{align*}
    \left \|\mvGamma \left( [\mvf^*] - [\hat{\mvf}^*] \right) - \fL \right\|_{\cC^1} = o_\bP \left( N^{-1/2} \right).
\end{align*}
As mentioned in Step 4, $\mvGamma$ is invertible and $\|\mvGamma^{-1}\| \leq C$ for some constant. Thus,
\begin{align*}
    \left \| \left( [\mvf^*] - [\hat{\mvf}^*] \right) - \mvGamma^{-1}\fL \right\|_{\cC^1} = o_\bP \left( N^{-1/2} \right).
\end{align*}
\end{proof}

\noindent Some useful properties on linear operator theory are listed below.
\begin{lemma}\label{matrix_like_operator_norm_bound_0}
    Given an operator $S : \cC^1  \to \cC^1 $ defined as  $$S(\mvf):= \begin{pmatrix}
        L_1 \mvf\\
        \vdots\\
        L_m \mvf\\
    \end{pmatrix}$$
with $L_i: \cC^1 \to \cC^1([-1,1]^d) $ bounded operator defined for $i\in[m]$ with $\|L_i\|\leq M$, we have 
$$\|S\| \leq M.$$
\end{lemma}

\begin{proof}
    We see that
    \begin{equation*}
        \|Sf\|_{\cC^1} = \max_{1\leq i\leq m} \| (Sf)_i \|_{\cC^1} 
        \leq \max_{1\leq i\leq m} \|L_{i}\| \|\mvf\|_{\cC^1} \leq M \|\mvf\|_{\cC^1}.
    \end{equation*}
\end{proof}

\begin{lemma}\label{matrix_like_operator_norm_bound}
    Given an operator $S : \cC^1  \to \cC^1 $ defined as  $$S\begin{pmatrix}
        f_1\\
        \vdots\\
        f_m
    \end{pmatrix} := \begin{pmatrix}
        \sum_{j\neq 1} S_{1j}f_j\\
        \vdots\\
         \sum_{j\neq m} S_{mj}f_j\\
    \end{pmatrix}$$
with $S_{ij}: \cC^1([-1,1]^d) \to \cC^1([-1,1]^d) $ bounded operator defined for $i\neq j$ with $\|S_{ij}\|\leq K$, we have 
$$\|S\| \leq (m-1)K.$$
\end{lemma}

\begin{proof}
    We see that
    \begin{equation*}
        \|Sf\|_{\cC^1} = \max_{1\leq i\leq m} \| (Sf)_i \|_{\cC^1} \leq \max_{1\leq i\leq m} \sum_{j\neq i} \|S_{ij}\| \|f_j\|_{\cC^1} \leq (m-1) K\|f\|_{\cC^1}.
    \end{equation*}
\end{proof}

\section{Omitted proofs in the main paper}\label{omitted_proofs}
\begin{proof}[Proof of Proposition \ref{LipcontiMSB}]
Suppose $\pi^*_\varepsilon$ (resp. $\tilde{\pi}^*_\varepsilon$) is the optimal coupling of the $m$-marginal Schr\"{o}dinger system $S_{\varepsilon}(\nu_1,\dots,\nu_m)$ (resp. $S_{\varepsilon}(\tilde{\nu}_1,\dots,\tilde{\nu}_m)$) and  
$\gamma\in\Pi(\pi^*_\varepsilon,\tilde{\pi}^*_\varepsilon)$ is the optimal coupling attaining $W_1(\pi^*_\varepsilon,\tilde{\pi}^*_\varepsilon)$. 
Define $\pi_0\in\Pi\left({T_{\alpha}}_{\sharp}\pi^*_\varepsilon,{T_{\alpha}}_{\sharp}\tilde{\pi}^*_\varepsilon\right)$ via
$$\pi_0=(T_\alpha, T_\alpha)_{\sharp}\gamma,$$
namely, $\pi_0(A\times B)=\gamma(T_\alpha^{-1}(A)\times T_\alpha^{-1}(B))$ for $A,\,B$ measurable. Notice that $T_\alpha(\mvx)$ is continuous, and we have 
\begin{align}\label{Lip}
&W_1(\bar{\nu}_\varepsilon, \tilde{\nu}_{\varepsilon})=W_1({T_{\alpha}}_{\sharp}\pi^*_\varepsilon,{T_{\alpha}}_{\sharp}\tilde{\pi}^*_\varepsilon)\leq\int\|\mvx-\mvy\|d\pi_0(\mvx,\mvy)\nonumber\\
&=\int\|T_\alpha(\mvx)-T_\alpha(\mvy)\|d\gamma(\mvx,\mvy)\leq\int\|\mvx-\mvy\|d\gamma(\mvx,\mvy)=W_1(\pi_\varepsilon^*,\tilde{\pi}_\varepsilon),\nonumber\\
\end{align}
using the Lipschiz property of $T_\alpha(\mvx)$. Moreover, Theorem 3.3 in \cite{BayraktarEcksteinZhang_stability} implies that under this setting, 
\begin{equation}\label{shadowstability}
W_1(\pi_\varepsilon^*,\tilde{\pi}_\varepsilon)\leq \sqrt{m}W_2(\bm{\nu},\bm{\tilde{\nu}})+C W_2(\bm{\nu},\bm{\tilde{\nu}})^{1/2}.
\end{equation}
Here, $C > 0$ is a constant depending on $\varepsilon$, and  the second moment of $\nu_j$ and $\tilde{\nu}_j$ for $j \in [m]$. Combining (\ref{Lip}) and (\ref{shadowstability}), the proof is complete.
\end{proof}

\begin{proof}[Proof of Proposition \ref{BDP}]
Recall that the optimal dual potential $\vf^*=(f^*_1,\dots, f^*_m)$ and the optimal coupling $\pi_\varepsilon^*$ satisfies 
$$\frac{d\pi_\varepsilon^*}{d(\otimes_{k=1}^m\nu_k)}(x_1,\dots,x_m)=p^*_{\varepsilon}(x_1,\dots,x_m).$$

\noindent \textbf{Step 1.}
For $\nu_m\text{-}a.e.\,x_m$, we have
\begin{align*}
&1=\int\exp\left({-\frac{c}{\varepsilon}}\right)\exp\left({\frac{\sum_{j=1}^{m}f^*_j(x_j)}{\varepsilon}}\right) d(\otimes_{k=1}^{m-1}\nu_k)\\
&\geq\exp\left({\frac{f^*_m(x_m)-\|c\|_{\infty}}{\varepsilon}}\right) \int\exp\left({\frac{\sum_{j=1}^{m-1}f^*_j(x_j)}{\varepsilon}}\right)d(\otimes_{k=1}^{m-1}\nu_k)\\
&=\exp\left({\frac{f^*_m(x_m)-\|c\|_{\infty}}{\varepsilon}}\right)\Pi_{j=1}^{m-1}\int\exp\left(\frac{f^*_j(x_j)}{\varepsilon}\right)d\nu_j\\
&\geq\exp\left({\frac{f^*_m(x_m)-\|c\|_{\infty}}{\varepsilon}}\right)\Pi_{j=1}^{m-1}\exp\left[\int\left(\frac{f^*_j(x_j)}{\varepsilon}\right)d\nu_j\right]\\
&=\exp\left({\frac{f^*_m(x_m)-\|c\|_{\infty}}{\varepsilon}}\right).
\end{align*}
Thus, we get, for $\nu_m\text{-}a.e.\,x_m$, 
\begin{equation}
f^*_m(x_m)\leq\|c\|_{\infty}.
\end{equation}
Similarly, we know that for $\nu_k\text{-}a.e.\,x_k$, $k\in[m-1]$, 
\begin{equation}
	f^*_k(x_k)\leq\|c\|_{\infty}.
\end{equation}

\noindent \textbf{Step 2.}
For $\otimes_{k=1}^{m-1}\nu_k\text{-}a.e.\,(x_1,\dots,x_{m-1}),$ it is known that
\begin{align*}
&1=\int\exp\left({-\frac{c}{\varepsilon}}\right)\exp\left({\frac{\sum_{j=1}^mf^*_j(x_j)}{\varepsilon}}\right)d\nu_m\\
&\leq\int\exp\left({\frac{\sum_{j=1}^mf^*_j(x_j)}{\varepsilon}}\right)d\nu_m=\exp\left({\frac{\sum_{j=1}^{m-1}f^*_j(x_j)}{\varepsilon}}\right)\int\exp\left({\frac{f^*_m(x_m)}{\varepsilon}}\right)d\nu_m\\
&\leq\exp\left({\frac{1}{\varepsilon}}\left(\sum_{j=1}^{m-1}f^*_j(x_j)+\|c\|_{\infty}\right)\right).
\end{align*}
So we get, for $\otimes_{k=1}^{m-1}\nu_k\text{-}a.e.\,(x_1,\dots,x_{m-1}),$
\begin{equation}
	\sum_{j=1}^{m-1}f^*_j(x_j)\geq-\|c\|_{\infty}.
\end{equation}

\noindent \textbf{Step 3.}
Note that the primal/dual problem has a nonnegative value, so 
\begin{align*}
0\leq\sum_{j=1}^m\nu_j(f^*_j)-\varepsilon\int p^*_{\varepsilon}d(\otimes_{k=1}^m\nu_k)+\varepsilon=\sum_{j=1}^m\nu_j(f^*_j)=\nu_m(f^*_m).
\end{align*}
As a consequence, for $\otimes_{k=1}^{m-1}\nu_k\text{-}a.e.\,(x_1,\dots,x_{m-1}),$
\begin{align*}
&1=\int\exp\left({-\frac{c}{\varepsilon}}\right)\exp\left({\frac{\sum_{j=1}^mf^*_j(x_j)}{\varepsilon}}\right)d\nu_m\\
&\geq\exp\left({\frac{\sum_{j=1}^{m-1}f^*_j(x_j)-\|c\|_{\infty}}{\varepsilon}}\right)\int\exp\left({\frac{f^*_m(x_m)}{\varepsilon}}\right)d\nu_m\\
&\geq\exp\left({\frac{\sum_{j=1}^{m-1}f^*_j(x_j)-\|c\|_{\infty}}{\varepsilon}}\right)\exp\left[\int\left({\frac{f^*_m(x_m)}{\varepsilon}}\right)d\nu_m\right]\\
&\geq\exp\left({\frac{\sum_{j=1}^{m-1}f^*_j(x_j)-\|c\|_{\infty}}{\varepsilon}}\right).
\end{align*}
Hence, for $\otimes_{k=1}^{m-1}\nu_k\text{-}a.e.\,(x_1,\dots,x_{m-1}),$
\begin{equation}
	\sum_{j=1}^{m-1}f^*_j(x_j)\leq\|c\|_\infty.
\end{equation}

\noindent \textbf{Step 4.}
Also, notice that the optimal dual potentials satisfy
$\nu_k(f^*_k)=0$ for $k\in[m-1]$, so for $\nu_k\text{-}a.e.\,x_k$, we have
\begin{align*}
	&1=\int\exp\left({-\frac{c}{\varepsilon}}\right)\exp\left({\frac{\sum_{j=1}^mf^*_j(x_j)}{\varepsilon}}\right)d\nu_k\\
	&\geq\exp\left({\frac{\sum_{j\neq k}f^*_j(x_j)-\|c\|_{\infty}}{\varepsilon}}\right)\int\exp\left({\frac{f^*_k(x_k)}{\varepsilon}}\right)d\nu_k\\
	&\geq\exp\left({\frac{\sum_{j\neq k}f^*_j(x_j)-\|c\|_{\infty}}{\varepsilon}}\right).
\end{align*}
Hence, for $\nu_k\text{-}a.e.\,x_k,$
\begin{equation}
f^*_k(x_k)\leq\|c\|_\infty.
\end{equation}

\noindent \textbf{Step 5.}
For $\nu_m\text{-}a.e.\,x_m$, we know that

\begin{align*}
&1=\int\exp\left({-\frac{c}{\varepsilon}}\right)\exp\left({\frac{\sum_{j=1}^mf^*_j(x_j)}{\varepsilon}}\right)d(\otimes_{k=1}^{m-1}\nu_k)\\
&\leq\int\exp\left({\frac{\sum_{j=1}^mf^*_j(x_j)}{\varepsilon}}\right)d(\otimes_{k=1}^{m-1}\nu_k)=\exp\left(\frac{f^*_m(x_m)}{\varepsilon}\right)\int\exp\left({\frac{\sum_{j=1}^{m-1}f^*_j(x_j)}{\varepsilon}}\right)d(\otimes_{k=1}^{m-1}\nu_k)\\
&\leq\exp\left(\frac{f^*_m(x_m)+\|c\|_\infty}{\varepsilon}\right).
\end{align*}

\noindent As a result, for $\nu_m\text{-}a.e.\,x_m$,
\begin{equation}
	f^*_m(x_m)\geq-\|c\|_\infty.
\end{equation}
Combining all these steps, we obtain
\begin{equation}
	\|f^*_j(x_j)\|_{L^{\infty}(\nu_j)}\leq\|c\|_{L^{\infty}(\otimes_{j=1}^m\nu_j)} \quad\text{for all}\quad j\in[m],
\end{equation}
\begin{equation}
\|\sum_{j=1}^{m-1}f^*_j(x_j)\|_{L^{\infty}(\otimes_{j=1}^{m-1}\nu_j)}\leq\|c\|_{L^{\infty}(\otimes_{j=1}^m\nu_j)}.
\end{equation}
Moreover, we get
\begin{equation}
\|\sum_{j=1}^{m}f^*_j(x_j)\|_{L^{\infty}(\otimes_{j=1}^{m}\nu_j)}\leq2\|c\|_{L^{\infty}(\otimes_{j=1}^m\nu_j)}.
\end{equation}
\end{proof}

\begin{proof}[Proof of Lemma \ref{norm}]
By the duality of operator norm, we can write
\begin{equation}
\|\nabla\Phi_{\varepsilon}(\vf)\|_{\mathscr{L}_{m}} = \sup\left\{\left\langle\nabla\Phi_{\varepsilon}(\vf),\vg\right\rangle_{\mathscr{L}_{m}}, \|\vg\|_{\mathscr{L}_{m}} \leq 1\right\}.
\end{equation}
Note that
\begin{align*}
&\langle\nabla\Phi_{\varepsilon}(\vf),\vg\rangle_{\mathscr{L}_{m}}\\
&=\sum_{j=1}^m\int\left[g_j\left(1-\exp\left({\frac{\sum_{i=1}^mf_i-c}{\varepsilon}}\right)\right)\right]d\left(\otimes_{k=1}^m\nu_k\right) \\
&\leq\sum_{j=1}^m\left(\int g_j^2d\nu_j\right)^{1/2} \left\{\int\left[\int1-\exp\left({\frac{\sum_{i=1}^mf_i-c}{\varepsilon}}\right)d\nu_{-j}(x_{-j})\right]^2d\nu_j\right\}^{1/2}\\
&\leq \left(\sum_{j=1}^m\int g_j^2d\nu_j\right)^{1/2} \left\{\sum_{j=1}^m\int\left[\int1-\exp\left({\frac{\sum_{i=1}^mf_i-c}{\varepsilon}}\right)d\nu_{-j}(x_{-j})\right]^2d\nu_j\right\}^{1/2},\\
\end{align*}
where the last two inequalities both follow from the Cauchy-Schwarz inequality, with the equality attained if  $g_j=\int\left(1-\exp\left({\frac{\sum_{i=1}^mf_i-c}{\varepsilon}}\right)\right)d\nu_{-j}(x_{-j})$,
for $j\in[m]$. This completes the proof.
\end{proof}

\begin{proof}[Proof of Proposition \ref{sconc}]
For $\vf=(f_1,\dots, f_m),\,\vg=(g_1,\dots,g_m)\in\mathcal{S}_L$, $t\in[0,1]$, we define
\begin{align*}
&h(t) := \Phi_{\varepsilon}((1-t)\vf+t\vg)\\
&=(1-t)\sum_{j=1}^m\int f_jd \nu_j+t\sum_{j=1}^m\int g_jd\nu_j-\varepsilon\int\exp\left(\frac{\sum_{j=1}^m\left((1-t)f_j+tg_j\right)-c}{\varepsilon}\right)d\left(\otimes_{j=1}^m\nu_j\right)+\varepsilon.
\end{align*}
Taking derivatives, we get
\begin{align*}
h'(t) &= \sum_{j=1}^m\int (g_j-f_j)d\nu_j-\int \left(\sum_{j=1}^m(g_j-f_j)\right)\exp\left(\frac{\sum_{j=1}^m\left((1-t)f_j+tg_j\right)-c}{\varepsilon}\right)d\left(\otimes_{j=1}^m\nu_j\right)\\
&=\left\langle\nabla\Phi_{\varepsilon}\left((1-t)\vf+t\vg\right), \vg-\vf\right\rangle_{\mathscr{L}_m},\\
h^{''}(t)&=-\frac{1}{\varepsilon}\int \left(\sum_{j=1}^m(g_j-f_j)\right)^2\exp\left(\frac{\sum_{j=1}^m\left((1-t)f_j+tg_j\right)-c}{\varepsilon}\right)d\left(\otimes_{j=1}^m\nu_j\right).
\end{align*}
The strong concavity (\ref{sconci}) could be rewritten as
\begin{equation}
	h(0)-h(1)\geq -h'(0)+\frac{\beta}{2}\|\vf-\vg\|_{\mathscr{L}_m}^2.
\end{equation}
It suffices to show that $h^{''}(t)\leq-\beta\|\vf-\vg\|_{\mathscr{L}_m}^2,$ for all $t\in[0,1]$, namely
\begin{align}\label{SL}
&\frac{1}{\varepsilon}\int\left(\sum_{j=1}^m(g_j-f_j)\right)^2\exp\left(\frac{\sum_{j=1}^m\left((1-t)f_j+tg_j\right)-c}{\varepsilon}\right)d\left(\otimes_{j=1}^m\nu_j\right)\geq\beta\sum_{j=1}^m\int \left(g_j-f_j\right)^2d\nu_j\nonumber\\
&=\beta\int\sum_{j=1}^m\left(g_j-f_j\right)^2d(\otimes_{j=1}\nu_k)=\beta\int\left(\sum_{j=1}^m(g_j-f_j)\right)^2d(\otimes_{j=1}\nu_k),
\end{align}
where we make use of the fact that $\vf,\vg\in\mathcal{S}_L$ to derive the last equality.
The fact that $\vf,\vg\in\mathcal{S}_ L$ indicates that $\beta=\frac{1}{\varepsilon}\exp\left(-\frac{L+\|c\|_\infty}{\varepsilon}\right)$ qualifies to make (\ref{SL}) hold true.
\end{proof}

\section{Technical lemmas}\label{appendix_technical_lemmas}

\begin{lemma}[Tightness of couplings]\label{tight}
Let $\mathcal{Y}=\Pi_{j=1}^m\mathcal{Y}_j$. Assume that $\mathscr{{T}}_j\subset\mathcal{P}(\mathcal{Y}_j)$ is tight for $j\in[m]$. Then the set $\Pi(\mathscr{T}_1,\dots,\mathscr{T}_m) := \{ \gamma \in \mathcal{P}(\mathcal{Y}) \mid {e_j}_{\sharp}\gamma\in\mathscr{T}_j \}$
is tight. Particularly, we have the tightness of $\Pi(\nu_1,\dots,\nu_m) = \{ \gamma \in \mathcal{P}(\mathcal{Y}) \mid {e_j}_{\sharp}\gamma=\nu_j \}$.
\end{lemma}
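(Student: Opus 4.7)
The plan is to reduce tightness of the joint couplings to tightness of each marginal family via a standard union bound, using Tychonoff-style compactness of products of compact sets in finite-dimensional Euclidean (or more generally Polish) spaces.

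First, I would fix $\varepsilon > 0$ and exploit tightness of each $\mathscr{T}_j$ to extract a compact set $K_j \subset \mathcal{Y}_j$ such that $\mu_j(\mathcal{Y}_j \setminus K_j) < \varepsilon/m$ for every $\mu_j \in \mathscr{T}_j$. Then I would form the product set $K := \prod_{j=1}^m K_j \subset \mathcal{Y}$, which is compact as a finite product of compact sets (in our setting $\mathcal{Y}_j \subset \mathbb{R}^d$, so this is immediate; more generally one invokes Tychonoff's theorem).

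Next, for any $\gamma \in \Pi(\mathscr{T}_1, \dots, \mathscr{T}_m)$, I would estimate the mass outside $K$ via the union bound on projections:
\begin{align*}
\gamma(\mathcal{Y} \setminus K)
&= \gamma\Big(\bigcup_{j=1}^m \{\mvy \in \mathcal{Y} : y_j \notin K_j\}\Big) \\
&\leq \sum_{j=1}^m \gamma\big(\{\mvy \in \mathcal{Y} : y_j \notin K_j\}\big) \\
&= \sum_{j=1}^m ({e_j}_{\sharp} \gamma)(\mathcal{Y}_j \setminus K_j) < \sum_{j=1}^m \frac{\varepsilon}{m} = \varepsilon,
\end{align*}
where the penultimate equality is just the definition of the pushforward and the final inequality uses ${e_j}_\sharp \gamma \in \mathscr{T}_j$ together with our choice of $K_j$. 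Since $K$ does not depend on $\gamma$, this establishes tightness of $\Pi(\mathscr{T}_1,\dots,\mathscr{T}_m)$.

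The special case $\Pi(\nu_1,\dots,\nu_m)$ follows at once by taking $\mathscr{T}_j = \{\nu_j\}$, which is a singleton and hence trivially tight (any finite set of Borel probability measures on a Polish space is tight). There is really no obstacle here: the only thing to watch is that the projection maps $e_j$ are continuous (so preimages of compact sets are measurable and their pushforward identity applies), and that $K$ is compact in the product topology, both of which are routine in the Euclidean setting assumed throughout the paper.
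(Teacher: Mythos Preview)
Your proof is correct and follows essentially the same argument as the paper: choose compact $K_j$ capturing all but $\varepsilon/m$ mass for each marginal family, form the product $K=\prod_j K_j$, and use the union bound on the complement. If anything, your write-up is slightly more explicit than the paper's (you spell out the compactness of $K$ and the reduction to singletons for the particular case), but the structure and key idea are identical.
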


\begin{proof}[Proof of Lemma \ref{tight}]
Let $\delta > 0$. By the tightness of $\mathscr{T}_j$ we can find a compact set $K_j \subseteq \mathcal{Y}_j$ such that $\mu_j(\mathcal{Y}_j \setminus K_j) < \frac{\delta}{m}$, for any $\mu_j\in\mathscr{T}_j$.

Let $K := K_1 \times \dots \times K_m$ and let $\gamma \in \Pi(\mathscr{T}_1,\dots,\mathscr{T}_m)$. Due to the fact  ${e_j}_{\sharp}\gamma\in\mathscr{T}_j$, and the fact that
$$ \mathcal{Y} \setminus K \subseteq \left(\left((\mathcal{Y}_1 \setminus K_1) \times \prod_{k=2}^m \mathcal{Y}_k\right) \bigcup \left(\mathcal{Y}_1 \times (\mathcal{Y}_2 \setminus K_2) \times \prod_{k=3}^m \mathcal{Y}_k\right) \bigcup \dots \bigcup \left(\prod_{k=1}^{m-1} \mathcal{Y}_k \times (\mathcal{Y}_m \setminus K_m)\right)\right), $$
one has $\gamma(\mathcal{Y} \setminus K) \leq \delta$.
\end{proof}

\begin{lemma}[Polyak-\L ojasiewicz inequality]\label{PLineq}
Let \( S \subset \mathbb{H} \) be a convex subset of a Hilbert space \( \mathbb{H} \) and \( f : \mathbb{H} \to \mathbb{R} \) be a \(\beta\)-strongly convex function on \( S \). Then, we have for all \( x \in S \),
\[
	f(x) - \inf_{y \in \mathbb{H}} f(y) \leq \frac{1}{2\beta} \|\nabla f(v)\|_{\mathbb{H}}^2.
\]
\end{lemma}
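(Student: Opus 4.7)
The plan is to derive the PL inequality directly from the quadratic lower bound furnished by $\beta$-strong convexity, optimized pointwise over the second argument.

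First, I would invoke the defining inequality of $\beta$-strong convexity of $f$ on $S$: for all $x, y \in S$,
\[
f(y) \;\geq\; f(x) + \langle \nabla f(x),\, y - x \rangle_{\mathbb{H}} + \frac{\beta}{2}\|y - x\|_{\mathbb{H}}^2.
\]
With $x \in S$ held fixed, the right-hand side is a strictly convex quadratic in the displacement $z = y - x \in \mathbb{H}$, namely $q(z) := f(x) + \langle \nabla f(x), z\rangle + \tfrac{\beta}{2}\|z\|^2$. This quadratic admits an unconstrained minimizer on the Hilbert space $\mathbb{H}$ at $z^\star = -\nabla f(x)/\beta$, and a direct substitution gives $q(z^\star) = f(x) - \frac{1}{2\beta}\|\nabla f(x)\|_{\mathbb{H}}^2$.

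Next, for every $y \in S$ we chain the two bounds:
\[
f(y) \;\geq\; q(y-x) \;\geq\; \inf_{z \in \mathbb{H}} q(z) \;=\; f(x) - \frac{1}{2\beta}\|\nabla f(x)\|_{\mathbb{H}}^2.
\]
Taking the infimum over $y$ and rearranging produces $f(x) - \inf_y f(y) \leq \frac{1}{2\beta}\|\nabla f(x)\|_{\mathbb{H}}^2$, which is the claimed PL inequality.

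There is no genuine obstacle here; the only mild point worth noting is that the strong-convexity inequality is only asserted for $y \in S$, so strictly what the argument delivers is $\inf_{y \in S} f(y) \geq f(x) - \tfrac{1}{2\beta}\|\nabla f(x)\|^2$. In the paper's applications (Lemma \ref{barf}), $S = \hat{\mathcal{S}}_L$ contains the global optimizer $\hat{\vf}^*$, so this restricted infimum coincides with the one stated. I would flag this with a one-line remark rather than pursue any subtlety, since the classical result is essentially immediate from the quadratic completion above.
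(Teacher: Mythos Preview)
Your argument is correct and is exactly the standard derivation of the PL inequality from strong convexity via quadratic completion. The paper itself does not give a proof but simply cites \cite{KarimiNutiniSchmidt_PL}; your short computation is precisely the one underlying that reference, so the approaches coincide. Your remark about the infimum being over $S$ rather than all of $\mathbb{H}$ is apt and harmless in the paper's applications, as you note.
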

\begin{proof}
	See \cite{KarimiNutiniSchmidt_PL}.
\end{proof}

\begin{lemma}[Hoeffding's inequality in Hilbert space]\label{lem:hoeffding_hilbert}
    Let $X_1, \dots, X_n$ be independent mean-zero random variables taking values in a Hilbert space $(\bH, \|\cdot\|_{\bH})$. If $\|X_i\|_{\bH} \leq C$ for some constant $C > 0$, then for every $t > 0$, we have
    \begin{equation}
        \bP \left( \left\| \sum_{i=1}^n X_i \right\|_{\bH} \geq t \right) \leq 2 \exp\left( -{\frac{t^2} {8 n C^2}} \right).
    \end{equation}
\end{lemma}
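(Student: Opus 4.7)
The plan is to apply McDiarmid's bounded differences inequality to the real-valued functional $F(x_1,\dots,x_n) := \|\sum_{i=1}^n x_i\|_{\bH}$ and then convert concentration about the mean into concentration about zero by controlling $\bE F$. First, I would verify the bounded differences condition: by the triangle inequality, replacing the $i$-th coordinate $x_i$ by $x_i'$ changes $F$ by at most $\|x_i - x_i'\|_{\bH} \leq 2C$. Applying McDiarmid's inequality with $c_i = 2C$ for each $i$ would yield
\begin{equation*}
\bP\bigl(F(X_1,\dots,X_n) - \bE F \geq u\bigr) \leq \exp\bigl(-u^2/(2nC^2)\bigr) \quad \text{for every } u > 0.
\end{equation*}

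Next, I would bound the mean by the variance proxy. Jensen's inequality, combined with independence and the mean-zero assumption (which kills the cross terms when expanding the squared norm inner product), gives
\begin{equation*}
\bE F \leq \Bigl(\bE\Bigl\|\sum_{i=1}^n X_i\Bigr\|_{\bH}^2\Bigr)^{1/2} = \Bigl(\sum_{i=1}^n \bE\|X_i\|_{\bH}^2\Bigr)^{1/2} \leq C\sqrt{n}.
\end{equation*}

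Finally, I would conclude by a dichotomy in $t$. For $t \geq 2C\sqrt{n}$, the choice $u = t - C\sqrt{n} \geq t/2$ produces
\begin{equation*}
\bP(F \geq t) \leq \bP(F - \bE F \geq u) \leq \exp\bigl(-u^2/(2nC^2)\bigr) \leq \exp\bigl(-t^2/(8nC^2)\bigr),
\end{equation*}
which is even sharper than the target. For $t < 2C\sqrt{n}$, the right-hand side $2\exp(-t^2/(8nC^2)) > 2 e^{-1/2} > 1$, so the inequality is trivial. The only mildly delicate point is calibrating the cutoff so that the exponent $-t^2/(8nC^2)$ and the leading factor $2$ line up cleanly; there is no genuine obstacle here. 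An alternative route is to invoke Pinelis' vector-valued Hoeffding inequality for martingales in $2$-smooth Banach spaces, which bypasses the case analysis but requires considerably heavier machinery than the McDiarmid + Jensen route above.
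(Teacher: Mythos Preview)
Your argument is correct: the bounded-differences verification, the McDiarmid tail, the Jensen bound $\bE F \leq C\sqrt{n}$, and the dichotomy in $t$ all go through as written. The paper itself does not supply a proof of this lemma; it merely cites Lemma~17 in the Rigollet--Stromme sample-complexity paper. So there is no substantive approach in the paper to compare against, and your self-contained McDiarmid-plus-Jensen route is in fact more informative than the citation. The Pinelis alternative you mention is indeed the route often taken in the literature (and likely what the cited reference ultimately relies on), but for the bounded case your elementary argument is entirely adequate.
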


\begin{proof}
    See Lemma 17 in~\cite{rigollet2022samplecomplexityentropicoptimal}.
\end{proof}

\begin{lemma}[Dudley's entropy integral bound]\label{dudley}
Let $\{X_t,\, t \in T\}$ be a zero-mean process satisfying the sub-Gaussian condition with respect to distance $\rho_X$, i.e., for any $t,\tilde{t}\in T$, $\mathbb{P}(|X_t-X_{\tilde{t}}|\geq \tau)\leq C_1\exp\left({-\frac{C_2\tau^2}{\rho^2_X(t,\tilde{t})}}\right)$ for some universal constant $C_1, C_2>0$. Then for any $\delta \in [0,D]$, with $D=\sup_{t,\tilde{t}\in T}\rho_X(t,\tilde{t})$ denoting the diameter of $T$ under $\rho_X$, we have, for some universal constants $C_3,C_4>0$,
\[
\mathbb{E}\left[\sup_{t,\tilde{t}\in T} (X_t - X_{\tilde{t}})\right] \leq C_3\mathbb{E}\left[\sup_{\substack{\gamma,\gamma'\in T\\\rho_X(\gamma,\gamma')\leq\delta}} (X_\gamma - X_{\gamma'})\right] + C_4\int_{\delta/4}^D\sqrt{\log N(T,\varepsilon, \rho_X)}d\varepsilon.
\]
\end{lemma}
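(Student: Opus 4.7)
The plan is to prove this by the classical \emph{generic chaining} technique (Dudley) with the chain terminated at scale $\delta$ rather than zero. First I fix nested $\rho_X$-nets: set $\varepsilon_k := 2^{-k} D$ for $k \geq 0$, let $T_0 = \{t_0\}$ be a single reference point (a valid $D$-net by definition of $D$), and for $k \geq 1$ choose a minimal $\varepsilon_k$-covering $T_k \subset T$ with $|T_k| \leq N(T, \varepsilon_k, \rho_X)$. For each $t \in T$ let $\pi_k(t)$ denote a closest point in $T_k$, so that $\rho_X(t, \pi_k(t)) \leq \varepsilon_k$. Choose the stopping level $K$ as the smallest integer with $\varepsilon_K \leq \delta/2$; this guarantees simultaneously $\rho_X(t, \pi_K(t)) \leq \delta$ for every $t \in T$ and $\varepsilon_{K+1} \geq \delta/4$, which is where the lower limit of integration will come from.

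Next I carry out the telescoping decomposition. For every $t \in T$,
\[
X_t - X_{t_0} = \bigl(X_t - X_{\pi_K(t)}\bigr) + \sum_{k=1}^{K} \bigl(X_{\pi_k(t)} - X_{\pi_{k-1}(t)}\bigr),
\]
so that $\sup_{t,\tilde t \in T}(X_t - X_{\tilde t}) \leq 2 \sup_{t \in T} |X_t - X_{\pi_K(t)}| + 2 \sup_{t \in T} \bigl|\sum_{k=1}^{K} (X_{\pi_k(t)} - X_{\pi_{k-1}(t)})\bigr|$. For the residual term, since $\rho_X(t, \pi_K(t)) \leq \delta$ for all $t$, it is pointwise dominated by $\sup_{\{\gamma,\gamma' \in T:\rho_X(\gamma,\gamma') \leq \delta\}}(X_\gamma - X_{\gamma'})$, which is exactly the first quantity on the right-hand side of the target bound.

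For the chaining sum I bound each level separately using the sub-Gaussian maximal inequality: since $\rho_X(\pi_k(t),\pi_{k-1}(t)) \leq \varepsilon_k + \varepsilon_{k-1} \leq 2\varepsilon_{k-1}$, there are at most $|T_k|\cdot|T_{k-1}| \leq N(T,\varepsilon_k,\rho_X)^2$ distinct pairs, and the sub-Gaussian tail assumption implies
\[
\mathbb{E}\Big[\max_{t \in T} \bigl|X_{\pi_k(t)} - X_{\pi_{k-1}(t)}\bigr|\Big] \lesssim \varepsilon_{k-1}\sqrt{\log N(T,\varepsilon_k,\rho_X)}.
\]
Summing over $k = 1,\dots,K$ and invoking the Riemann-sum comparison $2\varepsilon_k\sqrt{\log N(T,\varepsilon_k,\rho_X)} \leq 4\int_{\varepsilon_{k+1}}^{\varepsilon_k}\sqrt{\log N(T,\varepsilon,\rho_X)}\,d\varepsilon$ (valid because $N(T,\cdot,\rho_X)$ is non-increasing), the telescoping of intervals gives $\int_{\varepsilon_{K+1}}^{D} \geq \int_{\delta/4}^{D}$, producing the Dudley integral.

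I expect the only delicate point to be turning the signed chaining increments into the symmetric $\sup_{t,\tilde t}(X_t - X_{\tilde t})$ on the left-hand side; this is handled by the two-sided tail from sub-Gaussianity and a union bound between $\sup(X_t - X_{t_0})$ and $\sup(X_{t_0} - X_t)$, at the cost of an absolute constant absorbed into $C_3, C_4$. Everything else is a clean instance of the standard truncated-chaining argument (see, e.g., Talagrand or van der Vaart--Wellner).
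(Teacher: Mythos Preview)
Your proposal is correct: it is the standard truncated-chaining proof of Dudley's bound, and the minor bookkeeping around the stopping level (whether $\varepsilon_{K+1}$ lands above $\delta/4$ or only above $\delta/8$) affects at most the absolute constants $C_3,C_4$, which the statement leaves unspecified anyway. The paper itself does not give a proof; it simply cites Theorem~5.22 in Wainwright's \emph{High-Dimensional Statistics}, whose argument is exactly the chaining-with-truncation you outline. So your write-up is a faithful expansion of the cited reference rather than a genuinely different route.
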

\begin{proof}
    See Theorem 5.22 in \cite{Wainwright_2019}.
\end{proof}

Once the order of the difference of the empirical and population potential is settled as in Lemma \ref{diff_empirical_population_f_i_bounded_by_other_potentials},
the following lemma is a straightforward extension of Lemma B.1 in \cite{AlbertoGonzalez-SanzJean-MichelLoubesJonathanNiles-Weed2024weaklimit} from two marginal case to non-smooth multimarginal case. The proof is just Taylor expansion of exponential function as suggested by \cite{AlbertoGonzalez-SanzJean-MichelLoubesJonathanNiles-Weed2024weaklimit}. 
\begin{lemma}\label{exponential_potential_infinity_norm_order}
    Let \(\Omega \subset \mathbb{R}^{d}\) be a compact set, \(\nu_1,\dots,\nu_m \in \cP(\Omega)\), and let \(\hat{\nu}^N_1,\dots,\hat{\nu}^N_m \) be their empirical measures. Then
    \begin{align}\label{bound_taylor_expansion_littleop_order}
    \left\|{e^{ \frac{\sum_{i=1}^m \hat{f}^*_i(x_i)}{\varepsilon}} - e^{ \frac{\sum_{i=1}^m f^*_i(x_i)}{\varepsilon}} - e^{ \frac{\sum_{i=1}^m f^*_i(x_i)}{\varepsilon}}\left( \frac{ \sum_{i=1}^m \hat{f}^*_i(x_i)-f^*_i(x_i)}{\varepsilon} \right)} \right\|_{\cC^1}
        &= o_{\bP}\left( N^{-1/2} \log N \right),
    \end{align}
    \begin{align}\label{bound_taylor_expansion_bigO_order}
 \left\| {e^{ \frac{\sum_{i=1}^m \hat{f}^*_i(x_i)}{\varepsilon}} - e^{\frac{\sum_{i=1}^m f^*_i(x_i)}{\varepsilon}}} \right\|_{\cC^1}
        &= \mathcal{O}_{\bP}\left(  N^{-1/2} \log N \right). 
    \end{align}
\end{lemma}

\begin{proof}
    Since \(\hat{f}^*_j\) and \(f^*_j\) are uniformly bounded by Proposition \ref{BDP} and Proposition \ref{empirical_BDP}, we have
\begin{align*}
 &\left \|{ e^{ \frac{\sum_{i=1}^m \hat{f}^*_i(x_i)}{\varepsilon}} - e^{ \frac{\sum_{i=1}^m f^*_i(x_i)}{\varepsilon} } -e^{ \frac{\sum_{i=1}^m f^*_i(x_i)}{\varepsilon}}\left( \frac{\sum_{i=1}^m \hat{f}^*_i(x_i) - \bar{f}^*_i(x_i)}{\varepsilon}\right)}\right\|_{\cC^1}\\
 &= \left\|{e^{ \frac{\sum_{i=1}^m \hat{f}^*_i(x_i)}{\varepsilon}} - e^{ \frac{\sum_{i=1}^m \bar{f}^*_i(x_i)}{\varepsilon} } -e^{ \frac{\sum_{i=1}^m f^*_i(x_i)}{\varepsilon}}\left( \frac{\sum_{i=1}^m \hat{f}^*_i(x_i) - 
 \bar{f}^*_i(x_i)}{\varepsilon}\right)}\right\|_{\cC^1}\\
 &= o_{\bP}\left(\left\|\sum_{i=1}^m \hat{f}^*_i(x_i)-\bar{f}^*_i(x_i)\right\|_{\cC^1}\right).
\end{align*}
By virtue of Lemma \ref{diff_empirical_population_f_i_bounded_by_other_potentials}, we have  
\begin{align}\label{empirical_popularion_dual_difference_order}
\left\|\sum_{i=1}^m \hat{f}^*_i(x_i)-\bar{f}^*_i(x_i)\right\|_{\cC^1}
= \cO_\bP(N^{-1/2}\log N),
\end{align}
and then (\ref{bound_taylor_expansion_littleop_order}) follows. To prove (\ref{bound_taylor_expansion_bigO_order}), apply the reverse triangle inequality to (\ref{bound_taylor_expansion_littleop_order}):
\begin{align}\label{reverse_triangle_inequality}
\left\|e^{ \frac{\sum_{i=1}^m \hat{f}^*_i(x_i)}{\varepsilon}} - e^{ \frac{\sum_{i=1}^m f^*_i(x_i)}{\varepsilon}} - e^{ \frac{\sum_{i=1}^m f^*_i(x_i)}{\varepsilon}}\left(\frac{\sum_{i=1}^m \hat{f}^*_i(x_i) - \bar{f}^*_i(x_i)}{\varepsilon}\right)\right\|_{\cC^1} 
\end{align}
\begin{align*}
\geq \left\|e^{ \frac{\sum_{i=1}^m \hat{f}^*_i(x_i)}{\varepsilon}} - e^{ \frac{\sum_{i=1}^m f^*_i(x_i)}{\varepsilon}}\right\|_{\cC^1} - \left\|e^{ \frac{\sum_{i=1}^m f^*_i(x_i)}{\varepsilon}}\left(\frac{\sum_{i=1}^m \hat{f}^*_i(x_i) - \bar{f}^*_i(x_i)}{\varepsilon}\right)\right\|_{\cC^1}.
\end{align*}
Apply (\ref{empirical_popularion_dual_difference_order}) , (\ref{reverse_triangle_inequality}) and Proposition \ref{BDP} to get:
\[
\left\|e^{ \frac{\sum_{i=1}^m \hat{f}^*_i(x_i)}{\varepsilon}} - e^{ \frac{\sum_{i=1}^m f^*_i(x_i)}{\varepsilon}}\right\|_{\cC^1}
    \lesssim_\varepsilon \left\|\left(\sum_{i=1}^m \hat{f}^*_i(x_i)-\bar{f}^*_i(x_i)\right)\right\|_{\cC^1} + o_{\bP}\left( N^{-1/2} \log N \right) = \cO_\bP(N^{-1/2} \log N).
\]
\end{proof}

\begin{lemma}[Vanishing term B]\label{vanishing_term_B_proved_via_chaining} 
\begin{equation*}
    B = \sqrt{N}\int g (\mvx) e^{-\frac{c(\mvx)}{\varepsilon}}(e^{\frac{\sum_{i=1}^m \hat{f}^*_i(x_i)}{\varepsilon}} - e^{\frac{\sum_{i=1}^m f^*_i(x_i)}{\varepsilon}}) d\left(\otimes_{k=1}^m\hat{\nu}^N_k - \otimes_{k=1}^m\nu_k\right)(\mvx) = o_\bP(1).
\end{equation*}
\end{lemma}

\begin{proof}
Set $\cH := \{ fg, f\in\cF\}$ with $\cF = \{ f\in\cC^1, \|f\|_{\cC^1}\leq \|e^{\frac{\sum_{i=1}^m \hat{f}^*_i(x_i)}{\varepsilon}} - e^{\frac{\sum_{i=1}^m f^*_i(x_i)}{\varepsilon}}\|_{\cC^1}  \}$. For any $f,f'\in\cF$, note that $\|fg-f'g\|_{\infty}\leq \|g\|_{\infty}\,\|f-f'\|_{\infty}$, hence any $(\varepsilon/\|g\|_\infty)$-net $\{f_1,\ldots,f_K\}$ of $\cF$ gives an
$\varepsilon$-net $\{f_1g,\ldots,f_Kg\}$ of $\cH$. Therefore $\sup_{h\in\cH}\|h\|_{\infty}
\leq \|g\|_\infty \sup_{f\in\mathcal F}\|f\|_{\infty} < \infty$ and
\[
\cN(\varepsilon,\cH,\|\cdot\|_{\infty})
\le \cN(\varepsilon/\|g\|_\infty,\cF,\|\cdot\|_{\infty}).
\]
The remaining arguments follows the chaining argument Lemma \ref{dudley}, same as that in the proof of Theorem \ref{supremp} in Section \ref{sec_concentration_of_empirical_potentials_and_joint_optimal_coupling_density} by virtue of the sub-Gaussianity from Lemma \ref{Ustat}. Namely, we have, up to some constant depending on $m,\|g\|_\infty,\varepsilon,d$,
\begin{align*}
    B \lesssim N^{\frac{1}{2} - \frac{1}{d}} \left\|e^{\frac{\sum_{i=1}^m \hat{f}^*_i(x_i)}{\varepsilon}} - e^{\frac{\sum_{i=1}^m f^*_i(x_i)}{\varepsilon}}\right\|_{\cC^1}. 
\end{align*}
Combining with Lemma \ref{exponential_potential_infinity_norm_order} and the proof is complete.
\end{proof}

\begin{lemma}\label{V_statistics_U_statistics_difference}
    Suppose $\Phi(x_{\alpha}^{(j)};\alpha= i_{j1},\dots,i_{jm_j}, j=1,\dots,m)$ is a bounded kernel of order $m$ and degree $(m_1,\dots,m_m)$ with $m_k>1$ for some $k\in[m]$, symmetric inside each group. Then given $m$ independent series $\{X^{(j)}_{1},\dots,X^{(j)}_{N}\}, j=1,\dots,m$ of independent random variables,  for 
    \begin{align*}
        V_N := \frac{1}{N^{\sum_{k=1}^m m_k}}\sum_{\substack{1\leq i_{jk}\leq N\\1\leq k\leq m_j\\j=1,\dots,m}}\Phi(X_{\alpha}^{(j)};\alpha= i_{j1},\dots,i_{jm_j}, j=1,\dots,m),
    \end{align*}
and     
    \begin{align*}
        U_N :=
        \frac{1}{\prod_{k=1}^m \tbinom{N}{m_k}} 
        \sum_{\substack{1\leq i_{j1}<\dots<i_{jm_j}\leq N\\\, j=1,\dots,m}}
        \Phi(X_{\alpha}^{(j)};\alpha= i_{j1},\dots,i_{jm_j}, j=1,\dots,m),
    \end{align*}
we have
\begin{align*}
    |V_N-U_N| \lesssim \frac{\|\Phi\|_\infty}{N}.
\end{align*}
\end{lemma}

\begin{proof}
Due to the symmetry of the kernel $\Phi$ inside each group, we could write
\begin{align*}
    U_N =  \frac{1}{\prod_{k=1}^m (N)_{m_k}} 
        \sum_{\substack{1\leq i_{j1},\dots,i_{jm_j}\leq N\\i_{jp}\neq i_{jq},\, 1\leq p<q\leq m_j \\ j=1,\dots,m}}
        \Phi(X_{\alpha}^{(j)};\alpha= i_{j1},\dots,i_{jm_j}, j=1,\dots,m),
\end{align*}
where $(N)_{m_k}= N(N-1)\dots(N-m_k+1)$ denotes the falling factorial notation. At the same time, observe that
\begin{align*}
    V_N-U_N 
    & = \left(\frac{1}{N^{\sum_{k=1}^m m_k}}-\frac{1}{\prod_{k=1}^m (N)_{m_k}} \right)
   \sum_{\substack{1\leq i_{j1},\dots,i_{jm_j}\leq N\\i_{jp}\neq i_{jq},\, 1\leq p<q\leq m_j \\ j=1,\dots,m}}
    \Phi(X_{\alpha}^{(j)};\alpha= i_{j1},\dots,i_{jm_j}, j=1,\dots,m)\\
    & +  \frac{1}{N^{\sum_{k=1}^m m_k}}\sum_{*}
    \Phi(X_{\alpha}^{(j)};\alpha= i_{j1},\dots,i_{jm_j}, j=1,\dots,m) \\
    &: = A+B, 
\end{align*}
where $\sum_{*}$ denotes the summation over all index $(i_{j\alpha};\alpha = i_{j_1},\dots,i_{j_{m_j}},j=1,\dots,m)$, $1\leq i_{j1},\dots,i_{jm_j}\leq N,  j=1,\dots,m$ with $i_{jp}= i_{jq}$ for some $1\leq p<q\leq m_j$.
We have, by the boundedness of the kernel, say, $|\Phi(\cdot)|\leq C$, for term A, 
\begin{align*}
   |A| 
   &\leq C  \prod_{k=1}^m (N)_{m_k} \left|\frac{1}{N^{\sum_{k=1}^m m_k}} -\frac{1}{\prod_{k=1}^m (N)_{m_k}} \right|\\
   &=   \frac{C}{N^{-m+\sum_{k=1}^m m_k}}\left|\prod_{i=1}^m\prod_{j=1}^{m_i-1}(N-j)-N^{-m+\sum_{k=1}^m m_k}\right| \lesssim \frac{1}{N}.
\end{align*}
Here the last inequality follows from the fact that the highest degree term of the polynomial $\prod_{i=1}^m\prod_{j=1}^{m_i-1}(N-j)$ equals $N^{-m+\sum_{k=1}^m m_k}$ and the assumption that $m_k>1$ for some $k\in[m]$.
Similarly, for term B, noticing that there are $ N^{\sum_{k=1}^m m_k} - \prod_{k=1}^m (N)_{m_k}$ many terms in the summation $\sum_*$,
\begin{align*}
    |B| 
    &\leq \frac{C}{N^{\sum_{k=1}^mm_k}}\left|  N^{\sum_{k=1}^m m_k} - \prod_{k=1}^m (N)_{m_k} \right|\\
    &= \frac{C}{N^{-m+\sum_{k=1}^mm_k}} \left| N^{-m+\sum_{k=1}^m m_k} - \prod_{i=1}^m\prod_{j=1}^{m_i-1}(N-j)\right|\lesssim \frac{1}{N},
\end{align*}
following the same reasoning as above. 
\end{proof}

\section{Other Auxiliary tools}\label{Section_Hadamard_differentiability_Intro}
We briefly recapitulate useful tools related to bootstrap consistency in this section, in particular, Hadamard differentiability and functional delta method. For two normed spaces $\fD$ and $\fC$, a map $\phi:\Theta\subset\fD\to\fC$ is called Hadamard directionally differentiable at $\theta\in\Theta$ if there exists a map $\phi'_\theta: \fF_\Theta(\theta)\to\fC$ such that
\begin{align}\label{definition_Hadamard_differentiability}
    \lim_{t\to0^+}\frac{\phi(\theta_t)  - \phi(\theta)} {t} = \phi'_\theta(h),
\end{align}
for any sequence $(\theta_t)_{t>0}\subset\Theta$ with $\frac{\theta_t-\theta}{t}\to h$ as $t\to0^+$.
Here, $\fF_\Theta(\theta)$ is the tangent cone to $\Theta$ at $\theta$ defined as
\begin{align*}
    \fF_\Theta(\theta) = \{ h\in\fD, \quad h= \lim_{\substack{ \theta_t\to\theta\in\Theta  \\t\to0^+ } } \frac{\theta_t-\theta}{t}  \}.
\end{align*}
The derivative $\phi'_\theta$ can be shown to be continuous and positively homogeneous but not necessarily linear. If (\ref{definition_Hadamard_differentiability}) holds for $h\in\fD_0$ for some subset $\fD_0\subset\fF_\Theta(\theta)$, then we say $\phi$ is Hadamard directionally differentiable at $\theta\in\Theta$ tangentially to $\fD_0$. In such case, the derivative $\phi'_\theta$ is only valid on $\fD_0$. Finally, if the derivative $\phi'_\theta$ is linear, we call $\phi$ to be Hadamard differentiable at $\theta$ (tangentially to $\fD_0$ if $\phi'_\theta$ is defined only on $\fD_0$).

\begin{lemma}[Chain rule for Hadamard differentiability]
    If $\phi : \Theta_\phi\subset\fD \to\fC$ is Hadamard differentiable at $\theta\in\Theta_\phi$ tangentially to $\fD_0$ and $\psi: \Theta_\psi\subset\fC\to\fL$ is Hadamard differentiable at $\phi(\theta)$ tangentially to $\phi'_\theta(\fD_0)$, then $\psi\circ\phi : \Theta_\phi\to \fL$ is Hadamard differentiable at $\theta$ tangentially to $\fD_0$ with derivative $\psi'_{\phi(\theta)}\circ \phi'_\theta$.
\end{lemma}

\begin{lemma}[Functional delta method]
Let $\fD$ and $\fC$ be two normed spaces, and a map $\phi:\Theta\subset\fD\to\fC$ that is Hadamard directionally differentiable at $\theta\in\Theta$ tangentially to some set $\fD_0\subset\fF_\Theta(\theta)$. Let $T_n:\Omega\to\Theta$ be maps such that $r_n(T_n-\theta)\overset{w}{\longrightarrow}T$
for some $r_n\to\infty$ and Borel measurable map $T:\Omega\to\fD$ with values in a separable subset of $\fD_0$. Then
\begin{itemize}
    \item $r_n( \phi(T_n) - \phi(\theta) )\overset{w}{\longrightarrow} \phi'_\theta(T)$;
    \item If $\Theta$ is also convex and $\fD_0 = \fF_\Theta(\theta)$, then $r_n \left( \phi(T_n) - \phi(\theta)  - \phi'_\theta( T_n - \theta ) \right)\to 0$ in outer probability.
\end{itemize}
For more details on outer probability, see \cite{vanderVaartWellner1996}. The proofs of the previous two theorems could be found in \cite{vanderVaartWellner1996,Romisch_Werner_Delta_Method_Intro}.

\end{lemma}

\begin{proof}[Proof of Theorem \ref{bootstrap_cost} and Theorem \ref{bootstrap_coupling}]
The proof of either Theorem \ref{bootstrap_cost} or Theorem \ref{bootstrap_coupling}] is the same,  basically applying Theorem 23.9 in \cite{Van_der_Vaart_Asymptotic_statistics00}. Let us first verify the Hadamard differentiability. Recall that the following map $\cH$ is Hadamard differentiable by Theorem 6 in \cite{ZivGoldfeldKengoKatoGabrielRiouxRitwikSadhu}.
\begin{align*}
    \cH: &\prod_{j=1}^m\cP(\cX_j) \quad \to \quad \prod_{j=1}^m \cC^k(\cX_j)\\
    &\mvnu : = (\nu_1,\dots,\nu_m) \mapsto \cH(\mvnu)=(f^*_1,\dots,f^*_m)
\end{align*}
with $(f^*_1,\dots,f^*_m)$ denotes the Schr\"{o}dinger potential associated to $\mvnu : = (\nu_1,\dots,\nu_m)$.
As a consequence, 
$$   S_\varepsilon(\mvnu) = \sum_{j=1}^m \nu_j(f^*_j)$$ 
and 
$$F(\mvnu) := \int g(\mvx) \exp\left(\frac{\sum_{j=1}^mf^*_j(x_j) - c(\mvx)}{\varepsilon}\right) d\otimes_{j=1}^m\nu_j(x_j)$$
are both Hadamard differentiable. This fact comes obvious once we realize 
$$ S_\varepsilon(\mvnu) = H_1 (\mvnu, \cH(\mvnu))$$
with
\begin{align*}
    H_1 : \prod_{j=1}^m\cP(\cX_j) \times \prod_{j=1}^m \cC^k(\cX_j) &\to \bR\\
    (\mvnu,\mvf) &\mapsto H_1 (\mvnu,\mvf)= \sum_{j=1}^m \int f_j(x_j) d\nu_j(x_j)
\end{align*}
being Hadamard differentiable and 
$$ F(\mvnu) = H_2 \left((\text{id},\cH)(\mvnu) \right)$$ with
\begin{align*}
    H_2 : \prod_{j=1}^m\cP(\cX_j) \times \prod_{j=1}^m \cC^k(\cX_j) &\to \bR\\
    (\mvnu,\mvf) &\mapsto H_2 (\mvnu,\mvf)=  \int g(\mvx)\exp\left(\frac{\sum_{j=1}^mf_j(x_j) - c(\mvx)}{\varepsilon}\right) d(\otimes_{k=1}^m\nu_k)(\mvx)
\end{align*}
Hadamard differentiable. Define $\cF_{0,j} := \{ f\in\cC^k(\cX_j), \|f\|_{\cC^k} \leq R_k\}$ for some constant $R_k>0$ depending on $c,\varepsilon,d$ and we know that the optimal potential $f^*_j\in\cF_{0,j}$. For $f_j\in \cF_{0,j}$, to shorten the notation, we denote $\sum_{j=1}^mf_j(x_j)$ as $f_1\oplus f_2\oplus\dots\oplus f_m$. After that, define $$\cF^* := \left\{f_1\oplus f_2\oplus\dots\oplus f_m; f_j\in \cF_{0,j}, j\in[m] \right\},$$
we need to check that as maps into $\ell^\infty(\cF^*)$, the sequence
\[
\sqrt{N} \left( \otimes_{j=1}^m\hat{\nu}_j^B  - \otimes_{j=1}^m\hat{\nu}^N_j \right)
\]
is asymptotically measurable and converges conditionally in distribution to
\[
\left(\sum_{j=1}^mG^{(j)}_{\nu_j}(f_j) \right)_{\oplus_{j=1}^m f_j \in \cF^*}
\]
given $(X^{(1)}_1,\dots,X^{(m)}_1), (X^{(1)}_2,\dots,X^{(m)}_2), \dots, (X^{(1)}_N,\dots,X^{(m)}_N)$. Here, $G^{(j)}_{\nu_j}$ are independent and the weak limits of $\sqrt{N} \left(\hat{\nu}_j^N - \nu_j\right)$, $j\in[m]$. Since $\cF_{0,j}$ is Donsker w.r.t.\ $\nu_j$ (cf. \cite{vanderVaartWellner1996}), each of
\[
\sqrt{N}\bigl(\hat{\nu}_j^B - \hat{\nu}^N_j\bigr)
\]
is asymptotically measurable and converges conditionally in distribution for $j\in [m]$
(cf.\ Chapter~3.6 in \cite{vanderVaartWellner1996}).
By Lemma~1.4.4 and Example~1.4.6 in \cite{vanderVaartWellner1996}, as maps into
$\ell^\infty(\cF_{0,1})\times\dots\times \ell^\infty(\cF_{0,m})$, the sequence
\[
\left( \sqrt{N}(\hat{\nu}_1^B - \hat{\nu}_1^N),
\dots, \sqrt{N}(\hat{\nu}_m^B - \hat{\nu}_m^N) \right)
\]
is asymptotically measurable and converges conditionally in distribution to
$(G^{(1)}_{\nu_1},\dots,G^{(m)}_{\nu_m})$. Since the map
\[
\ell^\infty(\cF_{0,1})\times\dots\times \ell^\infty(\cF_{0,m}) \ni (q_1,\dots,q_m)
\mapsto \left(\sum_{j=1}^m q_j(f_j) \right)_{\oplus_{j=1}^m f_j \in \cF^*}
\in \ell^\infty(\cF^*)
\]
is continuous, we see that, as maps into $\ell^\infty(\cF^*)$,
\[
\sqrt{N} \left( \otimes_{j=1}^m\hat{\nu}_j^B  - \otimes_{j=1}^m\hat{\nu}^N_j \right)
\]
is asymptotically measurable and converges conditionally in distribution to
\[
\left(\sum_{j=1}^mG^{(j)}_{\nu_j}(f_j) \right)_{\oplus_{j=1}^m f_j \in \cF^*}
\]
as desired. The rest follows from Theorem~23.9 in \cite{Van_der_Vaart_Asymptotic_statistics00}.
\end{proof}

\end{document}